\numberwithin{equation}{section}
\newcommand{\cR}{\mathcal R}
\newcommand{\cC}{\mathcal C}
\newcommand{\cT}{\mathcal T}
\newcommand{\cH}{\mathcal H}
\newcommand{\cS}{\mathcal S}
\newcommand{\bbT}{\mathbb{T}}
\renewcommand{\P}{\mathbb{P}}
\newcommand{\E}{\mathbb{E}}
\newcommand{\C}{\mathbb{C}}
\newcommand{\R}{\mathbb{R}}
\newcommand{\cO}{\mathcal{O}}
\newcommand{\tb}{\bullet}
\newcommand{\tw}{\circ}
\newcommand{\frb}{\mathfrak{b}}
\newcommand{\frw}{\mathfrak{w}}
\newcommand{\wc}{u^\tw}
\newcommand{\bc}{u^\tb}
\newcommand{\wpr}{w}
\newcommand{\bpr}{b}
\newtheorem{theorem}{Theorem}[section]
\newtheorem{remark}[theorem]{Remark}
\newtheorem{corollary}[theorem]{Corollary}
\newtheorem{assumption}[theorem]{Assumption}
\newtheorem*{assumption*}{Assumption}
\newtheorem{lemma}[theorem]{Lemma}
\newtheorem{definition}[theorem]{Definition}
\newtheorem{proposition}[theorem]{Proposition}
\renewcommand{\Im}{\operatorname{Im}}
\renewcommand{\Re}{\operatorname{Re}}
\newcommand{\G}{\mathcal{G}}
\newcommand{\rI}{\mathrm{I}}
\newcommand{\rD}{\mathrm{D}}
\DeclareMathOperator{\area}{Area}
\DeclareMathOperator{\var}{Var}
\newcommand\LipKd[2]{{\mbox{\textsc{Lip(}$#1,#2$\textsc{)}}}}
\newcommand\ExpFat[1]{{{\mbox{\textsc{Exp-Fat(}$#1$\textsc{)}}}}}
\newcommand\ExpFatPrime[2]{{{\mbox{\textsc{Exp-Fat(}$#1,#2$\textsc{)}}}}}
\title[Dimer model and holomorphic functions on t-embeddings]{Dimer model and holomorphic functions\\ on t-embeddings of planar graphs}
\author[Dmitry Chelkak]{Dmitry Chelkak$^\mathrm{a,b,c}$}
\author[Beno\^{\i}t Laslier]{Beno\^{\i}t Laslier$^\mathrm{d}$}
\author[Marianna Russkikh]{Marianna Russkikh$^\mathrm{e,f}$}
\thanks{\textsc{${}^\mathrm{A}$ ENS--MHI chair funded by MHI. D\'epartement de math\'ematiques et applications, \'Ecole Normale Sup\'erieure, CNRS, PSL University, 45 rue d'Ulm, 75005 Paris, France.}}
\thanks{{\textsc{${}^\mathrm{B}$} \emph{Current address:} \textsc{Department of Mathematics, University of Michigan,
Ann Arbor, MI 48109-1043, USA}}}
\thanks{\textsc{${}^\mathrm{C}$ \emph{On leave from} St.~Petersburg Dept. of Steklov Mathematical Institute RAS, Fontanka 27, 191023 St.~Petersburg, Russia.}}
\thanks{\textsc{${}^\mathrm{D}$ {Universit\'e Paris Cit\'e}, Sorbonne Universit\'e, CNRS, Laboratoire de Probabilit\'es, Statistiques et Mod\'elisations (LPSM), Paris, France}}
\thanks{\textsc{${}^\mathrm{E}$ Massachusetts Institute of Technology, Department of Mathematics, 77 Massachusetts Avenue, Cambridge, Massachusetts, 02139–4307}}
\thanks{{\textsc{${}^\mathrm{F}$} \emph{Current address:} \textsc{Department of Mathematics, California Institute of Technology, Pasadena,
CA 91125, USA}}}
\thanks{\emph{E-mail:} {\texttt{dchelkak@umich.edu}, \texttt{laslier@lpsm.paris}, \texttt{russkikh@caltech.edu}}}
\begin{document}

\begin{abstract} We introduce the framework of discrete holomorphic functions on t-embeddings of weighted bipartite planar graphs; t-embeddings also appeared under the name Coulomb gauges in a recent {paper}~\cite{KLRR}. We argue that this framework is particularly relevant for the analysis of scaling limits of the height fluctuations in the corresponding dimer models. In particular, it unifies both Kenyon's interpretation of dimer observables as derivatives of harmonic functions on T-graphs and the notion of s-holomorphic functions originated in Smirnov's work on the critical Ising model. We develop an a priori regularity theory for such functions and provide a meta-theorem on convergence of the height fluctuations to the Gaussian Free Field. We also discuss {how several more standard discretizations of complex analysis fit this general framework.}
\end{abstract}

\keywords{dimer model, discrete holomorphicity, Gaussian free field}

\subjclass[2010]{82B20, 30G25}

\maketitle

\newpage

\tableofcontents

\newpage

\section{Introduction}

\subsection{General context} \label{sub:gen-context}
This paper contributes to two subjects: the dimer model on bipartite planar graphs and the discrete complex analysis techniques in probability and statistical physics. Both topics are very rich, we refer {an} interested reader to~\cite{kenyon-lectures,gorin-lectures} and~\cite{smirnov-icm2010} and {references} therein, respectively. Though the two subjects are known to be intimately related, it should be said that many other powerful techniques were successfully applied to studying the dimer model, e.g., see~\cite{petrov,bufetov-gorin,colomo-sportiello,BLR1,aggarwal,giuliani-mastropietro-toninelli,dubedat-SL2,BdTR-via-dimers} and references therein to mention some of the important achievements obtained during the last decade. In particular, in the
last years there was a widespread feeling that discrete complex analysis {ideas} had almost
reached the limit of their capacity to bring new interesting results in the bipartite dimer model context. In this paper and {its} follow-up~\cite{CLR2} we intend to revive the link between the two topics; see also~\cite{chelkak-s-emb} for a companion research project on the planar Ising model.

It is well known that entries of the inverse Kasteleyn matrix (also known as the \emph{coupling function}) of the {homogeneous} dimer model on the square grid satisfy the most straightforward discrete version of the Cauchy-Riemann equation. This observation was used by Kenyon in~\cite{kenyon-gff-a,kenyon-gff-b} to prove the convergence of the height fluctuations to the Gaussian Free Field for the so-called Temperleyan discretizations of planar domains. This classical result was among the very first rigorous proofs of the convergence of lattice model observables, considered in discrete domains on~$\delta\mathbb{Z}^2$ approximating a continuous domain~$\Omega$ as~$\delta\to 0$, to conformally invariant quantities. A few years later, a similar treatment of the critical Ising model on the square grid appeared in the work of Smirnov~\cite{smirnov-annals}. Smirnov's approach, in particular, relied upon a specific reformulation of the discrete Cauchy--Riemann equations on~$\mathbb{Z}^2$. This reformulation is now commonly known as the \mbox{\emph{s-holomorphicity}} property, a term coined in the {paper}~\cite{chelkak-smirnov-universality} devoted to a generalization of Smirnov's results to the Z-invariant critical Ising model on isoradial {grids}. Another, at first sight unrelated, discretization of the Cauchy--Riemann equations was suggested in~\cite{dynnikov-novikov} {which in fact boils down to the linear relations satisfied by the} coupling functions on the honeycomb grid.

However, a naive interpretation of discrete Cauchy--Riemann equations is known to be often misleading even in the context of regular lattices, like the square or the honeycomb ones. Though it works well in several contexts (critical Ising model, dimers in {Temperleyan-type} domains), the dimer model observables are known \emph{not} to have holomorphic scaling limits in other situations, in particular if the Cohn--Kenyon--Propp limit shape surface~\cite{cohn-kenyon-propp} is not horizontal. The intrinsic reason for such a mismatch is that we expect the scaling limit to live in a less trivial complex structure than the one suggested by the naive discretization of Cauchy-Riemann equations. This {effect} manifests itself by the fact that quantities like the {entries of the} inverse Kasteleyn matrix that, {in principle,} could have holomorphic limits {do \emph{not} remain uniformly bounded even locally and, in particular, do not converge as $\delta \to 0$}. Instead, they grow exponentially with the number of steps in a way reminiscent of {generic} discrete harmonic functions.
This raises a question of finding a framework in which, on the one hand, this exponential growth is removed and, on the other hand, the new discrete equations are compatible with a \emph{nontrivial} continuous complex structure arising {in the limit}; see~\cite{kenyon-okounkov} for the description of this complex structure {via} the limit shape surface {for doubly periodic dimer models.}

Developing this idea in~\cite{kenyon-honeycomb}, Kenyon introduced a framework of holomorphic functions on \mbox{\emph{T-graphs}} (combinatorial objects first discussed in~\cite{kenyon-sheffield}) in order to analyze the behavior of dimer model observables in the non-horizontal case. In particular, this paper already contained an idea of embedding a given \emph{abstract} planar graph~$\G^\delta$ (a piece of the honeycomb grid in that case) into the complex plane as a T-graph so that discrete observables approximate holomorphic functions in the metric of these embeddings~$\Omega^\delta\subset\C$. This procedure, in particular, requires a proper choice of the \emph{gauge function}, a transformation of the dimer weights which leave the law of the model unchanged. The gauge function is responsible for the removal of the local exponential growth {of dimer coupling functions,} which varies from point to point in the original metric and should be evened out. {We refer an interested reader to a recent paper~\cite{laslier-21} for an extensive discussion of this approach.}

A more geometric viewpoint on `nice' gauge functions, the so-called \emph{Coulomb gauges}, {was suggested in~\cite{KLRR}. These gauges} have many remarkable algebraic properties (see also~\cite{affolter}) and are also closely related to T-graphs mentioned above. In parallel, a notion of \emph{s-embeddings} of graphs carrying the planar Ising model was suggested in~\cite{chelkak-icm2018}. As explained in~\cite[Section~7]{KLRR}, the latter are a particular case of the former under the combinatorial bosonization correspondence of the two models~\cite{dubedat-bosonization}. {The notion of} \emph{t-embeddings} discussed in our paper {is} fully equivalent to Coulomb gauges of~\cite{KLRR} except that we focus on embeddings of the dual graphs~$(\G^\delta)^*$ from the very beginning. The appearance of another name for the same object is caused by the fact that we were not aware of the research of~\cite{KLRR} at the beginning of this project and arrived at the same concept aiming to generalize results obtained for the Ising model observables on s-embeddings to dimers.

Whilst the work~\cite{KLRR} is focused on algebro-geometric properties of {t-embeddings,} our paper is devoted to the study of discrete holomorphic functions on such graphs, which we will call \emph{t-holomorphic} functions to distinguish from other discretizations of complex analysis. In particular, {our framework generalizes (a part of) the discrete complex analysis techniques recently developed in} \cite{chelkak-icm2018,chelkak-s-emb} in the planar Ising model context. We are particularly interested in the behavior of t-holomorphic functions in the `small mesh size' limit. It is worth noting that 
we do \emph{not} rely upon usual `uniformly bounded angles, degrees or sizes of faces' assumptions. In particular, the notion of the scale~$\delta$ of a t-embedding~$\cT^\delta$ requires a more invariant definition, which is discussed below. Also, note that for the case of convergence of harmonic functions on \emph{circle packings} or, more generally, on orthodiagonal quadrangulations, similar technical assumptions were recently fully dropped in~\cite{gurel-jerison-nachmias}. However, the notion of harmonicity on T-graphs associated {with} t-embeddings is formulated in terms of \emph{directed} random walks (and not via conductances), which significantly changes the perspective. Nevertheless, among other things we prove {the} a priori Lipschitzness of harmonic functions under a mild assumption \ExpFat{\delta}\ formulated below.

One of the long-term motivations to get rid of `technical' assumptions mentioned above is to {develop a discrete complex analysis framework that could be eventually applied to} \emph{random planar maps} weighted by the critical Ising or {by} the bipartite dimer model. We believe that s- and t-embeddings of abstract {weighted} planar graphs are the right tools to attack these questions. This perspective is somehow similar to the idea~\cite{smirnov-boykiy-private} of using \emph{square tilings} to study random planar maps weighted by uniform spanning trees; in this case an alternative option could be to use \emph{Tutte's harmonic embeddings} which are also related to the context of t-embeddings via~\cite[Section~6.2]{KLRR}. {On deterministic graphs,} we believe that the discrete complex analysis viewpoint on Kasteleyn equations provided in our paper is flexible enough to be applied {in} rather general situations, both in terms of the underlying lattice and of the limit shape surfaces; e.g. see~\cite{chelkak-ramassamy} where the case of the classical Aztec diamond is discussed from this perspective. In particular, our paper unifies Smirnov's concept of s-holomorphic functions and Kenyon's interpretation of dimer model observables as derivatives of harmonic functions on T-graphs. {We also refer an interested reader to {Section~\ref{sec:appendix2},} in which several links {between} the t-embeddings framework {developed in our paper and} more standard discretizations of complex analysis are discussed.}

\begin{figure}
\includegraphics[clip,trim=6.2cm 13.1cm 5.7cm 7.8cm, width=0.65\textwidth]{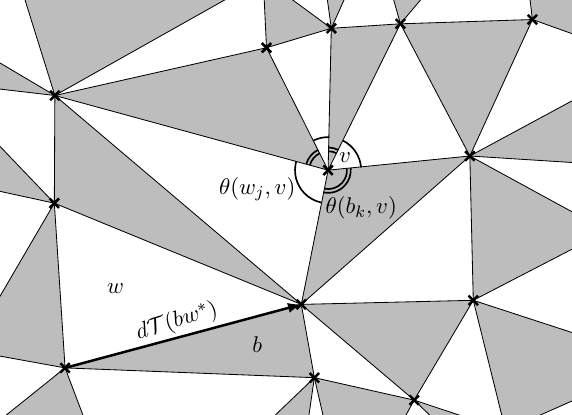}
\caption{\label{Temb} A portion of a t-embedding $\cT$ 
{and the notation $d\cT(bw^*)$.}
The angle condition $ \sum_{{j=1}}^{{n}} \theta( w_j, v) = \sum_{{k=1}}^{{n}} \theta(b_k, v) = \pi$ around a vertex $v$ {of degree~$2n=6$} is highlighted. The {Kasteleyn matrix of the dimer model on \emph{faces} of~$\cT$ is~$K_\cT(b,w):=d\cT(bw^*)$.}}
\end{figure}

\subsection{Basic concepts and assumptions}
We now briefly recall the setup of t-embeddings or Coulomb gauges, see Section~\ref{sec:setup} and~\cite{KLRR} for more details. Let~$\G$ be a weighted bipartite graph carrying the dimer model; the latter is a random choice of a perfect matching of vertices of~$\G$ with a probability proportional to the product of the corresponding positive weights. {We call the two bipartite classes of vertices of~$\G$ `black' and `white' and denote them $B$ and~$W$ in what follows.} Assume that all vertices of~${\G=B\cup W}$ have degree at least three and that the graph~$\G$ is planar. A t-embedding~$\cT$ is a proper embedding of the \emph{dual} graph~$\G^*$ into the complex plane such that
\begin{itemize}
\item all edges {of~$\cT$} are straight segments {and} all faces {of $\cT$} are convex polygons,
\item the geometric weights given by {the} lengths of edges {of~$\cT$} are gauge equivalent to the original {dimer weights,}
\end{itemize}
and the following \emph{angle condition} holds (see Fig.~\ref{Temb}):
\begin{itemize}
\item for each inner vertex~$v$ of~$\cT$, the sum of angles of black faces adjacent to~$v$ {(which correspond to black vertices of~$\G$ adjacent to a given face)} equals~$\pi$.
\end{itemize}

If~$\G$ is a finite {planar} graph {with the sphere topology (or, more accurately, a \emph{planar map,} i.e., a proper embedding of~$\G$ into the sphere considered up to homotopies),} one should be more accurate and first specify {an `outer'} face of~$\G$, {further replacing} the corresponding vertex~$v_\mathrm{out}$ of~$\G^*$ by a cycle of length~$\deg v_\mathrm{out}$. The graph thus obtained is called the \emph{augmented dual} in~\cite{KLRR}, we still denote it by~$\G^*$. A finite t-embedding is an embedding of this augmented dual graph~$\G^*$ and the angle condition is dropped at boundary vertices of~$\G^*$; see Fig.~\ref{finTembTriangl}.

\begin{figure}
\centering
\begin{minipage}{0.4\textwidth}
\includegraphics[clip, trim=4.4cm 10.4cm 6.3cm 4cm,width=\textwidth]{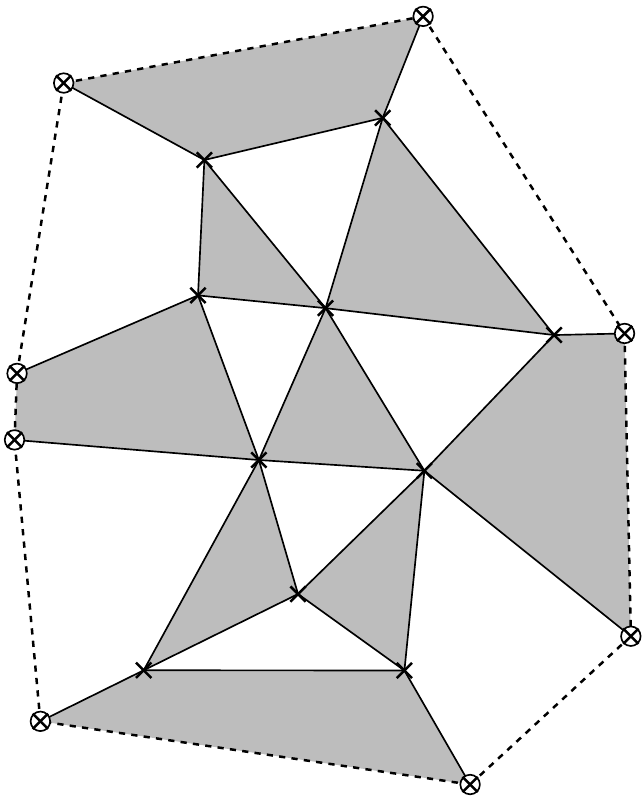}
\end{minipage}
\hskip 0.08\textwidth
\begin{minipage}{0.34\textwidth}
\includegraphics[clip, trim=4.4cm 9.7cm 7.4cm 4.1cm,width=\textwidth]{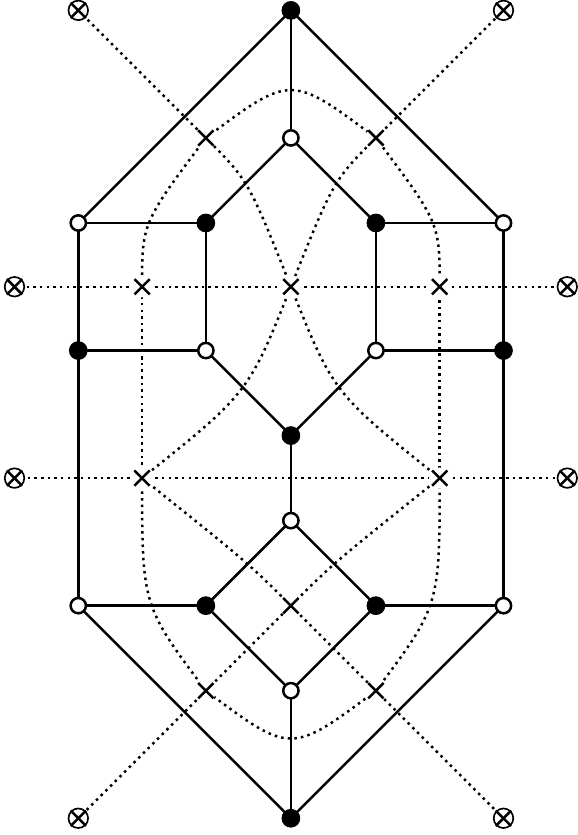}
\end{minipage}
\caption{\textsc{Left:} A finite t-embedding, the boundary edges are dashed.
\textsc{Right:} {A bipartite graph~$\G$ and its augmented dual} $\G^*$ (dotted). We call $\cT(\G^*)$ a finite triangulation if all vertices of $\G$ have degree three. {In this} {case interior faces of~$\cT$ are triangles whilst boundary ones are quadrilaterals.}}\label{finTembTriangl}
\end{figure}

In this paper, we do \emph{not} discuss the existence of t-embeddings of a given abstract planar graph~$\G$ carrying the bipartite dimer model. This question was addressed in~\cite{KLRR}, we quote some of these results below, see Theorem~\ref{thm:KLRR-existence}. Overall, {we believe that all finite planar} bipartite graphs admit many t-embeddings if no constraints are imposed at {the} \emph{boundary} vertices of~$\cT$. The interested reader is also referred to our follow-up paper~\cite{CLR2} for a notion of \emph{`perfect'} t-embeddings {of finite graphs,} which specifies additional constraints on the boundary of~$\cT$ in a way that {potentially} provides both the existence and the uniqueness {of such an embedding} up to a few natural isomorphisms. In particular, we consider {`perfect'} t-embeddings from~\cite{CLR2} as a very important application of the framework developed in this paper; see also remarks after Theorem~\ref{thm:main-GFF}.

To summarize the preceding discussion, in {what follows} we view a t-embed\-ding $\cT$ as an object given \emph{in advance}, and then study the dimer model on faces of~$\cT$ with weights given by edge lengths. The angle condition easily implies (see~\cite{KLRR} or Section~\ref{sec:setup} below) that the matrix~$K(b,w):=d\cT(bw^*)$ is a Kasteleyn matrix for this dimer model, see Fig.~\ref{Temb} for the notation.
Loosely speaking, \emph{t-holomorphic functions} on~$\cT$ are just functions satisfying the Kasteleyn relations locally. Note however that this down-to-earth interpretation is not the best possible one, we refer the reader to Section~\ref{sec:holomorphicity} for precise definitions and a discussion.

The central concept for our analysis is the \emph{origami map}~$\cO$ associated to a t-embedding~$\cT$. {(The name `origami' for this map is motivated by~\cite{KLRR}, the reason is that tilings of the plane satisfying the angle condition coincide with the crease patterns of origami that are locally flat-foldable, see~\cite{hull-origami}.)} Let~$z$ be the complex coordinate in the plane in which the graph~$\cT$ is drawn. Informally speaking, to construct the mapping~$z\mapsto\cO(z)$ out of~$\cT$, one folds this plane along each of the edges of~$\cT$. The angle condition guarantees that this folding procedure is locally and hence globally consistent; we refer the reader to Section~\ref{sec:setup} for an accurate definition. Note that~$\cO$ is defined up to translations, rotations and a possible reflection; in our convention the white faces preserve their orientation in~$\cO$ while the black ones change it. If one starts with the square lattice~$\delta\mathbb{Z}^2$, then the image of the origami map is just a single square of size~$\delta$. Similarly, if one starts with the regular triangular lattice (which corresponds to the dimer model on the honeycomb grid), then the image of~$\cO$ is just a single {equilateral} triangle. However, already for a skewed triangular lattice the map~$\cO$ becomes less trivial though its image is still bounded. Surprisingly enough, the origami map of these triangular lattices also appeared in the dynamical systems context recently~\cite{romaskevich}. The origami map also gives a link between t-embeddings and \emph{T-graphs}: the latter are just the images of the t-embedding under the mappings~$z\mapsto z+\alpha^2\cO(z)$ or~$z\mapsto z+\overline{\alpha^2\cO(z)}$, $\alpha\in\mathbb{T}$, where~$\mathbb{T}:=\{\alpha\in\C:|\alpha|=1\}$. 
We use the notation~$\cT+\alpha^2\cO$ and~$\cT+\overline{\alpha^2\cO}$ for these T-graphs, see Section~\ref{sub:t-graphs} for details.

Clearly, {the mapping $z\mapsto\cO(z)$} does not increase {Euclidean} distances {in the complex plane,} i.e., is a~$1$-Lipschitz function. The main assumption for our analysis is that {this mapping} has a slightly better Lipschitz constant at large scales:

\begin{assumption}[\LipKd{\kappa}{\delta}] \label{assump:LipKd}
Given two {constants}~$\kappa<1$ and~$\delta>0$ we say that a t-embedding~$\cT$ satisfies assumption~{\LipKd{\kappa}{\delta}} in a region~$U\subset\C$ {covered by~$\cT$} if
\[
|\cO(z')-\cO(z)|\ \le\ \kappa\cdot|z'-z|\ \ \text{for all $z,z'\in U$\ such that\ $|z-z'|\ge\delta$.}
\]
\end{assumption}
Since we do not fold any face of~$\cT$ to get~$\cO$, assumption~{\LipKd{\kappa}{\delta}} clearly implies that all faces have diameter less than~$\delta$. We think of~$\delta$ as the `mesh size' of a t-embedding and sometimes explicitly include it into the notation by writing~$\cT^\delta$ and~$\cO^\delta$ instead of~$\cT$ and~$\cO$. Still, let us emphasize that the actual size of faces {can be} much \emph{smaller} than~$\delta$.

A good part of the a priori regularity theory developed in this paper holds just under assumption~{\LipKd{\kappa}{\delta}}. {Notably,} this assumption is enough to prove a uniform ellipticity estimate for random walks on {the} associated T-graphs {on scales greater than~$\delta$,} to prove an a priori H\"older-type estimate for t-holomorphic functions, and to describe their possible subsequential limits, see Section~\ref{sec:regularity} for details. However, to derive the a priori \emph{Lipschitz}-type estimate for harmonic functions on \mbox{T-graphs} and to deduce meaningful results for the dimer model on~$\cT$ we need slightly more: {see Assumption~\ExpFat{\delta}\ below. For shortness, we now formulate this additional assumption} only in the case when~$\cT^\delta$ are triangulations; {the general case is discussed in Section~\ref{sec:non_triangulation}.}

\smallskip

Given~$\rho>0$, let us say that a face of~$\cT$ is~\emph{$\rho$-fat} if it contains a disc of radius~$\rho$.

\begin{assumption}[\ExpFat{\delta}, triangulations] \label{assump:ExpFat-triang}
We say that a sequence $\cT^\delta$ of \mbox{t-embeddings} with triangular faces satisfies assumption~{\ExpFat{\delta}} {(or, more accurately, \ExpFatPrime{\delta}{\delta'}) in} a region~$U\subset\C$ {covered by~$\cT$} (or, more generally, {in} regions $U^\delta\subset\C$ {covered by~$\cT^\delta$ and} depending on~$\delta$) as~$\delta\to 0$ if {there exist auxiliary scales~$\delta'=\delta'(\delta)$ such that~$\delta'\to 0$ as~$\delta\to 0$ and the following holds:}
\begin{center}
 if one removes all ${\delta\exp(-\delta'\delta^{-1})}$-fat triangles from~$\cT^\delta$, then {each of the \\
 remaining vertex-connected components of~$\cT^\delta$ has diameter at most~$\delta'$.}
\end{center}
\end{assumption}
{As a warm-up example, let us assume that all edges of a t-embedding~$\cT$ with triangular faces are uniformly comparable to~$\ell$ and that all angles of its faces are uniformly bounded away from~$0$. In this case it is not hard to check that there exist~$\kappa<1$ and~$C>1$ such that the assumption~$\LipKd{\kappa}{\delta}$ holds with~$\delta=C\ell$. In its turn, the assumption~$\ExpFat{\delta}$ holds with~$\delta'=C'\delta$ provided that $C'>1$ is big enough as in this case \emph{all} triangles are~$e^{-C'}\delta$-fat. Thus, in this setup both~$\delta$ and~$\delta'$ are just multiples of the `true' mesh size of the tiling. A more interesting example arises when we know that~$\cT=\cT^\delta$ satisfies the assumption~\LipKd{\kappa}{\delta}\ and that all its faces except maybe isolated ones are, say, $\delta^{100}$-fat. In this case the assumption~\ExpFat{\delta}\ still holds with a huge margin since one can take \mbox{$\delta':=99\delta|\log\delta|$}. In full generality, one can replace~$\delta^{100}$ by any function decaying, as~$\delta\to 0$, slower than exponentially (in~$\delta^{-1}$) and admit not only isolated `exponentially non-fat' triangles in~$\cT^\delta$ but also arbitrary clusters formed by them, with the only requirement that the maximal \emph{Euclidean} diameter of these clusters tends to zero as~$\delta\to 0$.

Let us emphasize that -- contrary to~\LipKd{\kappa}{\delta} -- we regard the second assumption~\ExpFat{\delta}\ as `technical': loosely speaking (see Section~\ref{sub:Lipschitz} for details), we use it to exclude a hypothetical pathological scenario in which the gradients of uniformly bounded harmonic functions on T-graphs obtained from~$\cT^\delta$ grow exponentially (in~$\delta^{-1}$) fast as~$\delta\to 0$. Certainly, it would be mich nicer to rule out this pathological scenario using~\LipKd{\kappa}{\delta}\ only. However, it seems plausible to believe that the very mild assumption~\ExpFat{\delta}\ still holds in potential applications.}

\subsection{Regularity of harmonic functions on T-graphs} Though studying harmonic functions on T-graphs is not the primary motivation of our work, it is nevertheless one of its important ingredients. We now formulate our main a priori regularity result in this direction. For an open set~$V\subset\C$, let~$W^{1,\infty}(V)$ be the Sobolev space of functions whose derivatives are bounded on compact subsets of~$V$.

{Recall that the origami map~$\cO^\delta$ associated with a t-embedding~$\cT^\delta$ is defined up to translations and rotations. Therefore, when considering a sequence of, e.g., T-graphs~$\cT^\delta+(\alpha^\delta)^2\cO^\delta$ associated with given~$\cT^\delta$ we can assume that~$\alpha^\delta=1$ for all~$\delta$ without loss of generality.} In a special case of T-graphs obtained from skewed triangular lattices~$\cT^\delta$, {the following theorem yields \cite[Lemma~3.6]{kenyon-honeycomb};} recall {however} that our aim is to develop the regularity theory for \emph{general} t-embeddings~$\cT^\delta$.

\begin{theorem} \label{thm:Lip-intro}
Let~$\cT^\delta$, $\delta\to 0$ be a sequence of t-embeddings satisfying both assumption~{\LipKd{\kappa}{\delta}} (with a common constant~$\kappa<1$) and assumption~{\ExpFat{\delta}}. Let~$H^\delta$ be a sequence of {(real-valued)} harmonic functions defined on T-graphs ${\cT^\delta+\cO^\delta}$ associated to~$\cT^\delta$. If the functions~$H^\delta$ are uniformly bounded in a region $V\subset\C$, then {these functions are also uniformly Lipschitz on each compact subset of~$V$. Moreover,} the family~$\{H^\delta\}$ is pre-compact in the space~$W^{1,\infty}(V)$.
\end{theorem}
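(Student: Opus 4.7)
Pre-compactness in $W^{1,\infty}(V)$ amounts, in view of the given uniform $L^\infty$ bound and a standard Rellich--Arzel\`a--Ascoli argument, to a uniform Lipschitz bound for $H^\delta$ on each compact $K\Subset V$. The task therefore reduces to bounding the discrete gradient $dH^\delta$ uniformly in~$\delta$ on $K$.

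The first ingredient, which uses only Lip($\kappa$,$\delta$), is the regularity theory developed in Section~\ref{sec:regularity}: uniform ellipticity of the directed random walk on the T-graph $\cT^\delta+\alpha^2\cO^\delta$ yields an a priori H\"older estimate for $H^\delta$ at scales $\ge\delta$, and hence equicontinuity together with H\"older-continuous subsequential limits. The decisive conceptual step is then to identify the discrete gradient of $H^\delta$ with a t-holomorphic function $F^\delta$ on $\cT^\delta$ via the Kasteleyn relations; this is the bridge between Kenyon's viewpoint and the t-holomorphicity formalism of the present paper. Consequently $F^\delta$ itself inherits the H\"older-type a priori regularity, but a uniform pointwise bound on $F^\delta$ does \emph{not} follow from Lip($\kappa$,$\delta$) alone and requires Assumption~{\ExpFat}.

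Assumption~{\ExpFat} enters through the crude trivial estimate $|F^\delta(f)|\le C\|H^\delta\|_\infty / \rho(f)$, where $\rho(f)$ is the inradius of a triangle $f$. Fix $\beta>0$. By {\ExpFat}, every point of $K$ is encircled, for all sufficiently small $\delta$, by vertex-connected chains of $e^{-\beta\delta^{-1}}$-fat triangles of vanishing diameter, so outside an $o(1)$-neighborhood we obtain the sub-exponential bound $|F^\delta|\le Ce^{\beta\delta^{-1}}$. It remains to bootstrap this weak pointwise bound to a uniform one, which one achieves by combining the H\"older regularity of $F^\delta$ with a three-balls-type inequality: the supremum of a t-holomorphic function on a small disc is controlled by a \emph{small} power of its supremum on a larger disc and a \emph{large} power of the oscillation of its primitive $H^\delta$, the latter being $O(1)$. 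Taking $\beta$ sufficiently small then yields a uniform bound $|F^\delta|\le C(K)$, and integrating $F^\delta$ along chains of fat faces produces the desired Lipschitz estimate for $H^\delta$.

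The main obstacle lies in this last bootstrap step: the three-balls/Harnack-type inequality for t-holomorphic functions must be quantitative enough to propagate a bound of size $e^{\beta\delta^{-1}}$ down to $O(1)$, and the ``fundamental theorem of calculus'' relating $F^\delta$ to $H^\delta$ must be made robust to the presence of non-fat faces. The latter is precisely what {\ExpFat} is designed to ensure: non-fat components form vertex-connected clusters of $o(1)$-diameter, so any macroscopic contour of integration can be routed through fat triangles with only an $o(1)$-perturbation.
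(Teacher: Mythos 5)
Your architecture coincides with the paper's: reduce to a uniform gradient bound on compact subsets, identify $\rD[H^\delta]$ with a t-holomorphic function $F^\delta_\frw$, use \LipKd\ to get the a priori H\"older estimate (Proposition~\ref{prop:Holder}) and \ExpFat\ to get the sub-exponential a priori bound $F^\delta_\frw=o(\exp(\beta\delta^{-1}))$ for every $\beta>0$ (this is exactly~\eqref{eq:no-exp-blow-up}), then rule out large gradients by a quantitative propagation argument; this is the content of Theorem~\ref{thm:Lipschitz} and Corollary~\ref{cor:H-sub-limits}. Two side remarks are slightly off but harmless: (a) once $F^\delta_\frw$ is bounded there is no need to route integration contours through fat faces --- the form $F^{\tw}_\frw(z)\,dz+\overline{F^{\tw}_\frw(z)}\,d\overline{\cO(z)}$ extends to a closed piecewise-constant form on the whole plane (Lemma~\ref{lem:FdT-in-C}), so one integrates along straight segments directly, and \ExpFat\ is used only for the sub-exponential bound; (b) your estimate $|F^\delta(f)|\le C\|H^\delta\|_\infty/\rho(f)$ only controls the values $F^{\tb,\delta}_\frw$ on fat black faces; one still has to reconstruct $F^{\tw,\delta}_\frw$ from two nearby projections (costing another exponential factor) and invoke the maximum principle for $\Re F^{\tw,\delta}_\frw$ and $\Im F^{\tw,\delta}_\frw$ to propagate the bound off the fat chains.

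The genuine gap is the bootstrap. A three-balls inequality with a fixed interpolation exponent $\theta\in(0,1)$, say $\sup_{B_r}|F|\le C(\sup_{B_R}|F|)^{\theta}(\cdot)^{1-\theta}$, cannot collapse a bound of size $e^{\beta\delta^{-1}}$ to $O(1)$: it only yields $e^{\theta\beta\delta^{-1}}$, and no choice of a fixed $\beta$ helps. What is needed --- and what the paper proves as Theorem~\ref{thm:Lipschitz} --- is a dichotomy with an exponential rate $\beta_0=\beta_0(\kappa)$ \emph{independent of~$\delta$}: either $\max|\rD[H^\delta]|\le C_0d^{-1}\osc_{B(v,d)}H^\delta$ or $\max|\rD[H^\delta]|\ge C_0\exp(\beta_0 d\delta^{-1})\osc_{B(v,d)}H^\delta$. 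The mechanism is a doubling iteration: if $|F^{\tw,\delta}_\frw(z_n)|=M_n$ is large, integrating the closed form above over a segment of length $\asymp((1-\kappa)M_n)^{-1}$ through~$z_n$, and using $\osc H^\delta\le 1$ together with the $\kappa$-Lipschitz bound on~$\cO$ (which prevents the $d\overline{\cO}$ term from cancelling the $dz$ term), forces $\osc_{B(z_n,r)}F^{\tw,\delta}_\frw\gtrsim(1-\kappa)M_n$; the H\"older estimate in the form~\eqref{eq:x-const-A} then produces a point $z_{n+1}$ at distance $O(M_n^{-1})\vee O(\delta)$ with $M_{n+1}\ge 2M_n$. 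The iteration runs for at least $\mathrm{const}\cdot d/\delta$ steps before leaving the disc, giving the rate~$\beta_0$; since \ExpFat\ holds for every $\beta>0$, in particular for $\beta<\beta_0$, the blow-up alternative is excluded. Your plan becomes a proof once the ``three-balls'' black box is replaced by this doubling dichotomy.
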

\begin{proof} See Section~\ref{sub:Lipschitz}, notably Corollary~\ref{cor:H-sub-limits}.
\end{proof}

{Given Theorem~\ref{thm:Lip-intro},} one can ask about properties of subsequential limits of bounded harmonic functions on T-graphs ${\cT^\delta+\cO^\delta}$. To this end, let us assume that
the t-embeddings~$\cT^\delta$ cover a common region~$U\subset\C$. As~$\cO^\delta$ are $1$-Lipschitz functions on~$U$, one can always find a subsequence such that
\begin{equation}
\label{eq:O-conv-theta}
\cO^\delta(z)\ \to\ \vartheta(z)\quad \text{uniformly on compact subsets,}
\end{equation}
for a Lipschitz function~$\vartheta:U\to\C$. As above, assume that t-embeddings~$\cT^\delta$ satisfy both assumptions~{\LipKd{\kappa}{\delta}} and~{\ExpFat{\delta}}\ {in~$U$.} In Section~\ref{sub:Lipschitz} we also show that the gradients~$2\partial h:=\partial_x h-i\partial_y h$ of all subsequential limits~$h{:(\mathrm{id}+\vartheta)(U)\to\R}$ from Theorem~\ref{thm:Lip-intro} admit the following representation:
\begin{align*}
2\partial h=f\circ(\mathrm{id}+\vartheta)\quad &\text{{with a H\"older-continuous function}}\ f:U\to \C\\[-2pt]
&\text{such that the form}\ f(z)dz+\overline{f(z)}d\overline{\vartheta(z)}\ \text{is closed}.
\end{align*}
In a special situation~$\vartheta(z)\equiv 0$, which we call the \emph{`small origami' case} below, one sees that the functions~$h$ are just harmonic in~${U}$. In general, $h$ satisfies a second order PDE whose coefficients can be recovered from~$\vartheta$.
Though we do not go into such an analysis here, let us nevertheless mention that there also exists a very particular generalization of the case~$\vartheta(z)\equiv 0$. Namely, if we assume that~$(z,\vartheta(z))$ is a space-like {\emph{maximal}} surface in {the Minkowski space~$\mathbb \R^{2,2}$,} then all {subsequential limits}~$h$ are harmonic in the conformal metric of this surface, see~\cite{CLR2} for details.

\subsection{Convergence framework for the dimer model on t-embeddings} We begin with recalling the definition of the Thurston \emph{height function}~\cite{thurston-height} for the dimer model on a bipartite graph~$\G$. Given a perfect matching~$\mathrm{P}$ of vertices of~$\G$, let~$\mathrm{P}^*$ be a flow on edges of the dual graph~$\G^*$ constructed as follows: one assigns the value~$1$ to edges~$bw^*$ crossing those edges~$bw$ of~$\G$ which are used in~$\mathrm{P}$ (with the plus sign if~$b$ is on the right), and $0$ to all other edges. If~$\mathrm{P}_0$ is an (arbitrarily chosen) reference perfect matching, then the primitive of the flow~${\mathrm{P}^*\!-\mathrm{P}_0^*}$ is well defined (up to an additive constant) and is called the \emph{height function} of the perfect matching~$\mathrm{P}$. Given a t-embedding~${\cT}$ carrying the dimer model, we denote by~${h_\cT}$ the random height function obtained from a random perfect matching of faces of~${\cT}$; {note that~$h_\cT$ is defined on vertices of~$\cT$. Further,} let~${\hbar_\cT}:= {h_\cT}-\E({h_\cT})$ be the \emph{fluctuations} of~${h_\cT}$. {It is not hard to see that, even though the definition of the function~${h_\cT}$ involves a choice of the reference matching $\mathrm{P}_0$, the fluctuations~$\hbar_\cT$ are independent of this choice. For} a collection of vertices~{$v_1,\ldots,v_n$ of~$\cT$, denote by}
\begin{equation}
\label{eq:def-Hn-discrete}
{H_{\cT,n}}(v_1,\ldots,v_n)\ :=\ \mathbb E({\hbar_\cT}(v_1)\ldots{\hbar_\cT}(v_n))
\end{equation}
the \emph{correlation functions} of the height fluctuations at {these vertices.}

{Let us now assume that we are given a sequence of finite t-embeddings~$\cT_m$ and that the corresponding discrete domains~$\Omega_{\cT_m}$, defined as the unions of faces of~$\cT_m$, approximate} a bounded simply connected domain~$\Omega\subset\C$ as~${m\to\infty}$ (say, in the Hausdorff sense for simplicity though in fact one can also work with weaker notions of convergence {of~$\Omega_{\cT_m}$} to $\Omega$). For~$v_1,\ldots,v_{{2k}}\in\Omega$, let
\begin{align}
G_{\Omega,{2k}}&(v_1,\ldots,v_{{2k}}) \notag \\
&\ :=\ \sum\nolimits_{\substack{\operatorname{pairings} {\varpi} \operatorname{of} 1,\ldots,{2k}}}G_\Omega(v_{{\varpi}(1)},v_{{\varpi}(2)})\ldots G_\Omega(v_{{\varpi}({2k}-1)},v_{{\varpi}({2k})})
\label{eq:Green-correlations}
\end{align}
be the correlation functions of the Gaussian Free Field {(GFF)} in~$\Omega$ with Dirichlet boundary conditions {(e.g., see~\cite{sheffield-GFF} for background),} where the normalization of the Green function is chosen so that
$G_\Omega(z,z')=-\tfrac{1}{2\pi}\log|z'-z|+O(1)$ as $z'\to z$;
we also set~$G_{\Omega,{2k}+1}:=0$. The following theorem provides a general framework to study the limit of the dimer model on t-embeddings.

\begin{theorem}\label{thm:main-GFF} Let {the} t-embeddings~${\cT_m}$ approximate a bounded simply connected domain~$\Omega\subset\C$ {as~$m\to\infty$. Assume that for each compact subset~$K\subset\Omega$ there exist a constant $\kappa=\kappa(K)<1$ and scales~$\delta_m=\delta_m(K)\to 0$, $\delta'_m=\delta'_m(K)\to 0$ as~$m\to\infty$ such that~$\cT_m$ satisfies the assumptions~\LipKd{\kappa}{\delta_m}\ and \ExpFatPrime{\delta_m}{\delta'_m}\ on~$K$ for all sufficiently large~$m$. Assume also that}
{\renewcommand{\theenumi}{\Roman{enumi}}
\begin{enumerate}
\item we are in the `small origami' case: ${\cO_m}(z)\to \vartheta(z)\equiv 0$ as ${m\to\infty}$;
\item {the coupling functions $K^{-1}_{\cT_m}$ are uniformly bounded on compact sets: for each~$\rho>0$ there exists $C(\rho)>0$ such that~$|K^{-1}_{\cT_m}(w,b)|\le C(\rho)$ provided that $m$ is big enough (depending only on~$\rho$) and the faces~$w,b$ of~$\cT_m$ stay \mbox{$\rho$-away}} from each other and from the boundary of $\Omega$;
\item the correlations~\eqref{eq:def-Hn-discrete} are uniformly small near the boundary of~$\Omega$: {for each~$\varepsilon>0$ and for each $\rho>0$ there exists~$d(\varepsilon,\rho)>0$ such that
\[
\qquad |\,H_{\cT_m,n}(v_1^{(m)},\ldots,v_n^{(m)})|\ \le\ \varepsilon\ \ \text{if}\ \ \mathrm{dist}(v_n^{{(m)}},\partial\Omega)\le d(\varepsilon,\rho),
\]
$m$ is big enough (depending only on~$\varepsilon$, $\rho$ and~$n$) and provided that the other vertices~$v_1^{{(m)}},\ldots,v_{n-1}^{{(m)}}$ of~$\cT_m$ stay $\rho$-away} from each other and from $\partial\Omega$.
\end{enumerate}}
Then, the height function correlations~\eqref{eq:def-Hn-discrete} converge to those of the GFF in~$\Omega$: for all~$n\ge 2$ and all collections of pairwise distinct points~$v_1,\ldots,v_n\in\Omega$, we have
\[
H_{{\cT_m,n}}(v_1^{{(m)}},\ldots,v_n^{{(m)}})\ \to\ \pi^{-n/2}G_{\Omega,n}(v_1,\ldots,v_n)\quad \text{if}\ \ v_k^{{(m)}}\to v_k\ \ \text{as}\ \ {m\to\infty}.
\]
Moreover, this convergence is uniform provided that~$v_1,\ldots,v_n$ remain at a definite distance from each other and from the boundary of~$\Omega$.
\end{theorem}
Before discussing assumptions (I)--(III) in more detail, let us emphasize {that in Theorem~\ref{thm:main-GFF} we neither assume nor prove the existence of scaling limits of the coupling functions~$K^{-1}_{{\cT_m}}$ themselves.} Such limits, when they do exist, are known to be highly sensitive to the microscopic details of the boundary, see Section~\ref{sub:discussion} for a discussion.
In particular, {in many setups} one should \emph{not} expect the convergence of the full sequence of the coupling functions though
subsequential limits of them still exist under assumption (II) due to compactness arguments.

We emphasize that {in this paper} we do \emph{not} discuss how {one can check the assumptions (I)--(III) in any concrete setup,} this is why we call our result a framework or a meta-theorem.
Still, it is worth mentioning that {almost all} known examples of applications of discrete complex analysis techniques to the bipartite dimer model {fit} the framework of Theorem~\ref{thm:main-GFF}; {see also Section~\ref{sec:appendix2}. An important exception is the work~\cite{kenyon-honeycomb} of Kenyon on the convergence of height correlations to the GFF in a `non-flat' metric; see also a recent development~\cite{laslier-21} that fixes several details of this approach, which combines a `local' study of the dimer coupling function by means of discrete complex analysis on skewed triangular lattices with other ideas. Though we do not know whether it is possible to construct an appropriate global t-embedding and to apply Theorem~\ref{thm:main-GFF} in the setup of \cite{kenyon-honeycomb,laslier-21}, we believe that our paper, in particular, provides a natural development of the ideas originated in~\cite{kenyon-honeycomb}.

Let us also mention that we use} essentially the same approach to the convergence of height fluctuations in our follow-up paper~\cite{CLR2}. To conclude, we briefly discuss each of {the assumptions (I)--(III),} in particular {in order} to make precise the {links} between {the setup of Theorem~\ref{thm:main-GFF} and that of~\cite{CLR2}.}
{\renewcommand{\theenumi}{\Roman{enumi}}
\begin{enumerate}
\item This assumption cannot be dropped completely. Still, there exists a striking case when the proof of Theorem~\ref{thm:main-GFF} goes through just by the cost of more involved computations. Namely, if~$(z,\vartheta(z))$ is a space-like {\emph{maximal}} surface in {the Minkowski space~$\R^{2,2}$,} then the correlations~\eqref{eq:def-Hn-discrete} converge to those of the GFF in the conformal metric of {this} surface. {For simplicity, we do not consider this more general setup here and discuss it in~\cite{CLR2}.}
\smallskip
\item This assumption seems natural from the discrete complex analysis perspective: if the coupling functions~$K^{-1}_{{\cT_m}}$ are not bounded on compacts, they typically contain local exponential growing factors as mentioned in Section~\ref{sub:gen-context}. In such a situation, one should not expect that the `discrete conformal structure' {provided by the t-embedding~$\cT_m$} captures the behaviour of~$K^{-1}_{{\cT_m}}$ correctly. In {previously} known examples, this assumption is typically verified along with finding the \emph{scaling limit} of~$K^{-1}_{{\cT_m}}$ {as~$m\to\infty$}, which is exactly the route that we want to \emph{avoid} by formulating Theorem~\ref{thm:main-GFF}. {In particular,} in~\cite{CLR2} we introduce a special class of t-embeddings {of finite graphs,} so-called {`perfect' ones,} for which we are able to derive the required uniform boundedness of {the coupling functions~$K^{-1}_{\cT_m}$} from general estimates, not {identifying} their possible scaling limits.
\smallskip
\item This assumption is quite natural from the dimer model perspective: it {simply} says that
fluctuations vanish near the boundary {of~$\Omega$.} However, it should be said that even in the well-known cases {this fact} is typically derived \emph{a posteriori} from the identification of the scaling limit of~$K^{-1}_{{\cT_m}}$. Nevertheless, in some situations one could hope to verify it by probabilistic tools. Let us also mention that {for `perfect' t-embeddings} introduced in~\cite{CLR2} we are in fact able to prove the uniform boundedness of the functions ${K^{-1}_{\cT_m}(w,b)}$ not only on compact subsets of~$\Omega$ but also in a situation when \emph{one} of {$w$ and $b$ is allowed to approach} the boundary of~${\cT_m}$. Though this estimate near~$\partial\Omega$ is not strong enough to control the boundary values of the limits of~$H_{{\cT_m,n}}$, it nevertheless implies the convergence of the \emph{gradients} of these correlation functions to those of the GFF; see~\cite{CLR2} for details.
\end{enumerate}}

The paper is organized as follows. We overview the setup of t-embeddings~$\cT$ in Section~\ref{sec:setup}. The notion of t-holomorphicity on~$\cT$, in the special case when~$\cT$ is a triangulation, is introduced in Section~\ref{sec:holomorphicity}. In Section~\ref{sec:Tgraph} we discuss the links between t-holomorphic functions on t-embeddings and harmonic {functions} on T-graphs; note that Section~\ref{sub:harmonicity-1} contains a new material {as} compared to, say, the paper~\cite{kenyon-honeycomb} due to Kenyon. Section~\ref{sec:non_triangulation} is devoted to generalizations of all these notions to the case of general t-embeddings (not triangulations). Section~\ref{sec:regularity} is at the heart of our paper, we develop the a priori regularity theory for t-holomorphic and harmonic functions there. Two particularly important {ingredients} are the uniform ellipticity estimate for random walks on T-graphs obtained in Section~\ref{sub:ellipticity} under {the} assumption~{\LipKd{\kappa}{\delta}} and the a priori Lipschitzness of harmonic functions discussed in Section~\ref{sub:Lipschitz} under {the} additional assumption~{\ExpFat{\delta}}. We prove Theorem~\ref{thm:main-GFF} in Section~\ref{sec:convergence}. {Finally, in Section~\ref{sec:appendix2} we discuss the} links between \mbox{t-holomorphic} functions on t-embeddings and more standard discretizations of complex analysis.

\addtocontents{toc}{\protect\setcounter{tocdepth}{1}}
\subsection*{Acknowledgements} D.C. is grateful to {Mikhail Basok,} Alexander Logunov, Eugenia Malinnikova {and R\'emy Mahfouf} for helpful discussions. M.R. would like to thank Alexei Borodin for useful discussions. We also would like to thank Nathana\"el Berestycki, Richard Kenyon and Stanislav Smirnov for their interest {and the referees for providing a useful feedback on the first version of this paper.}

D.C. is the holder of the ENS--MHI chair funded by MHI. The research of D.C. and B.L. was partially supported by the ANR-18-CE40-0033 project DIMERS. The research of M.R. is supported by the Swiss NSF grants P400P2-194429 and P2GEP2-184555 and also partially supported by the NSF Grant DMS-1664619.
\addtocontents{toc}{\protect\setcounter{tocdepth}{2}}

\section{The setup of t-embeddings}\label{sec:setup}

\subsection{Definitions}
In this section we introduce 
 t-embeddings and give several related definitions.

\begin{definition}
A t-embedding in the whole plane is an embedded {locally finite} planar {graph} with the following properties:
\begin{itemize}
\item {Properness:} The edges are non-degenerate straight segments, the faces are convex, do not overlap and cover the whole plane.
\item Bipartite dual: The dual graph is bipartite, we call the bipartite classes black and white, and denote them $B$ and $W$, respectively. {(In other words, we assume that the \emph{faces} of the corresponding tiling of the plane by convex polygons are colored black and white in a chessboard fashion.)}
\item Angle condition: For every vertex $v$ one has
\[
 \sum_{{b\in B:\,b\sim v}} \theta(b, v)\ = \sum_{{w\in W:\,w\sim v}} \theta(w, v)\ =\ \pi,
\]
where $\theta({u}, v)$ {is} the angle of a face ${u}$ at {a} neighbouring vertex $v$, see Fig.~\ref{Temb}.
\end{itemize}
\end{definition}
Given an {infinite} t-embedding, let $\G^*$ be the associated planar graph seen as an abstract combinatorial object {(i.e., as a \emph{planar map}: a planar graph embedded into the plane and considered up to homotopies in the space of proper embeddings)} and let $\G$ be its planar bipartite dual, also seen abstractly.

The above definition can be extended to finite {bipartite planar graphs~$\G$} {with the topology of the sphere. To this end, we remove one marked vertex $v_\mathrm{out}$ from {the dual graph $\G^*$ which is to be embedded,} and replace it by a cycle of length $\deg v_\mathrm{out}$ so that $\deg v_\mathrm{out}$ edges adjacent to $v_\mathrm{out}$ become adjacent to corresponding vertices of the cycle. {Following~\cite{KLRR},} we call this procedure an \emph{augmentation} at $v_\mathrm{out}$.

\begin{definition}
{A finite t-embedding of a planar graph with the topology of the sphere and a marked vertex $v_\mathrm{out}$ is an embedding of its augmentation at $v_\mathrm{out}$} with the following properties:
\begin{itemize}
\item {Properness:} The edges are non-degenerate straight segments, the faces are convex and do not overlap, the outer face corresponds to the cycle replacing $v_\mathrm{out}$ in the augmented graph.
\item Bipartite dual: The dual graph of the augmented map becomes bipartite once the outer face is removed, we call the bipartite classes black and white, and denote them $B$ and $W$.
\item Angle condition: For every interior vertex $v$ one has
\[
 \sum_{{b\in B:\,b\sim v}} \theta(b, v)\ = \sum_{{w\in W:\,w\sim v}} \theta(w, v)\ =\ \pi,
\]
where we call $v$ interior if it is not adjacent to the outer face, see Fig.~\ref{finTembTriangl}.
\end{itemize}
We call the union of the closed faces {(except the outer one)} of a finite t-embedding  the \emph{discrete domain} associated {with} this t-embedding.
\end{definition}

{In what follows,} we \emph{exclude} the outer face from $\G$ and the \emph{boundary edges} (i.e., those adjacent to the outer face) from $\G^*$; see Fig.~\ref{finTembTriangl}. Recall that ${V(\G)=B\cup W}$, where $B$ and $W$ denote the sets of black and white faces of $\G^*$, respectively. Below we denote typical faces of $\G^*$ either $b$ or $w$ depending on their color and also use the notation $u^\tb$, $u^\tw$ for the same purpose. The vertices of $\G^*$ are typically denoted as $v,v'$ etc.}
We say that a face of the graph~$\G^*$ of a finite t-embedding is a \emph{boundary face}
if it is adjacent to at least one boundary edge. Other faces are called \emph{interior}. Let $\partial B$ and $\partial W$ be the sets of boundary black and boundary white faces, respectively.

Given an oriented edge $(bw)$ of $\G$, denote by $(bw)^*$ (or $bw^*$ for brevity) the oriented edge of~$\G^*$ which has the first face (here $b$) to its right. Denote by $wb^*$ the same edge of $\G^*$ oriented in the opposite direction. Let~$\cT$ denote the map from $\G^*$ to $\mathbb{C}$ giving the position of any vertex in the embedding.
Given an oriented edge~${e^* = (vv')}$ of $\G^*$, let ${d\cT( e^* ) := \cT( v') - \cT(v)}$, see Fig.~\ref{Temb}.
For a given face $b$ or $w$ of $\G^*$, we write $\cT(b)$ or $\cT(w)$ to denote the corresponding polygon in the embedding.



Let us now {briefly} describe how to construct a \emph{realisation}
of~$\G$ given a t-embedding ${\cT = \cT(\G^*)}$. The resulting realisations with an embedded dual have been introduced and studied in~\cite{affolter,KLRR} {in the so-called `circle patterns' context, so we refer the reader to these papers for more details.}

\begin{lemma}
\label{lem:def-C}
{The following definition of a mapping $\cC{:V(\G)\to\C}$ constructed from a t-embedding~$\cT(\G^*)$ is consistent:} fix an arbitrary white vertex $w_0\in {V(\G)}$ and choose $\cC(w_0)$ arbitrarily, then define $\cC$ at neighbours of~$w_0$ and iteratively everywhere {on $V(\G)$} by saying that {points $\cC(b)$ and $\cC(w)$ are symmetric with respect to the line $\cT(bw^*)$ for each pair of neighboring $b$ and $w$.}
\end{lemma}

\begin{proof}
It is enough to check {the consistency} 
around a single vertex of~$\G^*$. Let $w_1, b_1, \ldots,$ $w_k, b_k$ be faces of~$\G^*$ around~$v$, {labeled in the counterclockwise order,} 
and assume without loss of generality and for ease of notation that $\cT(v) = 0$. Let
\[d_1 = {d}\cT(w_1b_1^*)/|{d}\cT( w_1b_1^*) |,\quad d_2 = {d}\cT( b_1w_2^*)/|{d}\cT( b_1w_2^*)|, \quad\ldots\] be the directions of the edges of $\cT(\G^*)$ around $\cT(v)$, pointing away from~$\cT(v)$.

It is easy to {see} that the reflection symmetry condition
{gives} \mbox{$\cC( b_1) = d_1^2\cdot \overline{\cC( w_1)}$} and therefore $\cC( w_2 ) = (d_2 \overline{d_1})^2\cdot \cC( w_1)$. Note that $\arg(d_2 \overline{d_1})$ is the angle of the face~$\cT(b_1)$ at~$\cT(v)$, 
so the {angle condition is equivalent to the consistency of the definition of $\cC$ around $v$.}
\end{proof}

The construction {described in {Lemma~\ref{lem:def-C}} produces a two-dimensional family of realisations of $\G$ parametrized by the position of $\cC(w_0)$. {In general,} it is not clear whether $\cC$ is a proper embedding {of $\G$.} However, for each face $v$ of $\G$, all points $\cC(u)$, where $v\sim u\in V(\G)$, lie on a single circle and  each point $\cC(u)$ is an intersection of ${\deg(u)}$ such circles. {This justifies the name \emph{circle pattern realisations} for such embeddings of bipartite planar graphs,} see~\cite{affolter,KLRR}.

Informally speaking, the above construction of $\cC$ can be equivalently described as follows: fold the plane along all the edges of $\cT$ (where the angle condition {guarantees} that this operation makes sense), then pierce the folded plane at an arbitrary point. Finally, unfold the plane:  the realisation $\cC$ is given by the positions of all the punctures (provided that all points $\cC(u)$ lie inside corresponding faces $\cT(u)$ of the t-embedding, which is certainly not true in general).

\subsection{The origami map} The goal of this section is to introduce a formal definition of the folding procedure described above, which we call the origami map and which plays a crucial role in our analysis.  

\begin{definition}\label{def:eta}
A function $\eta : {V(\G)=B\cup W} \to \bbT$ is said to be an origami square root function if it satisfies {the identity}
\begin{equation}
\label{eq:def-eta}
\overline{\eta}_b\overline{\eta}_w = \frac{d \cT ( bw^*)}{| d \cT( bw^*) |}
\end{equation}
for all pairs $(b,w)$ of white and black neighbouring faces of $\G^*$.
\end{definition}

\begin{remark}\label{rem:eta-def}
Since the sum of external angles of each convex polygon equals~$2\pi$, equation~\eqref{eq:def-eta} gives a consistent definition of~$\eta$ around each face of~$\cT$. However, it can be (slightly) inconsistent around its vertices: due to the angle condition, {definition~\eqref{eq:def-eta} implies that the total increment of~$\eta_w$ (and similarly for~$\eta_b$) around a vertex~$v$ of a t-embedding equals~$(1\!+\!\frac12\deg v)\cdot\pi$. Therefore,} the function $\eta^2$ is always well-defined but the function $\eta$ itself has to branch over every {vertex ${v}$ of~$\G^*$ (i.e., a face of~$\G$) such that $\deg{v}\in 4\mathbb{Z}$.} (In other words, $\eta$ has to be defined on an appropriate double cover of $V(\G)$, with the values on the two sheets being opposite of each other.) By an abuse of notation we will consider $\eta$ being defined up to {the} sign.
We also define the values $\phi_u\in (-\pi/2, \pi/2]$ as follows:
\[\phi_u := \arg \eta_u \mod \pi,\qquad u\in B\cup W.\]
\end{remark}

{It is clear that two origami square root functions $\eta$ and $\eta'$ differ only by a global factor: more precisely, there exists ${\alpha}\in\bbT$ such that $\eta'_w={\alpha}\cdot \eta_w$ for all $w\in W$ and $\eta'_b=\overline{{\alpha}}\cdot \eta_b$ for all $b\in B$. In general, there is no canonical way to choose the global prefactor ${\alpha}$.}
Let us now 
{comment on how the angles $\phi$ are related to the geometry of a t-embedding.}

\begin{lemma}\label{lem:increment_phi}
{Let~$v$ be an inner vertex of~$\G^*$ and $b_1,b_2\in B$ and~$w\in W$ be three consecutive faces adjacent to~$v$.} If $b_1, w, b_2$ are in the counterclockwise order around $v$, then, for any origami square root function and associated $\phi$,
\[
 \phi_{b_2} - \phi_{b_1} =  -{\theta(w, v)} \mod \pi,
\]
where $\theta(w, v)$ is the angle of the white face $w$ at the vertex $v$, {computed in the} positive direction.

Similarly, 
if $w_1$, $b$, $w_2$  are in the counterclockwise order around their common vertex $v$, then
\[
\phi_{w_2} - \phi_{w_1} =  - {\theta(b, v)} \mod \pi.
\]
\end{lemma}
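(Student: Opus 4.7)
The plan is a direct computation from the defining relation~\eqref{eq:def-eta} of~$\eta$ applied to the two edges of~$\cT$ emanating from~$v$ that bound the white face~$w$. The only real thing to get right is the orientation of these edges (i.e., which side of the oriented edge the black face lies on), and a book-keeping of the resulting $\pi$ shifts; everything else reduces to reading off angles.

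First, set $\cT(v)=0$ and let $d_1,d_2\in\mathbb{T}$ be the unit vectors pointing from~$v$ along the two edges of~$\cT$ adjacent to~$w$: $d_1$ along the edge between $b_1$ and $w$, and $d_2$ along the edge between $w$ and $b_2$. Since $b_1,w,b_2$ appear counterclockwise around~$v$, the angle from $d_1$ to $d_2$ measured counterclockwise equals $\theta(w,v)$, i.e., $d_2 = d_1 \cdot e^{i\theta(w,v)}$. Now observe the orientations: walking outward along $d_1$, the face $w$ lies counterclockwise of $d_1$, so $b_1$ is on the right, and the oriented edge $b_1w^*$ of~$\G^*$ points in direction $+d_1$. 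Along $d_2$, however, $b_2$ lies counterclockwise of $d_2$, so $w$ is on the right and $b_2$ on the left; hence the oriented edge $b_2w^*$ points in direction $-d_2$.

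Applying the defining identity~\eqref{eq:def-eta} to both edges gives
\[
\overline{\eta}_{b_1}\overline{\eta}_w \ =\ d_1,\qquad \overline{\eta}_{b_2}\overline{\eta}_w\ =\ -d_2.
\]
Dividing the second by the first and passing to arguments yields
\[
\arg(\eta_{b_2}) - \arg(\eta_{b_1})\ =\ \arg(d_1) - \arg(d_2) + \pi\ =\ -\theta(w,v) + \pi \ \ (\mathrm{mod}\ 2\pi).
\]
Reducing modulo~$\pi$ absorbs the $+\pi$ and the indeterminacy in the sign of~$\eta$ (cf.\ Remark~\ref{rem:eta-def}), and we obtain $\phi_{b_2}-\phi_{b_1} = -\theta(w,v) \pmod \pi$, as claimed.

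The second statement is proved in an identical manner, exchanging the roles of black and white: the edges between $w_1,b$ and $b,w_2$ emanating from~$v$ are oriented so that~$b$ lies on the right of one and on the left of the other, producing the same $-\theta(b,v)+\pi$ shift modulo~$\pi$. There is no real obstacle here; the only delicate point is the sign convention pairing ``$b$ on the right'' in~\eqref{eq:def-eta} with the counterclockwise cyclic order around~$v$, and this is what forces the extra $\pi$ that conveniently vanishes after taking the argument modulo~$\pi$.
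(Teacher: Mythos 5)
Your proof is correct and follows essentially the same route as the paper's: apply the defining relation~\eqref{eq:def-eta} to the two edges of~$\cT$ bounding~$w$ at~$v$, track the orientation convention (``first face on the right'') to get the crucial sign flip on one of the two edges, and conclude that $\eta_{b_2}\overline{\eta}_{b_1}=-e^{-i\theta(w,v)}$, so the extra~$\pi$ disappears modulo~$\pi$. The orientation bookkeeping, which is the only delicate point, is handled correctly.
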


\begin{proof}
Let $v_{j}\neq v$ be {the endpoints of} the edges $\cT( wb^*_{j})$ for ${j}\in\{1,2\}$.
{Then,}
\[
{\eta_{b_2} \overline{\eta}{}_{b_1}}  = \frac{{\overline{d \cT ( b_2 w^*)}}}{| d \cT( b_2 w^*) |}\cdot \frac{{d\cT( b_1 w^*)}}{| d \cT( b_1 w^*) |}
  = \frac{{\overline{v - v_2}}}{| v - v_2|}\cdot  \frac{{v_1 - v}}{| v_1 - v|}
  = - e^{-i \theta(w, v)} .
\]
The computation for the second case is identical. 
\end{proof}

We now formally define the folding of the plane along the edges of $\cT$ {using the (square of the) function $\eta$ introduced above.}


\begin{definition}\label{def:O}
The origami differential form associated to $\eta$ is defined as
\[
d \cO (z) := \begin{cases}
 \eta_w^2 \,dz & \text{if $z$ {belongs to a} white face $\cT(w)$,}\\
 \overline{\eta}_b^2 \,d \bar{z} & \text{if $z$ {belongs to a} black face $\cT(b)$.}
\end{cases}
\]
\end{definition}
{Let us emphasize that we view $d\cO (z)$ as a piecewise constant differential form defined in the whole \emph{complex plane} (or inside the discrete domain associated to a finite t-embedding). However, it is worth noting that the above definition also allows one to view $d\cO$ as a well-defined $1$-form on \emph{edges of $\cT$} by setting}
\begin{equation}
\label{eq:dO-on-T}
d \cO ( bw^*)\ :=\ \eta_w^2 \,d\cT( bw^*)\ {=\ \overline{\eta}_b\eta_w\,|d\cT( bw^*)|\ } =\ \overline{\eta}_b^2 \,\overline{d \cT}( bw^*).
\end{equation}

\begin{lemma}
\label{lem:dO-closed}
The origami differential form $d\cO$ is a closed form (inside the {associated discrete domain} in the finite case).
We denote its primitive by $\cO$, which we call the origami map.
\end{lemma}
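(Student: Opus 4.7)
My plan is to verify closedness in two stages: first within each individual face, then by patching the local primitives consistently around every vertex of~$\cT$. Inside any open face the form has constant coefficients --- namely $\eta_w^2\,dz$ on a white face and $\overline{\eta}_b^2\,d\bar z$ on a black face --- so it is manifestly exact there, admitting primitives of the form $\eta_w^2 z+C_w$ and $\overline{\eta}_b^2\bar z+C_b$ respectively. The real content of the lemma is therefore the compatibility of these local primitives near the vertices of~$\cT$.

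To handle this, I would fix a vertex~$v$ of~$\cT$ and construct a continuous primitive~$\cO$ on the union~$U$ of the closed faces incident to~$v$. Normalizing by $\cO(\cT(v)):=0$, I declare in each face $f\ni v$ the candidate
\[
\cO_f(z)\ :=\ \begin{cases}\eta_w^2\,(z-\cT(v)) & \text{if $f=w\in W$,}\\ \overline{\eta}_b^2\,(\bar z-\overline{\cT(v)}) & \text{if $f=b\in B$.}\end{cases}
\]
The only nontrivial step is to check that on a shared edge $e=bw^*$ incident to~$v$ the two candidates $\cO_w$ and $\cO_b$ coincide. Both vanish at $\cT(v)$, so it suffices to check that their difference has zero derivative along~$e$; parametrising a point of~$e$ as $\cT(v)+s\cdot d\cT(bw^*)$ with $s\in\mathbb{R}$, this difference equals $s\bigl(\eta_w^2\,d\cT(bw^*)-\overline{\eta}_b^2\,\overline{d\cT(bw^*)}\bigr)$, which is precisely the quantity that~\eqref{eq:dO-on-T} asserts to vanish.

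This provides a primitive of~$d\cO$ on an open neighborhood of every vertex of~$\cT$. Together with the obvious exactness of~$d\cO$ on each open face, this yields local exactness of~$d\cO$ at every point of the region under consideration, which implies closedness in the usual de Rham sense. In the finite case the same argument applies verbatim inside the associated discrete domain, where~$\eta^2$ is single-valued by Remark~\ref{rem:eta-def} and the angle condition holds at every interior vertex. I do not expect any real obstacle beyond recognising that the single algebraic identity~\eqref{eq:dO-on-T} is exactly the compatibility condition required for the gluing; the angle condition itself has already done its work by making $\eta^2$ single-valued around~$v$, and from that point the patching is automatic.
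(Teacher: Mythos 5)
Your proof is correct and rests on exactly the same key fact as the paper's: the identity~\eqref{eq:dO-on-T} guarantees that the two constant-coefficient expressions for $d\cO$ agree along each shared edge $bw^*$, after which closedness is routine. The only (cosmetic) difference is that you verify this by gluing explicit local primitives on the star of each vertex, whereas the paper decomposes an arbitrary closed contour into loops contained in single closed faces; both arguments are valid and interchangeable.
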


\begin{proof} Let $\gamma$ be a closed contour running in the domain of a t-embedding. If $\gamma$ lies inside a single face of $\cT$, then $\oint_\gamma d\cO = 0$ since $d\cO$ is proportional either to $dz$ or to $d\overline{z}$. For general~$\gamma$, since $|d\cO(z)|\le|dz|$, one can always write $\oint_\gamma d\cO$ as a sum of integrals over smaller loops~$\gamma_u$, each of which belongs to a \emph{closed} face $\cT(u)$. As pointed out in \eqref{eq:dO-on-T}, on an edge $bw^*$ of the t-embedding, the two definitions of $d\cO$ (coming from the right face $b$ and the left face $w$) agree. Hence, all contour integrals over such loops $\gamma_u$ make sense and vanish, thus $\oint_\gamma dO$ also vanishes.
\end{proof}

Note that $\eta^2_{w}$ {is} the local rotation angle of the origami map $\cO$ on the white face $\cT(w)$. Recall that {the} origami differential form $d\cO$ {is} defined up to a global prefactor ${\alpha}^2\in \bbT$ only, {which means that the origami map~$\cO$ itself is defined up to rotations and translations.} {If $\cO|_{w_0}=\mathrm{Id}$ on some white face $w_0\in W$} (which one can always assume by choosing ${\alpha}$ {and the integration constant} properly), then} it is easy to check that $\cO$ maps $z$ to its position after the folding procedure {(started from the face $w_0$ so as it is kept fixed), which was described in} the construction of a circle pattern realisation $\cC$. In particular, if $\cC(u)\in \cT(u)$ for all $u\in V(\G)$ (recall that this is not true in general), then~$\{\cC(u),~u\in V(\G)\}=\cO^{-1}(\cC(w_0))$.

With a slight abuse of notation ({similar to that} in the definition of the origami differential form), below we also allow ourselves to see $\cO$ as a map from $\G^*$ to $\C$.


\subsection{Dimers and t-embeddings} In this section we describe how to define Kasteleyn weights on a bipartite graph $\G$ in a natural geometric way given a t-embedding of {its dual graph} $\G^*$. Let \mbox{$\chi( e):=| d\cT (e^*) |$} be positive weights of edges $e$ of~$\G$. Recall that a \emph{Kasteleyn matrix} $K$ is a weighted, complex-signed adjacency matrix whose rows index the black vertices and columns index the white vertices, and the signs ($\tau_{bw}\in\mathbb{C}, |\tau_{bw}|=1$) are chosen to satisfy the following condition: 
around a face of $\G$ of degree $2k$ the alternating product of signs over the edges of this face is~$(-1)^{k+1}$. Signs satisfying this condition around each face are called Kasteleyn signs.

\begin{proposition}\label{Kast}
For {$b, w\in\G$, let} $K(b, w) := d \cT(bw^*)$ if $b$ and $w$ are neighbours and ${K(b,w):=}0$ otherwise. Then, $K$ is a Kasteleyn matrix for the weights $\chi$.
\end{proposition}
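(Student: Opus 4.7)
The plan is straightforward and amounts to bookkeeping, with the angle condition doing the essential work.

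First I would check the modulus condition: by construction $|K(b,w)| = |d\cT(bw^*)| = \chi(bw)$, so the matrix has the required absolute values and defines unit complex phases $\tau_{bw} := K(b,w)/\chi(bw) = d\cT(bw^*)/|d\cT(bw^*)|$ for neighbouring $b,w$. It remains to verify the Kasteleyn sign condition at every face of $\G$.

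A face of $\G$ of degree $2k$ corresponds to a vertex $v$ of $\G^*$ with $2k$ incident edges. I would fix such a $v$, translate so that $\cT(v) = 0$, and label the faces of $\cT$ around $v$ in counterclockwise order as $b_1, w_1, b_2, w_2, \ldots, b_k, w_k$ (the bipartite structure of $\G$ forces this alternation). Denote by $e_1, e_2, \ldots, e_{2k}$ the incident edges of $\G^*$ oriented outward from $v$, with $e_{2j-1}$ separating $b_j$ from $w_j$ and $e_{2j}$ separating $w_j$ from $b_{j+1}$. The orientation convention in Proposition~\ref{Kast} (first face on the right) gives
\[
d\cT(b_j w_j^*) = d\cT(e_{2j-1}), \qquad d\cT(b_{j+1} w_j^*) = -d\cT(e_{2j}),
\]
since in the second case $b_{j+1}$ is on the left of $e_{2j}$ read outward from $v$.

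Next I would compute the alternating product of phases around the face. Writing $d\cT(e_j) = |d\cT(e_j)| e^{i\phi_j}$ with $\phi_j := \arg d\cT(e_j)$, the alternating product becomes
\[
\prod_{j=1}^{k} \tau_{b_j w_j}\,\overline{\tau_{b_{j+1} w_j}}
= \prod_{j=1}^{k} e^{i\phi_{2j-1}}\cdot\bigl(-e^{-i\phi_{2j}}\bigr)
= (-1)^{k}\exp\Bigl(-i\sum_{j=1}^{k}(\phi_{2j}-\phi_{2j-1})\Bigr).
\]
The quantity $\phi_{2j}-\phi_{2j-1}$ is precisely the angle $\theta(w_j,v)$ of the white face $w_j$ at $v$, so the angle condition $\sum_{j=1}^{k}\theta(w_j,v) = \pi$ from the definition of a t-embedding yields the exponential factor $e^{-i\pi} = -1$, and hence the product equals $(-1)^{k+1}$, as required. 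The same conclusion holds at boundary vertices when augmentation applies, since the Kasteleyn condition is only imposed on interior faces of~$\G$, which correspond to interior vertices of~$\G^*$ where the angle condition is assumed.

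No substantive obstacle is expected: the only care needed is the sign flip from the orientation convention on $d\cT(b_{j+1}w_j^*)$, which produces the crucial $(-1)^k$ that combines with the $e^{-i\pi}$ from the angle condition to give $(-1)^{k+1}$.
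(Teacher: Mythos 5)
Your proof is correct and follows essentially the same route as the paper: both arguments reduce the Kasteleyn sign condition to the alternating product of the $d\cT$ increments around a vertex $v$ of $\G^*$, use the orientation convention to produce one factor of $-1$ per white face, and then invoke the angle condition $\sum_j\theta(w_j,v)=\pi$ to turn $(-1)^k e^{-i\pi}$ into $(-1)^{k+1}$. The only difference is presentational (you separate moduli and phases before multiplying, while the paper keeps the positive length ratios in a single constant $X_v$), and your remark about boundary vertices is a harmless addition.
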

\begin{proof}
 Fix a face $v$ of $\G$ and let $b_1, w_1, \ldots, b_k, w_k$ be its {neighboring} vertices {listed counterclockwise.} Let $v_1, \ldots, v_{2k}$ be its neighbouring faces {listed counterclockwise} so that $v_1$ is between $b_1$ and $w_1$. {It is easy to see that}
 \[
 \prod_{i=1}^{k} \frac{K(b_i, w_{i})}{K( b_{i+1}, w_{i}) }\  =\ \prod_{i=1}^{k} \frac{d \cT( v v_{2i -1 })}{d \cT( v_{2i} v ) }\ =\
  {X_v}\cdot \prod_{i=1}^{k} (- e^{- i \theta( w_i,v)})\ =\
  {X_v}\cdot(-1)^{k+1}
 \]
 where ${X_v:= \prod_{i=1}^{k}} \frac{\chi(b_iw_{i})}{\chi( b_{i+1}w_{i}) }$ is a positive constant; in the last equality we use that the white angles {adjacent to $v$} sum {up} to $\pi$. This is exactly the sign condition in a Kasteleyn matrix.
\end{proof}

{Given an abstract planar weighted bipartite graph $(\G,x)$, one can wonder about the existence of a t-embedding of $\G^*$ into the plane such that {the} given edges weights $x(e)$ are \emph{gauge equivalent} to the geometrical weights $\chi(e)=|d\cT(e^*)|$ introduced above. (The gauge equivalence means that $\chi(bw)=g(b)x(bw)g(w)$ for all $b\in B$, $w\in W$ and some function $g:V(\G)\to\R_+$; such a transform preserves the law of the dimer model on $\G$.) In this case, we say that $(\G,x)$ admits a t-embedding of the (augmented) dual graph $\G^*$.}
We are now in the position to state one of the main results of~\cite{KLRR}, we refer the interested reader to this paper for more details.
\begin{theorem}[{\cite[Theorem~2, Theorem~8]{KLRR}}]\label{thm:KLRR-existence}
T-embeddings of the (augmented) dual graph $\G^*$ exist at least in the following cases:

\smallskip

\noindent (i) $(\G,x)$ is a nondegenerate bipartite finite weighted graph {admitting a dimer cover and} with outer face of degree~$4$. 

\smallskip

\noindent (ii) $(\G,x)$ is a doubly periodic weighted bipartite graph, equipped with an equivalence class of doubly periodic edge weights, which corresponds to a liquid phase. {In this situation we can also require that the t-embedding $\cT$ is {doubly periodic} and that the origami map $\cO$ is bounded. Moreover, such {doubly periodic} t-embeddings of $\G^*$, considered up to scalings, rotations, translations and {reflections,} are in bijection with the interior of the amoeba of the dimer spectral curve.}
\end{theorem}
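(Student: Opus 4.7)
Since this theorem is quoted verbatim from~\cite{KLRR}, my plan is to outline the construction of the t-embedding in each case rather than reprove it in full; the references~\cite[Thm.~2, Thm.~8]{KLRR} can then be invoked for the technical details.

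For case~(i), the plan is to build $\cT$ from the inverse Kasteleyn matrix of $(\G,x)$. With outer face of degree~$4$, the augmented dual has exactly four boundary white and four boundary black vertices, which I label cyclically $w_1,\ldots,w_4$ and $b_1,\ldots,b_4$. The key step is to define the origami square-root function $\eta:V(\G)\to\C$ as a specific complex linear combination of the four boundary columns of $K^{-1}$ (one for each $w_i$), with coefficients tuned so that the candidate geometric weights $|d\cT(bw^*)| = x(bw)\cdot|\eta_b\eta_w|^{-1}$ are positive and so that the Kasteleyn equation around every interior vertex of $\G$ translates into closedness, along the dual graph, of the $1$-form whose value on an edge $bw^*$ equals $\eta_b\eta_w\cdot x(bw)$ (appropriately oriented). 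Integrating this closed form along paths in $\G^*$ then defines the vertex positions $\cT(v)$; the angle condition and the gauge equivalence of $|d\cT(bw^*)|$ to $x(bw)$ become direct algebraic consequences of the construction.

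For case~(ii), my plan is to work on one fundamental domain of the bi-periodic graph and parametrize candidate bi-periodic t-embeddings by a pair of period parameters $(B_x,B_y)\in\R^2$ that records the increments of $\cT$ over the two generators of the torus. Given $(B_x,B_y)$ in the interior of the amoeba of the spectral curve $P(z,w)=0$, I form the magnetically twisted Kasteleyn operator $K_{(z,w)}$ with $|z|=e^{B_x}$, $|w|=e^{B_y}$, and mimic the construction of~(i) using its inverse on one fundamental domain to produce a doubly periodic $\eta$, and hence a bi-periodic t-embedding. Boundedness of $\cO$ is automatic once $\eta^2$ is doubly periodic, since the increments of $\cO$ across a fundamental cycle then reduce to a single translation; this translation can moreover be identified with the gradient of the Ronkin function of $P$, which vanishes on the boundary of the amoeba and is a diffeomorphism onto the open amoeba in the liquid phase. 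This last fact is what supplies both the existence of a t-embedding for each interior point and the claimed bijection, once one quotients by the listed symmetries (scalings, rotations, translations and reflection).

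The main obstacle in both parts is \emph{properness} of the resulting map $\cT:\G^*\to\C$: the closedness of the defining $1$-form, the gauge equivalence of edge weights and the angle condition are essentially algebraic and follow from the Kasteleyn identities for $K^{-1}$, but convexity of faces and non-overlap require controlling the signs of certain minors of $K^{-1}$. In~(i) this is where the nondegeneracy hypothesis together with the degree-$4$ outer face is used, reducing the positivity input to a single boundary computation. In~(ii) this is where the liquid-phase assumption enters through the strict log-concavity of the Ronkin function on the interior of the amoeba, which guarantees both that the construction does not degenerate and that distinct points of the amoeba give genuinely distinct (non-equivalent) bi-periodic t-embeddings.
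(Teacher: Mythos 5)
The paper does not prove this statement: Theorem~\ref{thm:KLRR-existence} is imported verbatim from~\cite{KLRR} (Theorems~2 and~8 there), and the surrounding text explicitly refers the reader to that paper. There is therefore no internal argument to compare yours against, and since you also ultimately defer all technical details to~\cite{KLRR}, your proposal functions as an annotated citation rather than a proof. As such it is acceptable, and your outline does capture the shape of the construction in~\cite{KLRR}: Coulomb gauge functions obtained from the inverse (respectively, magnetically twisted) Kasteleyn operator, integration of the resulting closed $1$-form along $\G^*$ to produce $\cT$, the angle condition and gauge equivalence coming for free from the algebra, and properness (convexity and non-overlap of faces) as the genuinely delicate step where nondegeneracy, respectively the liquid-phase hypothesis, enters.

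However, several concrete assertions in your sketch are wrong as written and would mislead anyone trying to reconstruct the argument. In case~(i): an outer face of degree~$4$ has two black and two white vertices on it, not four of each; and the formula $|d\cT(bw^*)|=x(bw)\cdot|\eta_b\eta_w|^{-1}$ is vacuous because, by Definition~\ref{def:eta}, the origami square root function takes values in $\bbT$, so $|\eta_b\eta_w|=1$ identically. What one actually extracts from the boundary data of $K^{-1}$ is a pair of Coulomb gauge functions $\mathcal{F}^\bullet:B\to\C$, $\mathcal{F}^\circ:W\to\C$ annihilated by $K$ from the two sides away from the boundary; their arguments give $\overline{\eta}_b$, $\overline{\eta}_w$ and their moduli give the gauge change $g$ realizing $\chi(bw)=g(b)\,x(bw)\,g(w)$. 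In case~(ii): periodicity of $\eta^2$ only makes $\cO$ periodic up to one translation per fundamental cycle, and boundedness of $\cO$ is exactly the nontrivial statement that these two translations vanish --- it is not automatic. Nor is the residual translation ``the gradient of the Ronkin function'': that gradient is the slope map, a diffeomorphism from the interior of the amoeba onto the interior of the Newton polygon, and it does not vanish on the boundary of the amoeba. In~\cite{KLRR} the vanishing of the $\cO$-periods and the bijection with the interior of the amoeba come from building the gauge out of the pair of complex-conjugate zeros of the spectral polynomial on the torus $\{|z|=e^{B_x},\,|w|=e^{B_y}\}$, which exist precisely for $(B_x,B_y)$ interior to the amoeba in the liquid phase.
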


\section{T-holomorphicity}\label{sec:holomorphicity}

In this section, we introduce the notion of \emph{t-holomorphic functions} defined on faces of a \mbox{t-embedding} and give some basic facts about such functions. Let us already remark that this theory {has a simpler and more invariant form} in the case of triangulations so we restrict ourselves to this case for now. The modifications required in the general case will be given in Section~\ref{sec:non_triangulation}.

Below, we work with a fixed t-embedding $\cT$ of a finite or infinite \emph{triangulation} and a fixed origami square root function $\eta$. In the finite case, we call a t-embedding $\cT(\G^*)$ a {triangulation} if all its \emph{interior} faces are triangles (equivalently, if all vertices of the corresponding dual bipartite graph $\G$ have degree three),
see Fig.~\ref{finTembTriangl} for an example. A t-holomorphic function $F$ will be defined on both black and white faces but $B$ and $W$ do \emph{not} play the same role. We denote by $F^\tb$ the restriction of a function $F$ to black faces and $F^\tw$ the restriction to white faces, 
the t-holomorphicity condition links the values of $F^\tb$ and $F^\tw$ to each other. We will use a subscript $\frb$ or $\frw$ to indicate whether we `primarily' consider a function on black or white faces, note that all four combinations $F_\frb^\tw, F_\frb^\tb, F_\frw^\tw, F_\frw^\tb$ are used below.

\subsection{Definition of t-holomorphic functions}
We begin with a preliminary lemma. Let $K$ be the Kasteleyn matrix defined in Proposition~\ref{Kast}.
Given a discrete path $\gamma = (e_1, \ldots, e_n)$ on~$\G^*$ and a function $F$ on {\emph{unoriented}} edges of $\G^*$, define ${\int_\gamma F\, d \cT := \sum F(e_i) \,d\cT (e_i)}$. 
With a slight abuse of notation, one can extend this definition to functions defined {\emph{either}} on black {or on} white faces {of~$\G^*$ by setting} $F^\tw( bw^*) := F^\tw(w)$ for~$w\in W$ and similarly for $b\in B$.

Given a face $u$ of $\G^*$, let $\partial u$ be its boundary, viewed as a path on $\cT(\G^*)$ {and oriented in the positive (i.e., counterclockwise) direction.}
\begin{lemma} \label{lem:KF=oint}
Let $F^\tw$ be a complex-valued function defined on (a subset of) W. 
Then, for each interior black face $b$ one has
\[
(K F^\tw) (b) =  - \oint_{\partial b} F^\tw\, d\cT. 
\]
 Similarly, for a function $F^\tb$ defined on (a subset of) $B$ and an interior white face $w$, one has
\[
(F^\tb K) (w) = \oint_{\partial w} F^\tb \,d \cT. 
\]
\end{lemma}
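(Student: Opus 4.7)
The plan is simply to unpack both identities and match them edge by edge; the only nontrivial content is a sign bookkeeping driven by the convention that $bw^*$ is the oriented edge of $\G^*$ having the black face $b$ on its right.

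First I would expand the left-hand side of the first identity using Proposition~\ref{Kast}: $(KF^\tw)(b) = \sum_{w\sim b} K(b,w)\, F^\tw(w) = \sum_{w\sim b} d\cT(bw^*)\, F^\tw(w)$. On the right-hand side, since $F^\tw$ has been extended to edges via $F^\tw(bw^*):=F^\tw(w)$, the contour integral $\oint_{\partial b} F^\tw\, d\cT$ unfolds as the sum, over the oriented edges traversed along $\partial b$, of $F^\tw(w)$ times the corresponding directed edge vector. Taking the standard counterclockwise orientation on $\partial b$ places $b$ to the left of every traversed edge, whereas by definition $bw^*$ has $b$ on its right; hence each boundary edge is in fact traversed as $wb^*$, contributing $d\cT(wb^*) = -d\cT(bw^*)$. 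Summing over all white neighbours $w\sim b$ produces exactly $-(KF^\tw)(b)$, which is the first identity.

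The second identity is handled identically, except that counterclockwise traversal of $\partial w$ places $w$ on the left of every edge, which now \emph{agrees} with the orientation convention of $bw^*$ (since $b$ is on the right, $w$ is on the left); therefore no sign flip occurs and one reads off $(F^\tb K)(w) = \sum_{b\sim w} F^\tb(b)\, d\cT(bw^*) = \oint_{\partial w} F^\tb\, d\cT$ directly.

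The only real obstacle is to fix an explicit orientation convention for $\partial b$ and $\partial w$ and apply it consistently in the two cases; beyond that the proof is pure bookkeeping and uses neither the angle condition nor any property of $\cT$ other than the formula $K(b,w) = d\cT(bw^*)$ from Proposition~\ref{Kast}.
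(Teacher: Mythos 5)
Your proof is correct and follows essentially the same route as the paper's: expand $(KF^\tw)(b)$ via $K(b,w)=d\cT(bw^*)$ and recognize the sum as the contour integral around $\partial b$, with the sign determined by the convention that $bw^*$ has $b$ on its right (the paper simply states that the integral is ``computed in the negative direction according to our conventions''). Your version merely makes the orientation bookkeeping more explicit, which is fine.
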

\begin{proof}
By the definition of $K$, we have
$
(K F^\tw) (b)= \sum_{w:w \sim b} d \cT (bw^*)\cdot F^\tw(w).
$
{According to our conventions,} this is the definition of the contour integral {$-\oint_{\partial b} F^\tw d \cT$}; see Fig.~\ref{Temb}. The proof for white faces is similar.
\end{proof}

Note that, for any function $F^\tw$ and any white face $w$, the equality $\oint_{\partial w} F^\tw \,d \cT=0$ holds since $\cT$ is well defined. Therefore, the condition $K F^\tw(b) = 0$ for all $b$ in a simply connected region~$U$ of the t-embedding is equivalent to $\oint_\gamma F^\tw\, d \cT = 0$ for all closed contours~$\gamma$ in this region. A similar statement holds for a function $F^\tb$ defined on the black faces of $U$.
{(In this section we think of~$\gamma$ as being a path composed of edges of~$\cT$. However, let us note that below we adopt a more general viewpoint and think of $\gamma$ as a general rectifiable curve in the complex plane, in which~$\cT$ stands for the complex coordinate; see Lemma~\ref{lem:FdT-in-C} for a formal statement.)}

For a region $U$ of the t-embedding $\cT$, {let~$B_U$ and~$W_U$ denote the sets of black and white faces, respectively, that are contained in~$U$. Further, given a subset~$\mathfrak{p}$ of faces of~$\cT$, thought of as `punctures', denote~$U_\mathfrak{p}:=U\smallsetminus\mathfrak{p}$.} The {forthcoming Definition~\ref{def:t-hol}} is one of the central concepts of this paper, see Remark~\ref{rem:motivation-thol} for the motivation. {We use the notation
\[
\Pr(F,\eta\R)\ :=\ \tfrac12(F+\eta^2\overline{F})
\]
for the orthogonal projection of a complex number~$F$ onto the line~$\eta\R$, where~$|\eta|=1$.}

\begin{definition}\label{def:t-hol}
Given a subregion $U$ of a t-embedding $\cT$ {with triangular faces} and an origami square root function~$\eta$, a function  $F_\frw: U \to \mathbb{C}$, is said to be \emph{t-white-holomorphic at $\wc\in {W_U}$} if $\wc$ is an inner face of $U$ and
\begin{equation}\label{eq:def-twhol}
\begin{cases}
F_\frw^{\tb}(\bpr) \in {\eta}_b \R,\\ 
\Pr( F_\frw^{\tw}( \wc) ,{\eta}_b \R)= F_\frw^{\tb} (\bpr) 
\end{cases} \quad \text{for~all}~\bpr\in B~\text{such~that}~\bpr\sim\wc.
\end{equation}
A function $F_\frw$ is t-white-holomorphic in a region $U$ or, more generally, in {a punctured region} $U_\mathfrak{p}$ if it is t-white-holomorphic at all inner white faces of the region.

Similarly, we say that $F_\frb$ is \emph{t-black-holomorphic} at an inner face ${\bc\in B_U}$ if
\begin{equation}\label{eq:def-tbhol}
\begin{cases}
F_\frb^{\tw}( \wpr) \in \eta_w \R,\\
\Pr( F_\frb^{\tb}( \bc) , \eta_w\R )= F_\frb^{\tw} (\wpr) 
\end{cases}
\quad \text{for~all}~\wpr\in W~\text{such~that}~\wpr\sim\bc.
\end{equation}
{If no confusion arises, we simply say that a function is} t-holomorphic if it is \emph{either} \mbox{t-white}-holomorphic or t-black-holomorphic; {note that these properties never hold together as $B$ and~$W$ play different roles in each of the definitions~\eqref{eq:def-twhol},~\eqref{eq:def-tbhol}.}
\end{definition}

\begin{remark} \label{rem:motivation-thol}
A typical example of a t-white-holomorphic function is {given by}
\[
F_w^\tb(\bpr)\ {:=}\ \overline{\eta}_w\cdot K^{-1}(w, \bpr),\ \ \text{{where $w$ is a fixed white face of~$\cT$.}}
\]
Indeed, {the first condition in~\eqref{eq:def-twhol} holds since the matrix~$(\eta_bK(b,w)\eta_w)_{b\in B,w\in W}$ is real-valued due to~\eqref{eq:def-eta} and hence so is its inverse $(\overline{\eta}_wK^{-1}(w,b)\overline{\eta}_b)_{w\in W,b\in B}$.}
In Lemma \ref{lem:kernel_to_holomorphy} given below we show that the existence of a value $F^\tw(\wc)$ such that the second condition in~\eqref{eq:def-twhol} holds true given the first is equivalent to the identity $\oint_{\partial \wc}F^\tb{d\cT}=0$. This is true for $F_w^\tb$ due to Lemma~\ref{lem:KF=oint} {unless $\wc=b$; in other words, the function~$F_w$ is t-white-holomorphic in the punctured region~$U_w$.} Similarly, a typical example of a t-black-holomorphic function is given by $F_b^\tw(\wpr)= \overline{\eta}_b\cdot K^{-1}(\wpr,b)$, for a fixed black face~$b$. In~particular, the notation $F_\frw$, $F_\frb$ is designed so that in the future these functions can be {easily} replaced by $F_w$ and $F_b$ for actual faces~$w$ and $b$.
\end{remark}

\begin{lemma}\label{lem:kernel_to_holomorphy}
Given a t-embedding $\cT$ of a triangulation and an origami square root function~$\eta$, let $F_\frw^{\tb}$ be a function on black faces of some region $U$ such that \mbox{$F_\frw^{\tb} (\bpr) \in {\eta}_\bpr \R$} for all~{$\bpr\in B_U$. The function $F_\frw^{\tb}$ can be extended to a} t-white-holomorphic function $F_\frw$ in $U_\mathfrak{p}$, $\mathfrak{p}\subset {W_U}$, if and only if $\oint_{\partial \wc} F_\frw^\tb \,d  \cT = 0$ for all inner faces $\wc\in {W_U\smallsetminus \mathfrak{p}}$.

Similarly, a function $F_\frb^\tw$ defined on white faces of~$U$ such that $F_\frb^\tw(\wpr) \in \eta_\wpr \R$ {for all~$\wpr\in W_U$} admits an extension to a t-black-holomorphic function {in $U_\mathfrak{p}$, $\mathfrak{p}\subset {B_U}$,} if and only if $\oint_{\partial \bc} F_\frb^\tw \,d  \cT = 0$ for all inner faces $\bc\in {B_U\smallsetminus\mathfrak{p}}$.
\end{lemma}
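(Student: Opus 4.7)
The key observation is that the statement reduces to a purely local question at each inner white face. A t-white-holomorphicity condition at $\wc$ constrains only the single unknown $F_\frw^\tw(\wc)\in\C$ in terms of the three given neighbouring values $F_\frw^\tb(b_j)$ (three because $\cT$ is a triangulation), so after verifying local solvability face by face, the extension is obtained simply by choosing any consistent value at each $\wc$ and by assigning arbitrary values on the remaining (boundary) white faces. No global argument is required.

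Fix an inner face $\wc\in W_\mathfrak{p}$ with black neighbours $b_1,b_2,b_3$, and write $F_\frw^\tb(b_j)=x_j\eta_{b_j}$ with $x_j\in\R$ (as permitted by the hypothesis). Unpacking the projection $\Pr(\cdot,\eta_{b_j}\R)$, the condition \eqref{eq:def-twhol} at $\wc$ is equivalent to the linear system $\Re(F\overline\eta_{b_j})=x_j$, $j=1,2,3$, in the unknown $F=F_\frw^\tw(\wc)\in\C$ (two real unknowns, three real equations). From Lemma~\ref{lem:increment_phi} applied at each of the three vertices of $\wc$, consecutive values $\phi_{b_j}$ differ modulo $\pi$ by interior angles of the triangle, which lie in $(0,\pi)$; hence the three directions $\eta_{b_j}\R$ are pairwise distinct. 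Consequently the $3\times 2$ coefficient matrix has rank $2$, so the system is solvable if and only if one real compatibility relation $\sum_j\lambda_jx_j=0$ holds, where $(\lambda_j)\in\R^3$ spans the one-dimensional left kernel cut out by $\sum_j\lambda_j\overline\eta_{b_j}=0$.

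The remaining step, which is essentially the whole content of the proof, is to match this compatibility relation with the closed-contour condition $\oint_{\partial\wc}F_\frw^\tb\,d\cT=0$. Using the defining identity $d\cT(bw^*)=\overline\eta_b\overline\eta_w|d\cT(bw^*)|$, the choice $\lambda_j:=|d\cT(b_j\wc^*)|$ satisfies
\[
\sum_j\lambda_j\overline\eta_{b_j}\ =\ \eta_\wc\sum_j d\cT(b_j\wc^*)\ =\ 0,
\]
the last equality being simply the closedness of the oriented boundary of the triangle $\wc$; so this $(\lambda_j)$ spans the kernel. Plugging the same identity directly into the integral yields $\oint_{\partial\wc}F_\frw^\tb\,d\cT=\overline\eta_\wc\sum_j\lambda_jx_j$, which therefore vanishes exactly when the compatibility relation holds. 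The black case follows verbatim by exchanging the roles of $B$ and $W$. I anticipate that the only mild point of friction will be the translation of the geometric projection condition \eqref{eq:def-twhol} into the linear-algebraic form $\Re(F\overline\eta_{b_j})=x_j$ (and verifying rank two via Lemma~\ref{lem:increment_phi}); the rest is a short computation using only the definition of $\eta$ and the fact that the signed boundary of a closed polygon sums to zero.
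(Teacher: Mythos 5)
Your proof is correct and follows essentially the same route as the paper: reduce to a local $3\times 2$ real linear system at each inner white face, observe that non-collinearity of the directions $\eta_{b_j}$ (which the paper asserts and you justify via Lemma~\ref{lem:increment_phi}) gives rank two, and identify the single real compatibility condition with $\oint_{\partial\wc}F_\frw^\tb\,d\cT=0$ using $d\cT(bw^*)=\overline{\eta}_b\overline{\eta}_w|d\cT(bw^*)|$ and the closing of the triangle's boundary. The paper packages the kernel computation as "multiply the $k$-th equation by $d\cT(b_k\wc^*)$ and sum, using $\oint_{\partial\wc}d\cT=\oint_{\partial\wc}d\overline{\cO}=0$", which is the same calculation as your explicit left-kernel vector $\lambda_j=|d\cT(b_j\wc^*)|$.
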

\begin{proof}
{Consider} an inner white face $\wc$ of $U_\mathfrak{p}$ and let $\bpr_1$, $\bpr_2$, $\bpr_3$ be its adjacent black faces. The function $F^\tb_\frw$ can be extended to $\wc$ {as} a t-white-holomorphic function if and only if the three lines perpendicular to {$\eta_{\bpr_{k}}\R$, $k=1,2,3$,} and passing through the points $F^\tb_\frw(\bpr_{k})$, {respectively,} intersect. This corresponds to the {following equations} with unknown ${F_\frw^\tw}(\wc)$:
\[
  {F_\frw^\tw}(\wc) +  {\eta}_{\bpr_{k}}^2 \overline{{F_\frw^\tw}(\wc)} = 2\cdot F^\tb_\frw(\bpr_{k}) \quad \text{for~all}\ {k}\in\{1,2,3\}.
\]
These equations can be viewed as a system of three \emph{real} equations on two real unknowns. Since $\eta_b^2d\cT(bw^*)=d\overline{\cO(bw^*)}$, multiplying each equation by $d\cT({(b_k\wc)^*})$ and adding them together one easily gets a necessary solvability condition:
\begin{align*}
2\oint_{\partial\wc}\!F^\tb_\frw(\bpr_{k})d\cT\ &=\ \oint_{\partial\wc}\!\big({F_\frw^\tw}(\wc) +  {\eta}_{\bpr_{k}}^2 \overline{{ F_\frw^\tw}(\wc)}\big)d\cT\ \\
&=\ F_\frw^\tw(\wc)\oint_{\partial\wc}\!d\cT+\,\overline{{F_\frw^\tw}(\wc)}\oint_{\partial\wc}\!d\overline{\cO}\ =\ 0.
\end{align*}
This condition is also sufficient as $\wc$ has degree three and the directions $\eta_{b_k}$ are never collinear.

Similarly, for a function {$F_\frb$} we want
\[
 {F_\frb^\tb}(\bc) + \eta_{\wpr_{k}}^2 \overline{{F_\frb^\tb}(\bc)} = 2\cdot F^\tw_\frb (\wpr_{k}) \quad \text{for~all}~{k}\in\{1,2,3\}.
\]
which gives the desired solution {using the identity $\eta_\wpr^2 \,d\cT(bw^*) = d\cO(bw^*)$.}
\end{proof}

\begin{remark}\label{rq:involution}
Since {$K(b,w)\in \overline{\eta}_b\overline{\eta}_w\mathbb{R}$} {for all~$b,w$,}
the mapping $F^{\tb}\to{\eta}^2\cdot\overline{F^\tb}$ defines an involution on the kernel of $K$, which is naturally split into the invariant and anti-invariant components. The first condition in~\eqref{eq:def-twhol} says that we {consider} the invariant component only.
Let us emphasise that t-holomorphic functions form a real-linear space but not a complex-linear one.
\end{remark}

{In the finite case,} let us glue to each boundary edge an outer face with a color different from the color of the incident boundary face, see Fig.~\ref{stan_bc}. Denote the sets of {thus obtained} black and white faces by $\partial_{\operatorname{out}}B$ and $\partial_{\operatorname{out}}W$, respectively.
Denote $\overline B:=B\cup \partial_{\operatorname{out}}B$, $\overline W:=W\cup \partial_{\operatorname{out}}W$ and let $U$ be a (sub)region of the t-embedding; {we also use a notation~$\overline{B}_U$, $(\partial_\mathrm{out}B)_U$ etc for intersections of these sets with~$U$.}

\begin{figure}
\centering
\includegraphics[clip, trim=4.4cm 11.2cm 1.5cm 4.1cm ,width=0.7\textwidth]{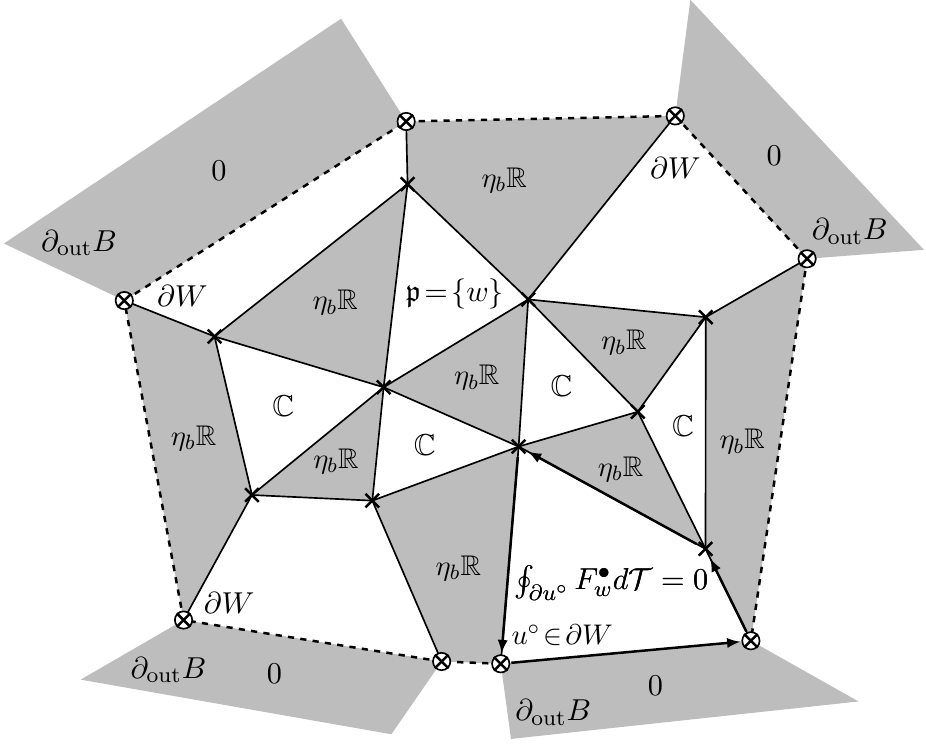}
\caption{Standard boundary conditions for a t-white-holo\-morphic function {$F_w$, $w\in W$; see Remark~\ref{rem:motivation-thol} and Definition~\ref{def:standard-bc}.}}\label{stan_bc}
\end{figure}

\begin{definition}\label{def:standard-bc}
We say that a t-white-holomorphic function $F_\frw$ defined on {a set $\overline{B}_U\cup(W_U\smallsetminus((\partial W)_U\cup\mathfrak{p}))$, $\mathfrak{p}\subset W_U$,} satisfies \emph{standard boundary conditions} if
\[
\begin{cases}
F_\frw^\tb(\bpr)=0 & \text{for all $\bpr \in {(\partial_{\operatorname{out}}B)_U}$},\\
{\oint_{\partial \wc} F^\tb_\frw\,d\cT = 0} & {\text{for all $\wc\in {(\partial W)_U\smallsetminus\mathfrak{p}}$.}}
\end{cases}
\]
{(Recall that~$\partial W\subset W$ is the set of white `inner' boundary faces of~$\cT$.)} The {standard boundary conditions} for t-black-holomorphic functions are defined similarly.
\end{definition}

Note that a t-white-holomorphic function ${F_w^\bullet}:\bpr \mapsto \overline{\eta}_w\cdot K^{-1}(w, \bpr)$ {with~$w\in W$} satisfies standard boundary conditions in the region ${U_w}$ {provided we set $F_w^\tb(b):=0$ for $b\in\partial_{\operatorname{out}} B$, see Fig.~\ref{stan_bc}. Indeed, $(F_w^\tb K)(\wc)=0$ for all $\wc\ne w$ including the boundary faces $\wc\in{\partial W_U\smallsetminus\{w\}}$. As we set $F_w^\tb(b)=0$ at the nearby outer black face, this sum coincides with the contour integral along $\partial\wc$ as before.}

\subsection{Closed forms associated to t-holomorphic functions} Let us first
summarize basic properties of t-holomorphic functions discussed in the previous section.

\begin{proposition}\label{prop:integral_simple} Let $U$ be a simply connected region in the domain of a t-embedding and
 $F_\frw$ be a t-white-holomorphic function on {a punctured} region $U_\mathfrak{p}$, {$\mathfrak{p}\subset {W}$}. Then,
 on edges not adjacent to boundary white faces {and/or} to faces of $\mathfrak{p}$,
 \begin{equation}\label{eq:Fb_dT=}
  {2}F_\frw^\tb \,d\cT =  F_\frw^\tw \,d \cT + \overline{F}{}_\frw^\tw \,d \overline{\cO}
 \end{equation}
 and {$F_\frw^\tb \,d\cT$} is a closed form  in $U_\mathfrak{p}$ away from the boundary (i.e., the integral over any closed contour $\gamma$ running over interior edges
 and not surrounding faces from~$\mathfrak{p}$ vanishes). Moreover, if $F_\frw$ satisfies standard boundary conditions, then the left-hand side of~\eqref{eq:Fb_dT=} also defines a closed form up to the boundary (i.e., $\gamma$ can then contain boundary edges too).

Similarly, if $F_\frb$ is a t-black-holomorphic function {in} $U_\mathfrak{p}$, {$\mathfrak{p}\subset {B}$}, then, on edges not adjacent to boundary black faces {and/or} to faces of $\mathfrak{p}$,
\begin{equation}\label{eq:Fw_dT=}
 {2}F^\tw_{\frb} \,d \cT =F_\frb^\tb\, d \cT + \overline{F}{}_\frb^\tb\, d \cO
\end{equation}
 and {$F^\tw_{\frb} \,d \cT$} is a closed form in $U_\mathfrak{p}$ away from the boundary. {Again, if $F_\frb$ satisfies the standard boundary conditions, then the left-hand side of~\eqref{eq:Fw_dT=} defines a closed form up to the boundary.}
\end{proposition}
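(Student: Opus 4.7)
My plan is to derive the identity \eqref{eq:Fb_dT=} edge by edge from the defining relations \eqref{eq:def-twhol} of t-white-holomorphicity, and then deduce the closedness of $F_\frw^\tb\,d\cT$ by a face-by-face argument relying on the trivial closedness of $d\cT$ and $d\cO$ (each being the differential of a single-valued map on the ambient plane).

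First I would unfold the projection appearing in \eqref{eq:def-twhol}. Since $|\eta_b|=1$, the orthogonal projection of a complex number $z$ onto the real line $\eta_b\R$ equals $\tfrac12(z+\eta_b^2\overline z)$. Applied to the second line of \eqref{eq:def-twhol}, this yields, for every edge $bw^*$ with $\wc$ an inner white face of $U_\mathfrak{p}$ and $\bpr\sim\wc$, the identity
\[
2F_\frw^\tb(\bpr)\ =\ F_\frw^\tw(\wc)+\eta_b^2\,\overline{F_\frw^\tw(\wc)}.
\]
Multiplying both sides by $d\cT(bw^*)$ and invoking \eqref{eq:dO-on-T} in the form $\eta_b^2\,d\cT(bw^*)=\overline{d\cO(bw^*)}$, one recovers exactly \eqref{eq:Fb_dT=} on each such edge. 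This is a purely local algebraic identity, valid precisely on edges not adjacent to $\partial W$ or to $\mathfrak{p}$, as claimed.

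For closedness of $F_\frw^\tb\,d\cT$ in $U_\mathfrak{p}$ away from the boundary, it suffices to verify that the contour integral vanishes around every interior face $u\notin\mathfrak p$: by standard homology any closed contour using only interior edges and not encircling $\mathfrak{p}$ decomposes as a signed sum of such face boundaries. Around an interior black face $\bc$, the function $F_\frw^\tb$ is the constant $F_\frw^\tb(\bc)$ along $\partial\bc$ and $\oint_{\partial\bc}d\cT=0$ since $\cT(\bc)$ is a closed polygon. Around an interior white face $\wc\in W_\mathfrak{p}$, the identity \eqref{eq:Fb_dT=} holds edge by edge on $\partial\wc$, while $F_\frw^\tw$ is constant along $\partial\wc$; hence
\[
2\oint_{\partial\wc}\!F_\frw^\tb\,d\cT\ =\ F_\frw^\tw(\wc)\!\oint_{\partial\wc}\!d\cT+\overline{F_\frw^\tw(\wc)}\!\oint_{\partial\wc}\!d\overline{\cO}\ =\ 0,
\]
both contour integrals vanishing because $\cT$ and $\cO$ are single-valued maps in the plane. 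This establishes assertion (2).

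The boundary upgrade (3) is handled by the standard boundary conditions, which cover exactly the remaining faces. Around an outer black face $\bpr\in\partial_{\operatorname{out}}B$ the integral vanishes trivially since $F_\frw^\tb(\bpr)=0$, while around a boundary white face $\wc\in\partial W\cap U_\mathfrak{p}$ identity \eqref{eq:Fb_dT=} may fail (since $\wc$ is not inner), but the vanishing $\oint_{\partial\wc}F_\frw^\tb\,d\cT=0$ is imposed directly by definition. Thus $F_\frw^\tb\,d\cT$ is closed on contours that may traverse boundary edges as well. The proof for t-black-holomorphic functions is entirely symmetric: the projection formula together with $\eta_w^2\,d\cT(bw^*)=d\cO(bw^*)$ from \eqref{eq:dO-on-T} yields \eqref{eq:Fw_dT=}, after which the same face-by-face argument gives closedness. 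I do not anticipate any real obstacle here; the content of the proposition reduces to the one-line projection identity combined with the trivial closedness of $d\cT$ and $d\cO$ around each face.
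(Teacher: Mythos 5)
Your proposal is correct and follows essentially the same route as the paper: the paper's proof simply points back to the computation in Lemma~\ref{lem:kernel_to_holomorphy}, where the projection identity $2F_\frw^\tb(b)=F_\frw^\tw(\wc)+\eta_b^2\overline{F_\frw^\tw(\wc)}$ is combined with $\eta_b^2\,d\cT(bw^*)=d\overline{\cO}(bw^*)$ and the vanishing of $\oint d\cT$ and $\oint d\overline{\cO}$ around faces, and the boundary extension is read off directly from the definition of standard boundary conditions. Your write-up just makes these steps explicit.
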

\begin{proof} {See the proof of Lemma~\ref{lem:kernel_to_holomorphy}: the equalities~{\eqref{eq:Fb_dT=}, \eqref{eq:Fw_dT=}} follow from the definition of t-holomorphic functions and the identities $\eta_{b}^2d\cT(bw^*)=d\overline{\cO(bw^*)}$ and $\eta_{w}^2d\cT(bw^*)=d\cO(bw^*)$. The fact that the form $F_\frb^\tb d\cT$  (respectively, $F_\frw^\tw d\cT$) is closed is trivial around black (resp., white) faces and is equivalent to the definition of t-holomorphicity at white (resp., black) ones. The extension up to the boundary is nothing but the definition of standard boundary conditions.}
\end{proof}

{In what follows, we `primarily' think about t-holomorphic functions~$F_\frw$ and~$F_\frb$ as of~$F_\frw^\tw$ and~$F_\frb^\tb$, respectively; {let us emphasize once again that the two colors play non-symmetric roles in the definition of t-holomorphicity, so~$F_\frw^\tb$ and~$F_\frb^\tw$ are functions of a different kind whose values have complex signs~$\eta_u$ prescribed in advance, contrary to~$F_\frw^\tw$ and~$F_\frb^\tb$.} Note that the differential forms $F_\frw^\tw d\cT$ and $F_\frb^\tb d\cT$ are \emph{not} closed:} the contour integrals $\oint_{\partial \bc} F_\frw^\tw \,d \cT$ and $\oint_{\partial \wc} F_\frb^\tb \,d \cT$ do not vanish.

\begin{lemma} \label{lem:FdT-in-C}
Similarly to the definition of the origami differential form $d\cO$, one can view~\eqref{eq:Fb_dT=} and~\eqref{eq:Fw_dT=} as closed piecewise constant differential forms
\[
F_\frw^{\tw}{(z)d z} + \overline{F_\frw^{\tw}{(z)}}d \overline{\cO{(z)}}
\qquad\text{and}\qquad
F_\frb^{\tb}{(z)dz} + \overline{F_\frb^{\tb}{(z)}} d \cO{(z)}
\]
defined in the {plane} (and not just on edges of the t-embedding), where we set $F_\frw^\tw(z):=F_\frw^\tw(\wc)$ if $z\in\cT(\wc)$ and $F_\frb^\tb(z):=F_\frb^\tb(\bc)$ if $z\in\cT(\bc)$, respectively. To define the former form for $z$ inside an {interior} black face $\cT(b)$ (respectively, the latter for {$z\in\cT(w)$}) one can use any of the three values $F_\frw^\tw(\wc)$ at the adjacent white faces $\wc\sim b$ {(respectively, any of the three values~$F_\frb^\tb(\bc)$, $\bc\sim w$):} all thus obtained expressions coincide.
\end{lemma}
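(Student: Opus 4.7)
The plan is to verify two things: (i) that the proposed value of the form inside a black face $\cT(b)$ is well-defined, i.e., independent of which adjacent white face $\wc\sim b$ is used to evaluate $F_\frw^\tw$; and (ii) that the resulting piece-wise constant 1-form on $\C$ is closed. I will do this for the t-white-holomorphic case; the t-black-holomorphic case follows by the same argument with the roles of $B$ and $W$ swapped.

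For (i), I would first note that by Definition~\ref{def:O}, for $z$ in the interior of a black face $\cT(b)$ one has $d\cO(z)=\overline{\eta}_b^{\,2}dz$, and therefore $d\overline{\cO(z)}=\eta_b^{2}\,dz$. Plugging in the tentative value $F_\frw^\tw(z):=F_\frw^\tw(\wc)$ for some $\wc\sim b$ turns the proposed form into
\[
\bigl(F_\frw^\tw(\wc)+\eta_b^{2}\overline{F_\frw^\tw(\wc)}\bigr)\,dz.
\]
The second condition in the t-white-holomorphicity definition~\eqref{eq:def-twhol}, applied at $\wc$ with $\bpr=b$, is precisely the statement $F_\frw^\tw(\wc)+\eta_b^{2}\overline{F_\frw^\tw(\wc)}=2F_\frw^\tb(b)$. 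Hence, irrespective of the choice of $\wc\sim b$, the form equals $2F_\frw^\tb(b)\,dz$ in the interior of $\cT(b)$. This also covers all three adjacent white faces simultaneously, as required.

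For (ii), the form is a constant combination of $dz$ and $d\overline z$ on the interior of every face (constant on a white face $\cT(\wc)$ because $F_\frw^\tw(\wc)$ is a constant and $d\overline{\cO}=\overline{\eta}_\wc^{\,2}d\overline z$ there, and constant on a black face by the computation above), so it is closed on the interior of each face. The only remaining point is compatibility across an edge $bw^{*}$: the integral of the form along this edge, evaluated from the white side, is $F_\frw^\tw(w)\,d\cT(bw^*)+\overline{F_\frw^\tw(w)}\,\overline{d\cO(bw^*)}$, while evaluated from the black side it is $2F_\frw^\tb(b)\,d\cT(bw^*)$. Using the identity $\overline{d\cO(bw^*)}=\eta_b^{2}\,d\cT(bw^*)$ which is immediate from~\eqref{eq:dO-on-T}, and the t-holomorphicity relation used above, these two expressions coincide. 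Equivalently, this is just a restatement of identity~\eqref{eq:Fb_dT=} from Proposition~\ref{prop:integral_simple}. Since the form is constant on the interior of each face and its integrals along edges agree from both sides, it defines a genuine closed 1-form on $\C$.

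There is no real obstacle here: the lemma is a direct consequence of the piece-wise constant behaviour of $d\cO$ on each face, combined with the defining identities of t-holomorphicity. The only conceptual subtlety — and the one worth flagging — is that there is no natural single value of $F_\frw^\tw$ inside a black face, yet the particular linear combination appearing in the form is unambiguous, and it is exactly the second condition of~\eqref{eq:def-twhol} that makes this combination coherent across the three adjacent white faces of any interior triangle.
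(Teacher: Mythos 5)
Your proof is correct and takes essentially the same route as the paper's: the paper extends the form into an interior black face as $2F_\frw^\tb(b)\,dz$ and matches it with the white-side expression via $F_\frw^\tw(\wc)+\eta_b^2\overline{F_\frw^\tw(\wc)}=2F_\frw^\tb(b)$, which is exactly your computation read in the other direction. One small typo: for $z\in\cT(b)$ one has $d\cO(z)=\overline{\eta}_b^{\,2}\,d\overline{z}$ rather than $\overline{\eta}_b^{\,2}\,dz$, though the consequence you actually use, $d\overline{\cO(z)}=\eta_b^{2}\,dz$, is correct.
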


\begin{proof} Let us consider the form~\eqref{eq:Fb_dT=}. Its extension inside white faces is a triviality. Moreover, one can also extend this form inside a black face as ${2}F_\frw^\tb(b)dz$, \mbox{$z\in\cT(b)$}: similarly to the definition of the origami differential form~${d\cO}$, this procedure is consistent since the two sides of~\eqref{eq:Fb_dT=} match along the edge $(b\wc)^*$. Finally, note that
\[
{2}F_\frw^\tb(b)dz\ =\ 
F_\frw^\tw(\wc){dz}+\eta_b^2\overline{F_\frw^\tw(\wc)}dz\ =\ 
F_\frw^\tw(\wc)dz+\overline{F_\frw^\tw(\wc)}d\overline{\cO(z)}
\]
as $d\cO(z)=\overline{\eta}_b^2d\overline{z}$ for $z\in\cT(b)$. The other case is identical.
\end{proof}

\begin{remark} \label{rem:ext-F(z)dz}
Though Lemma~\ref{lem:FdT-in-C} does not apply to faces of higher degrees literally (and, in particular, does not apply to boundary faces of a finite triangulation; see Fig.~\ref{finTembTriangl}) it can be nevertheless extended to the full generality {by splitting faces of higher degree into triangles.} We refer the reader to Section~\ref{sec:non_triangulation} for more details.
\end{remark}

{The next proposition provides a key identity for the analysis of dimer correlation functions in Section~\ref{sec:convergence}. Since $F_\frb^\tb\equiv \mathrm{cst}$ (resp., $F_\frw^\tw\equiv \mathrm{cst}$) is a trivial example of a t-holomorphic function, it can be also viewed as a generalization of~\eqref{eq:Fb_dT=} and~\eqref{eq:Fw_dT=}.}

\begin{proposition}\label{prop:integral_product}
If $F_\frb$ and $F_\frw$ are respectively a t-black- and a t-white-holo\-morphic functions on some region $U_\mathfrak{p}$, then, on edges not adjacent to boundary faces and to faces of $\mathfrak{p}$, {the identity}
\begin{equation}
\label{eq:FFdT=}
F_\frw^\tb F_\frb^\tw\, d\cT = \tfrac{1}{2}\Re \left( F_\frw^\tw F_\frb^\tb \,d \cT  +  F^\tw_\frw \overline{F}{}^\tb_\frb\, d \cO \right)
\end{equation}
{holds and the form} $F_\frw^\tb F_\frb^\tw\, d\cT$ is closed in $U_\mathfrak{p}$ away from the boundary.

Moreover, if $F_\frw$ and $F_\frb$ satisfy standard boundary conditions, then the form $F_\frw^\tb F_\frb^\tw\, d\cT$ is closed up to the boundary (provided we set $F_\frw^\tb(\bpr)F_\frb^\tw(\wpr):=0$ for boundary edges $bw^*$).
\end{proposition}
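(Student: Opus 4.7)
The plan is to verify the identity~\eqref{eq:FFdT=} edge-by-edge using the earlier relations~\eqref{eq:Fb_dT=} and~\eqref{eq:Fw_dT=}, and then to deduce closedness of $F_\frw^\tb F_\frb^\tw\, d\cT$ by factoring the contribution at each face.

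\emph{Step 1: Edge-by-edge verification of~\eqref{eq:FFdT=}.} Fix an edge $bw^*$ not adjacent to the boundary or to $\mathfrak{p}$. Multiplying~\eqref{eq:Fw_dT=} by $F_\frw^\tw(w)$ and taking $\tfrac{1}{2}\Re$ gives
\[
\tfrac{1}{2}\Re\bigl(F_\frw^\tw F_\frb^\tb\, d\cT + F_\frw^\tw\,\overline{F_\frb^\tb}\, d\cO\bigr)\ =\ \Re\bigl(F_\frw^\tw(w) F_\frb^\tw(w)\, d\cT(bw^*)\bigr),
\]
so the identity reduces to showing $F_\frw^\tb(b) F_\frb^\tw(w)\, d\cT(bw^*) = \Re\bigl(F_\frw^\tw(w) F_\frb^\tw(w)\, d\cT(bw^*)\bigr)$. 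Write $F_\frw^\tw(w) = F_\frw^\tb(b) + (F_\frw^\tw(w)-F_\frw^\tb(b))$. By the t-white-holomorphicity condition, $F_\frw^\tb(b)\in\eta_b\R$ and $F_\frw^\tw(w)-F_\frw^\tb(b)\in i\eta_b\R$; also $F_\frb^\tw(w)\in\eta_w\R$ and $d\cT(bw^*)\in\overline{\eta}_b\overline{\eta}_w\R_+$ by the definition of $\eta$. Thus $F_\frw^\tb(b)F_\frb^\tw(w)d\cT(bw^*)\in\R$, while $(F_\frw^\tw(w)-F_\frw^\tb(b))F_\frb^\tw(w)d\cT(bw^*)\in i\R$, and taking the real part kills exactly the second term. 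This gives~\eqref{eq:FFdT=} on each interior edge.

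\emph{Step 2: Closedness in the interior.} Around an inner black face $\bc\in B_\mathfrak{p}$, the value $F_\frw^\tb(\bc)$ is common to all incident edges, so
\[
\oint_{\partial\bc} F_\frw^\tb F_\frb^\tw\, d\cT\ =\ F_\frw^\tb(\bc)\oint_{\partial\bc} F_\frb^\tw\, d\cT\ =\ 0
\]
by the closedness of $F_\frb^\tw\,d\cT$ from Proposition~\ref{prop:integral_simple}. The argument around an inner white face $\wc\in W_\mathfrak{p}$ is symmetric: factor out $F_\frb^\tw(\wc)$ and invoke closedness of $F_\frw^\tb\,d\cT$. Since any closed contour running over interior edges and not encircling $\mathfrak{p}$ decomposes as a sum of such elementary loops, the form is closed in the interior.

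\emph{Step 3: Closedness up to the boundary under standard boundary conditions.} With the convention $F_\frw^\tb(\bpr)F_\frb^\tw(\wpr):=0$ on boundary edges, we repeat Step~2 at boundary faces. At $\bc\in\partial B\cap U_\mathfrak{p}$ the factorization gives $F_\frw^\tb(\bc)\cdot\oint_{\partial\bc}F_\frb^\tw\,d\cT$, where the integral now extends over \emph{all} edges of $\bc$; those going to $\partial_{\mathrm{out}}W$ contribute nothing because $F_\frb^\tw\equiv 0$ there, so the integral vanishes by the standard boundary condition for $F_\frb$. Symmetrically, at $\wc\in\partial W\cap U_\mathfrak{p}$ we factor out $F_\frb^\tw(\wc)$ and use the standard boundary condition for $F_\frw$, with the outer black faces contributing zero because $F_\frw^\tb\equiv 0$ on $\partial_{\mathrm{out}}B$. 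The only potential obstacle is bookkeeping of where each of $F_\frw^\tb$, $F_\frb^\tw$ is actually defined near the boundary, but the allocations $F_\frw:(U_\mathfrak{p}\smallsetminus\partial W)\cup\partial_{\mathrm{out}}B\to\C$ and $F_\frb:(U_\mathfrak{p}\smallsetminus\partial B)\cup\partial_{\mathrm{out}}W\to\C$ are precisely such that the factorizations above make sense at every boundary face; this completes the proof.
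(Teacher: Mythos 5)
Your proof is correct and follows essentially the same route as the paper: the identity~\eqref{eq:FFdT=} is checked edge by edge from the projection relations (the paper expands both projections $F_\frw^\tb=\tfrac12(F_\frw^\tw+\eta_b^2\overline{F_\frw^\tw})$ and $F_\frb^\tw=\tfrac12(F_\frb^\tb+\eta_w^2\overline{F_\frb^\tb})$ simultaneously and uses $\eta_b^2d\cT=d\overline{\cO}$, $\eta_w^2d\cT=d\cO$, $\eta_b^2\eta_w^2d\cT=d\overline{\cT}$, whereas you substitute~\eqref{eq:Fw_dT=} and then kill the component of $F_\frw^\tw-F_\frw^\tb$ orthogonal to $\eta_b\R$ by taking real parts — the same computation organized differently), and the closedness follows from the identical factorizations $\oint_{\partial\wc}F_\frw^\tb F_\frb^\tw\,d\cT=F_\frb^\tw(\wc)\oint_{\partial\wc}F_\frw^\tb\,d\cT$ and $\oint_{\partial\bc}F_\frw^\tb F_\frb^\tw\,d\cT=F_\frw^\tb(\bc)\oint_{\partial\bc}F_\frb^\tw\,d\cT$. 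Your Step~3 spells out the boundary bookkeeping slightly more explicitly than the paper does, and it is consistent with the definition of standard boundary conditions.
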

\begin{proof}
The definition of t-holomorphicity implies that
\[
F_\frw^\tb(b)F_\frb^\tw(w)d\cT(bw^*)\ =\ \tfrac{1}{4}\big(F_\frw^\tw(w)+\eta_b^2\overline{F_\frw^\tw(w)}\big) \big(F_\frb^\tb(b)+\eta_w^2\overline{F_\frb^\tb(b)}\big)d\cT(bw^*),
\]
which gives the result since $\eta_b^2d\cT=d\overline{\cO}$, $\eta_w^2d\cT=d\cO$ and $\eta_b^2\eta_w^2d\cT=d\overline{\cT}$ on $bw^*$. {As}
\[
\textstyle \oint_{\partial\wc}F_\frw^\tb F_\frb^\tw\, d\cT = F_\frb^\tw(\wc)\oint_{\partial\wc}F_\frw^\tb\, d\cT=0,
\]
and
\[
\textstyle \oint_{\partial\bc}F_\frw^\tb F_\frb^\tw\, d\cT = F_\frw^\tb(\bc)\oint_{\partial\bc}F_\frb^\tw\, d\cT=0
\]
for $\wc\in {W_U\smallsetminus\mathfrak{p}}$ and $\bc\in {B_U\smallsetminus\mathfrak{p}}$, respectively, the expression~\eqref{eq:FFdT=} defines a closed form {on edges of the t-embedding.}
\end{proof}

\begin{remark} \label{rem:FFdT-in-C}
{Similarly to Lemma~\ref{lem:FdT-in-C}}, the form~\eqref{eq:FFdT=} can be {extended from edges of $\cT$ to} a closed {piecewise constant} differential form
\[
{\tfrac{1}{2}}\Re \big( F_\frw^\tw{(z) F_\frb^\tb{(z)}dz}+ F_\frw^\tw{(z)}\overline{F}{}^\tb_\frb{(z)}\, d \cO{(z)} \big)
\]
defined in the {complex} plane. For $z\in \cT(b)$ (and similarly for $z\in\cT(w)$), we set $F_\frb^\tb(z):=F_\frb^\tb(b)$ and use an arbitrary adjacent white face $\wc\sim b$ to define the value $F_\frw^\tw(z):=F_\frw^\tw(\wc)$. Thus obtained differential form does not depend on the choices of $\wc$ (and similar choices made for $z\in\cT(w)$). {Similarly to Remark~\ref{rem:ext-F(z)dz}, this definition does not literally apply to faces of degree more than three (including boundary ones) but can be extended to the full generality; see Section~\ref{sec:non_triangulation}.}
\end{remark}

\newcommand\F[2]{F^{\scriptscriptstyle [#1#2]}}

\subsection{Dimer coupling function as a linear combination of t-holomor\-phic ones} \label{sub:Fpmpm-def}
Let $w\in W$ and $b\in B$. As discussed in Remark~\ref{rem:motivation-thol}, the functions $F_w^\tb(\cdot):=\overline{\eta}_wK^{-1}(w,\cdot)$ and $F_b^\tw(\cdot):=\overline{\eta}_bK^{-1}(\cdot,b)$ are t-holomorphic and, in particular, admit extensions $F_w^\tw$ and $F_b^\tb$
to the inner faces of the opposite color (except $w$ and $b$, respectively) such that the conditions~\eqref{eq:def-twhol} and~\eqref{eq:def-tbhol} are fulfilled. If~$\wc_b\in W$ and $\bc_w\in B$ satisfy~$b\sim\wc_b\ne w$ and $w\sim\bc_w\ne b$, this reads as
\begin{equation}
\label{eq:K-1viaFw}
K^{-1}(w,b)\ =\ \eta_w\cdot\tfrac{1}{2}\big(F_w^\tw(\wc_b) +  \eta_b^2\overline{F_w^\tw(\wc_b)}\,\big)\ =\ \eta_b\cdot\tfrac{1}{2}\big(F_b^\tb(\bc_w) +  \eta_w^2\overline{F_b^\tb(\bc_w)}\,\big).
\end{equation}
The next proposition provides a more symmetric representation of the dimer coupling function~$K^{-1}$,
which will be particularly useful in Section~\ref{sec:convergence}.
\begin{proposition} \label{prop:Fpmpm-def}
There exist four complex-valued functions~$\F\pm\pm$,
defined on pairs $(\bc,\wc)$ of inner faces $\bc\in B\smallsetminus\partial B$ and $\wc\in W\smallsetminus\partial W$, such that

\smallskip

\noindent (i) one has $\F--(\bc,\wc)=\overline{\F++(\bc,\wc)}$ and $\F+-(\bc,\wc)=\overline{\F-+(\bc,\wc)}$;

\smallskip

\noindent (ii) the following identities hold if $w\sim\bc\ne b$ and $b\sim \wc\ne w$:
\[
F_w^\tw(\,\cdot\,)\,=\,\tfrac{1}{2}\big(\overline{\eta}_w\F++ +\eta_w\F-+\big){(\bc,\,\cdot\,),}\quad
F_b^\tb(\,\cdot\,)\,=\,\tfrac{1}{2}\big(\overline{\eta}_b\F++ +\eta_b\F+-\big){(\,\cdot\,,\wc)};
\]
moreover, for such~$w\sim\bc$ and $b\sim\wc$ one has
\[
K^{-1}(w,b)\ =\ \tfrac{1}{4}\big(\F++ +\eta_b^2\F+- +\eta_w^2\F-++\eta_w^2\eta_b^2\F--\big){(\bc,\wc)};
\]
\noindent (iii) for each $\eta\in\C$, the function $\frac{1}{2}(\overline{\eta}\F++ +\eta\F-+){(\bc,\cdot)}$ is
t-white-holo\-morphic away from~$\bc$ and 
$\frac{1}{2}(\overline{\eta}\F++ +\eta\F+-){(\cdot,\wc)}$ is t-black-holomorphic away from $\wc$.
\end{proposition}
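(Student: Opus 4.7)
My plan is to define $\F++$ and $\F-+$ as the unique solution of a $2\times 2$ complex linear system coming from the t-white-holomorphicity of $F_{w}$ for two particular $w\sim\bc$, then use two independent representations of $4K^{-1}(w,b)$ (one through $F_w$ and one through $F_b$) to bootstrap full consistency for every other $w\sim\bc$ and $b\sim\wc$, and finally read off parts (ii) and (iii).

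To set up, I fix $(\bc,\wc)$ with $\bc\in B\setminus\partial B$ and $\wc\in W\setminus\partial W$. Since $\cT$ is a triangulation, $\bc$ has three white neighbors $w_1,w_2,w_3$ and $\wc$ has three black neighbors, and Lemma~\ref{lem:increment_phi} guarantees that the $\phi_{w_i}$'s (and analogously the $\phi$'s on the $b$-side) are pairwise distinct modulo~$\pi$. I pick $w_1,w_2\sim\bc$ with $\eta_{w_1}^2\ne\eta_{w_2}^2$ and $b_1,b_2\sim\wc$ with $\eta_{b_1}^2\ne\eta_{b_2}^2$, avoiding $\bc$ and $\wc$ respectively when needed. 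I then define $\F++(\bc,\wc)$ and $\F-+(\bc,\wc)$ as the unique solution of
\[
\F++(\bc,\wc)+\eta_{w_i}^2\F-+(\bc,\wc)\ =\ 2\eta_{w_i}F_{w_i}^\tw(\wc),\qquad i=1,2,
\]
and set $\F--:=\overline{\F++}$ and $\F+-:=\overline{\F-+}$, so that (i) holds by construction.

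The heart of the proof is to show that this definition propagates. For any $b\sim\wc$, substituting the defining equations and their conjugates into the t-white-holomorphicity projection $2K^{-1}(w_i,b)=\eta_{w_i}F_{w_i}^\tw(\wc)+\eta_{w_i}\eta_b^2\overline{F_{w_i}^\tw(\wc)}$ yields
\[
4K^{-1}(w_i,b)\ =\ \F++ + \eta_b^2\F+- + \eta_{w_i}^2\F-+ + \eta_{w_i}^2\eta_b^2\F--.
\]
Independently, t-black-holomorphicity of $F_b$ at $\bc$ gives $4K^{-1}(w_i,b)=2\eta_b F_b^\tb(\bc)+2\eta_b\eta_{w_i}^2\overline{F_b^\tb(\bc)}$. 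Both expressions are \emph{affine} in $\eta_{w_i}^2$ and agree for $i=1,2$; matching the constant and $\eta_{w_i}^2$ coefficients forces $\F++ +\eta_b^2\F+- = 2\eta_b F_b^\tb(\bc)$ for every $b\sim\wc$, together with the conjugate identity. Running the mirror-image argument with $b_1,b_2$ in place of $w_1,w_2$ then yields $\F++ +\eta_w^2\F-+ = 2\eta_w F_w^\tw(\wc)$ for every $w\sim\bc$. Part (ii) follows at once: the first two identities are these relations solved for $F_w^\tw$ and $F_b^\tb$, and the third is the $4K^{-1}$ formula just derived.

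For (iii), I fix $\bc$ and set $g_\eta(\cdot):=\tfrac{1}{2}(\overline{\eta}\F++(\bc,\cdot)+\eta\F-+(\bc,\cdot))$. For $\eta=\eta_{w_i}$ with $w_i\sim\bc$, part (ii) gives $g_{\eta_{w_i}}=F_{w_i}^\tw$, which is t-white-holomorphic at every inner $\wc'\ne w_i$. Any $\eta\in\C$ decomposes as $\eta=\sum_i a_i\eta_{w_i}$ with $a_i\in\R$ (since any two of the $\eta_{w_i}$'s $\R$-span $\C$), and because $\overline{\eta}=\sum_i a_i\overline{\eta}_{w_i}$ when $a_i\in\R$, a direct calculation gives $g_\eta=\sum_i a_i g_{\eta_{w_i}}$, manifestly independent of the choice of $(a_i)$. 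Hence $g_\eta$ is a real-linear combination of t-white-holomorphic functions, and is therefore t-white-holomorphic at every $\wc'\notin\{w_1,w_2,w_3\}$, i.e.\ away from $\bc$. The analogous statement for $\F++(\cdot,\wc),\F+-(\cdot,\wc)$ is proved symmetrically. The main obstacle I expect is the bootstrap step: once one recognizes that $4K^{-1}(w_i,b)$ admits two independent affine-in-$\eta_{w_i}^2$ expansions, agreement at two distinct points forces agreement of the coefficients, and everything then unlocks by linear algebra.
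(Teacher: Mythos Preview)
Your argument is correct and follows essentially the same route as the paper: both are linear-algebra arguments exploiting that each $F_w^\tw(\wc)$ (resp.\ $F_b^\tb(\bc)$) is determined by any two of its three projections. The only organizational difference is that the paper writes down the inverse explicitly---introducing dual coefficients $c^{[\pm]}_{\wc b}$, $c^{[\pm]}_{\bc w}$ and defining $\F\pm\pm(\bc,\wc)$ as a symmetric double sum $\sum_{w\sim\bc}\sum_{b\sim\wc}c^{[\pm]}_{\bc w}c^{[\pm]}_{\wc b}\overline{\eta}_b\overline{\eta}_w K^{-1}(w,b)$---whereas you solve a $2\times2$ system with two chosen neighbors and then bootstrap via the affine-in-$\eta_{w_i}^2$ matching to recover the third relation and the $b$-side relations. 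Your bootstrap step (two affine expressions for $4K^{-1}(w_i,b)$ agreeing at two values of $\eta_{w_i}^2$ forces coefficient equality) is the nontrivial observation; the paper hides the same fact inside the identities satisfied by the $c^{[\pm]}$'s. For part~(iii), the two proofs are identical.
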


\newcommand\cc[1]{c^{\scriptscriptstyle[#1]}}

\begin{proof} Given an inner white face $\wc$, let $\cc{+}_{\wc b}\in\eta_b\R$, $b\sim \wc$, be the (uniquely defined) triple of numbers satisfying the identities
\[
\sum_{{b:\,b\sim \wc}}\cc{+}_{\wc b}\overline{\eta}_b=2,\qquad \sum_{{b:\,b\sim \wc}}\cc{+}_{\wc b}\eta_b=0,
\]
and let $\cc{-}_{\wc b}\in\overline{\eta}_b\R$ be the complex conjugate of $\cc{+}_{\wc b}$. Note that the following identities are fulfilled for each t-white-holomorphic function~$F_\frw$:
\[
F_\frw^\tw(\wc)\,=\, \sum_{{b:\,b\sim\wc}}\cc{+}_{\wc b}\cdot\overline{\eta}_bF_\frw^\tb(b),\qquad \overline{F_\frw^\tw(\wc)}\,=\, \sum_{{ b:\,b\sim\wc}}\cc{-}_{\wc b}\cdot\overline{\eta}_bF_\frw^\tb(b),
\]
since $\overline{\eta}_bF_\frw^\tb(b)=\tfrac{1}{2}(\overline{\eta}_bF_\frw^\tw(\wc)+\eta_b\overline{F_\frw^\tw(\wc)})$. 
In particular, for $\wc\ne w$ one has
\[
F_w^\tw(\wc)\, = \, \sum_{{b:\,b\sim\wc}}\cc{+}_{\wc b}\cdot\overline{\eta}_bF_w^\tb(b) \, = \, \sum_{{b:\,b\sim\wc}}\cc{+}_{\wc b}\cdot \overline{\eta}_b\overline{\eta}_wK^{-1}(w,b)
\]
and similarly for the conjugate, with the coefficients $\cc{+}_{\wc b}$ replaced by $\cc{-}_{\wc b}$.

Given an inner black face $\bc$, let $\cc{+}_{\bc w}\in\eta_w\R$, $w\sim \bc$, be defined by the identities
\[
\sum_{{w:\,w\sim \bc}}\cc{+}_{\bc w}\overline{\eta}_w=2,\qquad \sum_{{w:\,w\sim \bc}}\cc{+}_{\bc w}\eta_w=0,
\]
and let $\cc{-}_{\bc w}\in\overline{\eta}_w\R$ be their complex conjugate. For $\bc\ne b$, the t-holomorphi\-city of $F_b$ implies
\[
F_b^\tb(\bc)\, = \, \sum_{{w:\,w\sim\bc}}\cc{+}_{\bc w}\cdot \overline{\eta}_wF_b^\tw(w) \, = \, \sum_{{w:\,w\sim\bc}}\cc{+}_{\bc w}\cdot \overline{\eta}_b\overline{\eta}_wK^{-1}(w,b)
\]
and similarly for the conjugate, with the coefficients $\cc{+}_{\bc w}$ replaced by $\cc{-}_{\bc w}$.

Now, for inner faces $\bc\in B\smallsetminus\partial B$ and $\wc\in W\smallsetminus \partial W$, define
\[
 \F\pm\pm(\bc,\wc)\ :=\ \sum_{{w:\,w\sim\bc}}\,\sum_{{b:\,b\sim \wc}}\cc{\pm}_{\bc w}\cc{\pm}_{\wc b}\cdot\overline{\eta}_b\overline{\eta}_wK^{-1}(w,b),
\]
where the superscript of $\cc{\pm}_{\bc w}$ corresponds to the first superscript of $\F\pm\pm$ and that of $\cc{\pm}_{\wc w}$ to the second one. Since $\overline{\eta}_b\overline{\eta}_wK^{-1}(w,b)\in\R$, the property (i) holds automatically.

Let us now prove the identities (ii). If $w\sim \bc\ne b$ and $\wc\ne w$, then
\begin{align*}
\overline{\eta}_w\F++({\bc},\wc)&+\eta_w\F-+({\bc},\wc)\\ & =\ \sum_{{w':\,w'\sim\bc}}\, \sum_{{b:\,b\sim \wc}}(\overline{\eta}_w\cc{+}_{\bc w'}+\eta_w\cc{-}_{\bc w'})\cc{+}_{\wc b}\cdot\overline{\eta}_b\overline{\eta}_{w'}K^{-1}(w',b)\\
&=\ \sum_{{b:\,b\sim\wc}}\cc{+}_{\wc b}\big(\,\overline{\eta}_wF_b^\tb(\bc)+\eta_w\overline{F_b^\tb(\bc)}\,)\\
&=\ \sum_{{b:\,b\sim\wc}}\cc{+}_{\wc b}\cdot 2\overline{\eta}_wF_b^\tw(w)\ =\ \sum_{{b:\,b\sim\wc}}\cc{+}_{\wc b}\cdot 2\overline{\eta}_bF_w^\tb(b)\ = \ 2F_w^\tw(\wc).
\end{align*}
A similar identity for the function~$F_b^\tb(\bc)$ follows from the same arguments and the formula for $K^{-1}(w,b)$ follows, e.g., from~\eqref{eq:K-1viaFw}.

Finally, note that (iii) holds if $\eta=\eta_{w}$, $w\sim \bc$ (or $\eta=\eta_b$, $b\sim \wc$, respectively). The result for all $\eta\in\C$ follows from the fact that t-holomorphic functions form a real-linear vector space.
\end{proof}

\section{T-embeddings and T-graphs}\label{sec:Tgraph}
We still assume that $\cT(\G^*)$ is a triangulation in this section.
Our approach to the properties (in particular the regularity) of t-holomorphic functions will be to link them to harmonic functions on related graphs called {\emph{T-graphs}, which were first introduced in~\cite{kenyon-sheffield}.} {We recall the definition of T-graphs and discuss basic properties of random walks on them in Section~\ref{sub:t-graphs}. The link (similar to~\cite[Lemma~2.4]{kenyon-honeycomb}) between t-holomorphic functions and harmonic functions on T-graphs is discussed in Section~\ref{sub:harmonicity}. Section~\ref{sub:harmonicity-1} contains a new material: another link between t-holomorphic functions and \emph{time-reversed} random walks on T-graphs.}

\subsection{T-graphs and their random walks}\label{sub:t-graphs}
In this section, we consider the image of $\G^*$ {under the mapping} $\cT + \cO$ and relate it to the geometry of $\cT$. We allow ourselves a similar abuse of the notation for $\cO$ and $\cT$ by {viewing} them {both} as complex-valued functions {defined on} an abstract graph $\G^*$ and as functions {defined in (a subset of)} $\C$. 
Note that in the latter case $\cT$ is just the identity {mapping}.

\begin{definition}\label{def:Tgraph}
A (non-degenerate) \emph{T-graph} in the whole plane is a closed {path-connected} subset of~$\C$ which can be written as the disjoint, locally finite, union of a countable number of open segments.

A finite {(non-degenerate)} T-graph is a closed {path-connected} subset of $\C$ which can be written as the disjoint union of a finite number of open segments and a finite {number} of single points, named \emph{`boundary vertices'}, {each of which is adjacent either to a single open segment or to a pair of those lying on the same line; see Fig.~\ref{fig:T-graph}.}

{We say that a finite T-graph has the \emph{topology of the disc} if all its `boundary vertices' are adjacent to the unbounded connected component of its complement.}
\end{definition}

\begin{figure}
\centering
\begin{minipage}{0.35\textwidth}
\includegraphics[clip, trim=4.4cm 10.4cm 6.25cm 4.1cm,width=\textwidth]{t-emb_finite.pdf}
\end{minipage}
\hskip 0.05\textwidth
\begin{minipage}{0.59\textwidth}
\includegraphics[clip, trim=4.45cm 15.8cm 6.2cm 4.1cm ,width=\textwidth]{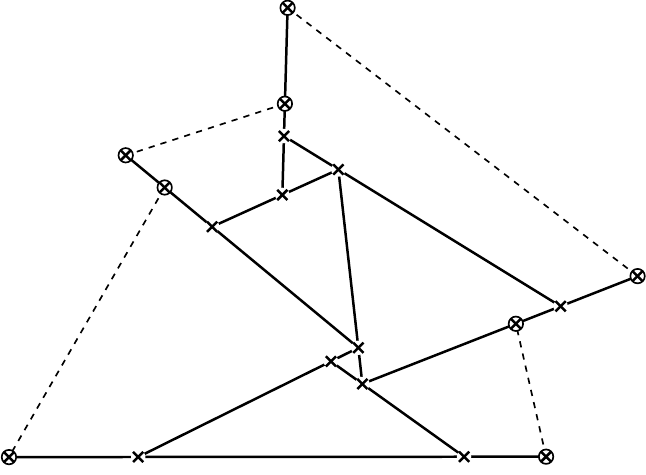}
\end{minipage}
\caption{{A (rescaled) non-degenerate T-graph $\cT+\alpha^2\cO$ (right) obtained from a finite t-embedding~$\cT$ (left, see also Fig.~\ref{finTembTriangl}): faces $w\in W$ of~$\cT$ correspond to faces of the T-graph whilst $b\in B$ are flattened to the segments (and vice versa in T-graphs $\cT+\overline{\alpha^2\cO}$).} {When~$\alpha$ varies, some of the faces degenerate; see Fig.~\ref{fig:t-graph-degen} below.}}
\label{fig:T-graph}
\end{figure}

Note that since the union of {open} segments {is required to} form a closed set, the endpoints of {each} segment have to lie {either} inside another segment or at a boundary point. Furthermore, this is the only way two segments can meet so the name refers to the fact that {each} vertex of a \emph{non-degenerate} T-graph {typically} looks like a {\fontfamily{ptm}\selectfont T} or a {\fontfamily{ptm}\selectfont K} {(or, in more involved situations, like an~{\fontfamily{ptm}\selectfont X} with one of the segments split into two at the intersection point etc)} but not like a {\fontfamily{ptm}\selectfont Y}; {see Fig.~\ref{fig:T-graph} and Fig.~\ref{fig:t-graph-degen}.}

\begin{definition}\label{def:Tgraph_degenerate}
A T-graph with possibly degenerate faces {in} the whole plane is a {disjoint, locally finite union of open segments and single points called \emph{degenerate faces} such that the following conditions hold (see also Fig.~\ref{fig:t-graph-degen}):
\begin{itemize}
\item each of the endpoints of an open segment either lies inside another segment as in the non-degenerate case, or coincides with a degenerate face;
\item each degenerate face is the endpoint of~$n+m$ open segments, among which $n\ge 3$ are called \emph{outgoing} and $m\ge 0$ \emph{incoming}, with a restriction that the directions of outgoing segments are not contained in a half-plane;
\item in the latter case we say that this degenerate face has degree~$n$ and assign to it an `infinitesimal' convex \mbox{$n$-gon} (i.e., an equivalence class of polygons considered up to homotheties) with sides parallel to the outgoing segments (note that for~$n=3$ no additional data are actually required as the directions of the sides define such an `infinitesimal' triangle uniquely).
\end{itemize}}
\noindent The definition in the finite case is similar. 
\end{definition}

\begin{figure}
\centering
\begin{minipage}{0.32\textwidth}
\includegraphics[clip, trim=4.4cm 18.6cm 11.1cm 4.1cm, width=\textwidth]{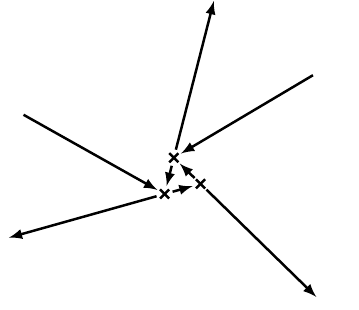}
\end{minipage}
\hskip 0.01\textwidth
\begin{minipage}{0.32\textwidth}
\includegraphics[clip, trim=4.4cm 12.3cm 11.1cm 10.4cm, width=\textwidth]{t-graph-degen.pdf}
\end{minipage}
\hskip 0.01\textwidth
\begin{minipage}{0.32\textwidth}
\includegraphics[clip, trim=4.4cm 5.8cm 11.1cm 16.9cm, width=\textwidth]{t-graph-degen.pdf}
\end{minipage}
\caption{{An example of a triangular face in the T-graph~$\cT+\alpha^2\cO$ for three consecutive values of~$\alpha\in\mathbb{T}$: the face degenerates in the central picture (this corresponds to~$n=3$ and $m=2$ in Definition~\ref{def:Tgraph_degenerate}). The arrows indicate possible transitions for the random} {walks on these T-graphs; see Definitions~\ref{def:chain}, \ref{def:chain-deg}} {and Remark~\ref{rq:degenerate_face}.}
\label{fig:t-graph-degen}}
\end{figure}

\begin{proposition}\label{prop:geomT}
For {each} $\alpha\in\C$ with $|\alpha| = 1$, the {image of~$\G^*$ under the mapping} $\cT + \alpha^2\cO$ is a T-graph, possibly with degenerate faces. In this T-graph:

(i) for {each $w \in W$,} the image of $w$ is a translate of $(1\!+\!\alpha^2\eta_w^2)\cT(w)$;

(ii) for {each} $b \in B$, 
the image of $b$ is a translate of $2\Pr( \cT(b), {\alpha}\overline{\eta}_b \R)$.

\noindent For a generic choice of $\alpha$, no face {of $\cT + \alpha^2\cO$} is degenerate.
\end{proposition}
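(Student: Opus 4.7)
The plan is to compute the restriction of the map $\cT+\alpha^2\cO$ to each face of $\cT$ explicitly, then piece the local pictures together using the angle condition. On a white face $\cT(w)$, Definition~\ref{def:O} gives $d\cO=\eta_w^2\,dz$, so $d(\cT+\alpha^2\cO)=(1+\alpha^2\eta_w^2)\,dz$ and the restriction is the affine map $z\mapsto(1+\alpha^2\eta_w^2)z+c_w$ for some constant $c_w$. This yields (i) directly, with degeneracy to a point precisely when $\alpha^2\eta_w^2=-1$. On a black face $\cT(b)$, the formula $d\cO=\overline{\eta}_b^2\,d\overline{z}$ gives the restriction $z\mapsto z+\alpha^2\overline{\eta}_b^2\overline{z}+c_b$; setting $\beta:=\alpha\overline{\eta}_b$ and using the elementary identity $z+\beta^2\overline{z}=2\Pr(z,\beta\R)$ for $|\beta|=1$, this equals $z\mapsto 2\Pr(z,\alpha\overline{\eta}_b\R)+c_b$, proving (ii). The two formulas agree along each common edge by identity~\eqref{eq:dO-on-T}, so $\cT+\alpha^2\cO$ is continuous across $\cT$.

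Next I would verify that the resulting subset of $\C$ fits Definition~\ref{def:Tgraph_degenerate}. The open white faces map to (possibly degenerate) open triangles and the black faces map to segments, so the candidate T-graph is the locally finite union of the images of the black faces, a union of open segments as required. The key check is the T-structure at the image of each interior vertex $v$ of $\cT$. Around $v$ the faces alternate $b_1,w_1,b_2,\ldots,b_k,w_k$; each white restriction being affine, the image of $\cT(w_j)$ keeps the angle $\theta(w_j,v)$ at the image vertex, so the total white angle there is $\sum_j\theta(w_j,v)=\pi$ by the angle condition. The remaining $\pi$ of angular space is occupied by the $k$ black segments passing through the image vertex. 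Lemma~\ref{lem:increment_phi} gives that the direction of consecutive segments rotates by $\theta(w_j,v)\bmod\pi$, so the $2k$ half-segments interlace the $k$ white wedges exactly as a T-intersection demands: the image of $v$ lies either in the relative interior of one segment or is the common endpoint of several.

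For generic $\alpha\in\bbT$ the countably many equations $\alpha^2\eta_w^2=-1$ (one per $w\in W$) are avoided, so no white face collapses to a point. Global non-overlap of the triangular tiles follows from the local T-pattern above together with a winding/area argument parallel to the closedness argument of Lemma~\ref{lem:dO-closed}: since $\cT+\alpha^2\cO$ is locally injective on each non-degenerate face and the pieces glue consistently along every edge, the image is a proper cellular decomposition.

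\textbf{Main obstacle.} The delicate point is the T-vertex step. Translating ``the white angles at $v$ sum to $\pi$'' into the precise statement that the images of black faces meet at $(\cT+\alpha^2\cO)(v)$ in the strict T-configuration of Definition~\ref{def:Tgraph} requires careful cyclic bookkeeping of the directions $\alpha\overline{\eta}_{b_j}$ and of which vertex of each triangle $\cT(b_j)$ projects to an extreme versus to an interior point of the segment $2\Pr(\cT(b_j),\alpha\overline{\eta}_{b_j}\R)$. For non-generic $\alpha$ one must also track how degenerate white faces merge into their neighbouring black segments, which is the origin of the ``possibly degenerate faces'' clause.
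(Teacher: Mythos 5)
Your computation of the face images is correct and matches the paper: items (i) and (ii) follow directly from $d\cO=\eta_w^2\,dz$ on white faces and $d\cO=\overline{\eta}_b^2\,d\overline{z}$ on black ones, with the identity $z+\beta^2\overline{z}=2\Pr(z,\beta\R)$ for $|\beta|=1$, and the local T-structure at a vertex is handled by the angle condition essentially as the paper does (the paper in fact dispatches this in one sentence, so the step you flag as the ``main obstacle'' is not where the real work lies).

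The genuine gap is the global non-overlap step. Your justification --- ``since $\cT+\alpha^2\cO$ is locally injective on each non-degenerate face and the pieces glue consistently along every edge, the image is a proper cellular decomposition'' --- is a non sequitur: a map that is locally injective and consistently glued can still be a multi-sheeted cover of its image (and here the map is not even a local homeomorphism, since black faces collapse to segments), so local data alone cannot rule out two white faces landing on top of each other. The appeal to an argument ``parallel to the closedness argument of Lemma~\ref{lem:dO-closed}'' points at the wrong lemma: closedness of $d\cO$ decomposes a contour integral into face contributions and says nothing about injectivity. The ingredient you are missing is the quantitative bound $|\cO(z')-\cO(z)|\le|z'-z|$ fed into a Rouch\'e (``dog on a leash'') comparison: if the images of two white faces $w,w'$ overlapped at a point $z$ not on any segment, take a closed edge-path $\gamma$ surrounding both $\cT(w)$ and $\cT(w')$ and normalize the additive constant of $\cO$ so that $\gamma$ surrounds $z$; since every white face preserves orientation, the winding of $(\cT+\alpha^2\cO)(\gamma)$ around $z$ would be at least $4\pi$, whereas the $1$-Lipschitz bound on $\cO$ forces this winding to equal that of $\gamma$ around $z$, namely $2\pi$ --- a contradiction. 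Without this (or an equivalent global argument, e.g.\ a limit from the bi-Lipschitz maps $z\mapsto z+\alpha^2\cO(z)$ with $|\alpha|<1$), the proof is incomplete.
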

\begin{proof}
Let us start by identifying the image of faces of $\G^*$.
On a white face $w$, one has ${d (\cT + \alpha^2\cO) = ( 1 + \alpha^2\eta_w^2) dz}$ which proves the first item. The second item is identical, so we just need to show that $\cT + \alpha^2\cO$ is a T-graph. The angle property of a t-embedding together with the fact that all white faces preserve the orientation imply that the end of each segment either lies on some other segment or belongs to a degenerate face. Therefore $\cT + \alpha^2\cO$ is a union of segments which satisfies Definition~\ref{def:Tgraph_degenerate} except the fact that the segments are disjoint.

Let us show that there are no overlaps. Suppose that the images of two white faces $w$,~$w'$ overlap. {Choose} a point ${z \in (\cT+\cO)( w) \cap (\cT + \cO)( w')}$ such that $z$ is not on any segment of the T-graph. Recall that  $\cO$ and $\cT$ can be seen as functions from $\C$ to $\C$ and in this case, $\cT$ is just the identity.
Let us orient edges of $\cT$ in the counterclockwise direction around each white face. Let~$\gamma$ be an oriented closed edge path surrounding both $\cT(w)$ and $\cT(w')$. Note that $\cO$ {is} defined up to an additive constant, so we can assume that $\gamma$ {surrounds the} point~$z$. Since the orientation of all white faces of $\cT + \cO$ is the same as the orientation of white faces of $\cT$ the winding of $(\cT+\cO)( \gamma)$ around $z$ is at least~$4\pi$. On the other hand, we have clearly $|\cO( z') - \cO(z)| \leq |\cT(z') - \cT(z)|$ for all $z'\in \gamma$ so by the Rouch\'e theorem (or ``dog on a leash'' lemma) the winding of $(\cT+\cO)( \gamma)$ around $z$ is the same as the winding of $\gamma$ around $z$, which is~$2\pi$. This is a contradiction.
\end{proof}

Note that by Definition~\ref{def:eta} and Definition~\ref{def:O}, $\alpha^2\cO$ is just the origami map corresponding to the 
origami square root function ${\alpha}\eta$. Also note that for any white face $w$, its image is degenerate exactly for $\alpha^2 = - \overline{\eta}_w^2$. {In what follows we focus our attention on the T-graph} $\cT+\cO$ without loss of generality.

\begin{definition}\label{def:chain}
The (continuous time) random walk on a {whole plane} T-graph with no degenerate faces is the Markov chain with the following transition rates. For any interior vertex~$v$, there exists a unique segment $(v^-,v^+)$ such that $v \in (v^-,v^+)$. 
We set
\[
{q}( v \to v^\pm)\ :=\ \frac{1}{|v^\pm - v|\cdot|v^+ - v^-|}\,
\]
{and} all other transitions have probability zero.

In the finite triangulation case, for each edge of the T-graph corresponding to a \emph{boundary} black face of~$\cT$ {(recall that such faces have degree four)} we make a choice between two options to split this face into two triangles and define the possible transitions accordingly; see Fig.~\ref{boundary_t_graph}. Boundary vertices act as sinks for the chain.
\end{definition}

\begin{figure}
\center{\includegraphics[width=\textwidth]{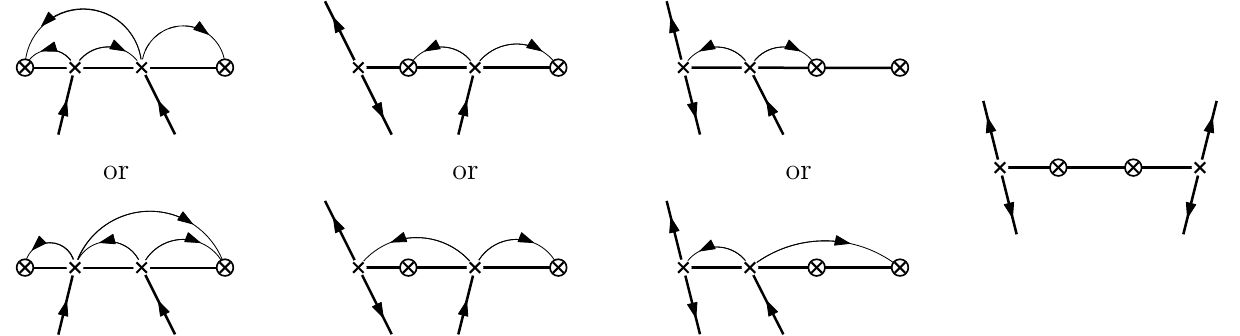}}
\caption{Possible configurations of a T-graph near the boundary of a finite triangulation. The circled crosses are boundary vertices (sinks) and the arrows indicate possible transitions for the random walk; see also Section~\ref{sec:non_triangulation} {for a more general discussion.}}\label{boundary_t_graph}
\end{figure}

\begin{remark}
The Markov chain $X_t$ defined above is a martingale. The choice of transition rates is made so that it fits the expected time for a Brownian motion started at $v$ {and} moving along the segment $[v^-, v^+]$ to hit the endpoints. In particular, {in the whole plane case one has} $\operatorname{Tr}(\operatorname{Var}( X_t ) )= t$ for all $t\ge 0$.
\end{remark}

For T-graphs associated to t-embeddings {with triangular faces,} Definition~\ref{def:chain} can be naturally extended to degenerate faces as follows.

\begin{definition}\label{def:chain-deg}
Consider a T-graph of the form $\cT +  \cO$ and suppose $v =( \cT +  \cO)(w)$ is a degenerate {triangular} face. Let $b_1, b_2, b_3$ be the adjacent {faces} of $w$ in ${\G^*}$ and let $v_1, v_2, v_3$ be the endpoints of the corresponding segments in $\cT +\cO$. We define transition rates for the random walk from~$v$~as
\[
{q}( v \to v_k ) := \frac{{m_k}}{|v_k - v|^2},\qquad 
{m_k} := \frac{|d\cT( b_kw^* )|\cdot |v_k - v|}{\sum_{j=1}^3 |d\cT( b_jw^*) |\cdot |v_j - v|}.
\]
\end{definition}

\begin{remark}\label{rq:degenerate_face}
One can understand {these} transition probabilities as follows. The {degenerate} vertex~$v$ corresponds to three vertices of non-degenerate T-graphs \mbox{$\cT + {\alpha^2}\cO$}, with~$\alpha\to 1$. For~$\alpha=1$, these vertices form a face of diameter $0$ but $v$ still contains the information on the aspect ratio of~$\cT(w)$. In particular, each of these three collapsed vertices now have a possible transition to one of the~$v_k$'s with the rate~$|v_k-v|^{-2}$ and a transition to other vertex with infinite rate; {see Fig.~\ref{fig:t-graph-degen}.} These infinite rates still depend on the geometry {of~$\cT(w)$} and have invariant measure ${m_k}$. {Clearly,} this invariant measure just multiplies the rates of the long jumps.
\end{remark}

It is not hard to see that the law of the (continuous time) random walk on $\cT + \alpha^2 \cO$ is continuous in $\alpha$, including those producing degenerate faces, cf.~Remark~\ref{rq:degenerate_face}.

We now make the transition probabilities more explicit in terms of the geometry of the \mbox{t-embedding} itself. For this recall Lemma~\ref{lem:increment_phi} and
note that it implies that the {values} $\phi_w$ around a black face of $\G^*$ are {monotone} with a single jump of $\pi$.
If $v$ is a non-degenerate vertex of~$\cT + \cO$, denote by $b(v)$ the unique black face such that $v$ is an interior point of {the segment} $(\cT+\cO)({b(v)})$. If~${v = (\cT + \cO)(w)}$ is a degenerate face, define by $b_1(v), b_2(v), b_3(v)$ the three black faces adjacent to~$w$.  Finally, denote the area of a triangle $\cT(b)$ by $S_b$.

\begin{lemma}\label{lem:proj}
Let $b$ be an inner black face of $\G^*$, 
let $A$, $B$, $C$ be vertices of the triangle $\cT(b)$ {listed counterclockwise}, let $w_A$, $w_B$, $w_C$ be the {opposite} white faces {adjacent to~$b$}, and let ${v_A = (\cT + \cO)(A)}$ 
{and similarly for~$v_B$ and $v_C$; see Fig.~\ref{fig:ABC-transitions}.} 
Then, the following holds:

\noindent (i) the vertex $v_A$ {lies in} the interior of the segment $(\cT + \cO)(b)$ (i.e., $b = b(v_A)$) if and only if
\[{-\pi/2 <}~\phi_{w_C} < \phi_{w_A} < \phi_{w_B} < \pi/2\,;\]
\noindent (ii)  if $b = b(v_A)$, then
\[
{q}( v_A \to v_B ) =
\frac{\tan(\phi_{w_A}) - \tan(\phi_{w_C})}{8 S_{b}}\ \ \text{and}\ \
{q}( v_A \to v_C ) = \frac{\tan(\phi_{w_B}) - \tan(\phi_{w_A})}{8 S_{b}}\,;
\]
\noindent (iii) vertices $v_A$ and $v_B$ coincide if and only if 
$\phi_{w_C} = \pi/2$. In this case
\[
{q}( v \to v_C ) = \frac{\tan(\phi_{w_B}) - \tan(\phi_{w_A})}{8(S_{b_1(v)}+S_{b_2(v)}+S_{b_3(v)})}\,,\qquad {v=v_A=v_B\,.}
\]
\end{lemma}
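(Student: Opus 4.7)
The plan is to pull the geometry back through the map $z \mapsto z + \cO(z)$ using Proposition~\ref{prop:geomT}(ii). Up to a global rotation I may assume $\eta_b = 1$, so that the segment $(\cT+\cO)(b)$ lies along the real axis and $v_X = 2\Re(X) = 2X_x$ for each $X \in \{A, B, C\}$; in particular, $v_A$ is interior iff $A_x$ lies strictly between $B_x$ and $C_x$. To express the angles $\phi_{w_X}$ in terms of $\cT(b)$, I use $\overline{\eta}_b\overline{\eta}_{w_X} = d\cT(bw_X^*)/|d\cT(bw_X^*)|$: with $\eta_b = 1$, $\eta_{w_X}$ is the complex conjugate of the unit direction of the edge of $\cT(b)$ opposite to $X$ (oriented so $b$ lies on its right), and a direct computation yields
\[
\tan \phi_{w_A} = \frac{C_y - B_y}{B_x - C_x}, \qquad \tan \phi_{w_B} = \frac{A_y - C_y}{C_x - A_x}, \qquad \tan \phi_{w_C} = \frac{B_y - A_y}{A_x - B_x}.
\]

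Combining these in pairs and using the cross-product identity $(A-B)\times(C-B) = -2 S_b$ (for $A, B, C$ counterclockwise) gives the key formula
\[
\tan \phi_{w_A} - \tan \phi_{w_C} = \frac{-2 S_b}{(B_x - C_x)(A_x - B_x)},
\]
and analogously for $\tan \phi_{w_B} - \tan \phi_{w_A}$. Claim~(ii) is then immediate: in the case $C_x < A_x < B_x$ (so $v_A$ is interior), Definition~\ref{def:chain} yields $q(v_A \to v_B) = [4(B_x - A_x)(B_x - C_x)]^{-1}$, which matches $(\tan\phi_{w_A} - \tan\phi_{w_C})/(8 S_b)$ after rearranging signs; the other ordering $B_x < A_x < C_x$ is handled identically. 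Claim~(i) follows as a corollary: $v_A$ is interior iff both outgoing transition rates at $v_A$ are strictly positive, which, combined with $\phi \in (-\pi/2, \pi/2]$ (Remark~\ref{rem:eta-def}) and strict monotonicity of $\tan$ on $(-\pi/2, \pi/2)$, translates precisely into $-\pi/2 < \phi_{w_C} < \phi_{w_A} < \phi_{w_B} < \pi/2$. The excluded value $\phi_{w_X} = \pi/2$ means $\eta_{w_X}^2 = -1$, i.e., a degenerate white face $w_X$ in $\cT + \cO$ by Proposition~\ref{prop:geomT}(i), which is exactly the setting of~(iii).

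For claim~(iii), $\phi_{w_C} = \pi/2$ forces $A_x = B_x$, hence $v_A = v_B$, and the resulting degenerate vertex $v$ is the image of the entire white face $w_C$. Applying Definition~\ref{def:chain-deg}, the arithmetic reduces to verifying the identity
\[
|d\cT(b_j w_C^*)| \cdot |v_j - v| \;=\; 4 S_{b_j} \qquad \text{for each } j \in \{1,2,3\}.
\]
This is simply $|e| \cdot (2h) = 4 S_{b_j}$ for the $w_C$-edge $e$ of $b_j$ and the altitude $h$ of $b_j$ from $e$; the identification $|v_j - v| = 2h$ holds because the degeneracy $\eta_{w_C}^2 = -1$, read through $\overline{\eta}_{b_j}\overline{\eta}_{w_C}$ being the direction of $e$, forces $e \perp \overline{\eta}_{b_j}$ for \emph{every} $j$. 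Plugging this in gives $m_j = S_{b_j}/(S_{b_1}+S_{b_2}+S_{b_3})$, and combining with $\tan\phi_{w_B} - \tan\phi_{w_A} = 2 S_b/(C_x - A_x)^2$ (the formula above specialized to $A_x = B_x$) yields the rate stated in~(iii).

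The main obstacle I anticipate is the uniformity in (iii): the rotation normalization $\eta_b = 1$ that made (i) and (ii) transparent cannot simultaneously straighten the projection directions $\overline{\eta}_{b'}, \overline{\eta}_{b''}$ of the two other black neighbors of $w_C$, so the perpendicularity $e \perp \overline{\eta}_{b_j}$ and the resulting base-times-height identity must be extracted \emph{coordinate-freely} from the single condition $\eta_{w_C}^2 = -1$.
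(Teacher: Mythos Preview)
Your proof is correct. The route differs from the paper's in that the paper works trigonometrically: it proves (i) first via Lemma~\ref{lem:increment_phi} (identifying the interior vertex of $\Pr(\eta_b\cT(b),\R)$ as the one where the $\phi$-increment picks up the extra $\pi$), then derives (ii) from the side-length identities $|v_B-v_A|=2\cos(\phi_{w_C})|AB|$ etc.\ together with $2S_b=|AB|\,|BC|\sin\theta(b,B)$, and finally obtains the base-times-height identity in (iii) from the angle relation $\phi_{w_A}=\theta(b,B)-\tfrac\pi2$. You instead pass to the rotated coordinates $\eta_b\cT(b)$, write $\tan\phi_{w_X}$ as a slope, and extract everything from the single cross-product identity $\tan\phi_{w_A}-\tan\phi_{w_C}=-2S_b\big/[(B_x-C_x)(A_x-B_x)]$; you then read (i) off as a sign condition rather than proving it separately. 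Your coordinate-free perpendicularity argument for (iii) --- that $\eta_{w_C}^2=-1$ forces each shared edge $e$ to be orthogonal to $\overline\eta_{b_j}$, whence $|e|\cdot|v_j-v|=|e|\cdot 2h_j=4S_{b_j}$ --- is slightly cleaner than the paper's trigonometric verification and makes transparent why the identity holds uniformly for all three neighbours. One small point worth stating explicitly: your phrase ``assume $\eta_b=1$'' is really shorthand for replacing $A,B,C$ by $\eta_bA,\eta_bB,\eta_bC$ (equivalently, the paper's $\Pr(\cT(b),\overline\eta_b\R)=\overline\eta_b\Pr(\eta_b\cT(b),\R)$); this does not change $\eta$ or $\cO$, so the $\phi_{w_X}$ in your formulas are the original ones and no circularity arises.
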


\begin{proof}
Note that $\Pr( \cT(b) ,\overline{\eta}_b{\R}) = \overline{\eta}_b \Pr( {\eta}_b \cT(b), \R)$ and that by definition of the origami square root function $\eta$,
{the sides of the triangle~$\eta_b\cT(b)$ are parallel to the lines~$\overline{\eta}_{w_A}\R, \overline{\eta}_{w_B}\R, \overline{\eta}_{w_C}\R$. Assuming that these lines are not vertical,}
 it is clear that the angle equality {from Lemma~\ref{lem:increment_phi}} holds without an additional $\pi$ at the rightmost {and the leftmost vertices of the triangle} $ \eta_b \cT(b)$. 
Therefore, the interior {vertex} of ${\Pr(}\eta_b \cT(b),\R)$ corresponds to the vertex of $\cT(b)$ where the angle equality holds with an additional $\pi$. {This proves (i).}

\begin{figure}
\center{\includegraphics[width=\textwidth]{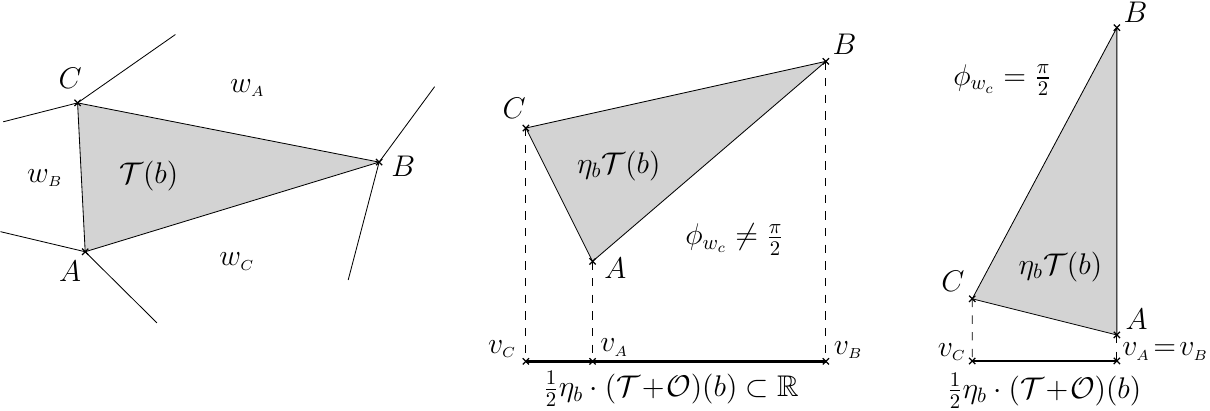}}
\caption{Notation used in Lemma~\ref{lem:proj}: a triangle~$\cT(b)$ in a t-embedding {(left) and} the corresponding edge of the T-graph~$\cT+\cO$ in the non-degenerate {(center) and degenerate (right) cases.}\label{fig:ABC-transitions}}
\end{figure}

To compute the transition rates {from the vertex~$v_A$ such that $b\!=\!b(A)$}, note that
\begin{align*}
 |v_B-v_A|\ &=\  2 \cos( \phi_{w_C} ) |AB|\,,  \\
 |v_A-v_C|\ &=\  2 \cos( \phi_{w_B} ) |AC|\,, \\
 |v_B-v_C|\ &=\ 2\cos( \phi_{w_A }) |BC|\,.
\end{align*}
This gives the transition rate
\[
{q}( v_A \to v_B )\ =\ \frac{1}{2 \cos( \phi_{w_C} ) |AB|\cdot 2\cos( \phi_{w_A} ) |BC|}\ =\ \frac{\sin( \theta(b, B))}{8\cos( \phi_{w_C} )\cos( \phi_{w_A} ) S_b },
\]
where ${\theta(b,B)=\phi_{w_A} - \phi_{w_C}}$ is the angle {of} the triangle $\cT(b)$ at the {vertex} $B$.
{This implies~(ii).}

{In} the degenerate case {(iii),} it is obvious that $v_A = v_B$ if and only if 
$\phi_{w_C} = \pi/2$ and that in this case $v_C$ is the other endpoint of the segment $ (\cT\!+\!\cO)(b)$. First, we note that
\[
|\cT( bw_C^*) | \cdot |v_B - v_C|\ =\ |AB| \cdot {2}\cos (\phi_{w_A})|BC|\ =\ {4} S_b\cdot \frac{\cos \phi_{w_A}}{\sin \theta(b,B)}\ =\ {4S_b}
\]
since $\phi_{w_C} = \pi/2$ and hence $\phi_{w_A} = \theta(b, B) - \pi/2$. This shows
that
\[
{m_k}{} = {S_{b_k}}/({S_{b_1} + S_{b_2} + S_{b_3}}).
\]
The rest of the proof is a simple computation similar to the non-degenerate case.
\end{proof}

We now give a simple geometric expression for the invariant measure of the random walk on the T-graph $\cT+\cO$. {It is worth noting that in fact we have the \emph{same} measure for each of the random walks on~$\cT+\alpha^2\cO$, even though these T-graphs are quite different for different~$\alpha\in\mathbb{T}$.}

\begin{corollary}\label{cor:invariant_measure}
For a whole plane T-graph~$\cT+\cO$, define an infinite measure on its vertices by $\mu(v) := S_{b(v)}$ if $v$ is not a degenerate vertex of $\cT+ \cO$ and ${\mu(v) := S_{b_1(v)} + S_{b_2(v)} +S_{b_3(v)}}$ if $v$  is a degenerate one. 
The measure $\mu$ is invariant for the random walk on $\cT+ \cO$ {defined above.}
\end{corollary}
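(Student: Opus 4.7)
The plan is to verify directly, vertex by vertex, that $\mu$ satisfies the detailed balance identity
\[
 \sum_{v\ne v'}\mu(v)\,q(v\to v')\ =\ \mu(v')\sum_{v''\ne v'}q(v'\to v''),
\]
by using Lemma~\ref{lem:proj} to turn both sides into sums of differences of tangents of the angles $\phi_w$ attached to the white faces around $V:=(\cT+\cO)^{-1}(v')$.

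Fix first a non-degenerate vertex $v'$ and write $b_1,w_1,b_2,w_2,\dots,b_k,w_k$ for the faces of $\G^*$ surrounding $V$ listed counterclockwise. By the T-graph property, $v'$ is interior to a unique segment $(\cT+\cO)(b_{j_0})$, so there are exactly two out-neighbours $v'_\pm$, namely the images of the other two corners of $b_{j_0}$; Lemma~\ref{lem:proj}(ii) expresses each rate $q(v'\to v'_\pm)$ as a difference of two tangents divided by $8S_{b_{j_0}}$, and after multiplying by $\mu(v')=S_{b_{j_0}}$ one obtains $\frac{1}{8}$ times a difference of tangents of the form $\tan\phi_{w_j}-\tan\phi_{w^*_{j_0}}$, where $w^*_{j_0}$ denotes the white face of $b_{j_0}$ opposite to $V$. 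The in-neighbours of $v'$ are, by the same T-graph property, in bijection with black faces $b_j$ adjacent to $V$ other than $b_{j_0}$: for each such $b_j$, the sole in-neighbour coming from this face is the interior corner of the segment $(\cT+\cO)(b_j)$, and Lemma~\ref{lem:proj}(ii) again expresses the rate as a tangent difference divided by $8S_{b_j}$. Weighting by $\mu=S_{b_j}$, both sides of the invariance identity are reduced to sums (with appropriate signs) of $\frac{1}{8}(\tan\phi_{w_{j-1}}-\tan\phi_{w_j})$-type terms indexed by the faces around $V$.

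The key step is then to check that, after the case analysis dictated by Lemma~\ref{lem:proj}(i), these tangent differences telescope to zero. The bookkeeping is governed by the sequence of signs of $\phi_{w^*_j}-\phi_{w_{j-1}}$ and $\phi_{w_j}-\phi_{w^*_j}$ for $j=1,\dots,k$: for each $b_j\ne b_{j_0}$, Lemma~\ref{lem:proj}(i) applied inside the triangle $b_j$ pins down which corner of $b_j$ is the interior vertex of its segment, and hence fixes both which endpoint of $(\cT+\cO)(b_j)$ the walk from the interior corner can hit ($v'$ or not) and, if it does hit $v'$, which tangent difference appears. Combined with the observation from Lemma~\ref{lem:increment_phi} that as one goes once around $V$ the values $\phi_{w_1},\dots,\phi_{w_k}$ (each of them taken in $(-\pi/2,\pi/2]$) execute a monotone cyclic sequence with exactly one jump by $\pi$, the signed tangent differences cancel in pairs along the cycle. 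The vanishing of this telescoped sum is the arithmetic heart of the proof; this is the step I expect to require the most careful sign tracking.

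Finally, if $v'$ is a degenerate vertex, say $v'=(\cT+\cO)(w)$ for a white face $w$ with adjacent black triangles $b_1(v'),b_2(v'),b_3(v')$, one has $\mu(v')=S_{b_1(v')}+S_{b_2(v')}+S_{b_3(v')}$; the invariance can be obtained either by a direct computation using Definition~\ref{def:chain-deg} and Lemma~\ref{lem:proj}(iii) (the three degenerate pre-images of $v'$ carry the invariant measure $m_k$ with the same telescoping mechanism as above), or by passing to the limit $\alpha\to 1$ in the family $\cT+\alpha^2\cO$ using the continuity of the random walk in $\alpha$ and the invariance of $\mu$ established in the non-degenerate case. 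Either way one concludes that $\mu$ is invariant for the chain on $\cT+\cO$.
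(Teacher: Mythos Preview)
Your proposal is correct and follows essentially the same route as the paper: weight the transition rates from Lemma~\ref{lem:proj} by $\mu=S_b$ so that they become $\tfrac18$ times tangent differences, then invoke the cyclic monotonicity of the $\phi_{w_j}$ with a single $\pi$-jump (Lemma~\ref{lem:increment_phi}) to telescope. One simplification the paper makes that removes the ``careful sign tracking'' you anticipate: for each $b_j\neq b_{j_0}$ the $\mu$-weighted incoming flow through $b_j$ is exactly $\tfrac18(\tan\phi_{w_{j+1}}-\tan\phi_{w_j})$ and involves \emph{only} the two white faces adjacent to $V$ --- the opposite face $w^*_j$ never enters --- so the incoming sum over $j\neq j_0$ telescopes immediately to match the total outgoing flow, with no case analysis needed.
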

\begin{proof}
First, assume that there are no degenerate faces. {Consider} a vertex~$v$ of~$\cT+\cO$ and let~$2n$ be its degree in~$\G^*$.
The {consecutive} values $\phi_{w_j}$ for the white faces adjacent to $v$ differ by either $ \theta(b_j,v)$ or ${\theta(b_j,v)}-\pi$ (where $b_j$ are the black vertices around~$v$). 
{Moreover,} it is easy to see that there is exactly one increment $\theta(b_j,v)-\pi$ 
and without loss of generality {we can assume that this is} the increment from $ \phi_{w_n}$ to $ \phi_{w_1}$. By Lemma~\ref{lem:proj}, the two outgoing edges from $v$ {lead to} the two other vertices of $b_n$ and the total outgoing rate is ${\frac18}(\tan \phi_{w_n} - \tan \phi_{w_1})$.

For the incoming rate, we see from Lemma~\ref{lem:proj} that the incoming rate through the edge {of $\cT+\cO$} corresponding to~$b_j$ is ${\frac18}(\tan \phi_{w_{j+1}} - \tan \phi_{w_j})$, independently of which vertex it comes from. Therefore the total incoming rate is also ${\frac18}(\tan \phi_{w_n} - \tan \phi_{w_1})$, which concludes the proof.

Finally, it is {not hard} to check that, if $v=(\cT+\cO)(w)$ is a degenerate face, {then} the above arguments still hold, one only needs to consider more possible transitions. {An alternative -- and more conceptual -- argument is to use the continuity of the random walks on~$\cT+\alpha^2\cO$ with respect to~$\alpha$; see~Remark~\ref{rq:degenerate_face}.}
\end{proof}

{\begin{remark} In the finite case, one clearly cannot define a true invariant measure in presence of the absorbing {boundary}. Nevertheless, let us note that the definition of~$\mu$ given above still makes sense. More precisely, recall that the definition of~$X_t$ on a segment~$(\cT+\cO)(b)$ obtained from a boundary quad~$b$ requires a choice of a decomposition of~$b$ into two triangles; see Fig.~\ref{boundary_t_graph}. If~$v$ is an inner vertex of~$\cT+\cO$ on such a segment, then we set~$\mu(v)$ to be the area of the corresponding triangle and not that of~$b$. It is easy to see that thus defined measure~$\mu$ is subinvariant.
\end{remark}}

Clearly, one can exchange the roles of black and white faces replacing the origami map~$\cO$ by its conjugate~$\overline{\cO}$. {Below we list} properties of thus obtained T-graphs with flattened white faces.

\begin{proposition}\label{prop:white_Tgraph}
For each $\alpha\in\mathbb{T}$, the mapping $\cT + {\overline{\alpha^2\cO}}$ defines a T-graph {and}

\smallskip 
\noindent (i) for {each} ${w\in W}$, the edge $(\cT + {\overline{\alpha^2\cO}})(w)$ is a translate of $2\Pr( \cT(w), {\overline{\alpha}}\overline{\eta}_w \R)$;

\smallskip 
\noindent (ii) for {each} ${b\in B}$, the face $(\cT + {\overline{\alpha^2\cO}})(b)$ is a translate of $(1 + {\overline{\alpha}^2}\eta_b^2 ) \cT(b)$.

\smallskip 

\noindent (iii) Let~$\alpha=1$ {and $w={\scriptstyle \triangle} ABC$ be an interior white face} of $\cT$. The vertex \mbox{$v_A:=(\cT+\overline{\cO})(A)$} lies in the interior of the {corresponding} segment $(\cT + \overline{\cO} )(w)$ {of the T-graph} if and only if
$-\pi/2<\phi_{b_C}<\phi_{b_A}<\phi_{b_B}<\pi/2$.

\smallskip 
\noindent (iv) In the above case, the transition rates of the random walk are
\[
{q}( v_A \to v_B ) = \frac{\tan \phi_{b_A} - \tan \phi_{b_C}}{8 S_w}
\quad {\text{and}}\quad
{q}( v_A \to v_C) = \frac{\tan \phi_{b_B} - \tan \phi_{b_A}}{8 S_w}.
\]


\noindent (v) Vertices $v_A$ and $v_B$ coincide if and only if 
$\phi_{b_C} = \pi/2$. In this case,
\[
{q}( v\to v_C ) = \frac{\tan(\phi_{b_B}) - \tan(\phi_{b_A})}{8(S_{w_1(v)}+S_{w_2(v)}+S_{w_3(v)})}\,,\qquad {v=v_A=v_B\,}.
\]


{\noindent (vi) The invariant measure~$\mu$ for the random walk discussed above is given by $\mu(v):=S_{w(v)}$ if $v$ is non-degenerate and~$\mu(v):=S_{w_1(v)}+S_{w_2(v)}+S_{w_3(v)}$ otherwise.}
\end{proposition}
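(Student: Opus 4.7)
The plan is to observe that the construction $\cT+\alpha^2\cO$ and its associated random walk are formally symmetric under the exchange of the two bipartite classes $B$ and $W$, and that this exchange corresponds precisely to replacing the origami map $\cO$ by its complex conjugate $\overline{\cO}$ (equivalently, replacing the origami square root function $\eta$ by $\overline{\eta}$). Under this substitution, it is now the black faces that preserve their orientation under the folding, while the white ones are flattened. Hence each of the six items should follow from the corresponding statement for $\cT+\alpha^2\cO$ proved in Proposition~\ref{prop:geomT}, Lemma~\ref{lem:proj} and Corollary~\ref{cor:invariant_measure}, with the roles of $b$ and $w$ swapped throughout.

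Concretely, I would first compute $d(\cT+\overline{\alpha^2\cO})$ face by face: on a black face $b$ one has $d\overline{\cO} = \eta_b^2\,dz$, so the restriction of the map is a multiplication by $1+\overline{\alpha}^2\eta_b^2$, yielding item (ii); on a white face $w$ one has $d\overline{\cO} = \overline{\eta}_w^2\,d\overline{z}$, so $\cT(w)$ is flattened onto a translate of $2\Pr(\cT(w),\overline{\alpha}\overline{\eta}_w\R)$, yielding item (i). The verification that $\cT+\overline{\alpha^2\cO}$ is genuinely a T-graph (with no overlaps) is the same Rouch\'e-type ``dog on a leash'' argument as in the second half of the proof of Proposition~\ref{prop:geomT}, only now the winding is computed around a point inside two overlapping images of black faces and a contour surrounding both of them is oriented counterclockwise around each black face.

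For items (iii) and (iv), I would reuse the computations in the proof of Lemma~\ref{lem:proj}: that lemma relies on the increment identity of Lemma~\ref{lem:increment_phi}, which was stated there in two symmetric forms, one for consecutive black faces around a white vertex and one for consecutive white faces around a black vertex. Applying the second of these forms to the white face $w={\scriptstyle \triangle}ABC$, the geometric identities $|v_B-v_A|=2\cos(\phi_{b_C})|AB|$, etc., go through verbatim with $w$'s replaced by $b$'s and $S_b$ replaced by $S_w$, giving items (iii)--(v) (the last one being the degenerate analogue exactly as in Lemma~\ref{lem:proj}(iii)). Finally, item (vi) follows from the same telescoping argument as in Corollary~\ref{cor:invariant_measure}: at each vertex $v$, the $\phi_{b_j}$ attached to the consecutive black faces around $v$ differ by either $\theta(w_j,v)$ or $\theta(w_j,v)-\pi$ with exactly one occurrence of the latter, and the incoming and outgoing rates both telescope to $\tfrac{1}{8}(\tan\phi_{b_n}-\tan\phi_{b_1})$ after multiplication by $\mu(v)$.

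The only real obstacle is pure bookkeeping: one must carefully track sign conventions, counter-clockwise orderings, and the direction of the jumps on an edge, so that the second half of Lemma~\ref{lem:increment_phi} replaces the first at every step consistently. Since no new geometric input is required and the bipartite symmetry of the setup is strict, there is no genuine difficulty beyond this verification.
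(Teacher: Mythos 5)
Your proposal is correct and is essentially the paper's own proof: the authors dispose of Proposition~\ref{prop:white_Tgraph} with the single remark that ``the proof mimics the case of T-graphs~$\cT+\alpha^2\cO$ with flattened black faces,'' i.e.\ exactly the black/white symmetry you spell out via Proposition~\ref{prop:geomT}, Lemma~\ref{lem:increment_phi}, Lemma~\ref{lem:proj} and Corollary~\ref{cor:invariant_measure}. Your face-by-face computation of $d(\cT+\overline{\alpha^2\cO})$ and the swapped use of the two forms of Lemma~\ref{lem:increment_phi} are the correct bookkeeping behind that remark.
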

\begin{proof}
The proof mimics the case of T-graphs~$\cT+\alpha^2\cO$ with flattened black faces.\!\!
\end{proof}


\subsection{T-holomorphic functions as derivatives of harmonic functions on \mbox{T-graphs}}\label{sub:harmonicity}
{In this section we present a relation between t-holomorphic functions on t-embeddings and harmonic ones on \mbox{T-graphs}, similar to~\cite[Lemma~2.4]{kenyon-honeycomb}. The harmonic functions are understood in the usual sense: $H$ is harmonic on~$\cT+\alpha^2\cO$ if~$H(X_t)$ is a martingale for the corresponding random walk~$X_t$. In the whole plane case, such a function can be naturally} extended {onto} segments of the T-graph {in a linear way. Moreover,} if all faces of the T-graph are triangles, then it can also be extended as a function on $\C$ which is affine on each face. Conversely, it is easy to see that any such piecewise affine function restricts to a harmonic function on vertices of the T-graph. In the finite case, a similar correspondence holds for piecewise affine functions defined on the union of \emph{interior} faces only, recall that boundary faces are not triangles; see Fig.~\ref{finTembTriangl}.

\begin{definition}\label{def:D}
On a non-degenerate T-graph~{$\cT+\alpha^2\cO$ with flattened black faces}, we define a derivative operator $\rD$, acting on real-valued harmonic functions~$H$, by specifying that
\begin{equation}
\label{eq:rD-def}
\begin{array}{ll}
dH = \rD[H](b)dz &\text{along {each} segment ${(\cT\!+\!\alpha^2\cO)}(b)$,}\\
dH = \Re(\rD[H](w)dz) & \text{inside each {inner} face~${(\cT\!+\!\alpha^2\cO)}(w)$.}
\end{array}
\end{equation}
If $v = (\cT+\alpha^2\cO)(w)$ is a degenerate face, calling {$b_k=b_k(v)$, $k=1,2,3$,} the neighbouring black faces as before, we define $\rD[H]( w)$ as the unique complex number such that
\[
\rD[H](b_k)=\Pr(\rD[H](w),{\overline{\alpha}\eta_{b_k}\R})\quad \text{for~all}~k\in\{1,2,3\}.
\]
Note that~${\overline{\alpha}\eta_{b_k}\R}$ is the {conjugated} direction of the segment $(\cT+\alpha^2\cO)(b_k)$, and that the above relation also holds around non-degenerate faces~$w$ due to~\eqref{eq:rD-def}.
\end{definition}

We need to check that the definition {of~$\rD[H](w)$ for degenerate faces} makes sense. {Denote \mbox{$b_k=b_k(v)$} for shortness.} By harmonicity, we have {the identity}
\[
\sum_{k=1}^3 \frac{|d\cT(b_kw^*)| |v_k - v|}{|v_k - v|^2}\cdot (v_k - v)\rD[H]( b_k)\ =\ 0,
\]
which simplifies into
\[
\sum_{k=1}^3 {\alpha}\rD[H](b_k)\cdot\overline{\eta}_{b_k} |d\cT(b_kw^*)|\ =\ \eta_w \sum_{k=1}^3 {\alpha}\rD[H]( b_k) \cdot d\cT(b_kw^*)\ =\ 0.
\]
{This is exactly} the condition from Lemma~\ref{lem:kernel_to_holomorphy} which ensures {the existence of a complex number~$\alpha\rD[H](w)$ with prescribed projections~$\alpha\rD[H](b_k)$ onto the lines~$\eta_{b_k}\R$.}

\begin{remark}\label{rem:def-D} {Definition~\ref{def:D}} extends 
to complex multiples of {real-valued harmonic functions by linearity (note however that one cannot extend it to all complex-valued~$H$ as the definition of~$\rD[H](w)$ is not complex-linear). For what follows, a particularly important case is when~$H$ {is~$\alpha\R$-valued.} For such functions, we have $\rD[H](b)=\Pr(\rD[H](w),\eta_b\R)$ if~$b\sim w$.}
\end{remark}

In other words, if~$H$ is an $\alpha\R$-valued harmonic function on~$\cT+\alpha^2\cO$, then its derivative~$\rD[H]$ satisfies the t-holomorphicity condition. The next definition provides the inverse operation.
\begin{definition} \label{def:rI} Let a function~$F_\frw$ be t-white-holomorphic on (a subset of) the t-embedding~$\cT$. We denote by~$\rI_\C[F_{\frw}]$ {a} primitive of the form~\eqref{eq:Fb_dT=}. Similarly, for a t-black-holomorphic function~$F_\frb$, let~$\rI_\C[F_{\frb}]$ be {a} primitive of~\eqref{eq:Fw_dT=}. Further, let~$\rI_{\alpha\R}[F_{{\frw}}]:=\Pr(\rI_\C[F_{{\frw}}];\alpha\R)$ for~$\alpha\in\mathbb{T}$ {and similarly for~$F_\frb$.}
\end{definition}

\begin{proposition}\label{prop:representation_derivative}
Let $F_\frw$ be a t-white-holomorphic function {and $\alpha$ be} in the unit circle. {The function $\rI_{\alpha\R}[F_\frw]$} 
is harmonic on {the T-graph} $\cT + \alpha^2 \cO$, except possibly on segments containing boundary vertices. Furthermore, $F_\frw =  \rD [\rI_{\alpha\R}[F_\frw]]$ away from the boundary. If $F_\frw$ satisfies standard boundary {conditions,} then the function $ \rI_{\alpha\R}[F_\frw]$ is harmonic up to the boundary.


{The same statements hold for t-black-holomorphic functions: $\rI_{\overline{\alpha}\R}[F_\frb]$
is harmonic on {the T-graph} $\cT + \overline{\alpha^2 \cO}$, up to the boundary if $F_\frb$ satisfies standard boundary conditions, and $F_\frb=\rD[\rI_{\overline{\alpha}\R}[F_\frb]]$.}
\end{proposition}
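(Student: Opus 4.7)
The plan is to verify the three assertions of the proposition directly from the explicit formula for the closed form $d\rI_\C[F_\frw]$ given in~\eqref{eq:Fb_dT=} and Lemma~\ref{lem:FdT-in-C}, combined with the geometric description of the T-graph $\cT + \alpha^2 \cO$ from Proposition~\ref{prop:geomT}. Well-definedness of $\rI_{\alpha\R}[F_\frw] := \Pr(\rI_\C[F_\frw];\,\alpha\R)$ is immediate from Proposition~\ref{prop:integral_simple}, both in the bulk and, under standard boundary conditions, up to the boundary. Throughout, I will repeatedly use that on a white face $\cT(w)$ we have $d\cO = \eta_w^2\, dz$, hence
\[
d\rI_\C[F_\frw]\ =\ F_\frw^\tw(w)\,dz\ +\ \overline{F_\frw^\tw(w)}\,\overline{\eta}_w^2\, d\overline{z}.
\]

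With this formula in hand, harmonicity follows from the standard equivalence, recalled just before Definition~\ref{def:D}, between harmonic functions on the T-graph and functions that extend to be piecewise affine on its non-degenerate (white) faces. The coefficients above are constant on $\cT(w)$, so $\rI_\C[F_\frw]$ is real-affine in $(z,\overline{z})$ there. Since on this face $\zeta = z + \alpha^2\cO(z) = (1+\alpha^2\eta_w^2)z + \mathrm{const}$ is an invertible affine change of variables whenever $1+\alpha^2\eta_w^2 \neq 0$, the primitive $\rI_\C[F_\frw]$ is also real-affine in $(\zeta,\overline{\zeta})$ on the corresponding T-graph face, and its projection onto $\alpha\R$ inherits this property. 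In the degenerate case $\alpha^2 = -\overline{\eta}_w^2$ both coefficients in the displayed formula vanish simultaneously, so $\rI_\C[F_\frw]$ is constant on $\cT(w)$; the three corresponding T-graph vertices then receive equal values, and the martingale relation at the collapsed vertex in Definition~\ref{def:chain-deg} is trivially satisfied regardless of the transition rates~$m_k$.

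The identity $F_\frw = \rD[\rI_{\alpha\R}[F_\frw]]$ is then read off by comparing derivatives. On a non-degenerate inner white face, collecting terms gives $d\rI_{\alpha\R}[F_\frw] = \alpha\Re(\overline{\alpha} F_\frw^\tw(w)\, d\zeta)$, which by Remark~\ref{rem:def-D} (applied to the $\alpha\R$-valued function $\rI_{\alpha\R}[F_\frw]$) identifies $\rD[\rI_{\alpha\R}[F_\frw]](w) = F_\frw^\tw(w)$. For an inner black face $b$, fix any edge $AB$ of $\cT(b)$ shared with a white face $w$, parametrise it as $z_0 + t\overline{\eta}_b\overline{\eta}_w$, $t\in[0,t_0]$, and use $d\cO = \overline{\eta}_b^2\, d\overline{z}$ to obtain $v_B - v_A = 2t_0\Re(\alpha\eta_w)\cdot\alpha\overline{\eta}_b$; writing $F_\frw^\tb(b) = r\eta_b$ with $r\in\R$ (the first t-holomorphicity relation) and using~\eqref{eq:Fb_dT=}, one finds $\rI_\C[F_\frw](B) - \rI_\C[F_\frw](A) = 2rt_0\overline{\eta}_w$, whose projection onto $\alpha\R$ equals $2rt_0\alpha\Re(\alpha\eta_w)$. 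The ratio of these two quantities collapses to $r\eta_b = F_\frw^\tb(b)$, independently of the choice of edge, as required.

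The statement for a t-black-holomorphic $F_\frb$ on $\cT + \overline{\alpha^2\cO}$ is completely symmetric: swap $B$ and $W$, exchange $\eta_w$ with $\overline{\eta}_b$ and $\alpha$ with $\overline{\alpha}$, and use~\eqref{eq:Fw_dT=} together with Proposition~\ref{prop:white_Tgraph} in place of Proposition~\ref{prop:geomT}. The one place I expect genuine care is the treatment of degenerate white faces, which is why the argument is organised so that the algebraic observation ``both coefficients of $d\rI_\C[F_\frw]$ vanish exactly when the T-graph face degenerates'' takes the place of a direct case analysis of Definition~\ref{def:chain-deg}. Finally, the extension up to the boundary under standard boundary conditions reduces once more to Proposition~\ref{prop:integral_simple}: closedness up to the boundary provides a well-defined primitive, while boundary vertices themselves are sinks and therefore do not require a martingale identity.
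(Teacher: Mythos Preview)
Your computations on non-degenerate faces are correct and follow essentially the same path as the paper's proof (which establishes the edge-increment identity $\Pr(\rI_\C[F_\frw](v')-\rI_\C[F_\frw](v),\alpha\R)=F_\frw^\tb(b)\cdot(d\cT+\alpha^2 d\cO)(vv')$ first and reads off both harmonicity and~$\rD$ from it). One organisational remark: the equivalence ``piecewise affine on white T-graph faces $\Leftrightarrow$ harmonic'' that you invoke presupposes that the piecewise affine extension is \emph{continuous} across segments of the T-graph; continuity at an interior vertex $v_A$ of a segment amounts precisely to the black-face computation you defer to the next paragraph, so harmonicity is not actually justified at the point where you assert it. The paper avoids this by doing the black-face computation first.

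The degenerate-face argument, however, has a genuine gap. First a slip: the coefficients in your displayed expression for $d\rI_\C[F_\frw]$ do not depend on~$\alpha$, so they certainly do not vanish when $\alpha^2=-\overline{\eta}_w^2$; it is $d\rI_{\alpha\R}[F_\frw]$ that vanishes on~$\cT(w)$ in that case. More importantly, the martingale relation of Definition~\ref{def:chain-deg} at the collapsed vertex~$v$ concerns the jumps to the \emph{far} endpoints $v_1,v_2,v_3$ of the three adjacent black segments, not transitions among the three coincident preimages. Knowing that those preimages carry equal values only shows the function is well-defined at~$v$; it says nothing about $\sum_k q(v\!\to\! v_k)(H(v_k)-H(v))=0$. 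The correct argument uses your black-face identity $H(v_k)-H(v)=F_\frw^\tb(b_k)\cdot(v_k-v)$ together with the explicit rates $q(v\!\to\! v_k)=m_k/|v_k-v|^2$: exactly as in the computation following Definition~\ref{def:D}, the sum simplifies to a non-zero multiple of $\sum_k F_\frw^\tb(b_k)\,d\cT(b_kw^*)=-\oint_{\partial w}F_\frw^\tb\,d\cT$, which vanishes precisely by t-holomorphicity of~$F_\frw$ at~$w$. Alternatively one can argue by continuity in~$\alpha$.
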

\begin{proof}
{Consider a t-white-holomorphic function~$F_\frw$,} two vertices $v$, $v'$ of $\G^*$, and let $b$, $w$ be the black and the white faces of $\G^*$ adjacent to the edge~$(vv')$. Assume first that they are not boundary faces. Let us check that all relations are consistent for increments between $v$ and $v'$.
{Due to~\eqref{eq:Fb_dT=},} one has
\begin{align*}
\Pr\big( \rI_\C[F_\frw](v') - \rI_\C[F_\frw](v), \alpha \R \big)\ & =\ F_\frw^\tb(b) \,d\cT (vv') + \alpha^2 \overline {F_\frw^\tb(b)\, d \cT (vv')} \\
 & =\  F_\frw^\tb(b)\cdot\big(d\cT (vv') + \alpha^2 \overline{\eta}_b^2\, \overline{d \cT(vv')}\big)\\
  & =\ F_\frw^\tb(b)\cdot (d\cT + \alpha^2d\cO)(vv').
\end{align*}
In particular the increments of $\rI_{\alpha\R}[F_\frw]$ are linear along {each segment} of $\cT + \alpha^2 \cO$, hence $\rI_{\alpha\R}[F_\frw]$ is harmonic and for all $b$ one has $\rD[{\rI_{\alpha\R}[F_\frw]}](b) = F_\frw^{{\tb}}(b)$. For white faces, the following holds:
\begin{align*}
\Pr\big( \rI_\C[F_\frw](v') &- \rI_\C[F_\frw](v), \alpha \R \big)\\
&=\ {(F_\frw^\tw(w)d\cT + \overline{F}{}_\frw^\tw(w)d\overline{\cO}+\alpha^2\overline{F}{}_\frw^\tw(w)d\overline{\cT}+\alpha^2F_\frw^\tw(w)d\cO)(vv')}\\
& =\ \Pr\big( F_w^\tw(w)\cdot(d\cT + \alpha^2\,d\cO)(vv')\,,\,\alpha \R \big),
\end{align*}
which shows that $\rD[ {\rI_{\alpha\R}[F_\frw]}](w) = F_\frw^{{\tw}}(w)$ {according to the definition of the derivative~$\rD[H]$ for~$\alpha\R$-valued harmonic functions.}


Note that the proof {given above works up to} the boundary as long as the primitive $\rI_\C$ is well defined, {which is the case if~$F_\frw$ satisfies standard boundary conditions.}
The case of t-black-holomorphic functions is similar.
\end{proof}

\begin{remark}
Proposition~\ref{prop:representation_derivative} explains why, for a t-white-holomorphic function, its values on white vertices {have} a better behaviour than those on black vertices. Indeed, in the above representation the values $F_\frw^\tw$ encode the whole derivative of $H$
while $F_\frw^\tb$ only gives the derivative in a specific direction. Finally, if $H$ is regular, $F_\frw^\tw$ inherits its regularity while $F_\frw^\tb$ does not.
\end{remark}

\subsection{{T-holomorphic functions and reversed random walks on T-graphs}}\label{sub:harmonicity-1} This section is devoted to another link between t-holomorphic functions and T-graphs, which was not discussed in the earlier literature.
Namely, we show that projecting the values~$F_\frw^\tw$ (similarly,~$F_\frb^\tb$) onto a given direction, one obtains a harmonic function with respect to the \emph{reversed} random walk on an appropriate \mbox{T-graph}.

\begin{proposition}\label{prop:backward_harmonic}
Let $F_\frw$ be a t-white-holomorphic function. For {each} $\alpha$ in the unit circle, {the function} $\Pr(F_\frw^\tw, \alpha \R )$ is a martingale for the time reversal of the continuous time random walk on {the T-graph}
$\cT - \overline {\alpha^2 \cO}$ (with respect to the invariant measure {given in Proposition~\ref{prop:white_Tgraph}(vi)}).

Similarly, if $F_\frb$ is t-black-holomorphic, then $\Pr( F_\frb^\tb, {\overline{\alpha}} \R )$ is harmonic for the time reversal of the random walk on {the T-graph} $\cT - \alpha^2 \cO$. {Both claims hold true under proper identifications of the white (resp., black) faces of a t-embedding~$\cT$ with its vertices. Such an identification depends on~$\alpha$ and is described in Lemma~\ref{lem:bijection}.}
\end{proposition}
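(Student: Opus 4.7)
We sketch the first statement; the second follows by swapping the roles of black and white faces and replacing $\alpha$ by $\overline\alpha$. Via Lemma~\ref{lem:bijection}, each inner vertex $v$ of the T-graph $\cT - \overline{\alpha^2\cO}$ corresponds to a unique white face $w=w(v)$ of $\cT$: explicitly, $v = (\cT - \overline{\alpha^2\cO})(A)$ for the unique vertex $A$ of $w$ that maps to the interior of the segment $(\cT - \overline{\alpha^2\cO})(w)$. Consequently, the vertices $v'$ from which the forward random walk can transition to $v$ in one step are exactly the interior vertices $v_{w'}$ associated to white faces $w'\ne w$ of $\cT$ incident to $A$, since only such segments admit $v$ as an endpoint. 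Denoting this collection by $W_A \smallsetminus \{w\}$, the reverse-harmonicity of the function $G(v') := \Pr(F_\frw^\tw(w(v')), \alpha\R)$ at $v$ reads
\[
\sum_{w' \in W_A \smallsetminus \{w\}} \mu(v_{w'})\, q(v_{w'} \to v)\cdot (G(v_{w'}) - G(v)) \ =\ 0.
\]

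Observing that $\cT - \overline{\alpha^2\cO} = \cT + \overline{(i\alpha)^2\cO}$, Proposition~\ref{prop:white_Tgraph}(iv),(vi) (applied with $\alpha$ replaced by $i\alpha$) gives
\[
\mu(v_{w'})\, q(v_{w'} \to v) \ =\ \tfrac{1}{8}\bigl(\tan\phi^\alpha_{b_-(w')} - \tan\phi^\alpha_{b_+(w')}\bigr),
\]
where $b_\pm(w')$ are the two black faces of $\cT$ adjacent to $w'$ along the two edges of $w'$ incident to $A$, and $\phi^\alpha_b := \arg(i\alpha\eta_b) \bmod \pi$. On the other hand, t-holomorphicity implies that for each black face $b$ incident to $A$ and separating consecutive white faces $w^-, w^+ \in W_A$, the increment $F_\frw^\tw(w^+) - F_\frw^\tw(w^-)$ lies in $i\eta_b\R$; writing it as $i\eta_b\lambda_b$ with $\lambda_b\in\R$, the angle condition at $A$ (equivalently, the fact that the white faces close up around $A$) yields the cyclic constraint $\sum_b \eta_b\lambda_b = 0$, where the sum is over the black faces incident to $A$.

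Telescoping each $F_\frw^\tw(w') - F_\frw^\tw(w)$ as a sum of the increments $i\eta_b\lambda_b$ along the corresponding arc of $W_A$ and projecting onto $\alpha\R$, the reverse-harmonicity identity reduces to a linear relation of the form $\sum_b c_b\lambda_b = 0$, where each coefficient $c_b$ is an $\R$-linear combination of $\eta_b$ and $\overline{\eta_b}$ built out of $\alpha$ and the two neighboring tangents $\tan\phi^\alpha_{b'}$. A direct expansion shows that $c_b$ in fact reduces to a fixed $\R$-linear functional of $\eta_b$ independent of $b$, whence the cyclic constraint and its complex conjugate imply the identity. The main obstacle is this last algebraic verification, which requires careful bookkeeping of orientations and telescoping signs as $w'$ varies over $W_A\smallsetminus\{w\}$. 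A streamlined alternative is to invoke Proposition~\ref{prop:integral_product} with the constant t-black-holomorphic function $F_\frb^\tb\equiv\alpha$, $F_\frb^\tw(w):=\Pr(\alpha,\eta_w\R)$: closedness of the form $F_\frw^\tb F_\frb^\tw\, d\cT$ around the star of $A$ in $\cT^*$, interpreted via Proposition~\ref{prop:white_Tgraph}(iv) to convert increments of $d\cT$ on boundary edges into the tangent expressions above, directly encodes the reverse-harmonic equation at $v$.
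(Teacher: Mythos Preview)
Your approach is essentially the paper's, only organized in the opposite order: the paper first proves the algebraic identity (Lemma~\ref{lem:backward_identity}) that
\[
\sum_k \Im(F_\frw^\tw(w_k))\,(\tan\phi_{b_{k-1}}-\tan\phi_{b_k})\ =\ 0
\]
around each inner vertex $A$ of $\cT$, and only then recognizes the coefficients as $8\,\mu(v_{w_k})\,q(v_{w_k}\to v)$ from Proposition~\ref{prop:white_Tgraph}(iv),(vi). You start from the reversed-rate side and arrive at the need for the same identity.

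The step you flag as the ``main obstacle'' is in fact a one-line Abel summation, and this is exactly what your parametrization was set up to deliver. With $F_\frw^\tw(w_{k+1})-F_\frw^\tw(w_k)=i\eta_{b_k}\lambda_{b_k}$, $\lambda_{b_k}\in\R$, cyclic telescoping gives $\sum_k\eta_{b_k}\lambda_{b_k}=0$. Then, summing by parts (cyclically),
\[
\sum_k(\tan\phi_{b_{k-1}}-\tan\phi_{b_k})\,\Im F_\frw^\tw(w_k)
\ =\ \sum_k\tan\phi_{b_k}\cdot \Re(\eta_{b_k})\lambda_{b_k}
\ =\ \sum_k\Im(\eta_{b_k})\,\lambda_{b_k}
\ =\ \Im\!\sum_k\eta_{b_k}\lambda_{b_k}\ =\ 0,
\]
since $\Im\big(F_\frw^\tw(w_{k+1})-F_\frw^\tw(w_k)\big)=\Re(\eta_{b_k})\lambda_{b_k}$ and $\tan\phi_{b_k}\Re\eta_{b_k}=\Im\eta_{b_k}$. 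This is precisely the paper's proof of Lemma~\ref{lem:backward_identity} rewritten in your variables; there is no further bookkeeping needed. The degenerate case is then handled, as in the paper, by continuity in $\alpha$.

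Your proposed alternative via Proposition~\ref{prop:integral_product} with $F_\frb^\tb\equiv\alpha$ is not clearly a shortcut: closedness of $F_\frw^\tb F_\frb^\tw\,d\cT$ is a statement about contour integrals around \emph{faces}, whereas the reverse-harmonic equation you need lives at a \emph{vertex} $A$; turning one into the other seems to require exactly the telescoping you were trying to avoid. I would drop that paragraph and simply carry out the two-line computation above.
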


{\begin{remark}\label{rem:Im-on-deg-T}
Let~$v=(\cT\!-\!\overline{\alpha^2\cO})(b)$ be a degenerate face of the T-graph \mbox{$\cT\!-\!\overline{\alpha^2\cO}$}, note that this means~$\eta_b=\pm \alpha$. In Lemma~\ref{lem:bijection}, all the three white faces~$w_k$ adjacent to~$b$ are identified with~$v$. However, if~$F_\frw$ is t-holomorphic, the three values~$\Pr(F_\frw^\tw(w_k),\alpha\R)=F_\frw^\tb(b)$ match. Therefore, even in presence of degenerate faces, it makes sense to view the function~$\Pr(F_\frw^\tw;\alpha\R)$ as being defined on vertices of the T-graph~$\cT-\overline{\alpha^2\cO}$ via Lemma~\ref{lem:bijection}.
\end{remark}}

The proof of Proposition~\ref{prop:backward_harmonic} {goes} through a sequence of lemmas. Let us focus on the case of $\Im( F_\frw^{{\tw}} )$ for simplicity and without true loss of generality. We first assume that {the T-graph} $\cT\! -\! \overline {\alpha^2  \cO}{{}=\cT\!+\!\overline{\cO}}$ has no degenerate faces, which is equivalent to saying that $\Re \eta_{b} \neq 0$ for all $b$.

\begin{lemma}\label{lem:backward_identity}
Let $F_\frw$ be a t-white-holomorphic function.
Let $w_1, b_1, w_2, \ldots$ be faces adjacent to an interior vertex $v$ {of~$\cT$, listed counterclockwise,} and assume that $w_k \in W\smallsetminus\partial W$ for all $k$.
Then,
\[
\sum_k \Im(F_\frw^{{\tw}}(w_k))\cdot (\tan (\phi_{b_{k-1}}) - \tan(\phi_{b_k})) = 0,
\]
where we use a cyclical indexing of vertices.
\end{lemma}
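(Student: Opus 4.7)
The plan is to convert the cyclic sum into a telescopic one by using the real-scalar increments that t-holomorphicity forces on $F_\frw^\tw$ between consecutive white faces around $v$.

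First, I would record the following local consequence of~\eqref{eq:def-twhol}. Since $b_k$ is adjacent to both $w_k$ and $w_{k+1}$,
\[
\Pr(F_\frw^\tw(w_k),\eta_{b_k}\R) = F_\frw^\tb(b_k) = \Pr(F_\frw^\tw(w_{k+1}),\eta_{b_k}\R),
\]
so the difference $F_\frw^\tw(w_{k+1}) - F_\frw^\tw(w_k)$ is perpendicular to $\eta_{b_k}\R$, that is, lies in $i\eta_{b_k}\R$. Accordingly, define real scalars $r_k$ by
\[
F_\frw^\tw(w_{k+1}) - F_\frw^\tw(w_k) = i\eta_{b_k}\,r_k, \qquad r_k\in\R.
\]

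Next, I would apply Abel summation cyclically to the left-hand side of the claimed identity:
\[
\sum_k \Im(F_\frw^\tw(w_k))(\tan\phi_{b_{k-1}}-\tan\phi_{b_k}) = \sum_k \Im(F_\frw^\tw(w_{k+1})-F_\frw^\tw(w_k))\,\tan\phi_{b_k}.
\]
Substituting the increment formula and using $\Im(i\eta_{b_k})=\Re(\eta_{b_k})$, the right-hand side becomes $\sum_k \Re(\eta_{b_k})\tan\phi_{b_k}\cdot r_k$. Because $\eta_{b_k}=\pm e^{i\phi_{b_k}}$, the identity $\Re(\eta_{b_k})\tan\phi_{b_k}=\Im(\eta_{b_k})$ holds irrespective of the sign ambiguity, so the sum simplifies to $\sum_k \Im(\eta_{b_k})\,r_k = \Im\bigl(\sum_k \eta_{b_k} r_k\bigr)$.

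Finally, from the very definition of $r_k$ one has $\eta_{b_k} r_k = -i(F_\frw^\tw(w_{k+1})-F_\frw^\tw(w_k))$, so the cyclic sum $\sum_k \eta_{b_k} r_k$ telescopes to zero and, a fortiori, its imaginary part vanishes, which is the claim. The only subtlety is essentially clerical: one must verify that the $\pm$ sign ambiguity of $\eta$ is absorbed consistently (it is, because both sides of $\Re(\eta)\tan\phi=\Im(\eta)$ flip sign together, and so do $\eta_{b_k}$ and $r_k$ in the product $\eta_{b_k} r_k$), and one must invoke the standing non-degeneracy assumption $\Re\eta_{b_k}\neq 0$ to ensure that every $\tan\phi_{b_k}$ is finite so that the manipulation is legitimate throughout.
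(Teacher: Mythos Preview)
Your proof is correct and follows essentially the same approach as the paper's: both extract from t-holomorphicity the increment relation $F_\frw^\tw(w_{k+1})-F_\frw^\tw(w_k)\in i\eta_{b_k}\R$, combine it with $\tan\phi_{b_k}=\Im\eta_{b_k}/\Re\eta_{b_k}$, and finish by a cyclic telescoping. The paper simply writes this out in real and imaginary parts directly rather than via your auxiliary scalars $r_k$, but the underlying argument is the same.
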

\begin{proof}
{Due to the definition of t-white-holomorphic functions, the values} $F_\frw^{{\tw}}(w_k)$ and $F_\frw^{{\tw}}(w_{k+1})$ have the same projection on {the direction} $ \eta_{b_k}\R$. Therefore, for all~$k$, {we have the identity}
\[
\overline{\eta}_{b_k} \cdot F_\frw^{{\tw}}(w_k) +  \eta_{b_k}\cdot \overline {F_\frw^{{\tw}}(w_k) }= \overline{\eta}_{b_k} \cdot F_\frw^{{\tw}}(w_{k+1}) +  \eta_{b_k} \cdot\overline {F_\frw^{{\tw}}(w_{k+1})},
\]
which can be rewritten as
\[
\Re( F_\frw^{{\tw}}(w_k)) - \Re( F_\frw^{{\tw}}(w_{k+1})) + \frac{\Im (\eta_{b_k})}{\Re(\eta_{b_k} )}\cdot \big(  \Im(  F_\frw^{{\tw}}(w_k))  - \Im(  F_\frw^{{\tw}}(w_{k+1}))\big) = 0.
\]
Summing over $k$ and re-indexing, we obtain
\[
\sum_k \Im(  F_\frw^{{\tw}}(w_k))\cdot\left( \frac{\Im (\eta_{b_k})}{\Re(\eta_{b_k} )}  - \frac{\Im (\eta_{b_{k-1}})}{\Re(\eta_{b_{k-1}} )}\right) = 0,
\]
which is the desired statement written in terms of $\eta$.
\end{proof}
\begin{remark}One can also interpret the identity of Lemma~\ref{lem:backward_identity} geometrically: successive values of $F_\frw^\tw$ have prescribed projections on {the lines} $ \eta_{b_k}\R$ so they must form a closed polygonal chain with edges with directions $ i \eta_{b_k}$. The identity expresses {the fact} that this chain is closed.
\end{remark}

Using Lemma~\ref{lem:increment_phi}, it is easy to see that all the {coefficients} $(\tan(\phi_{b_{{k-1}}}) - \tan(\phi_{b_{k}}))$ are {positive} except for a single one. As a consequence, {for each vertex~$v$ of~$\G^*$ lying away from the boundary,} we can  {specify a white face} $w(v)=w_{k(v)}$ {corresponding to this negative coefficient,} and rewrite the equations as
\begin{equation}\label{eq:backward-harm}
\Im\big( F_\frw^\tw(w(v) )\big)\ = \sum_{w \neq w(v){:w\sim v}}  {\widetilde p}(w(v), w) \cdot\Im( F_\frw^\tw(w ))
\end{equation}
where
\begin{equation}\label{transit}
{\widetilde p}(w(v), w_k)\ :=\ \big({\tan(\phi_{b_{k-1}}) - \tan(\phi_{b_k})}\big)\big/\big({\tan(\phi_{b_{k(v)}}) - \tan(\phi_{b_{k(v)-1}})}\big)\,.
\end{equation}
{Note that $\widetilde p(w(v), w)$ are} positive and sum {up} to $1$. We want to see these as equations for a discrete harmonic function for a random walk with transition probabilities given {by~\eqref{transit}.}
Let us write explicitly that $w(v)=w_{k(v)}$, where~$k(v)$ is the unique index such that $\phi_{b_{k(v)-1}} <\phi_{b_{k(v)}}$ and that this condition is equivalent to {the equality} $\phi_{b_{k(v)}} - \phi_{b_{k(v) - 1}} =\pi-{\theta(v,w(v))}$; 
{see Lemma~\ref{lem:increment_phi}.}

\begin{lemma}\label{lem:bijection}
Suppose first that $\cT + \overline \cO$ has no degenerate faces. In the whole plane case, the map $v \mapsto w(v)$ {constructed above is a bijection.} {Its inverse can be described as follows: $ (\cT+\overline{\cO})(v)$ is the interior vertex of the segment~$(\cT+\overline{\cO})(w(v))$ in~$\cT+\overline{\cO}$ (more generally, in~$\cT-\overline{\alpha^2\cO}$).}

In the finite case, there exists a subset $V'$ of the set~$V$ of vertices of~$\G^*$ such that {$v \mapsto w(v):V'\to W\smallsetminus\partial W$ is a bijection.} Moreover, $V'$ differs from $V$ only at the boundary in the sense that {all vertices} in $V \smallsetminus  V'$ are adjacent to boundary faces. 

Finally, the {bijection} $ v \mapsto w(v)$ is well defined {on~$\G^*$} even if {the T-graph} $\cT + \overline \cO$ has degenerate faces: {in this case, each degenerate vertex}~$v=(\cT+\overline{\cO})(w)$ {corresponds to three vertices of~$\G^*$ and, further, to three white {faces} adjacent to $b$; cf. Remark~\ref{rq:degenerate_face}.}
\end{lemma}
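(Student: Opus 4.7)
My plan is to identify the combinatorial rule $w(v)=w_{k(v)}$ with the geometric rule ``the unique white face whose segment in $\cT+\overline{\cO}$ contains $(\cT+\overline{\cO})(v)$ in its interior'', and then to read off the bijection from this identification.

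First I would verify that $k(v)$ is unambiguous. Going counterclockwise around~$v$, Lemma~\ref{lem:increment_phi} yields $\phi_{b_k}-\phi_{b_{k-1}}\equiv -\theta(w_k,v)\pmod{\pi}$, so each lifted increment equals either $-\theta(w_k,v)<0$ or $\pi-\theta(w_k,v)>0$. The angle condition $\sum_k\theta(w_k,v)=\pi$ combined with the fact that $\phi$ must return to itself $\!\mod\pi$ after a full loop forces exactly one of these increments to be of the positive ``wrap-around'' type, and this distinguished index is $k(v)$.

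Next I would match the two descriptions via Proposition~\ref{prop:white_Tgraph}(iii). Walking counterclockwise along the boundary of $w_k$ starting at $v$ with $w_k$ on the left, one leaves through the edge shared with $b_{k-1}$ (the clockwise neighbour of $w_k$ at $v$) and returns through the edge shared with $b_k$. Hence in the notation of Proposition~\ref{prop:white_Tgraph}(iii) applied with $A=v$, we have $b_C=b_{k-1}$ and $b_B=b_k$, with $b_A$ being the third (opposite) black face of $w_k$. The proposition then states that $(\cT+\overline{\cO})(v)$ is interior to the segment $(\cT+\overline{\cO})(w_k)$ if and only if
\[
-\pi/2<\phi_{b_{k-1}}<\phi_{b_A}<\phi_{b_k}<\pi/2.
\]
The outer inequality $\phi_{b_{k-1}}<\phi_{b_k}$ together with the uniqueness from the previous step forces $k=k(v)$. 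Conversely, in the non-degenerate case the T-graph structure implies that $(\cT+\overline{\cO})(v)$ lies in the interior of exactly one segment, and any such segment through this point must correspond to a face adjacent to $v$; hence there exists at least one $k$ for which the interior condition holds, and it must be $k(v)$.

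Bijectivity now follows by combining these statements: Proposition~\ref{prop:white_Tgraph}(iii) already gives a well-defined injection $W\to V$ sending each white face to the unique vertex of it that projects to the interior of the corresponding T-graph segment, and the preceding paragraph identifies its inverse with $v\mapsto w(v)$. For a finite t-embedding, the combinatorial construction of $k(v)$ requires every face in the star of $v$ to be an interior triangle; defining $V'$ as the set of such vertices, the same argument delivers a bijection $V'\to W\smallsetminus\partial W$, while $V\smallsetminus V'$ coincides with the set of vertices adjacent to a boundary face. In the degenerate situation $\phi_b=\pi/2$ the three vertices of~$b$ are all mapped to the same point of $\cT+\overline{\cO}$ but remain distinct in $\G^*$; the combinatorial rule is unaffected and sends them to the three distinct white faces adjacent to $b$, consistently with Remark~\ref{rq:degenerate_face}. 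The delicate step is the orientation bookkeeping underlying the identifications $b_C=b_{k-1}$ and $b_B=b_k$, which must be cross-checked against the conventions of Proposition~\ref{prop:white_Tgraph}; once this is established, the rest is a direct application of earlier results.
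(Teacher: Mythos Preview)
Your overall strategy coincides with the paper's: both arguments hinge on matching the combinatorial rule $w(v)=w_{k(v)}$ with the geometric characterization coming from Proposition~\ref{prop:white_Tgraph}(iii), after which bijectivity is read off from the T-graph structure. The paper phrases the match via the equivalent condition $\phi_{b_B}-\phi_{b_C}=\pi-\theta(A,w)$ rather than the bare inequality $\phi_{b_{k-1}}<\phi_{b_k}$, but this is cosmetic; your orientation bookkeeping $b_C=b_{k-1}$, $b_B=b_k$ is the same identification the paper makes.

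There is, however, a genuine gap in your treatment of the finite case. You define $V'$ to be the set of vertices all of whose neighbouring faces are interior triangles, and then assert that $v\mapsto w(v)$ is a bijection from this $V'$ onto $W\smallsetminus\partial W$. Injectivity is fine, but surjectivity fails in general: for $w\in W\smallsetminus\partial W$ the vertex $v(w)$ is one of the three corners of $w$, and nothing prevents that corner from being adjacent to a boundary face through some \emph{other} face in its star. In that situation $v(w)\notin V'$ in your sense, so $w$ is missed. The paper avoids this by taking the opposite route: it defines $V'$ as the \emph{image} $v(W\smallsetminus\partial W)$, which makes the bijection automatic, and then observes separately that any $v$ not adjacent to boundary faces has $w(v)$ well defined with $v=v(w(v))\in V'$, giving the inclusion $V\smallsetminus V'\subset\{\text{boundary-adjacent vertices}\}$. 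Your argument proves this last inclusion but conflates it with equality; swapping your definition of $V'$ for the paper's fixes the issue immediately.
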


\begin{proof}
Given {an inner white face} $w$ of~$\cT$, let $A$, $B$, $C$ be the adjacent vertices of~$\G^*$ {listed counterclockwise} and let $b_A$, $b_B$, $b_C$ be the black faces opposite to ${A, B, C}$, respectively. Let~$v(w)\in\G^*$ be the vertex mapped into the inner vertex of the segment~$(\cT+\overline{\cO})(w)$ of the T-graph. {Due to Proposition~\ref{prop:white_Tgraph}(iii),}~$A=v(w)$ if and only if $ -\pi/2<\phi_{b_C} < \phi_{b_A}<\phi_{b_B}<\pi/2$, which is also equivalent to {say that} ${\phi_{b_B} - \phi_{b_C}} = \pi- \theta{(A,w)}$.

Let us check that the composition $ v\mapsto w(v)\mapsto v(w(v))$ gives the identity {in the whole plane case}. {Given a vertex $v$}, let $b_{ k(v)-1}, b_{k(v)}$ be the two common black neighbours of $v$ and $w(v)= w_{k(v)}$.
By definition {of the mapping $v\mapsto w(v)$,} we have $\phi_{b_{k(v)}} - \phi_{b_{k(v)-1}}=\pi- \theta(v,w(v))$. Therefore, $v(w(v))=v$. The proof of~$w(v(w))=w$ is identical, thus~$v\mapsto w(v)$ is a bijection.

In the finite case, we just notice that {the mapping~$w\mapsto v(w)$ makes sense for interior white faces and denote by~$V'\subset V$ the image of~$W\smallsetminus\partial W$ under this mapping. If~$v$ is not adjacent to boundary faces, then the face~$w(v)$ is well-defined and one has $v=v(w(v))\in V'$ as above.}

Finally, {to define the inverse mapping $w\mapsto v(w)$ in presence of degenerate faces, one simply says that $v(w)=A$ if~$\phi_{b_B}-\phi_{b_c}=\pi-\theta(A,w)$ (and similarly for~$B$ and~$C$). Clearly, this remains a bijective correspondence if~$v$'s are considered as vertices of~$\G^*$ and not as those of~$\cT+\overline{\cO}$.}
\end{proof}

{
Assume for a moment that the T-graph~$\cT+\overline{\cO}$ has no degenerate faces. Given a vertex~$v$ (we assume that $v$ is not adjacent to boundary faces in the finite case), introduce the transition rates
\begin{equation}
\label{eq:q-tilde-def}
\widetilde{q}(w(v)\to w_k)\ :=\ \frac{\tan(\phi_{b_{k-1}}) - \tan(\phi_{b_k})}{8S_{w(v)}},\quad k\ne k(v)
\end{equation}
provided that $b_{k-1}$, $w_k$, $b_k$ are consecutive faces adjacent to $v$ (and set all other transition rates from~$w(v)$ to zero). Clearly, the equation~\eqref{eq:backward-harm} can be viewed as the harmonicity property with respect to the corresponding (continuous time) random walk~$\widetilde{X}_t$. Moreover, Lemma~\ref{lem:bijection} provides a bijective correspondence $v\leftrightarrow w(v)$, which allows to view this random walk as being defined on vertices of~$\cT+\overline{\cO}$. We are now in the position to prove the main result of this section.}

\begin{proof}[Proof of Proposition~\ref{prop:backward_harmonic}] We first consider the case when $\cT + \overline \cO$ has no degenerate faces.
We have seen above that $\Im( F_\frw^\tw)$ is harmonic for the walk $\widetilde X_t$. {Thus, it remains to check that} its transition rates agree with the time reversal of the walk on $\cT + \overline\cO$ {discussed in Proposition~\ref{prop:white_Tgraph}. Consider two white faces}~$w, w'$ of~$\cT$ sharing a vertex $v$. 
The transition rate~$\widetilde{q}(w\to w')$ is non-zero if and only if~$w=w(v)$. {In this situation,~$(\cT+\overline{\cO})(v)$ is an interior point of the segment~$(\cT+\overline{\cO})(w)$ and hence is an endpoint of~$(\cT+\overline{\cO})(w')$. Therefore, the forward random walk on~$\cT+\overline{\cO}$ also has a non-zero transition rate from~$v(w')$ to~$v$ if and only if~$w'\ne w(v)$. Moreover, it is easy to see from Proposition~\ref{prop:backward_harmonic}(iv) that
\[
q(v(w_k)\to v)\ =\ \frac{\tan(\phi_{b_{k-1}})-\tan(\phi_{b_k})}{8S_{w_k}},\quad k\ne k(v)
\]
if~$b_{k-1},w_k,b_{k+1}$ are faces adjacent to~$v$ listed counterclockwise. Since the invariant measure of the random walk is given by~$\mu(v)=S_{w(v)}$, this concludes the proof in the non-degenerate case.}



Finally, if $\cT + \overline \cO$ has degenerate faces, we still see that $\Pr( F_\frw^\tw, \alpha \R )$ is harmonic for the time reversal of the walk on $\cT - \alpha^2 \overline \cO$, for generic values~$ \alpha$. By continuity, we can take the limit $ \alpha\to i$ in order to  obtain the desired result for~$\cT+\overline{\cO}$, {see Remark~\ref{rq:degenerate_face} and Remark~\ref{rem:Im-on-deg-T}.}
\end{proof}

\section{{Generalization to faces of higher degree}}\label{sec:non_triangulation}

In this section we extend the framework of t-holomorphicity from triangulations to higher degree faces. This discussion also applies \emph{ad verbum} to boundary quads of a finite triangulation provided that {the} t-holomorphic functions in question satisfy standard boundary conditions.
Recall that the general definition of a t-embedding and of the origami map was given in Section~\ref{sec:setup} without any restriction on degrees of faces. The general idea is that the `proper' notion to extend is the kernel of $K$ and the link between t-holomorphic functions {on a t-embedding} and harmonic ones {on the corresponding} T-graphs. Compared to triangulations, the main missing point is the exact extension of functions~$F_\frw$ or $F_\frb$ from one bipartite class to the other (e.g., an extension of~$F_\frw^\tb$ to~$F_\frw^\tw$).

Below we define such an extension by splitting higher degree faces into triangles, similar to our treatment of the boundary of a finite triangulation discussed in Section~\ref{sub:t-graphs}; see also Fig.~\ref{boundary_t_graph}. {(A simplest example of this kind appears when discussing the link between the most standard discretization of the complex analysis on the square grid and the framework developed in this paper, we refer the interested reader to Section~\ref{subsub:appendix-Z2} for details.)} Though the exact values~$F_\frw^\tw$ on these new triangles depend on the choice of a splitting, this dependence is local: if one changes {the} splitting of a single face~$w$, only the values of~$F_\frw$ on {the} new faces obtained from~$w$ change. Moreover, {the} a priori regularity estimates discussed in Section~\ref{sec:regularity} (e.g., see Proposition~\ref{prop:Holder}) eventually imply that these values are actually almost independent of the way in which faces are split, at least for bounded t-holomorphic functions and at faces lying in the bulk of a t-embedding.

Recall that~$\eta$ and~$\cO$ are the origami square root function and the origami map associated to a t-embedding~$\cT$, and that $B$ and~$W$ are the sets of black and white faces of~$\cT$, respectively. The link between t-embeddings~$\cT$ and T-graphs~$\cT+\alpha^2\cO$ and~$\cT+\overline{\alpha^2\cO}$ remains exactly the same as in Proposition~\ref{prop:geomT} and Proposition~\ref{prop:white_Tgraph}, respectively, with the same proof. Namely,
\begin{itemize}
\item for each~$\alpha\in \mathbb{T}$, both~$\cT+\alpha^2\cO$ and~$\cT+\overline{\alpha^2\cO}$ are T-graphs with possibly degenerate faces; for a generic choice of~$\alpha$ there are no degenerate faces;
\item in the T-graph~$\cT+\alpha^2\cO$ the following holds:
\begin{samepage}
\begin{itemize}
\item for {each} $b \in B$, the image of $b$ is a translate of $2\Pr( \cT(b), {\alpha}\overline{\eta}_b \R)$;
\item for {each} ${w \in W}$, the image of $w$ is a translate of $(1\!+\!\alpha^2\eta_w^2)\cT(w)$;
\item {if a face~$(1+\alpha^2\eta_w^2)\cT(w)$ is degenerate (i.e., if $\alpha^2=-\overline{\eta}_w^2$), then the 'infinitesimal' polygon assigned to it is homothetic to~$\alpha\eta_w\cT(w)$;}
\end{itemize}
\end{samepage}
\item in the T-graph~$\cT+\overline{\alpha^2\cO}$ the following holds:
\begin{itemize}
\item for {each} $w\in W$, the image of $w$ is a translate of $2\Pr( \cT(w), {\overline{\alpha}}\overline{\eta}_w \R)$;
\item for {each} ${b\in B}$, the image of~$b$ is a translate of $(1 + {\overline{\alpha}^2}\eta_b^2 ) \cT(b)$;
\item {if a face~$(1+\overline{\alpha}^2\eta_b^2)\cT(w)$ is degenerate (i.e., if $\alpha^2=-\eta_b^2$), then the 'infinitesimal' polygon assigned to it is homothetic to~$\overline{\alpha}\eta_b\cT(w)$.}
\end{itemize}
\end{itemize}

To simplify the discussion, in the rest of {this} section we focus  on the T-graphs $\cT+\cO$ and $\cT + \overline \cO$ and assume that both contain no degenerate faces. Moreover, we also assume that no pair of distinct vertices of~$\cT$ is mapped onto the same vertex of~$\cT+\cO$ or~$\cT+\overline{\cO}$ (beyond triangulations, this might happen even in absence of degenerate faces in the T-graph {if, e.g., two vertices of~$\cT(b)$ are projected to the same point of the segment~$(\cT+\cO)(w)$ from opposite sides}). As in {Section~\ref{sec:Tgraph}, these} non-degeneracy assumptions can be dropped using continuity arguments with respect to~$\alpha$, thus all the statements readily extend to the general case.

Note that the definition of harmonic functions on T-graphs still makes sense: one says that a function~$H$, defined on vertices of, e.g.,~$\cT+\cO$, is harmonic if it admits a linear continuation onto each edge, the only difference is that these edges can now contain more than one interior point. In particular, the derivative~$\rD[H]$ of a harmonic function {on~$\cT+\cO$} is still well-defined {on black faces of~$\cT$.} As already mentioned above, we keep {this} link with harmonic functions on T-graphs (see Proposition~\ref{prop:representation_derivative}) as the definition of a t-holomorphic function. {Recall that we denote by~$B_U$ the subset of~$B$ lying in a region~$U\subset\C$ and similarly for other sets.}

\begin{definition} \label{def:thol-gen}
Let~$U\subset\C$ be a subregion of a t-embedding~$\cT$. We say that a function $F_\frw^\tb$ defined on~${B_U}$ is t-white-holomorphic if there exists a real-valued harmonic function~$H$ defined on the corresponding subset of the T-graph~$\cT+\cO$ such that~$F_\frw^\tb=\rD[H]$. If the region~$U$ is not simply connected, we require that such a harmonic primitive exists on all simply connected {subregions} of~$U$.

Similarly, a function $F_\frb^\tw$ defined on ${W_U}$ is t-black-holomorphic if it can be locally viewed as the derivative of a real-valued harmonic function defined on the corresponding subset of~$\cT + \overline \cO$.
\end{definition}

{As in} the case of triangulation, this definition can be reformulated in a more invariant way via the Kasteleyn matrix~$K$ and the contour integration. For instance,~$F_\frw^\tb$ is t-white-holomorphic if and only if
\[
\begin{cases}
F_\frw^\tb(\bpr)\in \eta_b\R & \text{for all $b\in{B_U}$,}\\
\oint_{\partial w} F^\tb_\frw\,d\cT = 0 & \text{for all~$w\in{W_U}$;}
\end{cases}
\]
the latter condition is equivalent to say that~$(F_\frw^\tb K)(w)=0$. This equivalence also shows that considering derivatives of $\alpha\R$-valued harmonic functions on the T-graph $\cT+\alpha^2\cO$ one gets the same notion of t-holomorphic functions on~$\cT$. The standard boundary conditions are defined exactly {in} the same {way} as for triangulations.

\def\spl{\mathrm{spl}}

We now move on to {defining} the `true' values~$F_\frw^\tw$ of a t-white-holomorphic function out of its values~$F_\frw^\tb$. Recall that all faces of a t-embedding are convex due to the angle condition. {If the degree of~$w\in W$ is $n>3$, then the condition~$(F_\frw^\tb K)(w)=0$, which can be written as a linear equation on $n$ real variables, no longer guarantees the existence of a complex value~$F_\frw^\tw(w)$ that has prescribed projections on the directions~$\eta_b\R$ for all~$b\in B$ surrounding~$w$. This motivates the following definition.}
\begin{definition}\label{def:split}
Given a t-embedding $\cT$, we say that~$\cT_\spl^\tw$ is a white splitting of~$\cT$ if $\cT_\spl^\tw$ is obtained from~$\cT$ by adding diagonal segments in all its white faces of degree at least {four} so that they are decomposed into triangles; see Fig.~\ref{split}. With a slight abuse of the terminology we still view $\cT_\spl^\tw$ as a \mbox{t-embedding} by interpreting each added segment as a black {bigon} with zero angles, note that this does not break the angle condition. Let $\G_\spl^\tw=B_\spl^\tw\cup W_\spl^\tw$ be the associated dimer graph; {note that~$B_\spl^\tw$ contains not only all black faces of~$\cT$ but also the newly constructed bigons.}
\end{definition}

\begin{figure}
\begin{center}
\includegraphics[width =\textwidth]{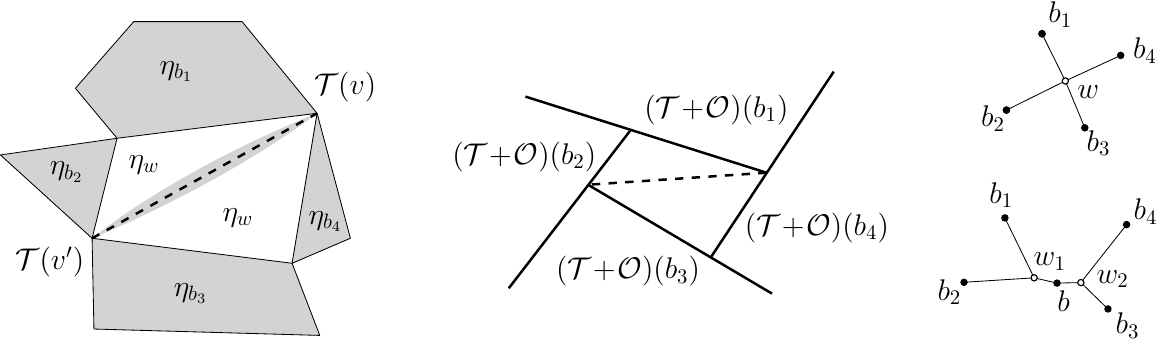}
\caption{A schematic representation of the effect of splitting a face, left in the t-embedding, middle in the T-graph and right in the combinatorial graph. The grey region around the added edge in the left picture represents the fact that we consider this edge as a black {bigon} in the t-embedding~$\cT_\spl^\tw$. {We~set $\eta_{w_1}\!=\!\eta_{w_2}\!:=\!\eta_w$ and} {also assign a value~$\eta_b$ to the new bigon~$b$ so that~\eqref{eq:def-eta} holds for~$\cT_\spl^\tw$.}}\label{split}
\end{center}
\end{figure}

{We mention several simple properties of this construction in the next lemma.}

\begin{lemma}\label{lem:split-triv} The origami square root function~$\eta$ extends {from~$\G$ to~$\G_\spl^\tw$} so that it keeps its {values} on all original black faces, {coherently assigns values to black bigons,} and inherits its value~$\eta_w$ on all white faces obtained from~$w$. In particular, {the} t-embeddings~$\cT$ and~$\cT_\spl^\tw$ define the same origami map~$\cO$.

The T-graph $(\cT+\cO)_\spl^\tw$ is obtained from $\cT + \cO$ by splitting all {its} white faces with the same diagonals as in the definition of $\cT_\spl$. Each harmonic function {defined on vertices and edges of} $\cT+\cO$ extends to {new edges of} $(\cT+\cO)_\spl^\tw$ in a unique way.
\end{lemma}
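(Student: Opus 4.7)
The plan is to verify the three assertions in sequence, each following by direct inspection once the definitions are unpacked.

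First, I would check that $\cT^\tw_\spl$ is a genuine t-embedding in the sense of Section~\ref{sec:setup}. The newly added diagonals of white faces are viewed as degenerate black $2$-gons $b_0$ with angle $0$ at their two endpoints, so the sum of black angles at each inner vertex of $\cT^\tw_\spl$ is unchanged and still equals $\pi$; the sum of white angles at the two endpoints of a chord is also preserved, because splitting a convex polygon along a chord only subdivides its angle there. To extend $\eta$, I set $\eta_{w_k}:=\eta_w$ on every piece $w_k$ of a split face; identity~\eqref{eq:def-eta} is then automatically satisfied on every edge inherited from $\cT$. On a new $2$-gon $b_0$, the identity on the edge between $b_0$ and a neighbour $w_1$ determines $\eta_{b_0}$ up to sign, while the analogous identity on the opposite edge $b_0 w_2^*$ forces $\eta_{b_0}$ to switch sign, since the two sides of $b_0$ carry the same geometric segment with opposite orientations. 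This sign flip is harmless thanks to the convention of Remark~\ref{rem:eta-def} that $\eta$ is considered as defined only up to a global sign. Finally, since $d\cO=\eta_w^2\,dz$ on each $w_k$ and the new $2$-gons carry no area, the piecewise constant form $d\cO$ on $\C$ is literally unchanged, hence $\cO$ is the same map as for $\cT$.

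Next, I would describe $(\cT+\cO)^\tw_\spl$ as a T-graph. On each white face $w$ of $\cT$ the mapping $\cT+\cO$ acts as the affine transformation $z\mapsto z+\eta_w^2\overline{z}$ plus a translation, so restricting it to any sub-piece $w_k$ of a splitting yields the same affine map. Therefore the image of $\cT^\tw_\spl$ under $\cT+\cO$ is obtained from $\cT+\cO$ by inscribing the affine images of the chosen diagonals of $\cT(w)$ inside the corresponding white face $(\cT+\cO)(w)$ of the T-graph; the degenerate $2$-gons of $\cT^\tw_\spl$ play the role of new black segments of the T-graph and coincide geometrically with these inscribed chords.

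Finally, for the harmonic extension I would argue as follows. The splitting introduces no new vertex of $\cT^\tw_\spl$, and by the non-degeneracy assumptions fixed at the beginning of Section~\ref{sec:non_triangulation} none of the new diagonal segments in the T-graph passes through a preexisting vertex in its interior. Therefore the vertex set of $(\cT+\cO)^\tw_\spl$ coincides with that of $\cT+\cO$, and every vertex that was interior to some segment of $\cT+\cO$ still lies in the interior of the same segment coming from an unaltered black face of $\cT$. Hence the random walk transitions from Definition~\ref{def:chain} are unchanged at all vertices, and a harmonic function $H$ on $\cT+\cO$ automatically satisfies the harmonicity condition at every vertex of $(\cT+\cO)^\tw_\spl$ with its original values; the unique extension of the associated piecewise linear function onto each newly added segment is the linear interpolation between its endpoints. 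The only mildly subtle step in the whole argument is the sign consistency of $\eta$ around a degenerate $2$-gon, which is handled above by the up-to-sign convention for $\eta$.
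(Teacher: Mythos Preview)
Your proof is correct and follows essentially the same line as the paper's, only with more detail where the paper is terse (the paper simply says ``reflect two times on each added diagonal'' for the $\eta$-extension and ``no interior vertices on the extra edges'' for the harmonic part). One small slip: on a white face $w$ the map $\cT+\cO$ acts as $z\mapsto (1+\eta_w^2)z$ up to translation (since $d\cO=\eta_w^2\,dz$ there; cf.\ Proposition~\ref{prop:geomT}), not as $z\mapsto z+\eta_w^2\bar z$; this does not affect your argument, since all you use is that the restriction to $w$ is affine and hence sends chords to chords.
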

\begin{proof}
For the definition of the origami square root function on~$\cT_\spl^\tw$, note that, when propagating its value {through} the different pieces of a white face of $\cT$, we reflect two times on each added diagonal. Therefore, these values have to be equal on all such pieces and hence the origami square root function on~$\cT_\spl^\tw$ agrees with the original function~$\eta$ everywhere.

For the fact that $(\cT+\cO)_\spl^\tw$ is obtained from $\cT+\cO$ by splitting faces (see Fig.~\ref{split}), the proof is simply to write explicitly the restriction of $(\cT\!+\!\cO)_\spl^\tw$ to a white face of~$\cT$. 
Finally, note that the extra edges that we add when splitting white faces have no interior vertices, therefore the two notions of harmonic functions on $\cT + \cO$ and on $(\cT+\cO)_\spl^\tw$ are tautologically the same.
\end{proof}

With a slight abuse of the notation we still denote the origami square root function on~$\cT_\spl^\tw$ by~$\eta$. Thanks to the correspondence between harmonic functions on {(vertices and edges of)}~$\cT+\cO$ and on~$(\cT+\cO)_\spl^\tw$, we can now reformulate the t-holomorphicity condition on~$\cT$ similarly to the case of triangulations. {Geometrically, this can be seen as considering complex-values gradients of affine extensions of a harmonic function~$H$ inside faces of~$\cT+\cO$, which is not necessarily possible for faces of high degree and thus requires a choice of a triangulation.}

\begin{proposition} \label{prop:spl-t-hol}
For each white splitting~$\cT_\spl^\tw$ of~$\cT$, a function~$F_\frw^\tb$ defined on a {set~$B_U$} is t-white-holomorphic {in the sense of Definition~\ref{def:thol-gen}} if and only if it can be extended to faces of~$\cT_\spl^\tw$ so that
\begin{equation}
\label{eq:spl-t-hol}
F_\frw^\tb(b)\ =\ \Pr(F_\frw^\tw(w),\eta_b\R)\quad \text{if}~b\sim w,\ \ b\in {(B_\spl^\tw)_U,\ \ w\in (W_\spl^\tw)_U.}
\end{equation}
(In particular, note that we also extend~$F_\frw^\tb$ from~$B$ to {the bigger set}~$B_\spl^\tw$.) The t-black-holomorphicity property admits a similar reformulation via black splittings $\cT_\spl^\tb$ of~$\cT$, defined similarly to Definition~\ref{def:split}.
\end{proposition}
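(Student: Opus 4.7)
The plan is to exploit the correspondence between real-valued harmonic functions on $\cT+\cO$ and on $(\cT+\cO)_\spl^\tw$ given by Lemma~\ref{lem:split-triv}, and then to invoke the triangulation-case machinery of Section~\ref{sec:Tgraph}. The key observation is that all white faces of $\cT_\spl^\tw$ are triangles, so Definition~\ref{def:D} and Proposition~\ref{prop:representation_derivative} apply to it directly; the fact that black faces of $\cT_\spl^\tw$ may remain polygonal is harmless because they are still mapped to segments in the T-graph, which is all that matters for the derivative $\rD$.

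For the forward implication, suppose $F_\frw^\tb=\rD[H]$ on $B\cap U$ for some real-valued harmonic function $H$ on the corresponding portion of $\cT+\cO$. Lemma~\ref{lem:split-triv} extends $H$ uniquely to a harmonic function $\widetilde H$ on the corresponding portion of $(\cT+\cO)_\spl^\tw$. Since every white face of $\cT_\spl^\tw$ is a triangle, $\widetilde H$ is affine there, and one sets $F_\frw^\tw(w):=\rD[\widetilde H](w)$ for $w\in W_\spl^\tw\cap U$ and $F_\frw^\tb(b):=\rD[\widetilde H](b)$ for the new $2$-gon black faces introduced by the splitting; on the original faces $b\in B$ this recovers the given value since the segment $(\cT+\cO)(b)$ is unaffected by the splitting. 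The projection identity~\eqref{eq:spl-t-hol} then follows from the compatibility of $\rD$ along any edge $bw^*$ shared by a segment $(\cT+\cO)(b)$ and a white triangle $(\cT+\cO)(w)$, which is precisely the content of Remark~\ref{rem:def-D} specialized to $\alpha=1$.

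For the converse, assume that an extension satisfying~\eqref{eq:spl-t-hol} is given. Since~\eqref{eq:spl-t-hol} automatically forces $F_\frw^\tb(b)\in\eta_b\R$ for every $b\in B_\spl^\tw$, the extended function satisfies both requirements of~\eqref{eq:def-twhol} and is therefore t-white-holomorphic on the triangulation $\cT_\spl^\tw$ in the sense of Section~\ref{sec:holomorphicity}. Proposition~\ref{prop:representation_derivative} (applied on each simply connected sub-region, cf.\ the local formulation of t-white-holomorphicity on non-simply-connected $U$) then produces a real-valued harmonic primitive $\widetilde H=\rI_\R[F_\frw]$ on the corresponding portion of $(\cT+\cO)_\spl^\tw$. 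By Lemma~\ref{lem:split-triv} this $\widetilde H$ restricts to a harmonic function $H$ on the corresponding piece of $\cT+\cO$, and $\rD[H](b)=\rD[\widetilde H](b)=F_\frw^\tb(b)$ for every original $b\in B\cap U$, so $F_\frw^\tb$ is t-white-holomorphic on $\cT$ as required.

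I do not foresee any real obstacle: the main subtlety is simply to notice that the splitting changes neither the topology of the domain nor the combinatorial data relevant for $\rD$ (the added diagonals behave as degenerate $2$-gon black faces on which the derivative is well-defined in the obvious way), so the triangulation machinery transfers verbatim. The analogous statement for t-black-holomorphicity follows by swapping the roles of $B$ and $W$ and using the T-graph $\cT+\overline\cO$ in place of $\cT+\cO$, and the residual non-degeneracy assumptions made earlier in the section can be lifted a posteriori by continuity in $\alpha\in\mathbb{T}$.
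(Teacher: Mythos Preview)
Your proposal is correct and follows essentially the same route as the paper's proof: both directions pass through the harmonic primitive on $(\cT+\cO)_\spl^\tw$ via Lemma~\ref{lem:split-triv} and then invoke the triangulation theory (Definition~\ref{def:D}/Proposition~\ref{prop:representation_derivative}) to obtain the projection relation~\eqref{eq:spl-t-hol}. Your write-up is simply more explicit about the intermediate steps (in particular the point that only the white faces need to be triangles for $\rD$ to be well-defined), but the strategy is the same.
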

\begin{proof}
Let $F_\frw^\tb$ be t-white-holomorphic, and let $H$ be its primitive {defined on vertices and edges of} $\cT+\cO$. By Lemma~\ref{lem:split-triv}, the function $H$ can also be seen as a harmonic function on {vertices and edges of} $(\cT+\cO)_\spl^\tw$ and {hence} its derivative can be defined on~$W_\spl^\tw$ and satisfies~\eqref{eq:spl-t-hol} because {$W_\spl^\tw$ has triangular faces.} Conversely, if $F_\frw$ satisfies~\eqref{eq:spl-t-hol}, then it admits a real harmonic primitive defined on {vertices and edges of} $(\cT+\cO)_\spl^\tw$, therefore {its} restriction to ${B_U}$ is t-white-holomorphic.
\end{proof}

We now move to forward and backward random walks on T-graphs $\cT+\cO$ and $\cT+\overline{\cO}$ obtained from general t-embeddings. Recall that we assume that these graphs do not have degenerate faces {and, moreover, that all vertices of~$\cT$ have distinct positions in these T-graphs. (A general case follows by considering, e.g, T-graphs~$\cT+\alpha^2\cO$ with~$\alpha\to 1$ and using the continuity in~$\alpha$.)} The only difference with the case of triangulations is that the edges of T-graphs can now contain more than one interior vertex.

\begin{definition}
On a T-graph, we say that a continuous time Markov chain~$X_t$ is a version of the (continuous time) random walk on this T-graph if its jump rates~$q(v\to v')$ satisfy the following property. For any segment $L$ of the T-graph and any interior vertex $v$ lying on $L$, there exist exactly two vertices $v^-$ and $v^+$ in the closure of $L$ and on different sides from $v$ such that
\[
q( v \to v^\pm)\ =\ (|v^\pm-v|\cdot|v^+-v^-|)^{-1}
\]
and $q(v\to v')=0$ for all other transitions.
\end{definition}

Note that all versions~$X_t$ of the random walk on a T-graph are martingales and we also normalize the jump rates so that the process~$|X_t|^2-t$ is a martingale too. The standard random walk is obtained when one always chooses~$v^\pm$ to be the endpoints of~$L$. Introducing versions {of this random walk means} that we also allow transitions between interior vertices on the same segment. Since the chain is still a martingale, this difference essentially  amounts to a time change only. Nevertheless, it is crucial for a geometric interpretation of the invariant measure as discussed below. In fact, we already used a version of the standard walk when discussing the boundary of a finite triangulations in Section~\ref{sub:t-graphs}, see Fig.~\ref{boundary_t_graph}.

It is easy to see that each splitting $\cT_\spl^\tb$ of the black faces of~$\cT$ naturally defines a version of the random walk on $\cT + \cO$ by using the recipe described in Lemma~\ref{lem:proj}. In particular, the correspondence~$v\mapsto b(v)\in B_\spl^\tb$ can be defined in exactly the same way by inspecting the increments of the nearby values~$\phi_{w_j}$, including those assigned to~$w_j\in W_\spl^\tb\smallsetminus W$. Geometrically, this can be seen as interpreting each segment of $\cT + \cO$ (which may have several interior points) as the superimposition of several segments of $(\cT + \cO)_\spl^\tb$ (each with only one interior point), let us repeat that this procedure was already sketched in Section~\ref{sub:t-graphs} to treat the boundary faces.

As in the triangulation case, we can define a measure on vertices of the \mbox{T-graph} $\cT+\cO$ by setting~$\mu_\spl^\tb:=S_{v(b)}$, in the notation we emphasize the fact that this measure \emph{depends} on the splitting since~$b(v)$ is a face of~$\cT_\spl^\tb$ and not of~$\cT$ itself. (Recall also that possible degeneracies in the T-graph can be treated by working with~$\cT+\alpha^2\cO$ instead and passing to the limit~$\alpha\to 1$.)
The next lemma clarifies the advantage of considering versions of the random walk on T-graphs.
\begin{lemma}\label{lem:inv-split}
In the whole plane case, $\mu_\spl^\tb$ is an invariant measure for the version of the random walk on~$\cT+\cO$ associated to the splitting $\cT_\spl^\tb$. In the finite case this measure, considered on inner vertices of~$\cT+\cO$, is subinvariant provided that boundary vertices act as sinks.
\end{lemma}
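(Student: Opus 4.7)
The plan is to reduce the claim to Corollary~\ref{cor:invariant_measure} via the following identification. The version of the random walk on $\cT+\cO$ associated to the splitting $\cT_\spl^\tb$ is literally the standard continuous time random walk on the T-graph $\cT_\spl^\tb+\cO$, once one checks that this auxiliary T-graph coincides with $\cT+\cO$ as a subset of $\C$ but with additional vertices placed at the images of the added diagonals. The first sub-step is to verify, exactly as in Lemma~\ref{lem:split-triv}, that the origami map associated to $\cT_\spl^\tb$ equals $\cO$: the added diagonals are viewed as degenerate white $2$-gons of zero angle, which affect neither the origami square root function nor the closed form $d\cO$. The second sub-step is the geometric observation that, if $[v,v']$ is a diagonal inside a black face $b$ of $\cT$, its image under $\cT+\cO$ equals $(v'-v)+\overline{\eta}_b^2\overline{(v'-v)}=2\Pr(v'-v,\overline{\eta}_b\R)$, which lies on the same line as the original segment $2\Pr(\cT(b),\overline{\eta}_b\R)$ and adds exactly one interior vertex on it. This matches the prescription of the version of the random walk associated to $\cT_\spl^\tb$.

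After this reduction, $\mu_\spl^\tb(v)=S_{b(v)}$, with $b(v)$ a triangle of $\cT_\spl^\tb$, is exactly the measure that Corollary~\ref{cor:invariant_measure} assigns to the standard random walk on $\cT_\spl^\tb+\cO$. The next step is to verify that this invariance statement extends from triangulations to $\cT_\spl^\tb$, which in general is not a triangulation in the sense of Section~\ref{sec:holomorphicity} because its white faces may still have degree larger than three. Inspection of the proof of Corollary~\ref{cor:invariant_measure} shows that only the triangular shape of the black faces is actually used: the outgoing rate from a vertex of $\cT_\spl^\tb+\cO$ comes from the unique triangular black face singled out by Lemma~\ref{lem:proj}(i) around the corresponding vertex of $\G^*$, and the cancellation of outgoing and incoming contributions amounts to a telescoping identity in the $\tan\phi_{w_j}$ around that vertex, which is insensitive to the degrees of the surrounding white faces. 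The same local computation therefore goes through unchanged. Under the blanket non-degeneracy assumption made at the beginning of Section~\ref{sec:non_triangulation} this concludes the whole-plane case; degenerate faces, if present, can be handled by continuity in $\alpha\in\mathbb{T}$ exactly as at the end of the proof of Corollary~\ref{cor:invariant_measure}.

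In the finite case, the same local balance identity remains valid at every inner vertex of $\cT_\spl^\tb+\cO$ that is not adjacent to the boundary of the discrete domain. Near the boundary, however, some of the outgoing flow from an interior vertex $v$ of the T-graph leaves through a sink and is not compensated by any incoming flow from the reverse chain, producing a strict deficit in the local balance. This yields $(\mu_\spl^\tb L)(v)\le 0$ for the generator $L$ of the walk, hence subinvariance. The step I expect to require the most care is synchronising the splitting of the boundary black quads used to define the version of the random walk (cf.~Fig.~\ref{boundary_t_graph}) with the choice of triangle areas entering $\mu_\spl^\tb$ at the new interior vertices on those boundary segments; once this bookkeeping is set, subinvariance follows from the interior computation together with the trivial non-negative loss coming from the sink-bound transitions.
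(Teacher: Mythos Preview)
Your proposal is correct and follows essentially the same approach as the paper. Both arguments rest on the observation that the proof of Corollary~\ref{cor:invariant_measure} is a local computation around each vertex of~$\G^*$ that uses only the triangular shape of the adjacent black faces (via Lemma~\ref{lem:proj}) and the telescoping identity in the~$\tan\phi_{w_j}$; the degrees of the white faces never enter. The paper states this directly, while you route through an explicit identification of the version of the walk with the standard walk on~$\cT_\spl^\tb+\cO$, which is exactly the geometric picture the paper sketches in the paragraph preceding the lemma. Your treatment of the finite case (same outgoing rate, possibly smaller incoming rate near the boundary) also matches the paper's. One small wording issue: splitting a black face does not ``add'' interior vertices to the segment~$(\cT+\cO)(b)$---the vertices of~$\G^*$ are unchanged---it merely reinterprets each existing interior vertex as the interior point of one of the sub-segments coming from the black triangles; this does not affect the argument.
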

\begin{proof}
This follows from exactly the same computation as in the proof of Corollary~\ref{cor:invariant_measure} {as} in this proof we never used the fact that white faces are triangles but only local relations around vertices of $\G^*$ and the geometry of black faces. For interior vertices adjacent to boundary faces, the mapping~$v\mapsto b(v)$ is still well-defined, we have the same outgoing rate but the incoming rate might be smaller; see Fig.~\ref{boundary_t_graph}.
\end{proof}

Note that the same measure is invariant for each of the random walks on \mbox{T-graphs} $\cT+\alpha^2\cO$ provided that we use a version corresponding to a fixed splitting of black faces. Clearly, similar considerations apply to (versions of) random walks on \mbox{T-graphs} $\cT+\overline{\alpha^2\cO}$ associated with a splitting~$\cT_\spl^\tw$ of white faces. Let~$\mu_\spl^\tw$ be the corresponding invariant measure and~$\widetilde{X}_t$ be the reversed (with respect to this measure) random walk on~$\cT+\overline{\cO}$. We now claim that the key Proposition~\ref{prop:backward_harmonic} also generalizes to faces of higher degrees in a straightforward way.

\begin{proposition} Let~$\cT_\spl^\tw$ be a white splitting of~$\cT$ and $F_\frw$ be a t-white-holomorphic function, whose values~$F_\frw^\tw$ on white faces of~$\cT_\spl^\tw$ are defined according to~\eqref{eq:spl-t-hol}. Then, {the function} $\Im F_\frw^\tw$ is harmonic for the time reversal (with respect to~$\mu_\spl^\tw$) of the {random} walk on $\cT_\spl+\overline{\cO}$, under the identification~$v\mapsto w(v)\in W_\spl^\tw$.

Similarly, for each~$\alpha\in\mathbb{T}$, the function~$\Pr(F_\frw^\tw ,\alpha\R)$ is harmonic with respect to the time reversal {(with respect to~$\mu_\spl^\tw$)} of the random walk on~$\cT+\overline{\alpha^2\cO}$.
\end{proposition}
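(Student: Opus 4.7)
The plan is to reduce everything to the triangulation case already established in Proposition~\ref{prop:backward_harmonic}. By Proposition~\ref{prop:spl-t-hol}, the extension of $F_\frw^\tw$ from $W$ to $W_\spl^\tw$ via~\eqref{eq:spl-t-hol} turns $F_\frw$ into a genuine t-white-holomorphic function on the triangulation $\cT_\spl^\tw$ (the added diagonals being treated as degenerate black $2$-gons with $\eta$ equal to the value of the face being split). Lemma~\ref{lem:split-triv} guarantees that the origami map is preserved under splitting, so that the origami square root function on $\cT_\spl^\tw$ agrees with the original $\eta$ on every sub-piece of a white face. Applying the triangulation version of Proposition~\ref{prop:backward_harmonic} directly to $\cT_\spl^\tw$ immediately yields that $\Im F_\frw^\tw$ is harmonic for the time reversal of the standard random walk on the triangulation T-graph $\cT_\spl^\tw + \overline{\cO}$, under the bijection $v \mapsto w(v) \in W_\spl^\tw$ furnished by Lemma~\ref{lem:bijection}.

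It remains to match this with the version of the random walk on $\cT+\overline{\cO}$ associated to the splitting~$\cT_\spl^\tw$. Exactly as in Lemma~\ref{lem:split-triv} (with the roles of $\cO$ and $\overline{\cO}$ interchanged), each segment $2\Pr(\cT(w),\overline{\eta}_w\R)$ corresponding to a white face $w$ of~$\cT$ is subdivided into sub-segments $2\Pr(\cT(w_i),\overline{\eta}_w\R)$ by the images of the diagonals; the degenerate $2$-gon diagonals themselves map onto sub-segments of the same line. The standard random walk on $\cT_\spl^\tw + \overline{\cO}$ thus takes place on the same underlying set of vertices as the random walk on $\cT + \overline{\cO}$ but allows jumps between interior points of the original white-face segments—this is precisely the content of the definition of a version of the random walk. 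Comparing transition rates face by face via Proposition~\ref{prop:white_Tgraph}(iv), and identifying the invariant measure $\mu_\spl^\tw(v) = S_{w(v)}$ (with $w(v)$ now a sub-triangle) using Lemma~\ref{lem:inv-split} applied in the analogous black-white swap, shows that this version coincides with the chain discussed in the general framework.

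For arbitrary $\alpha \in \mathbb{T}$, the same argument applies verbatim to the T-graph $\cT_\spl^\tw + \overline{\alpha^2\cO}$: Proposition~\ref{prop:spl-t-hol} is insensitive to $\alpha$, the splitting correspondence continues to hold with sub-segments now rotated by $\overline{\alpha}$, and Proposition~\ref{prop:backward_harmonic} on the triangulation $\cT_\spl^\tw$ gives harmonicity of $\Pr(F_\frw^\tw,\alpha\R)$ under the time reversal. If degenerate faces appear in $\cT+\overline{\alpha^2\cO}$ (equivalently in $\cT_\spl^\tw + \overline{\alpha^2\cO}$), one passes to a generic $\alpha$ and uses the continuity of the transition rates in $\alpha$ exactly as in the triangulation proof. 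The main point to verify carefully is the second paragraph: that the standard random walk on the triangulation T-graph $\cT_\spl^\tw + \overline{\cO}$ genuinely realises the prescribed version of the random walk on $\cT + \overline{\cO}$, with the correct interpretation of the new interior vertices that appear on formerly single segments of $\cT(w)$.
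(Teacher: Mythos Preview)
Your approach is genuinely different from the paper's and has a gap in the first step. You propose to apply Proposition~\ref{prop:backward_harmonic} as a black box to $\cT_\spl^\tw$, calling it ``the triangulation $\cT_\spl^\tw$''. But a white splitting only triangulates the \emph{white} faces; the original black faces of $\cT$ keep their degree, and the added diagonals are degenerate black $2$-gons. So $\cT_\spl^\tw$ is \emph{not} a triangulation in general, and Proposition~\ref{prop:backward_harmonic} as stated (under the standing hypothesis of Sections~\ref{sec:holomorphicity}--\ref{sec:Tgraph} that both colours are triangles) does not literally apply.

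The paper does not go via this black-box route. Instead it re-runs the proof of Proposition~\ref{prop:backward_harmonic} directly on~$\cT$ with the splitting, making the single observation that the ingredients --- Lemma~\ref{lem:backward_identity}, the bijection of Lemma~\ref{lem:bijection}, and the transition-rate computations in Proposition~\ref{prop:white_Tgraph} --- only ever use the triangularity of \emph{white} faces (to define $F_\frw^\tw$ and to identify the interior vertex of a projected white triangle), never the shape of black faces. This is why the algebra goes through word for word. Your approach can be repaired by making exactly this observation, at which point it collapses into the paper's argument; the second paragraph about identifying the walk on $\cT_\spl^\tw+\overline{\cO}$ with a version of the walk on $\cT+\overline{\cO}$ then becomes redundant, since the paper works directly with the latter from the start.
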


\begin{proof} Again, the proof repeats the proof of Proposition~\ref{prop:backward_harmonic}, almost word by word. Indeed, in the proof of Lemma~\ref{lem:backward_identity} we never used the fact that black faces are triangles. This allows us to define a Markov chain $\widetilde{X}_t$ on vertices of $\cT + \overline{\cO}$ such that the function $\Im( F_\frw^\tw )$ is harmonic for that chain. Furthermore, the definition of the correspondence~$v\mapsto w(v)$ via the increments of~$\phi_{b_k}$, $b_k\in B_\spl^\tw$ remains the same. This ensures that~$\widetilde{X}_t$ is a time reversal of some version of the random walk on~$\cT+\overline{\cO}$. Finally, the algebraic part of computations of the transition rates also does not rely upon the shape of black faces and thus still holds word by word.
\end{proof}

\begin{remark}
Formally, the choice of a specific version of the forward random walk affects in a non-trivial way the law of its time reversal. However one can check that the law of the discrete time sequence of {edges of~$\cT+\overline{\cO}$} used by the forward or backward walk does not depend on which version we choose, at least among versions associated to different splittings. This reflects the fact that changing the splitting of a single white face one does not change values of~$F_\frw^\tw$ at other white faces {of $\cT$}.
\end{remark}

We conclude this section by formulating the modifications required in assumption~{\ExpFat{\delta}} in the case of higher degrees of faces. Recall that we call a triangle~`$\rho$-fat' if it contains a disc of radius~$\rho$; we use the same terminology for faces {of degree~$n\ge 4$. We also introduce an artificial extension of this notion to the case~$n=2$: a bigon $b\in B^\tw_\spl$ is called~$\rho$-fat if all the edges on at least one of the corresponding boundary arcs of the white face~$w\in W$ containing~$b$ belong to~$\rho$-fat black faces. (The notion of $\rho$-fat bigons~$w\in W^\tb_\spl$ is defined similarly.)}

\begin{assumption}[\ExpFat{\delta}, general case] \label{assump:ExpFat-general}
We say that t-embeddings~$\cT^\delta$ satisfy {the assumption}~\ExpFat{\delta} {(or, more accurately, \ExpFatPrime{\delta}{\delta'})} as~$\delta\to 0$ on a common region~${U\subset\C}$ (or, more generally, on regions~$U^\delta$ {depending on~$\delta$}) if~there exist {auxiliary scales~$\delta'=\delta'(\delta)$ and} splittings~$\cT_\spl^{\tw,\delta}$,~$\cT_\spl^{\tb,\delta}$ of~$\cT$ such that {$\delta'\to 0$ as $\delta\to 0$ and the following is fulfilled:}

\begin{itemize}
\item if one removes all `${\delta\exp(-\delta'\delta^{-1})}$-fat' black faces {(including bigons as defined above)} and all `${\delta\exp(-\delta'\delta^{-1})}$-fat' white triangles from {$\cT_\spl^{\tw,\delta}$, then each of its remaining vertex-connected components has diameter at most~$\delta'$,}
\end{itemize}
{and if a similar condition for~$\cT_\spl^{\tb,\delta}$ holds.}
\end{assumption}
{Though the notion of `$\rho$-fat bigons' used in Assumption~\ref{assump:ExpFat-general} does not look very natural, it turns out to be useful in the context of a priori Harnack estimates of t-holomorphic functions via their harmonic primitives on T-graphs. We refer the reader to the proof of Corollary~\ref{cor:Lipschitz} given below for the motivation of this definition.}

\section{{A priori regularity theory for t-holomorphic functions on t-embeddings and for harmonic functions on associated T-graphs}}\label{sec:regularity}
This section is devoted to a priori regularity properties of {t-holomorphic functions on t-embeddings and of harmonic functions on T-graphs obtained thereof.} Let us emphasize that in what follows we do \emph{not} rely upon any type of `uniformly bounded angles' or `uniformly fat faces' assumptions. In particular, the crucial uniform ellipticity estimate for random walks on T-graphs (see Section~\ref{sub:ellipticity}) is fully independent of the microscopic (below the scale~$\delta$) structure of {the} t-embeddings~$\cT^\delta$ provided that the corresponding origami maps~$\cO^\delta$ satisfy the `Lipschitzness at large scales' assumption~{\LipKd{\kappa}{\delta}} with~$\kappa<1$.

We then discuss corollaries of this estimate in Sections~\ref{sub:crossing} and~\ref{sub:sub-limits}, notably the a priori H\"older-type regularity for both harmonic and t-holomorphic functions and its consequences for subsequential scaling limits. In Section~\ref{sub:Lipschitz} we go even further and prove the a priori Lipschitz-type regularity for harmonic functions on T-graphs under a mild additional assumption~\ExpFat{\delta}.
Finally, Section~\ref{sub:Fpmpm=O(1)} also relies upon additional assumption~{\ExpFat{\delta}} and contains a technical result which we later use in Section~\ref{sec:convergence}.

\subsection{Preliminaries} We begin this section with a usual estimate for large deviations of a martingale process with bounded increments. Then we discuss simple distortion estimates for \mbox{`$\kappa$-Lipschitz} at large scale', $\kappa<1$, perturbations of the identity mapping in the complex plane. {When speaking about random walks on T-graphs obtained from general t-embeddings we always assume that an appropriate splitting (either black or white) is made as discussed in Section~\ref{sec:non_triangulation} and that~$X_t$ denotes a version of the random walk corresponding to this splitting.}

\begin{proposition} \label{prop:concentration} For all~$t,\lambda>0$, the following estimate is fulfilled:
\[
\P\big(\sup\nolimits_{s\in [0,t]}|X_s-X_0|\ge 2\lambda\sqrt{t}\,\big)\ \le\ 4\exp\big(-\tfrac{1}{2}\lambda^2\cdot(1+\tfrac{2}{3}\delta\lambda t^{-1/2}\,)^{-1}\big)\,.
\]
{In particular, the left-hand side is exponentially small in~$\lambda$ uniformly over all~$t\ge\delta^2$.}
\end{proposition}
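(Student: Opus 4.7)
The natural approach is a Bernstein--Freedman exponential martingale inequality, applied after reducing to one dimension by projecting onto the coordinate directions of $\C$.

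First I would verify the two ingredients. For the jump bound: assumption~\LipKd\ forces every face of $\cT$ to have diameter $<\delta$, and by the geometric descriptions in Propositions~\ref{prop:geomT} and~\ref{prop:white_Tgraph}, every segment of the T-graph is a translate of $2\Pr(\cT(u),\,\cdot\,\R)$ for some face $u$, hence has length at most $2\delta$. In particular every jump $v\to v^\pm$ satisfies $|v^\pm-v|\le 2\delta$. For the growth of the predictable quadratic variation, a direct computation from the transition rates gives
\[
\sum_{\pm}q(v\to v^\pm)\,|v^\pm-v|^2\;=\;\frac{|v^+-v|+|v^--v|}{|v^+-v^-|}\;=\;1,
\]
since $v$ lies between $v^-$ and $v^+$; this recovers the identity $\operatorname{Tr}(\operatorname{Var}(X_t))=t$ (with "$\le$" in place of "$=$" near the absorbing boundary and at degenerate faces, which only helps).

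Next, decompose $X_s-X_0=(A_s,B_s)$ into its real and imaginary parts. Each $A_s$, $B_s$ is a one-dimensional c\`adl\`ag martingale with jump bound $2\delta$ and predictable quadratic variation at most $t$. The two-sided Freedman inequality for such martingales then gives, for every $x>0$,
\[
\P\bigl(\sup\nolimits_{s\le t}|A_s|\ge x\bigr)\;\le\;2\exp\!\left(-\frac{x^2}{2(t+\tfrac{2}{3}\cdot 2\delta\cdot x)}\right),
\]
and the same bound for $B_s$. Choosing $x=\sqrt{2}\,\lambda\sqrt{t}$, and noting that $\sup_s|X_s-X_0|\ge 2\lambda\sqrt{t}$ forces $\max(\sup|A_s|,\sup|B_s|)\ge\sqrt{2}\,\lambda\sqrt{t}$, a union bound over the two coordinates produces the stated exponential inequality, with the prefactor $4$ coming from the two directions and the two-sided tail.

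For the "in particular" clause, observe that the constraint $t\ge\delta^2$ gives $\delta\lambda t^{-1/2}\le\lambda$, so the denominator $1+\tfrac{2}{3}\delta\lambda t^{-1/2}$ is bounded above by $1+\tfrac{2}{3}\lambda$ and the exponent is at most $-\tfrac{1}{2}\lambda^2/(1+\tfrac{2}{3}\lambda)$, which is $\Theta(\lambda)$ as $\lambda\to\infty$, hence exponentially small uniformly in $t$.

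I expect the only non-cosmetic bookkeeping issue to be matching the exact constants $\tfrac12$ and $\tfrac23$: the $\tfrac23$ is the Freedman constant, while obtaining $\tfrac{2}{3}\delta$ (rather than $\tfrac{4}{3}\delta$) in the denominator requires being slightly more careful with the jump bound, e.g.\ by choosing a specific \emph{version} of the random walk (in the sense of Section~\ref{sec:non_triangulation}) so that each elementary jump is confined to within a single face of $\cT$ and hence bounded by $\delta$ rather than $2\delta$. Modulo this numerical refinement, the argument is entirely standard.
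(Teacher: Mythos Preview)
Your approach is essentially identical to the paper's: project onto the real and imaginary parts, bound jumps by $2\delta$ and the predictable quadratic variation by $t$, apply a Bennett/Bernstein--Freedman exponential martingale inequality, and take a union bound over the four one-sided tails. The one small discrepancy is in the reduction step: the paper uses the $\ell_1$ bound $|z|\le|\Re z|+|\Im z|$ (so $|X_s-X_0|\ge 2\lambda\sqrt t$ forces one coordinate to exceed $\lambda\sqrt t$), and with $a=\lambda\sqrt t$ and the $2\delta$ jump bound plugged into Bennett's inequality one gets the stated constants \emph{exactly} --- no sharpening of the jump bound to $\delta$ is needed (nor is such a sharpening via a ``version'' of the walk generally available).
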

\begin{proof} Let~$Y_t:=\Re(X_t)$, note that the process~$Y_t$ inherits the martingale property of~$X_t$. Since the jumps of~$X_t$ are bounded by~$2\delta$, so do those of~$Y_t$. Therefore, one can apply (a continuous time version of) Bennett's inequality, which says that
\[
\P\big(\sup\nolimits_{s\in [0,t]}(Y_s-Y_0)\ge a\,\big)\ \le\ \exp\biggl(-\frac{\var(Y_t)}{4\delta^2}H\biggl(\frac{2\delta a}{\var(Y_t)}\biggr)\biggr)
\]
for all~$a,t>0$, where~$H(x):=(1+x)\log(1+x)-x\ge \frac{1}{2}x^2\cdot (1+\frac{1}{3}x)^{-1}$ for~$x\ge 0$. Since the function~$\sigma^2H(x/\sigma^2)$ is decreasing in~$\sigma$, and~$\var(\Re X_t)\le \mathrm{Tr}\var (X_t)=t$, one gets
\begin{align*}
\P\big(\sup\nolimits_{s\in [0,t]}\Re(X_s-X_0)\ge \lambda\sqrt{t}\,\big)\ &\le\ \exp\big(-\tfrac{1}{4}t\delta^{-2}H(2\delta\lambda t^{-1/2})\big)\\
&\le\ \exp\big(-\tfrac{1}{2}\lambda^2\cdot(1+\tfrac{2}{3}\delta\lambda t^{-1/2}\,)^{-1}\big),
\end{align*}
a version of the Bernstein inequality; and similarly for $-\Re(X_s-X_0)$ and {for} $\pm\Im(X_s-X_0)$. We conclude the proof by saying that, for~$\sup_{s\in [0,t]}|X_s-X_0|$ to be greater than~$2\lambda$, at least one among these quantities must be greater than~$\lambda$.
\end{proof}

\begin{corollary}\label{cor:inv-concentration-triv}
There exist {$n_0\in\mathbb{N}$} such that the following estimate holds:
\[
\P\big({\sup\nolimits_{s\in [0,n_0t]}|X_s}-X_0|\ge {\tfrac14}\sqrt{t}\,\big)\;\ge\; {\tfrac34}\quad\text{for~all}\ \ t\ge \delta^2.
\]
\end{corollary}
\begin{proof} This is a straightforward corollary of the tail estimate {given above and of the Markov property.
Recall that we have $\E|X_t-X_0|^2=t$ for each~$t\ge 0$. Let us now find a (big) constant~$C_0>0$ such that
\[
\E(|X_t-X_0|^2 \mathbbm{1}_{|X_t-X_0|\le C_0\sqrt{t}})\ \ge\ \tfrac{1}{2}t
\]
and hence
\[
\P(|X_t-X_0|\ge \tfrac12\sqrt{t})\ \ge\ \tfrac14C_0^{-2}
\]
for all $t\ge\delta^2$. It is now enough to choose big enough~$n_0$ so that~$(1-\tfrac14C_0^{-2})^{n_0}\le \tfrac 14$ and apply this estimate~$n_0$ times using the Markov property of~$X_t$.}
\end{proof}

\def\cF{\mathcal{F}}

{We now discuss distortion properties of the correspondence between t-embeddings and T-graphs under assumption~\LipKd{\kappa}{\delta}. Let~$\alpha\in\mathbb{T}$ and~$\cF$ be one of the mappings \mbox{$z\mapsto (\cT+\alpha^2\cO)(z)$} or \mbox{$z\mapsto (\cT+\overline{\alpha^2\cO})(z)$}. Note that~$\cF$ is `almost a homeomorphism': it can be viewed as a limit of bi-{Lipschitz} mappings defined similarly with~$|\alpha|<1$. It is easy to see that assumption~{\LipKd{\kappa}{\delta}} implies the inclusions
\begin{equation}
\label{eq:T+0-discs}
B(\cF(z),(1-\kappa)r)\subset \cF(B(z,r))\subset B(\cF(z),(1+{\kappa})r)\ \ \text{provided that~$r\ge\delta$}.
\end{equation}
Indeed, the upper bound is trivial while the lower one follows from the fact that the image of the boundary~$\cF(\partial B(z,r))$ remains at distance at least~$(1-{\kappa})r$ from~$\cF(z)$ and that this curve necessarily encircles~$\cF(z)$ due to the ``dog on a leash'' lemma or Rouch\'e's theorem similarly to the proof of Proposition~\ref{prop:geomT}.}
\begin{lemma}\label{lem:area}
There exist constants $q_0=q_0(\kappa)\ge 1$ and $c_1(\kappa),c_2(\kappa)>0$ such that the following estimates hold for all T-graphs~$\cT+\alpha^2\cO$ and~$\cT+\overline{\alpha^2\cO}$, $\alpha\in\mathbb{T}$, obtained from t-embeddings satisfying assumption~{\LipKd{\kappa}{\delta}} and for all~$\beta\in\mathbb{T}$:
\[
c_1(\kappa)\cdot {\area(Q)}\ \le\ \sum_{v\in Q}|\Re(\,\overline{\beta}\eta_{b(v)}\,)|^2S_{b(v)}\ \le\ \sum_{v\in Q}S_{b(v)}\ \le\ c_2(\kappa)\cdot {\area(Q)}
\]
for each square~$Q$ of size~$(q_0\delta)\times(q_0\delta)$ drawn over the T-graph. {Let us also denote}
\begin{equation}
\label{eq:c0-def}
{c_0(\kappa)\ :=\ c_1(\kappa)/c_2(\kappa)\,.}
\end{equation}
\end{lemma}
\begin{proof} Let~$Q_\cT$ be the preimage of~$Q$ on the t-embedding and $V(Q_\cT)$ denote the set of vertices {of~$\cT$ lying in~$Q_\cT$.} The upper bound~$c_2(\kappa)\delta^2$ follows from the fact that {$Q_\cT$ is contained in a disc of radius~$(1-\kappa)^{-1}\cdot 2^{-1/2}q_0\delta$ due to~\eqref{eq:T+0-discs} and that all faces~$b$ have diameter less than~$\delta$.} To verify the lower bound~$c_1(\kappa)\delta^2$, note that
\[
\sum_{v\in Q}|\Re(\,\overline{\beta}\eta_{b(v)}\,)|^2S_{b(v)}\ =\ \frac{1}{4}\sum_{v\in Q}\area((\cT+\overline{\beta^2\cO})(b(v))).
\]
{It follows from~\eqref{eq:T+0-discs} that the image~$(\cT+\overline{\beta^2\cO})(Q_\cT)$ contains a disc of radius~$3\delta$ provided that~$q_0$ is chosen big enough. Therefore, the union of images of black faces~$b(v)$ with~$v\in Q$ contains a disc of radius~$\delta$, which implies the result.}
\end{proof}

\def\osc{\operatorname{osc}}

\subsection{Variance estimate for the random walks on t-graphs {under assumption~{\LipKd{\kappa}{\delta}}}} \label{sub:ellipticity}
In this section we prove the key ellipticity estimate for the continuous {time} random walks~$X_t$ on \mbox{T-graphs} {associated to} t-embeddings; see Definitions~\ref{def:chain},~\ref{def:chain-deg}. 
The estimates given below are fully independent of the microscopic (below the scale~$\delta$) structure of t-embeddings~$\cT^\delta$ provided that the corresponding origami maps~$\cO^\delta$ satisfy the `Lipschitzness at large' assumption~{\LipKd{\kappa}{\delta}} with~$\kappa<1$. Throughout this section the scale~$\delta$ is fixed, thus we write~$\cT=\cT^\delta$ and~$\cO=\cO^\delta$ for shortness. {In what follows,} all constants notated like~$t_0,\sigma_0,{q_0,c_0}$ etc {can (and actually do)} depend on~$\kappa$ but \emph{not} on {the t-embedding~$\cT^\delta$ or on~$\delta$.}

\begin{proposition} \label{prop:ellipticity} There exist constants~$t_0=t_0(\kappa)>0$ and~$\sigma_0=\sigma_0(\kappa)>0$ such that, for each {t-embedding}~$\cT$ satisfying assumption~{\LipKd{\kappa}{\delta}}, each~$\beta\in\mathbb{T}$, and each starting point~$X_0$, the following estimate holds for the continuous time random walk~$X_t$ on~$\cT+\cO$:
\begin{equation}
\label{eq:ellipticity}
\var (\Re(\,\overline{\beta}(X_{t_0\delta^2}-X_0)))\ge \sigma_0^2 \delta^2.
\end{equation}
{Due to the Markov property,~\eqref{eq:ellipticity} also implies that~$\var(\Re(\overline{\beta}(X_t-X_0)))\ge \tfrac{1}{2}\sigma_0^2 t$ for all~$t\ge t_0\delta^2$.}
\end{proposition}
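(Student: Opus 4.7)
The strategy is to rewrite the variance as an expected occupation-weighted integral, then combine the geometric density bound of Lemma~\ref{lem:area} with the concentration estimate of Proposition~\ref{prop:concentration}. Each jump of $X_s$ from a vertex $v$ is along the direction $\bar\eta_{b(v)}$, and the rates $q(v\to v^\pm) = (|v^\pm-v||v^+-v^-|)^{-1}$ satisfy $\sum_{\pm}q(v\to v^\pm)|v^\pm-v|^2 = 1$ and $\sum_{\pm}q(v\to v^\pm)(v^\pm-v)^2 = \bar\eta_{b(v)}^2$. A direct quadratic-variation computation then yields
\[
\var(\Re(\bar\beta(X_t-X_0))) \;=\; \mathbb{E}\!\int_0^t \psi_{b(X_s)}\,ds, \qquad \psi_b := |\Re(\bar\beta\bar\eta_b)|^2,
\]
so it suffices to lower-bound this integral by $\sigma_0^2\delta^2$ at time $t_0\delta^2$.

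The geometric input is Lemma~\ref{lem:area}, which gives $\sum_{v\in Q}\psi_{b(v)}S_{b(v)} \geq c_1(\kappa)\area(Q)$ on every $(q_0\delta)$-square $Q$; a Chebyshev split then identifies a threshold $\psi_0 := c_1/(2c_2)$ above which the \emph{good} vertices carry at least a fraction $c_1/(2c_2)$ of the $\mu$-mass of every such $Q$. On the other hand, Proposition~\ref{prop:concentration} lets me choose $t_0 = t_0(\kappa)$ large enough so that the trajectory $(X_s)_{s\leq t_0\delta^2}$ stays inside a ball $B(X_0, R)$ of radius $R = C(\kappa)\sqrt{t_0}\,\delta \gg q_0\delta$ with probability at least $\tfrac12$.

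The main obstacle is to convert the geometric $\mu$-mass estimate for good vertices into the required lower bound on the expected occupation time of good vertices by the walk, in the absence of any a priori Harnack or mixing result. I propose a proof by contradiction: if the expected integral $\mathbb{E}\!\int_0^{t_0\delta^2}\psi_{b(X_s)}\,ds$ could be made arbitrarily small relative to $\delta^2$, then nearly all of the walk's quadratic variation (total $t_0\delta^2$) would come from jumps with $\bar\eta_{b(X_s)}$ within $o(1)$ of $\pm i\bar\beta$, forcing the trajectory to concentrate on a union of lines parallel to $i\bar\beta$ inside $B$. This one-dimensional confinement is incompatible with (i) Corollary~\ref{cor:inv-concentration-triv}, which guarantees genuinely two-dimensional spreading $|X_{t_0\delta^2}-X_0|\geq \varepsilon_0\sqrt{t_0}\,\delta$ with probability $\geq\varepsilon_0$, and (ii) the fact that, by assumption~\LipKd, the mapping $z\mapsto z+\overline{\beta^2\cO(z)}$ is a $(1\pm\kappa)$-bi-Lipschitz perturbation of the identity at scales $\geq\delta$: the image of any such ``bad'' line would collapse in the $\beta$-direction, contradicting this bi-Lipschitz character. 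A quantitative barrier/comparison argument on the auxiliary T-graph $\cT+\overline{\beta^2\cO}$, whose geometry through Proposition~\ref{prop:white_Tgraph} transparently encodes the direction $\beta$, then produces the missing occupation-time bound.

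The extension of~\eqref{eq:ellipticity} from $t=t_0\delta^2$ to all $t\geq t_0\delta^2$ follows by applying the estimate on disjoint intervals $[kt_0\delta^2,(k+1)t_0\delta^2]$ and using that the variance of a martingale is additive across disjoint increments, yielding $\var(\Re(\bar\beta(X_t-X_0)))\geq \lfloor t/(t_0\delta^2)\rfloor \sigma_0^2\delta^2\geq \tfrac12\sigma_0^2 t$.
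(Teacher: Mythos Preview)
Your setup is correct: the occupation-time identity $\var(\Re(\bar\beta(X_t-X_0)))=\mathbb{E}\int_0^t|\Re(\bar\beta\bar\eta_{b(X_s)})|^2\,ds$ holds, and Lemma~\ref{lem:area} does give a uniform lower density of ``good'' black faces in the $\mu$-measure. You also correctly identify the main obstacle: converting this geometric density into an occupation-time bound for the walk started at a single point. But your proposed resolution of that obstacle does not work. Point~(i) fails: Corollary~\ref{cor:inv-concentration-triv} only controls $|X_t-X_0|$ and is perfectly compatible with the walk spreading exclusively in the $i\bar\beta$-direction; there is nothing ``genuinely two-dimensional'' in it. Point~(ii) is also not a contradiction: the bi-Lipschitz property of $z\mapsto z+\overline{\beta^2\cO(z)}$ is a statement about the embedding, not about where the walk goes. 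Good vertices exist everywhere by Lemma~\ref{lem:area}; the question is whether the walk \emph{visits} them, and nothing in your argument forces that. The sentence ``A quantitative barrier/comparison argument \dots\ then produces the missing occupation-time bound'' is a placeholder, not an argument.

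The paper's proof handles the occupation-time problem by a completely different mechanism. It starts the walk not at a point but at the \emph{invariant measure} $\nu=\mu/\mu(D)$ restricted to a long thin rectangle $D$; since $\mu(v)=S_{b(v)}$, Lemma~\ref{lem:area} applies directly and gives $\var(\Im(X^\nu_{KL}-X^\nu_0))\ge NL$ for suitable $K$ (Lemma~\ref{lem:D-variance}). The link back to the single starting point $X_0=0$ is made by contradiction via a topological path argument (Lemma~\ref{lem:D-exit}): if $\var(\Im X^0_{t_0})$ were too small, one constructs a horizontal barrier $\Gamma$ through $0$ across which exit through the bottom of $D$ is unlikely, and then a coupling between $\nu_0$ and the stopped law $\nu_{K_0L}$ shows that the conditional variance on the event $\{\xi'\neq\xi\}$ of exiting $D$ is bounded by $c(\kappa)L$, contradicting the lower bound $N_0L$. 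The invariant-measure trick is the key idea you are missing; without it, or something of comparable strength, the proof cannot close.
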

\begin{proof} Without loss of generality, we can assume that~$X_0=0$,~$\delta=1$ and $\beta=i$, i.e., we aim to prove that~$\var(\Im X_{t_0}^{{0}})\ge \sigma_0$. The proof goes by contradiction and relies upon two lemmas given below. Eventually, we will set (see Lemma~\ref{lem:D-exit}(ii) for the motivation of this choice)
\begin{equation}\label{eq:x-st0-def}
\sigma_0^2:={\tfrac{1}{8}q_0^2L^{-1}},\qquad t_0:={16n_0q_0^2L^4+K_0L}
\end{equation}
for a large enough~$L\in 2\mathbb{N}$, where~${n_0\in\mathbb{N}}$ is fixed in Corollary~\ref{cor:inv-concentration-triv}, $q_0=q_0(\kappa)\ge 1$ is fixed in Lemma~\ref{lem:area}, {and a (big)} constant $K_0={K_0(\kappa)}$ will be chosen later.

{Denote}
\begin{equation}
\label{eq:rect-D-def}
D:=[-q_0L^2,q_0L^2]\times [-q_0,q_0(L-1)]
\end{equation}
and $\nu(\cdot):=\mu(\cdot)/\mu(D)$ be the normalized invariant measure of the random walk on~$\cT+\cO$, restricted to vertices of the T-graph lying \emph{inside~$D$}. Denote by~$X_t^\nu$ the random walk started at the measure~$\nu$ and \emph{stopped} when leaving~$D$. Let~$\nu_t$ be the law of~$X_t^\nu$, note that~$\nu_t(v)<\nu_0(v)$ for all~$v\in D$ because of the contribution of those trajectories {that} exit~$D$ and a lack of those who enter~$D$ from outside. Of course,~$\nu_t$ remains a probability measure: the remaining mass is concentrated at the boundary of~$D$.

{Let us first informally explain the intuition behind the proof given below. The assumption \mbox{$\var(\Im X_{t_0}^0)<\sigma_0^2$} for big~$t_0$ and small~$\sigma_0$ means that the random walk started at the origin propagates almost only in the horizontal direction. This implies the existence of a `bottom-screening' path~$\Gamma$ (see the proof of Lemma~\ref{lem:D-exit}(ii)) that crosses the rectangle~$D$ horizontally near its bottom side and has the property that for \emph{each} vertex~$v\in\Gamma$ the probability that the random walk started at~$v$ exits from~$D$ through the bottom side is small. If we now start the random walk at the invariant measure~$\nu$, then the existence of such a `bottom screening' path implies that the particles cannot exit from~$D$ through the bottom side as, for topological reasons, they should cross~$\Gamma$ before doing that. In this scenario, the martingale property of~$\Im(X_t)$ implies that \emph{all} particles in~$D$ move almost only in the horizontal direction, which is not hard to rule out using the assumption~\LipKd{\kappa}{\delta} and the geometric interpretation of the invariant measure~$\nu$; see Lemma~\ref{lem:D-variance}.

\smallskip

Recall that the constant \mbox{$c_0=c_0(\kappa)>0$} is given by~\eqref{eq:c0-def}.}

\begin{lemma} \label{lem:D-variance} For each~{$K\ge 1$, the estimate}
\[
\var (\Im (X^\nu_{KL}-X^\nu_0))\ \ge\ {\tfrac12c_0\cdot K}L
\]
{holds} for all sufficiently (depending on~$\kappa$ and~${K}$) large $L$.
\end{lemma}
\begin{proof}
Since the edge~$(\cT+\cO)(b(v))$ of the T-graph corresponding to a vertex~$v$ goes in the direction~$\overline{\eta}_b$, we see that
\[
\var(\Im(X^\nu_{KL}-X^\nu_0))\ =\ \int_0^{KL}\sum_{v\in D}|\Im\eta_{b(v)}|^2\nu_t(v)dt.
\]
(Note that {this expression holds without any restriction on the degree of faces of a t-embedding provided that~$X_t$ is a version of the random walk on~$\cT+\cO$ corresponding to a black splitting. Moreover, the same expression} in presence of degenerate vertices follows, e.g., from continuity arguments.) {Recall (see Section~\ref{sub:t-graphs}) that the invariant measure~$\nu=\nu_0$ is, up to the multiplicative normalization, nothing but the area of black faces of~$\cT$. Therefore,} it is easy to see from Lemma~\ref{lem:area} and Proposition~\ref{prop:concentration} that
\begin{align*}
\textstyle \sum_{v\in D}|\Im\eta_{b(v)}|^2\nu_t(v)\ &\textstyle \ge\ \sum_{v\in D}|\Im\eta_{b(v)}|^2\nu_0(v)-(\nu_0(D)-\nu_t(D))\\
&\textstyle \ge {c_0\cdot\sum_{v\in D}\nu_0(v)}-O((K/L)^{1/2})\ {=\ c_0-O((K/L)^{1/2})}
\end{align*}
for all~$t\le KL$. {Hence,} $\var(\Im(X^\nu_{KL}-X^\nu_0)) \ge {KL\cdot(c_0-O((K/L)^{1/2}))}$,
which {is} greater than~${\tfrac12c_0\cdot KL}$ for large enough~$L$.
\end{proof}

The next lemma provides a bound for the probability that the random walk~$X^\nu_t$ exits from the rectangle $D$ before time~$K_0L$ through (i) the vertical or (ii) the bottom side; {the latter is the key ingredient of the proof of Proposition~\ref{prop:ellipticity}.} Recall that, by our convention, we stop~$X^\nu_t$ right after the exit {from}~$D$.

\begin{lemma}\label{lem:D-exit} (i) Provided that~$L$ is large enough (depending on~${K_0}$), one has
\[
\P(|\Re X^\nu_{K_0L}|> {q_0}L^2)\ \le\ 1/L\,.
\]
(ii) Let~$t_0$ and~$\sigma_0$ be related to~$L$ by~\eqref{eq:x-st0-def} and assume, by contradiction, that $\var(\Im X^0_{t_0})<{\sigma_0^2}$ for the random walk started at~$0$. Then, provided that $L$ is large enough, the following holds:
\[
\P(\Im X^\nu_{K_0L}<0)\ \le\ {(1+2c_0^{-1})}/L\,,
\]
{where the constant~$c_0=c_0(\kappa)>0$ is given by~\eqref{eq:c0-def}.}
\end{lemma}
\begin{proof}
(i) This is an easy corollary of Lemma~\ref{lem:area} and Proposition~\ref{prop:concentration}. Since the width of the rectangle~$D$ is of order~$L^2$, the probability to exit from~$D$ before time~$K_0L$ through its vertical sides starting from~$\nu$ is actually of order~$L^{-3/2}$.

\smallskip

\noindent (ii) {We call a vertex~$v$ \emph{`bottom-screening'} if the probability that the random walk started at~$v$ and run for time~$K_0L$ exits the rectangle~$D$ given by~\eqref{eq:rect-D-def} through its bottom side is less than $1/L$. Let $R:=[-q_0L^2,q_0L^2]\times[-q_0,q_0]$ be the bottom part of~$D$. Let us first show that there exists a (non-oriented) path~$\Gamma$ on the \mbox{T-graph} that crosses the rectangle $R$ horizontally and consists of `bottom-screening' vertices.

Assume that such~$\Gamma$ does not exist. For topological reasons, this implies existence of a path~$\gamma$ crossing~$R$ vertically and not containing `bottom-screening' vertices. Due to Corollary~\ref{cor:inv-concentration-triv}, if we start the random walk at the origin and wait for time $t_0':=16n_0q_0^2L^4$, then it exists from~$R$ with probability at least~$\frac34$. Note however that, due to our assumption~$\var(\Im X_{t_0}^0)<\sigma_0^2$, the probability that $X_t^0$ exits~$R$ through the top or the bottom sides before~$t'_0<t_0$ is bounded by~$\sigma_0^2q_0^{-2}<\frac18$. Hence, with probability at least~$\frac58$ this happens through one of the vertical sides. Moreover, as~$\Re X_t^0$ is a martingale, $X_t^0$ exists~$R$ before time~$t_0'$ through \emph{each} of the vertical sides with probability at least~$\frac18$. For topological reasons, this also implies that~$X_t^0$ crosses the path~$\gamma$ earlier on. Therefore,} if we additionally wait for time~$K_0L$ in the latter case so that {this random walk, restarted on~$\gamma$,} hits the bottom side of~$R$ with probability at least~$L^{-1}$, {we see that}
\[
\var(\Im X^0_{t_0})\ge {\tfrac{1}{8}L^{-1}q_0^2}={\sigma_0^2},\qquad t_0={t_0'+K_0L=16n_0q_0^2L^4+K_0L.}
\]
This is exactly the choice of constants~$t_0$ and~$\sigma_0$ made in~\eqref{eq:x-st0-def}, {which is a contradiction. The proof of the existence of the `bottom-screening' path~$\Gamma$ is complete.}

{It is easy to see that the existence of~$\Gamma$ implies the required estimate of the probability that the random walk exits~$D$ through the bottom side if started at the invariant measure~$\nu$.} Let a vertex~$v\in D$ lie above~$\Gamma$. For topological reasons, the probability {that the random walk started at~$v$ hits} the bottom side of~$D$ before time~$K_0L$ is bounded by~$1/L$ as in this case {the walk} should first cross~$\Gamma$. Finally, the total mass (in the measure~$\nu$) of vertices lying below~$\Gamma$ is bounded by~$2c_2(\kappa)/(Lc_1(\kappa))$ due to Lemma~\ref{lem:area} and the fact that~$\Gamma\subset R$.
\end{proof}

We now move back to the proof of Proposition~\ref{prop:ellipticity}. Recall that the choice of the {constant~$K_0=K_0(\kappa)$} is postponed until the end of the proof and that~$L$ will be eventually chosen large enough (depending on~${K_0}$).

Recall {also} that~$\nu_{K_0L}(v)\le \nu_0(v)$ for all~$v\in D$ and that the remaining mass $1-\sum_{v\in D}\nu_{K_0T}(v)$ is located at distance at most~$2\delta=2$ from the boundary of~$D$. It is easy to see that one can construct a coupling of the laws~$\nu_0$ and~$\nu_{K_0L}$ such that they differ if and only if the latter variable does not belong to~$D$. Let~$(\xi,\xi')$ denote the corresponding coupling obtained by taking the imaginary part: $\xi$ has the law of $\Im X^\nu_0$ and~$\xi'$ has the law of $\Im X^\nu_{K_0L}$.

Note that~$\E(\xi')=\E(\xi)$ due to the martingale property of the random walk. Therefore,
\begin{align}
{\tfrac12c_0\cdot K_0}L\ \le\ \var(\xi')-\var(\xi)\ &=\ \P(\xi'\ne\xi)\cdot (\var(\xi'|\xi'\ne\xi)-\var(\xi|\xi'\ne\xi)) \notag \\ &\le\ \P(\xi'\ne\xi)\cdot \var(\xi'|\xi'\ne\xi), \label{eq:x-cond-var}
\end{align}
see Lemma~\ref{lem:D-variance} for the first inequality. It remains to prove that~\eqref{eq:x-cond-var} can be bounded from above (provided that~$L$ is large enough, depending on~${K_0}$) by $c(\kappa)L$, where {$c(\kappa)$ does \emph{not} depend on $K_0$: once} this is done, choosing first~${K_0}$ and then~$L$ big enough one obtains a desired contradiction.

Trivially,~$\xi'\in [-3q_0,(L+1)q_0]$ as the steps of the random walk are bounded by~${\delta=1}\le 2q_0$. Let
\[
p_\pm:=\P(E_\pm),\quad\text{where}\ \ \begin{array}{l}E_-:=\{\omega:\xi'\ne\xi\ \&\ \xi'\in [-3q_0,(L-1)q_0])\},\\
E_+:=\{\omega:\xi'\ne\xi\ \&\ \xi'\in ((L-1)q_0,(L+1)q_0])\}\end{array}
\]
be the probabilities that~$X^\nu_{K_0L}$ exited from the rectangle~$D$ through the bottom or vertical ($E_-$) or through the top ($E_+$) side, respectively. It follows from Lemma~\ref{lem:D-exit} that
\begin{equation}
\label{eq:x-p-bound}
p_-\ \le\ {2(1+c_0^{-1})}\cdot L^{-1}
\end{equation}
provided that~$L$ is large enough (depending on~${K_0}$).
Though we do not have a similar estimate of the exit probability~$p_+$, the bound~\eqref{eq:x-p-bound} alone is already enough to control~\eqref{eq:x-cond-var}. Indeed, we have
\begin{align*}
&\P(\xi'\ne\xi)\cdot \var(\xi'|\xi'\ne\xi)\\
& = p_-\cdot \var(\xi'\,|\,E_-)\ +\ p_+\cdot\var(\xi'\,|\,E_+)\ +\ p_-p_+\cdot(\E(\xi'\,|\,E_-)-\E(\xi'\,|\,E_+))^2\\
&\le\ {2(1+c_0^{-1})}L^{-1}\cdot \tfrac{1}{4}((L+2)q_0)^2\ +q_0^2\ +{2(1+c_0^{-1})}L^{-1}\cdot((L+4)q_0)^2\\
&\le\ c(\kappa)L
\end{align*}
for large {enough~$L$ (depending of~$K_0$), where the constant~$c(\kappa)$ does \emph{not} depend on~$K_0$.} The proof is complete.
\end{proof}

\def\cst{\operatorname{cst}}

\subsection{Crossing estimates for forward and backward random walks} \label{sub:crossing}
In this section we first collect several standard corollaries of the uniform ellipticity estimate~\eqref{eq:ellipticity} for the forward random walk and harmonic functions on T-graphs. Then we argue that the same statements hold for the backward random walk. Since the backward random walk is \emph{not} a martingale, standard arguments do not apply, instead we derive crossing estimates for this walk from those available for the forward one; see Proposition~\ref{prop:backward_uniform_crossing}. Below we always assume that regions of T-graphs under consideration are such that assumption~{\LipKd{\kappa}{\delta}} is fulfilled (in the corresponding regions of t-embeddings) with a fixed constant~$\kappa<1$.

\begin{lemma}\label{lem:(S)} There {exist} constants~$\rho_0,\eta_0,\varsigma_0>0$ (depending on~$\kappa$) such that the following holds: for all discs~$B(v,r)$ with~$r\ge\rho_0\delta$ drawn over the T-graph and centered at its vertex, and for all intervals~$I\subset\R/2\pi\mathbb{Z}$ of length~$\pi-\eta_0$,
\begin{equation}
\label{eq:x-P-exit}
\P^v\big(\text{$X_t$ exits~$B(v,r)$ through the boundary arc~$\{v+r e^{i\theta}, \theta \in I\}$}\big)\ \ge\ \varsigma_0,
\end{equation}
where~$\P^v$ denotes the law of the (continuous time) random walk on the T-graph started at~$v$.
\end{lemma}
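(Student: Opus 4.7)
The plan is to run a second-moment argument for the real-valued martingale $Y_t := \Re(\overline{\beta}(X_t - v))$ evaluated at the exit time $\tau$ of $X_t$ from $B(v,r)$. By the rotational freedom in the statement of the lemma, I may assume that $\overline{\beta}$ points to the midpoint of $I$; with this choice, setting $s := \sin(\eta_0/2)$ and using the step-size bound $|X_\tau - v| \leq r + 2\delta$ (which in turn follows from $\mathrm{diam}\,\cT(b) \leq \delta$ and Proposition~\ref{prop:geomT}), one has $Y_\tau \geq rs - 2\delta$ on $\{X_\tau \in I\}$ and $Y_\tau \leq rs + 2\delta$ on its complement.

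The heart of the argument will be to establish a lower bound $\mathbb{E}^v[Y_\tau^2] \geq c_\star r^2$ for some $c_\star = c_\star(\kappa) > 0$, valid provided $r \geq \rho_0\delta$ with $\rho_0$ large. Iterating Proposition~\ref{prop:ellipticity} on the time grid $k t_0\delta^2$ via the Markov property and the identity $\mathbb{E}^v[Y_t^2] = \mathbb{E}^v[\langle Y\rangle_t]$ (from the martingale property of $Y_t^2 - \langle Y\rangle_t$) gives $\mathbb{E}^v[Y_t^2] \geq (\sigma_0^2/t_0) t$ for all $t \geq t_0 \delta^2$. Burkholder--Davis--Gundy combined with the trivial bound $\langle Y\rangle_t \leq t$ yields $\mathbb{E}^v[Y_t^4] = O(t^2)$. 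Setting $t = c_1 r^2$ for small $c_1 = c_1(\kappa)$ and invoking Proposition~\ref{prop:concentration}, we get $\mathbb{P}^v(\tau \leq t) \leq 4\exp(-\Omega(1/c_1))$; hence, by Cauchy--Schwarz, $\mathbb{E}^v[Y_t^2 \mathbf{1}_{\tau \leq t}] = O(r^2)\exp(-\Omega(1/c_1))$, which can be made less than $\tfrac{1}{2}\mathbb{E}^v[Y_t^2]$ by choosing $c_1$ small enough. The desired bound then follows from the elementary inequality $\mathbb{E}^v[Y_\tau^2] \geq \mathbb{E}^v[Y_{\tau \wedge t}^2] \geq \mathbb{E}^v[Y_t^2 \mathbf{1}_{\tau > t}] \geq \tfrac{1}{2}(\sigma_0^2/t_0)c_1 r^2$, valid once $c_1 \rho_0^2 \geq t_0$.

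Writing $p := \mathbb{P}^v(X_\tau \in I)$ for the probability in question, the remaining step is a direct extremal computation. The martingale identity $\mathbb{E}^v[Y_\tau] = 0$ together with the pointwise bounds on $Y_\tau$ yields $\mathbb{E}^v[|Y_\tau| \mathbf{1}_{Y_\tau < 0}] = \mathbb{E}^v[Y_\tau \mathbf{1}_{Y_\tau \geq 0}] \leq rp + rs + O(\delta)$. Splitting $\mathbb{E}^v[Y_\tau^2]$ according to the three events $\{X_\tau \in I\}$, $\{X_\tau \in I^c,\,Y_\tau \geq 0\}$, $\{Y_\tau < 0\}$ and bounding each term separately (for the last one via $Y_\tau^2 \leq (r+2\delta)|Y_\tau|$) leads to $\mathbb{E}^v[Y_\tau^2] \leq 2r^2 p + 2r^2 s + O(r\delta)$. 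Comparing with the lower bound from the previous paragraph gives $p \geq (c_\star - 2s)/2 - O(\delta/r)$; choosing $\eta_0 = \eta_0(\kappa)$ so small that $s < c_\star/4$ and then $\rho_0$ large enough to absorb the correction produces $p \geq \varsigma_0 := c_\star/8 > 0$.

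The main obstacle is the first step: transferring the variance growth of the unstopped walk (given by Proposition~\ref{prop:ellipticity}) to a lower bound on the exit variance of the \emph{stopped} walk. Since the ellipticity estimate operates only above scale $\delta$ and in expectation, one has no pointwise control on the quadratic-variation rate; the rescue comes from Proposition~\ref{prop:concentration}, which ensures that up to time of order $r^2$ the walk almost surely has not yet left $B(v,r)$, so that after a Cauchy--Schwarz application with the fourth-moment bound the contribution of early-exiting paths is negligible. Once this variance estimate is established, the extremal computation in the second step is essentially elementary.
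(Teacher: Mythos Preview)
Your proof is correct and follows the same overall strategy as the paper: reduce to a variance lower bound $\E^v[Y_\tau^2]\ge c_\star r^2$ for the projected martingale $Y_t=\Re(\overline\beta(X_t-v))$, then deduce the exit-arc estimate from this.

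The difference lies in how the variance bound is obtained. The paper notes that Proposition~\ref{prop:ellipticity} makes the discrete-time process $|Y_{kt_0\delta^2}|^2-k\sigma_0^2\delta^2$ a submartingale and applies optional stopping at $\varkappa:=\lceil \tau/(t_0\delta^2)\rceil$; combined with $\E[\tau]\ge r^2$ (from the martingale $|X_t-v|^2-t$) and the trivial overshoot bound $\var(Y_{\varkappa t_0\delta^2}-Y_\tau)\le t_0\delta^2$, this yields $\E[Y_\tau^2]\ge (\sigma_0^2/t_0)r^2-t_0\delta^2$ in two lines. Your route---grow $\E[Y_t^2]$ for the unstopped walk, then use Proposition~\ref{prop:concentration}, a fourth-moment bound via BDG, and Cauchy--Schwarz to show that early-exiting paths contribute negligibly---is valid but considerably heavier; the submartingale/optional-stopping argument avoids all of this machinery. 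On the other hand, you carry out the final extremal computation explicitly, whereas the paper simply remarks that the variance bound suffices and refers to \cite{BLRnote}.
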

\begin{proof} See the proof of~\cite[Lemma~3.7]{BLRnote}, we briefly recall this proof here for completeness. Without loss of generality, assume that~$I=(\frac{1}{2}\eta_0,\pi-\frac{1}{2}\theta_0)$. Let~$\tau$ denote the exit time from~$B(v,r)$, {note that~$\tau<\infty$ almost surely due to Corollary~\ref{cor:inv-concentration-triv} and the Markov property. Also, note that $\E(\tau)=\E(|X^v_\tau-v|^2)\ge r^2$ since~$|X^v_t-v|^2-t$ is a martingale.} As the process~$\Im(X^v_t)$ is a martingale, {in order to prove~\eqref{eq:x-P-exit} it is enough} to find a constant~$\sigma_0'>0$ such that
\[
\var(\Im(X^v_\tau))\ge (\sigma'_0)^2{r^2}\quad \text{for all}\ \ r\ge\rho_0{\delta}.
\]
Proposition~\ref{prop:ellipticity} {implies that} the \emph{discrete time} process \[Y_k:=|\Im(X^v_{kt_0\delta^2}-v)|^2-k\sigma_0^2\delta^2\] is a submartingale. 
The optional stopping applied to the stopping time
\[
\varkappa:=\lceil(t_0\delta^2)^{-1}\tau\rceil{}{\in (\tau,\tau+t_0\delta^2]}
\]
gives the desired result since
$\var(\Im(X^v_{\varkappa t_0\delta^2}-X^v_\tau))\le\E(\varkappa-\tau)\le t_0\delta^2$.
\end{proof}

\begin{figure}
\begin{center}
\includegraphics[width = .8\textwidth]{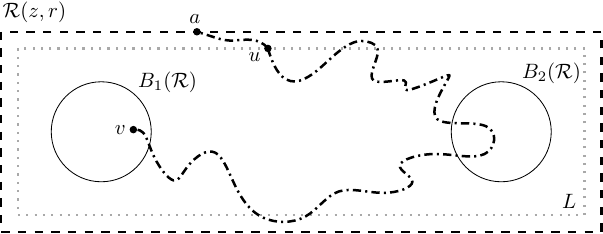}
\caption{Notation used in the discussion of the uniform crossing property for forward and backward walks on T-graphs.}\label{fig:domains}
\end{center}
\end{figure}

Given~$z\in\C$ and~$r>0$, let
\begin{align*}
&\cR(z,r)\ :=\ {z +[-3r,3r]\times[-r,r]},\\
&B_1(\cR):=B(z\!-\!2r,\tfrac{1}{2}r),\quad B_2(\cR):=B(z\!+\!2r,\tfrac{1}{2}r)
\end{align*}
be a rectangle and two discs drawn over the T-graph; see {Fig.~\ref{fig:domains}}. The following property of random walks was called the \emph{uniform crossing property} in~\cite{BLR1}.
\begin{lemma}\label{lem:forward_uniform_crossing}
There exist constants~$\rho'_0,\varsigma'_0>0$ such that the following holds for all rectangles~$\cR(z,r)$ with $r\ge\rho'_0\delta$ drawn over the T-graph:
\[
\P^v ( \text{$X_t$ hits $B_2(\cR)$ before exiting $\cR$} )\ \ge\ \varsigma'_0\quad \text{for all $v\in B_1(\cR)$}.
\]
\end{lemma}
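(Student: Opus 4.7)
The plan is to iterate Lemma~\ref{lem:(S)} along a chain of successive disc exits that guides the walker from~$B_1(\cR)$ eastward to~$B_2(\cR)$, while keeping the trajectory inside~$\cR$. First I would fix a small parameter $\rho=\rho(\eta_0)>0$ to be chosen below, set $s:=\rho r$, and define $\rho_0':=\rho_0/\rho$ so that $r\ge\rho_0'\delta$ automatically yields $s\ge\rho_0\delta$, the prerequisite of Lemma~\ref{lem:(S)}. Starting from $v_0:=v\in B_1(\cR)$, define stopping times $\tau_0:=0$ and, inductively, $\tau_k$ as the first exit time of $X_t$ from the disc $B(v_{k-1},s)$ after $\tau_{k-1}$; set $v_k:=X_{\tau_k}$. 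Each $v_k$ is a vertex of the T-graph at distance between~$s$ and $s+2\delta\le 2s$ from $v_{k-1}$.

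For each $k\ge 1$, I would choose an arc $I_k\subset\mathbb{R}/2\pi\mathbb{Z}$ of length $\pi-\eta_0$, depending on $v_{k-1}$ and aimed roughly eastward. By Lemma~\ref{lem:(S)} applied to the disc $B(v_{k-1},s)$ together with the strong Markov property, the event $E_k:=\{\arg(v_k-v_{k-1})\in I_k\}$ satisfies $\P(E_k\mid \mathcal{F}_{\tau_{k-1}})\ge \varsigma_0$, and hence $\P^v(E_1\cap\cdots\cap E_N)\ge \varsigma_0^N$ for any deterministic number of steps~$N$. The arcs $I_k$ are to be chosen so that on $E_1\cap\cdots\cap E_N$ the broken-line trajectory $v_0,v_1,\ldots,v_N$ stays inside~$\cR$, makes strictly eastward progress, and terminates inside $B_2(\cR)$. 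Setting $\varsigma_0':=\varsigma_0^N$ would then yield the result.

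The main obstacle is the joint control of the eastward progress and the vertical drift. A successful exit through an eastward-pointing arc guarantees horizontal progress of at least $s\sin(\eta_0/2)$ per step, but allows a vertical displacement of up to $(s+2\delta)\cos(\eta_0/2)$; over the $N\asymp 1/(\rho\sin(\eta_0/2))$ steps needed to cross the rectangle, a naive always-east choice could produce vertical drift exceeding the height~$r$. To cope with this I would choose the arcs $I_k$ adaptively, for instance by centering $I_k$ in the direction from $v_{k-1}$ toward the centre $z+2r$ of $B_2(\cR)$, or by applying a simple feedback rule that switches between \emph{east}, \emph{north-east} and \emph{south-east} according to the sign of $\Im(v_{k-1}-z)$. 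Because $\cR$ has aspect ratio $3{:}1$, with $\rho$ taken small enough the resulting trajectory remains confined to a horizontal tube of half-height strictly less than~$r$ and reaches $B_2(\cR)$ in $N=N(\kappa)$ bounded steps; combined with the chaining bound $\varsigma_0^N$, this completes the argument. An analogous scheme is used for related crossing estimates in~\cite{BLR1,BLRnote}.
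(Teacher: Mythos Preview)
Your approach---iterating Lemma~\ref{lem:(S)} along a chain of disc exits with adaptively chosen arcs---is exactly the strategy the paper points to (it cites~\cite[Appendix]{chelkak-toolbox} and~\cite[Theorem~3.8]{BLRnote}). However, one step needs more care. You claim that ``with $\rho$ taken small enough the resulting trajectory remains confined to a horizontal tube of half-height strictly less than~$r$'', but neither of your two adaptive rules actually achieves this. An arc of length $\pi-\eta_0$ gives only $s\sin(\eta_0/2)$ of guaranteed progress in the aimed direction while allowing up to $s\cos(\eta_0/2)\approx s$ of transverse displacement; shrinking $\rho$ multiplies the number of steps $N$ by the same factor by which it divides $s$, so the worst-case cumulative vertical drift $\sim Ns\cos(\eta_0/2)\sim r\cot(\eta_0/2)$ does not shrink. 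In particular, aiming at the fixed centre of $B_2(\cR)$ only guarantees that $|v_k-(z+2r)|$ decreases, which yields no useful bound on $|\Im(v_k-z)|$ since the initial distance is about $4r\gg r$; and an east/NE/SE feedback rule can lose as much horizontally during correction steps as it gains during eastward ones.

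The standard remedy (cf.~\cite[Appendix]{chelkak-toolbox}) is a two-scale version of your idea: first use the ``aim at target'' scheme at a \emph{small} scale $r'\ll r$ to show that from any $v$ with $|v-w|=r'$ one reaches $B(w,r'/C)$ before leaving $B(v,3r')$ (here the distance-to-target bound \emph{does} confine, since $3r'<r$); then chain these macro-steps along a sequence of eastward targets on the axis $\Im=\Im(z)$, so that each macro-step advances $\Re$ by $\sim r'$ while changing $\Im$ by at most $r'/C$, and the total vertical drift over $O(r/r')$ macro-steps is $O(r/C)<r/2$. Alternatively, as the paper notes, the argument in~\cite[Theorem~3.8]{BLRnote} invokes the martingale property of $X_t$ once more to control the vertical coordinate directly.
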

\begin{proof}
This is a simple geometrical corollary of Lemma~\ref{lem:(S)}; e.g., see~\cite[Appendix]{chelkak-toolbox} or the proof of~\cite[Theorem~3.8]{BLRnote}; note that the latter uses the martingale property of~$X_t$ once more.
\end{proof}
The crossing estimates discussed above easily imply the \emph{elliptic Harnack principle} for positive harmonic functions on planar graphs.
\begin{proposition} \label{prop:Harnack-positive}
For each~$\rho<1$ there exists a constant~$c(\rho)=c(\rho,\kappa)>0$ such that, for each positive harmonic function~$H$ defined inside a disc~$B(v_0,r)$ drawn over the T-graph, we have
\begin{samepage}
\[
\min\nolimits_{v\in B(v_0,\rho r)}H(v)\ \ge\ c(\rho)\cdot \max\nolimits_{v\in B(v_0,\rho r)}H(v)
\]
provided that~$(1-\rho)r\ge\cst\cdot\delta$ for a constant~$\cst$ depending on~$\kappa$ only.
\end{samepage}
\end{proposition}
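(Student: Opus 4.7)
My plan is to reduce to a one-step local Harnack inequality and then chain the estimates via a standard Harnack-chain construction. The one-step statement to prove is: there exist $C_0 = C_0(\kappa)$ and $\rho''_0 = \rho''_0(\kappa)$ such that for any disc $B(u, 2s)$ drawn on the T-graph with $s \geq \rho''_0 \delta$ and any positive harmonic function $H$ on $B(u, 2s)$, one has $H(v_1) \leq C_0 H(v_2)$ for all $v_1, v_2 \in B(u, s)$. I would derive this from the comparability of the exit distributions $\mu_{v_i}$ on $\partial B(u, 2s)$: once $C_0^{-1} \mu_{v_2} \leq \mu_{v_1} \leq C_0 \mu_{v_2}$ is shown as measures, the identity $H(v_i) = \int H\, d\mu_{v_i}$ (justified by optional stopping of the bounded-below martingale $H(X_{t \wedge \tau})$ and positivity of $H$) immediately yields the bound.

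To establish the comparability of exit measures I would use a coupling argument. Combining Lemma~\ref{lem:forward_uniform_crossing} with Proposition~\ref{prop:concentration}, starting from any two points of $B(u, s)$ the walks hit a fixed small sub-disc of $B(u, s)$ with uniformly positive probability before exiting $B(u, 2s)$, and within time $O(s^2)$. A Markovian coupling then identifies the two walks with success probability at least some $p_0 = p_0(\kappa) > 0$ per attempt, so iterating bounds the total-variation distance between $\mu_{v_1}$ and $\mu_{v_2}$ by $(1 - p_0)^n$ after $n$ attempts. Combined with the uniform lower bound $\mu_{v_i}(A) \geq \varsigma_0$ for boundary arcs $A$ of definite angular length (from Lemma~\ref{lem:(S)}), this gives the required two-sided comparability by taking $n$ large enough so that the leftover total variation is smaller than $\tfrac{1}{2} \varsigma_0$.

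To conclude, given $v, v' \in B(v_0, \rho r)$, I would connect them by a chain of at most $N(\rho)$ discs $B(u_j, 2s_0)$ with $s_0 = c \cdot (1-\rho) r$, all contained in $B(v_0, r)$ and with consecutive $B(u_j, s_0)$ sharing a common point. The hypothesis $(1-\rho)r \geq \cst \cdot \delta$ ensures $s_0 \geq \rho''_0 \delta$, so the one-step estimate applies at every link, and iteration gives $c(\rho) = C_0^{-N(\rho)}$. The main obstacle is the coupling step in the one-step Harnack: because the jumps of the random walk can be as large as $O(\delta)$ and no bounded-degree or uniform-face-size assumption is available, the coupling must be implemented using only inputs controlled by Lemmas~\ref{lem:(S)} and~\ref{lem:forward_uniform_crossing}. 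An alternative Krylov--Safonov-style iteration of crossing estimates could replace the coupling entirely, directly producing two-sided comparability of $\mu_{v_1}(A)$ and $\mu_{v_2}(A)$ for arbitrary measurable $A \subset \partial B(u, 2s)$.
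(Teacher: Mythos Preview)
Your route via exit-measure comparability is genuinely different from the paper's, but it has a real gap. The paper's argument is a one-liner: take $v_{\max}, v_{\min}$ realizing the extremes in $B(v_0,\rho r)$; by the maximum principle for the martingale $H(X_t)$ there is a nearest-neighbor path $\gamma_{\max}$ from $v_{\max}$ to $\partial B(v_0,r)$ along which $H\ge H(v_{\max})$; the uniform crossing estimate (Lemma~\ref{lem:forward_uniform_crossing}), iterated across a chain of rectangles, shows that the walk started at $v_{\min}$ hits $\gamma_{\max}$ before exiting $B(v_0,r)$ with probability at least some $p(\rho,\kappa)>0$; optional stopping and positivity of $H$ then give $H(v_{\min})\ge p(\rho,\kappa)\,H(v_{\max})$ directly, with no local Harnack step and no chaining.

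The gap in your plan is the passage from a total-variation bound on the exit laws to the \emph{multiplicative} inequality $C_0^{-1}\mu_{v_2}\le \mu_{v_1}\le C_0\,\mu_{v_2}$. A bound $\|\mu_{v_1}-\mu_{v_2}\|_{\mathrm{TV}}\le\epsilon$ only yields $|\mu_{v_1}(A)-\mu_{v_2}(A)|\le\epsilon$, which says nothing multiplicative when $\mu_{v_2}(A)$ is small; yet for $H(v_i)=\int H\,d\mu_{v_i}$ to produce a Harnack inequality you need the multiplicative bound on \emph{all} Borel $A$, since a positive harmonic $H$ can put almost all of its boundary mass on an arbitrarily small set. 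Your patch via Lemma~\ref{lem:(S)} only supplies lower bounds for arcs of definite angular length, not for general $A$, so it does not close this. (There is a secondary looseness in the coupling itself: both walks hitting a common sub-disc does not merge them, and after a failed attempt one or both may already have exited $B(u,2s)$, so the ``success probability $p_0$ per attempt'' picture needs more care than stated.) The Krylov--Safonov alternative you mention would indeed give the pointwise comparability of exit measures, but it is considerably heavier than the path-plus-crossing argument the paper uses.
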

\begin{proof}
This is a standard argument, which we also recall for completeness. Let~$v_\mathrm{max}$ and~$v_\mathrm{min}$ be the vertices in~$B(v_0,\rho r)$ at which~$H$ attains its maximal and minimal values, respectively. It follows from the maximal principle that~$H(\cdot)\ge H(v_\mathrm{max})$ along some nearest-neighbor path~$\gamma_\mathrm{max}$ going from~$v_\mathrm{max}$ to the boundary of~$B(v_0,r)$. The uniform crossing estimates ensure that the probability that the random walk started at~$v_\mathrm{min}$ hits~$\gamma_\mathrm{max}$ before exiting~$B(v_0,r)$ is uniformly bounded from below. The optional stopping theorem concludes the proof.
\end{proof}
As pointed out in~\cite[Lemma~4.4]{BLR1}, the elliptic Harnack principle allows one to strengthen the claim of Lemma~\ref{lem:forward_uniform_crossing} by additionally conditioning the random walk to exit from~$\cR$ through a fixed vertex~$a$.
\begin{lemma}\label{lem:forward-crossing-conditioned}
Let~$\tau$ be the exit time from~$\cR=\cR(z,r)$ of the random walk~$X_t$ on the T-graph. There exists a constant~$\varsigma''_0=\varsigma''_0(\kappa)>0$ such that the following holds: for all $v\in B_1(\cR)$ and all {exit points~$a$} such that~$\P(X_\tau={a})>0$, we have
\[
\P^v ( \text{$X_t$ hits $B_2(\cR)$ before exiting $\cR$}\ |\ X_\tau={a} )\ \ge\ \varsigma''_0
\]
provided that~$r\ge\cst\cdot\delta$ for a constant~$\cst$ depending on~$\kappa$ only. Due to the strong Markov property, the same lower bound also applies to any conditioning made after the exit time~$\tau$.
\end{lemma}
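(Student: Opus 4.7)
The plan is a standard Harnack-chain argument. Let $h(v) := \P^v(X_\tau = a)$; this is a nonnegative harmonic function on the interior vertices of~$\cR$ whose boundary values vanish except at the single vertex~$a$, where they equal $1$. Setting $\sigma$ to be the first hitting time of $B_2(\cR)$ by the walk, the strong Markov property applied at~$\sigma$ gives
\begin{equation*}
\P^v(\sigma < \tau,\ X_\tau = a)\ =\ \E^v\bigl[1_{\sigma<\tau}\cdot h(X_\sigma)\bigr]\ \ge\ \P^v(\sigma<\tau)\cdot \min_{v' \in B_2(\cR)} h(v'),
\end{equation*}
while Lemma~\ref{lem:forward_uniform_crossing} guarantees $\P^v(\sigma < \tau) \ge \varsigma'_0$. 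Since $\P^v(X_\tau = a) = h(v)$, the conditional probability to be bounded from below equals at least $\varsigma'_0 \cdot (\min_{B_2(\cR)} h)/h(v)$, and the lemma reduces to the uniform comparison
\begin{equation*}
\min_{v'\in B_2(\cR)} h(v')\ \ge\ c(\kappa) \cdot h(v) \qquad \text{for all } v \in B_1(\cR),
\end{equation*}
valid once $r$ exceeds a constant multiple of $\delta$.

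To establish this comparison I would invoke the elliptic Harnack principle (Proposition~\ref{prop:Harnack-positive}) along a Harnack chain contained strictly inside $\cR$. Concretely, cover the horizontal segment joining the centers of $B_1(\cR)$ and $B_2(\cR)$ by a uniformly bounded (independent of $\delta$ and~$r$) number of balls $B(v_k, r/2)$, with $v_0 \in B_1(\cR)$, $v_K \in B_2(\cR)$, and consecutive centers satisfying $v_{k+1} \in B(v_k, r/4)$; since this segment lies at distance $r$ from the top and bottom of $\cR$, all these balls sit well inside the interior of~$\cR$ and in particular avoid the boundary vertex~$a$. On each such ball $h$ is a positive harmonic function (positivity propagates from $h(v)>0$, which is given by hypothesis), so Proposition~\ref{prop:Harnack-positive} with $\rho = 1/2$ yields $h(v_{k+1}) \ge c_1(\kappa) \cdot h(v_k)$ as soon as $r \ge \cst(\kappa)\cdot \delta$. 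A final Harnack step inside each of $B_1(\cR)$ and $B_2(\cR)$, using a slightly enlarged concentric disc that still fits in $\cR$ to compare the center with an arbitrary point of the ball, gives the desired comparison with some $c(\kappa) > 0$.

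Combining the two displays yields the lemma with $\varsigma''_0 := \varsigma'_0 \cdot c(\kappa)$. The closing assertion about conditioning after~$\tau$ is immediate from the strong Markov property: for any event $\cE \in \sigma(X_s : s \ge \tau)$ with positive probability, the conditional law of the pre-$\tau$ trajectory given $\cE$ depends on~$\cE$ only through the exit point $X_\tau$, so the estimate just proved applies for each allowed exit point and integrates to the same bound. The only genuinely delicate point in the whole argument is arranging the Harnack chain so that every intermediate disc simultaneously fits in $\cR$, stays at a controlled distance from $\partial\cR$ (so that $h$ really is harmonic on each), and remains bounded away from~$a$; the geometry of the configuration $B_1(\cR), B_2(\cR) \subset \cR$ makes this routine once $r$ is large enough compared to $\delta$ for Proposition~\ref{prop:Harnack-positive} to be applicable.
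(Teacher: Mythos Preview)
Your proof is correct and follows essentially the same approach as the paper: define $h(v')=\P^{v'}(X_\tau=a)$, use the unconditioned crossing estimate together with the strong Markov property to reduce to comparing $\min_{B_2(\cR)}h$ with $h(v)$, and then invoke the elliptic Harnack principle (Proposition~\ref{prop:Harnack-positive}). The paper's version is simply terser, saying only that the two quantities are ``comparable due to the Harnack principle'' without spelling out the chain of discs.
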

\begin{proof} Let~$H(v'):=\P^{v'}(X_\tau={a})$, note that~$H$ is a positive harmonic function in~$\cR$. We have
\begin{align*}
\P^v (\text{$X_t$ hits $B_2(\cR)$}~&\text{before exiting $\cR$ and}\ X_\tau={a} )\\
\ge\ & \P^v(\text{$X_t$ hits $B_2(\cR)$ before exiting $\cR$})\cdot \min\nolimits_{v'\in B_2(\cR)}H(v').
\end{align*}
Since~$\min\nolimits_{v'\in B_2(\cR)}H(v')$ is comparable to $\P^v(X_\tau\!={a})=H(v)$ due to the Harnack principle, the required estimate follows.
\end{proof}

We are now in the position to prove a similar uniform crossing estimate for \emph{backward} random walks on T-graphs. Again, since this estimate does not depend on the exit point, the same bound holds for any conditioning made after the exit from~$\cR$ (and, in particular, unconditionally).

\begin{proposition} \label{prop:backward_uniform_crossing}
Let~$\widetilde{\tau}$ be the exit time from~$\cR=\cR(z,r)$ for the backward random walk~$\widetilde{X}_t$ on the T-graph. There exists a constant~$\widetilde{\varsigma}_0=\widetilde{\varsigma}_0(\kappa)>0$ such that the following holds: for all $v\in B_1(\cR)$ and all {exit points~$a$} such that~$\widetilde{\P}(\widetilde{X}_{\widetilde{\tau}}={a})>0$, we have
\[
\widetilde{\P}^v ( \text{$\widetilde{X}_t$ hits $B_2(\cR)$ before exiting $\cR$}\ |\ \widetilde{X}_{\widetilde{\tau}}={a} )\ \ge\ \widetilde{\varsigma}_0
\]
provided that~$r\ge\cst\cdot\delta$ for a constant~$\cst$ depending on~$\kappa$ only.
\end{proposition}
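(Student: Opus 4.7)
The strategy is to deduce Proposition~\ref{prop:backward_uniform_crossing} from the unconditional uniform crossing estimate for the forward walk (Lemma~\ref{lem:forward_uniform_crossing}) via the time-reversal duality. Since $\mu$ is invariant for $X_t$, a direct computation shows that the holding rates satisfy $\widetilde{q}(v)=q(v)$ at every vertex, and the semigroups of the forward and backward walks killed on $\partial\cR$ satisfy $\mu(v)\widetilde{p}_T^{\cR}(v,v')=\mu(v')p_T^{\cR}(v',v)$. Integrating this identity over the exit time $\widetilde\tau\in dT$ and summing over the last vertex $v'\in\cR$ before the exit jump to $a$ yields, for any time-reversal-invariant event $F$ on path fragments in $\cR$,
\[
\widetilde{\P}{}^{v}\bigl(F,\ \widetilde{X}_{\widetilde\tau}=a\bigr) \;=\; \frac{\mu(a)}{\mu(v)\,q(v)}\sum_{v'\sim a,\,v'\in\cR}q(a,v')\,\E^{v'}\bigl[\mathbf{1}_F\,N_v\bigr],
\]
where $N_v$ denotes the number of visits of the forward walk to $v$ before it exits $\cR$.

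Applying this identity with $F=A:=\{\widetilde{X}_s\in B_2(\cR)\text{ for some }s\leq\widetilde\tau\}$ and with $F$ trivial, the ratio cancels the $(a,v)$-dependent prefactor, so the statement reduces to the pointwise lower bound $\E^{v'}[\mathbf{1}_A N_v]\geq \varsigma_0'\cdot\E^{v'}[N_v]$ for every $v'\in\cR$ and every $v\in B_1(\cR)$, with $\varsigma_0'>0$ the constant from Lemma~\ref{lem:forward_uniform_crossing}. Decomposing the walk from $v'$ at the first hitting time $H_v$ of $v$ (on the event $\{H_v<\tau\}$, since otherwise $N_v=0$) and applying the strong Markov property gives
\[
\frac{\E^{v'}[\mathbf{1}_A N_v]}{\E^{v'}[N_v]} \;=\; P_1+(1-P_1)\,\frac{\E^{v}[\mathbf{1}_A N_v]}{\E^{v}[N_v]} \;\geq\; \frac{\E^{v}[\mathbf{1}_A N_v]}{\E^{v}[N_v]},
\]
where $P_1:=\P^{v'}(A\text{ occurs before }H_v\mid H_v<\tau)\in[0,1]$.

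Finally, splitting the trajectory of the walk from $v$ into i.i.d.\ excursions at successive returns to $v$ (each either returning to $v$ within $\cR$ with probability $r$ or exiting $\cR$ with probability $1-r$, and independently visiting $B_2$ with conditional probability $p_R$ or $p_E$ respectively), an elementary computation yields
\[
1-\P^{v}(A)=\frac{(1-r)(1-p_E)}{1-r(1-p_R)},\qquad 1-\frac{\E^{v}[\mathbf{1}_A N_v]}{\E^{v}[N_v]}=\frac{(1-r)^2(1-p_E)}{(1-r(1-p_R))^2}.
\]
Since $1-r(1-p_R)\geq 1-r$, this gives the size-biasing inequality $\E^{v}[\mathbf{1}_A N_v]/\E^{v}[N_v]\geq \P^{v}(A)$; the intuition is that weighting trajectories by their number of returns to $v$ only increases the chance of hitting $B_2$, as $A$ is a union-type event over excursions. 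Combined with $\P^{v}(A)\geq\varsigma_0'$ from Lemma~\ref{lem:forward_uniform_crossing}, the proof is complete with $\widetilde\varsigma_0:=\varsigma_0'$. The main technical point is the careful derivation of the continuous-time duality identity in the first display: one must handle the exit jump from $v'\in\cR$ to $a\notin\cR$ (which is responsible for the prefactor $q(a,v')$) and carry out the integration over the exit time $\widetilde\tau$ correctly; once this identity is secured, the remainder is a routine excursion-theoretic computation that does not even require the Harnack principle.
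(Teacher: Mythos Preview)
Your duality identity is not quite right. Carrying out the time-reversal carefully (insert the path event into the killed-semigroup identity $\mu(v)\widetilde p_T^{\cR}(v,v')=\mu(v')p_T^{\cR}(v',v)$ and \emph{then} integrate over $T$), one obtains
\[
\widetilde{\P}^{v}\bigl(A,\ \widetilde{X}_{\widetilde\tau}=a\bigr)\;=\;\frac{\mu(a)}{\mu(v)\,q(v)}\sum_{v'\sim a,\,v'\in\cR}q(a,v')\,\E^{v'}[N_v^{A}],
\]
where $N_v^{A}$ counts only those visits of the forward walk to $v$ that occur \emph{after} $B_2$ has already been hit. This is not $\E^{v'}[\mathbf{1}_A N_v]$: when you reverse, the event $A$ on the backward side becomes ``the forward path on $[0,T]$ visits $B_2$'', which depends on $T$, whereas writing $\mathbf{1}_A N_v$ tacitly replaces it by the $T$-independent event ``the forward walk visits $B_2$ at some time before exit''. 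A four-state non-reversible cycle with one killing edge already exhibits the discrepancy.

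With the correct quantity $N_v^{A}$, your excursion decomposition from $v$ gives
\[
\frac{\E^v[N_v^{A}]}{\E^v[N_v]}\;=\;\frac{rp_R}{1-r+rp_R},\qquad \P^v(A)\;=\;\frac{rp_R+(1-r)p_E}{1-r+rp_R},
\]
so the size-biasing inequality goes the \emph{wrong way}: $\E^v[N_v^{A}]/\E^v[N_v]\le \P^v(A)$, with a gap of $(1-r)p_E/(1-r+rp_R)$. The ratio you actually need can be arbitrarily small whenever excursions from $v$ visit $B_2$ mainly on their way out of $\cR$ (large $p_E$, small $rp_R$), and the unconditional crossing estimate of Lemma~\ref{lem:forward_uniform_crossing} says nothing about this. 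Bounding $rp_R$ from below amounts to showing that the forward walk from $v$ hits $B_2$ and then \emph{returns to the single vertex $v$} with uniformly positive probability; controlling the return to one specific vertex is exactly the content of the Harnack-based Lemma~\ref{lem:forward-crossing-conditioned} that you were hoping to bypass. The paper's argument avoids the issue by decomposing the backward trajectory into loops at $v$ plus a final excursion to $a$, reversing only that excursion, and then applying Lemma~\ref{lem:forward-crossing-conditioned} (via a contour $L$ inside $\cR$) to the forward excursion from $a$ to $v$.
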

\begin{proof}
We decompose the backward random walk~$\widetilde{X}_t$ as first a sequence of loops around~$v$ followed by an excursion from~$v$ to~$a$ inside~$\cR$ conditioned not to return to $v$. Clearly, the loops can only contribute positively to the probability to hit~$B_2$ before the exit from~$\cR$. Let~$\widetilde{\mathbb{P}}^{v\to {a}}$ denote the probability measure on such excursions of the backward walk~$\widetilde{X}_t$. Up to reversing the direction of trajectories, this measure is the same as the measure~$\P^{{a}\to v}$ on excursions from~$a$ to~$v$ staying inside~$\cR$ of the \emph{forward} random walk~$X_t$. Therefore, it is enough to obtain the uniform lower bound
\begin{align*}
\P^{{a}\to v}(\,\text{excursion}&~X_t~\text{visits}~B_2\,)\\
&=\ \P^{a}(\,X_{t\wedge\tau}~\text{visits~$B_2$ before~$v$}\;|\; X_{t\wedge\tau}~\text{visits~$v$}\,)\ \ge\ \widetilde{\varsigma}_0>0,
\end{align*}
where, as before,~$\tau$ stands for the exit time from~$\cR$ of the forward walk.

Let~$L$ be a contour going along the T-graph near the boundary of a slightly smaller rectangle (e.g.,~$z+[-\tfrac{17}{6}r,\tfrac{17}{6}r]\times[-\tfrac{5}{6}r,\tfrac{5}{6}r]$, see Fig.~\ref{fig:domains}) which still contains both discs~$B_1(\cR)$, $B_2(\cR)$. Since each random walk trajectory running from~$a$ to~$v$ should cross~$L$, it is enough to prove the same uniform estimate for trajectories started at~$L$:
\[
\P^{u}(\,X_{t\wedge\tau}~\text{visits~$B_2$ before~$v$}\;|\; X_{t\wedge\tau}~\text{visits~$v$}\,)\ \ge\ \widetilde{\varsigma}_0>0,\quad \text{for all ${u}\in L$.}
\]
This statement follows from Lemma~\ref{lem:forward-crossing-conditioned} applied to a suitable chain of smaller rectangles $\cR_1,\ldots,\cR_n$: on each step we condition on the event that the random walk visits~$v$ before hitting the boundary of~$\cR$, both~$v$ and~$\partial\cR$ being outside of~$\cR_j$.
\end{proof}

\begin{corollary} The elliptic Harnack inequality (see Proposition~\ref{prop:Harnack-positive}) also holds for positive functions which are harmonic with respect to the backward random walk on the T-graph.
\end{corollary}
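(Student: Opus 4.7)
The plan is to repeat verbatim the argument used for Proposition~\ref{prop:Harnack-positive}, with the backward random walk $\widetilde{X}_t$ replacing the forward walk $X_t$; all three ingredients of that proof have direct analogues in the backward setting.

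First I would note that harmonicity of $H$ with respect to $\widetilde{X}_t$ means precisely that $H(v)$ is a convex combination, with strictly positive weights summing to one, of the values $H(v')$ at the vertices $v'$ reachable by a single backward-walk step from $v$. This yields a discrete maximum principle in the usual way, and hence (choosing at each step a neighbour that realises or exceeds the current value) produces a nearest-neighbour path $\gamma_{\max}$ that runs from $v_{\max}\in B(v_0,\rho r)$ to the boundary of $B(v_0,r)$ and along which $H(\cdot)\ge H(v_{\max})$.

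Next, I would invoke Proposition~\ref{prop:backward_uniform_crossing} to obtain a lower bound, uniform in $\delta$ and in the embedding, on the probability that $\widetilde{X}_t$ started at $v_{\min}$ hits $\gamma_{\max}$ before leaving $B(v_0,r)$. The backward crossing estimate is stated for a single rectangle $\cR(z,r)$, so one has to chain a bounded (depending only on $\rho$) number of such crossings along a sequence of overlapping rectangles contained in $B(v_0,r)$ joining a neighbourhood of $v_{\min}$ to a neighbourhood of any point of $\gamma_{\max}$. This is purely topological and identical to the argument used in the forward case, e.g.\ as in \cite{chelkak-toolbox}, and the fact that the bound in Proposition~\ref{prop:backward_uniform_crossing} holds conditionally on any event depending on the post-exit trajectory makes the chaining work without change. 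Finally, since $H(\widetilde{X}_t)$ is a martingale for $\widetilde{X}_t$, the optional stopping theorem applied to the hitting time of $\gamma_{\max}\cup\partial B(v_0,r)$, together with the positivity of $H$ on $\partial B(v_0,r)$, gives $H(v_{\min})\ge c(\rho)\cdot H(v_{\max})$, which is the required elliptic Harnack inequality.

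I do not expect any serious obstacle: the only place where the forward/backward distinction could have mattered is in the use of a crossing estimate together with optional stopping, and the first was exactly established in Proposition~\ref{prop:backward_uniform_crossing} while the second uses only the martingale property that defines backward-harmonicity. The minor technical point worth checking is that the maximum-principle path $\gamma_{\max}$ has to be built using the \emph{backward} adjacency structure (i.e.\ moving to vertices that appear with positive probability as one-step destinations of $\widetilde{X}_t$), since it is along these edges that $H$ is a local convex combination; this causes no difficulty because the crossing estimate of Proposition~\ref{prop:backward_uniform_crossing} is formulated precisely for $\widetilde{X}_t$.
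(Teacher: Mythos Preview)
Your proposal is correct and mirrors the paper's own argument: the paper's proof consists of the single sentence ``The same arguments as those given in the proof of Proposition~\ref{prop:Harnack-positive} (based upon the non-conditioned version of Proposition~\ref{prop:backward_uniform_crossing}) apply.'' Your write-up spells out those same three steps (backward maximum principle to build $\gamma_{\max}$, backward crossing estimate to hit it, optional stopping) and correctly flags the only subtlety, namely that $\gamma_{\max}$ must be built along the backward adjacency structure.
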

\begin{proof}
The same arguments as those given in the proof of Proposition~\ref{prop:Harnack-positive} (based upon the non-conditioned version of Proposition~\ref{prop:backward_uniform_crossing}) apply.
\end{proof}

\subsection{Subsequential limits of harmonic and t-holomorphic functions} \label{sub:sub-limits}
We begin this section with an a priori H\"older-type bound for oscillations of harmonic functions on T-graphs and t-holomorphic functions on t-embed\-dings. Then we use these bounds to claim the existence of subsequential limits of uniformly bounded sequences of functions defined on a sequence of T-graphs or t-embeddings~$\cT^\delta$ satisfying assumption~{\LipKd{\kappa}{\delta}} with~$\delta\to 0$ and the same~$\kappa<1$. For shortness, we do not include the superscript~$\delta$ in the notation until Corollary~\ref{cor:pre-compact}.

For a real-valued function~$H$ defined on vertices of a T-graph and a region~$U$, denote
\[
\osc_U H\ :=\ \max\nolimits_{v\in U}H(v)-\min\nolimits_{v\in U}H(v)\,.
\]
In the same way, for a t-white-holomorphic function~$F_\frw$ (similarly, for a t-black-holomorphic function~$F_\frb$) and a region~$U$ of a t-embedding, let
\begin{align*}
\osc_U F_\frw\ &:=\ \max\nolimits_{w,w'\in {W_U}}|F_\frw^\tw(w')-F_\frw^\tw(w)|\,,\\
\osc_U F_\frb\ &:=\ \max\nolimits_{b,b'\in {B_U}}|F_\frb^\tb(b')-F_\frb^\tb(b)|\,.
\end{align*}
\begin{proposition}\label{prop:Holder} There exist constants $\beta=\beta(\kappa)>0$ and~$C=C(\kappa)>0$ such that the following estimates hold for all
harmonic functions~$H$ (resp., t-holomorphic functions~$F$) defined in a ball of radius~$R>r$ drawn over a T-graph (resp., over a t-embedding):
\[
\osc_{B(v,r)}H\ \le\ C(r/R)^\beta\osc_{B(v,R)}H\]
and
\[\osc_{B(z,r)}F\ \le\ C(r/R)^\beta\osc_{B(z,R)}F
\]
provided that $r\ge \cst\cdot \delta$ for a constant~$\cst$ depending on~$\kappa$ only.
\end{proposition}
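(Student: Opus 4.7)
The plan is to derive both H\"older-type estimates from the uniform crossing property established in Section~\ref{sub:crossing}, via a standard iterative oscillation-decay scheme.

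First I would treat the case of harmonic functions on T-graphs. The key intermediate step is: there exist constants $\theta_0=\theta_0(\kappa)\in(0,1)$ and $M=M(\kappa)\ge 3$ such that for every real-valued harmonic function $H$ defined on $B(v,MR)$ in a T-graph $\cT+\alpha^2\cO$ with $R\ge \mathrm{cst}\cdot\delta$,
\[
\osc_{B(v,R)}H\ \le\ \theta_0\cdot\osc_{B(v,MR)}H.
\]
Iterating this inequality $n$ times along dyadic radii gives $\osc_{B(v,M^{-n}R_0)}H\le \theta_0^{\,n}\osc_{B(v,R_0)}H$, and interpolating between consecutive scales yields the H\"older bound with $\beta=\log(1/\theta_0)/\log M$ and a constant $C$ depending only on $\kappa$.

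To establish the one-step estimate, I would use hitting measures. For each $u\in B(v,R)$, let $\mu_u$ be the law of $X_\tau$ on $\partial B(v,MR)$ under $\P^u$, where $\tau$ is the exit time of the random walk from $B(v,MR)$. Given any measurable $A\subset\partial B(v,MR)$, the function $u\mapsto \mu_u(A)=\P^u[X_\tau\in A]$ is positive and harmonic on $B(v,MR)$. By the elliptic Harnack principle (Proposition~\ref{prop:Harnack-positive}), applied along a chain of a bounded (depending only on $\kappa$) number of balls connecting any two points $u_1,u_2\in B(v,R)$ inside $B(v,MR/2)$, one obtains a constant $c_0=c_0(\kappa)>0$ such that
\[
\mu_{u_1}(A)\ \ge\ c_0\cdot \mu_{u_2}(A),\qquad \mu_{u_2}(A)\ \ge\ c_0\cdot \mu_{u_1}(A).
\]
This means $\mu_{u_1}-c_0\mu_{u_2}$ and $\mu_{u_2}-c_0\mu_{u_1}$ are nonnegative measures of total mass $1-c_0$. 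Writing the harmonic representation $H(u_i)=\int H\,d\mu_{u_i}$ and combining these, a straightforward manipulation gives
\[
|H(u_1)-H(u_2)|\ \le\ \tfrac{1-c_0}{1+c_0}\cdot\osc_{\partial B(v,MR)}H,
\]
so $\theta_0=(1-c_0)/(1+c_0)$ works. Taking the supremum over $u_1,u_2\in B(v,R)$ completes Step~1.

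Next, I would handle the t-holomorphic case by reducing to the harmonic case on two T-graphs. Let $F_\frw$ be t-white-holomorphic. By Proposition~\ref{prop:backward_harmonic} (and its extension to general t-embeddings via the splittings discussed in Section~\ref{sec:non_triangulation}), the real-valued function $H^{(1)}:=\Re F_\frw^\tw$ is harmonic for the time-reversed random walk on $\cT+\overline\cO$ under the bijection $v\leftrightarrow w(v)$ of Lemma~\ref{lem:bijection}, and $H^{(i)}:=\Im F_\frw^\tw$ is harmonic for the time-reversed walk on $\cT-\overline\cO$. Since Proposition~\ref{prop:backward_uniform_crossing} provides the uniform crossing estimate for backward random walks, the Harnack inequality also holds in that setting, and therefore the same oscillation-decay argument from Step~1 applies to both $H^{(1)}$ and $H^{(i)}$. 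Since $|F_\frw^\tw(w)-F_\frw^\tw(w')|\le |H^{(1)}(w)-H^{(1)}(w')|+|H^{(i)}(w)-H^{(i)}(w')|$, oscillations of $F_\frw$ obey the same H\"older bound. Finally, a ball $B(z,r)$ on $\cT$ and the corresponding ball on $\cT\pm\overline\cO$ only differ by factors $1\pm\kappa$ by~\eqref{eq:T+0-discs}, so the statement transfers across. The case of t-black-holomorphic $F_\frb$ is entirely symmetric, using $\cT\pm\cO$ in place of $\cT\pm\overline\cO$.

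The main obstacle is the boundary Harnack-type comparison $\mu_{u_1}\ge c_0\mu_{u_2}$ of hitting distributions on $\partial B(v,MR)$, which does not follow directly from interior Harnack without some care near the boundary. The trick, already used implicitly, is to apply Proposition~\ref{prop:Harnack-positive} not to $u\mapsto \mu_u(A)$ on the full ball $B(v,MR)$ (which degenerates near $\partial B(v,MR)$), but only on the sub-ball $B(v,MR/2)$, where $u_1$ and $u_2$ lie deeply inside; this is why a gap factor $M\ge 3$ between the two scales is needed. Once the comparability constant $c_0$ is extracted, the remainder of the proof is the routine geometric iteration sketched above.
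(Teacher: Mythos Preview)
Your proposal is correct and closely parallels the paper's proof, with one stylistic difference in the harmonic-function step worth noting.

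For harmonic $H$, the paper derives the one-step oscillation decay directly from the uniform crossing estimate (Lemma~\ref{lem:forward_uniform_crossing}) via the ``path-to-the-maximum'' argument: if $v_{\min}$ realizes $\min_{B(v,r)}H$, the random walk from $v_{\min}$ hits, with probability at least $p_0$, a path along which $H\ge\max_{B(v,r)}H$ before exiting $B(v,2r)$, whence $\min_{B(v,r)}H\ge p_0\max_{B(v,r)}H+(1-p_0)\min_{B(v,2r)}H$ and its mirror image give $(1+p_0)\osc_{B(v,r)}H\le(1-p_0)\osc_{B(v,2r)}H$. You instead pass through the Harnack principle (Proposition~\ref{prop:Harnack-positive}) applied to hitting distributions $u\mapsto\mu_u(A)$, obtaining the Doob-coupling bound $\mu_{u_1}\ge c_0\mu_{u_2}$ on an inner ball. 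Both routes are standard and yield the same iterable inequality; yours treats Harnack as a black box (which the paper has already provided), while the paper's is slightly more self-contained. Your remark about needing a gap $M\ge 3$ is exactly the point, though note that since Proposition~\ref{prop:Harnack-positive} already covers arbitrary $\rho<1$, no chaining of balls is actually needed.

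For the t-holomorphic case your argument is essentially identical to the paper's: reduce to backward-harmonic functions via Proposition~\ref{prop:backward_harmonic}, invoke the backward crossing estimate (Proposition~\ref{prop:backward_uniform_crossing}), and account for the $(1\pm\kappa)$ distortion from~\eqref{eq:T+0-discs}. One minor slip: with $\alpha=1$ in Proposition~\ref{prop:backward_harmonic}, $\Re F_\frw^\tw$ lives on $\cT-\overline\cO$ and $\Im F_\frw^\tw$ on $\cT+\overline\cO$, not the other way around; this has no effect on the argument.
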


\begin{proof}
The estimate for harmonic functions is straightforward from Lem\-ma~\ref{lem:forward_uniform_crossing}. Indeed, the same argument as in the proof of Proposition~\ref{prop:Harnack-positive} ensures that
\[
\min\nolimits_{B(v,r)}H\ \ge\ p_0\max\nolimits_{B(v,r)}H+(1\!-\!p_0)\min\nolimits_{B(v,2r)}H
\]
for some~$p_0=p_0(\kappa)>0$. Together with a similar inequality for~$-H$, this yields
\begin{equation}
\label{eq:x-osc-H}
(1+p_0)\osc\nolimits_{B(v,r)}H\ \le\ (1-p_0)\osc_{B(v,2r)}H.
\end{equation}
Iterating~\eqref{eq:x-osc-H}, one obtains the desired bound with the exponent
\[\beta=\log 2/\log((1+p_0)/(1-p_0)).\]

To prove the same result for t-{holomorphic} functions~$F=F_\frw$, recall that Proposition~\ref{prop:backward_harmonic} implies that both~$\Re F_\frw^\tw$ and~$\Im F_\frw^\tw$ can be viewed as harmonic functions with respect to the \emph{backward} random walks on appropriate T-graphs. Using Proposition~\ref{prop:backward_uniform_crossing} and the inclusions~\eqref{eq:T+0-discs} and applying the same argument as above, we obtain the estimate
\[
(1+p_0')\osc_{B(z,(1+\kappa)^{-1}r)}\Re F_\frw^\tw\ \le\ (1-p_0')\osc_{B(z,(1-\kappa)^{-1}r)}\Re F_\frw^\tw
\]
(and similarly for~$\Im F_\frw^\tw$) for some~$p_0'=p_0'(\kappa)>0$. Therefore, there {exist constants} \mbox{$\beta'=\beta'(\kappa)>0$} and~$C'=C'(\kappa)>0$ such that
\[
\osc_{B(z,r)}\Re F_\frw^\tw\ \le\ C'(r/R)^{\beta'}\osc_{B(z,R)}\Re F_\frw^\tw,
\]
and similarly for~$\Im F_\frw^\tw$. Since~$\osc_{B(z,r)} F_\frw\le\osc_{B(z,r)}\Re F_\frw^\tw+\osc_{B(z,r)}\Im F_\frw^\tw$, this gives a similar estimate 
for oscillations of the function~$F_\frw^\tw$ itself.
\end{proof}

\begin{corollary}\label{cor:pre-compact} {(i) Let a sequence of t-embeddings~$\cT^\delta$ with $\delta\to 0$, a constant $\kappa<1$, and an open set~$U\subset\C$ be such that~$\cT^\delta$ covers~$U$ and satisfies the assumption~\LipKd{\kappa}{\delta} for all sufficiently small~$\delta$.} Assume that~$F^\delta$ are t-holomorphic functions on~$\cT^\delta\cap U$ and that these functions are uniformly bounded on compact subsets of~$U$. Then, the family $\{F^\delta\}$ is pre-compact in the topology of the uniform convergence on compact subsets of~$U$.

\smallskip

\noindent (ii) In the same setup, let a sequence of T-graphs associated to~$\cT^\delta$ {be} given and an open set~$V\subset\C$ be covered by each of {them}. If functions~$H^\delta$ are harmonic (on T-graphs) and uniformly bounded on compact subsets of~$V$, then the family~$\{H^\delta\}$ is pre-compact in the topology of the uniform convergence {on} compact subsets of~$V$.
\end{corollary}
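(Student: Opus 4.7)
The plan is to deduce both pre-compactness statements from the H\"older-type oscillation estimate of Proposition~\ref{prop:Holder} by a standard Arzel\`a--Ascoli argument, once the discrete data are extended to functions of a continuous variable.

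First, I would fix a compact set $K$ contained in $U$ (respectively $V$) and slightly enlarge it to a compact $K'$ still contained in $U$ (resp.\ $V$) with $R_0 := \mathrm{dist}(K,\partial K') > 0$. By hypothesis there exists $\kappa = \kappa(K') < 1$ for which $\cT^\delta$ satisfies {\LipKd} on $K'$ once $\delta$ is small, and by assumption the functions $F^\delta$ (resp.\ $H^\delta$) are bounded on $K'$ by some constant $M$ independent of $\delta$. In particular, balls $B(z,R_0)$ centered at $z\in K$ sit inside $K'$, so Proposition~\ref{prop:Holder} applies to each such ball once $\delta$ is small enough.

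Next, I would extend the discrete data to ambient functions $\widetilde F^\delta : K \to \C$ (respectively $\widetilde H^\delta : K \to \R$): for a t-holomorphic $F^\delta$, assign to $z \in K$ the value of $F^\delta$ on a face of $\cT^\delta$ containing $z$ of the color relevant to the type of t-holomorphicity; for a harmonic $H^\delta$ on a T-graph, use the natural piece-wise affine extension which agrees with $H^\delta$ at vertices. Under {\LipKd} every face of $\cT^\delta$ has diameter at most $\delta$, and a direct inspection using Proposition~\ref{prop:geomT} (resp.\ Proposition~\ref{prop:white_Tgraph}) shows that faces of the associated T-graphs have diameter bounded by a constant depending only on $\kappa$ times $\delta$. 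Combining these bounds with Proposition~\ref{prop:Holder} then yields, for all $z,z'\in K$,
\[
|\widetilde F^\delta(z) - \widetilde F^\delta(z')|\ \le\ C' M \cdot \big(R_0^{-1}\max(|z-z'|,\mathrm{cst}\cdot\delta)\big)^\beta,
\]
with an analogous bound for $\widetilde H^\delta$. As $\delta \to 0$ the $\mathrm{cst}\cdot\delta$ term becomes negligible, producing a genuine uniform H\"older modulus of continuity on $K$.

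Together with the uniform boundedness this provides equicontinuity and equi-boundedness of the family $\{\widetilde F^\delta\}$ (resp.\ $\{\widetilde H^\delta\}$) on $K$, and the Arzel\`a--Ascoli theorem then extracts a uniformly convergent subsequence. A standard diagonal argument along an exhaustion of $U$ (resp.\ $V$) by compact sets produces a subsequence converging uniformly on every compact subset, which is the asserted pre-compactness. I do not foresee any serious obstacle here: the substantive content has already been encapsulated in Proposition~\ref{prop:Holder}, and what remains is the routine passage from a H\"older estimate valid above scale $\delta$ to a classical H\"older estimate in the continuum limit.
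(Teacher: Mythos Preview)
Your proposal is correct and follows essentially the same route as the paper: both derive pre-compactness from the H\"older estimate of Proposition~\ref{prop:Holder} via Arzel\`a--Ascoli, the only difference being that the paper handles scales below~$\delta$ by convolving with mollifiers of size~$\delta^{1/2}$ rather than by your direct face-value/piece-wise-affine extension. One small caution: your piece-wise constant extension for t-holomorphic functions is not literally continuous, so the phrase ``genuine uniform H\"older modulus'' is slightly off --- but your displayed bound $|\widetilde F^\delta(z)-\widetilde F^\delta(z')|\le C'M\big(R_0^{-1}\max(|z-z'|,\mathrm{cst}\cdot\delta)\big)^\beta$ is exactly the asymptotic equicontinuity needed, and the Arzel\`a--Ascoli step goes through as you describe.
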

\begin{proof}
Both statements are just applications of the {Arzel\`a}-Ascoli criterium. Indeed, Proposition~\ref{prop:Holder} yields that, on each compact set, the families~$\{F^\delta\}$ and~$\{H^\delta\}$ are equicontinuous on scales above~$\delta$. To get rid of small scales one can, for instance, consider convolutions of, say, $F^\delta$ with mollifiers of size~$\delta^{1/2}$: thus obtained functions stay close to~$F^\delta$ due to Proposition~\ref{prop:Holder} and are equicontinuous on all scales.
\end{proof}

In the same setup, assume now that the origami maps~$\cO^\delta$ associated to~$\cT^\delta$ converge as~$\delta\to 0$:
\begin{equation}
\label{eq:O-to-theta}
\cO^\delta(z)\to \vartheta(z),\quad \text{uniformly on compact subsets of~$U$;}\quad \vartheta:U\to\C.
\end{equation}
(Note that one can always find a subsequential limit since all~$\cO^\delta$ are~$1$-Lipschitz functions, defined up to a constant.) Clearly,~$\vartheta$ {has} to be a~{$\kappa$-Lipschitz function.}
\begin{proposition} \label{prop:f-closed-forms} In the setup described above, for each subsequential limit $f_\frw:U\to\C$ of {uniformly} bounded {(on compact subsets of~$U$)} t-white-holomorphic functions~$F_\frw^\delta$ the differential form~$f_\frw(z)dz+\overline{f_\frw(z)}d\overline{\vartheta(z)}$ is closed.

Similarly, for each subsequential limit~$f_\frb:U\to\C$ of {uniformly} bounded t-black-holomorphic functions~$F_\frb^\delta$ the differential form~$f_\frb(z)dz+\overline{f_\frb(z)}d\vartheta(z)$ is closed.

In particular, if~$\vartheta\equiv 0$, then all such subsequential limits are holomorphic in~$U$.
\end{proposition}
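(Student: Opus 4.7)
The plan is to show that the closed piece-wise constant differential forms
\[
\omega^\delta\ :=\ F_\frw^{\tw,\delta}(z)\,dz\,+\,\overline{F_\frw^{\tw,\delta}(z)}\,d\overline{\cO^\delta(z)},
\]
whose closedness is guaranteed by Lemma~\ref{lem:FdT-in-C} (together with its generalization from Section~\ref{sec:non_triangulation}), pass to the limit form $\omega:=f_\frw(z)\,dz+\overline{f_\frw(z)}\,d\overline{\vartheta(z)}$ in a sense strong enough to preserve closedness. Concretely, for every smooth closed contour $\gamma:[0,1]\to U$ whose image lies in a compact set~$K\subset U$ on which all~$\cT^\delta$ eventually satisfy~\LipKd, I shall prove $\oint_\gamma\omega=\lim_{\delta\to 0}\oint_\gamma\omega^\delta=0$; this suffices because a bounded measurable $1$-form is closed iff all its contour integrals over smooth contractible loops vanish.

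First I would handle the holomorphic summand: by Corollary~\ref{cor:pre-compact}(i), $F^{\tw,\delta}_\frw\to f_\frw$ uniformly on compacts (along the subsequence), hence $\oint_\gamma F^{\tw,\delta}_\frw\,dz \to \oint_\gamma f_\frw\,dz$ directly. For the anti-holomorphic summand, I parametrize $\gamma$ by $t\in[0,1]$ and write
\[
\oint_\gamma\overline{F_\frw^{\tw,\delta}}\,d\overline{\cO^\delta}\ =\ \int_0^1\overline{F_\frw^{\tw,\delta}(\gamma(t))}\cdot\tfrac{d}{dt}\overline{\cO^\delta(\gamma(t))}\,dt\,,
\]
using that $\cO^\delta\circ\gamma$ is Lipschitz with constant at most $\|\gamma'\|_\infty$ (since $\cO^\delta$ itself is $1$-Lipschitz), so its derivative exists a.e. by Rademacher's theorem, and that the identity $d\cO^\delta(\gamma'(t))=\tfrac{d}{dt}(\cO^\delta\circ\gamma)(t)$ reflects the piece-wise definition of $d\cO^\delta$ from Definition~\ref{def:O}. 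The main step is passing to the limit in this integral: the first factor converges uniformly on $[0,1]$ to $\overline{f_\frw(\gamma(t))}$, while the second factor is uniformly bounded by~$\|\gamma'\|_\infty$ and, since $\cO^\delta\circ\gamma$ converges uniformly to $\vartheta\circ\gamma$ with uniformly bounded Lipschitz constants, converges weakly-$*$ in $L^\infty([0,1])$ to $\tfrac{d}{dt}\overline{\vartheta(\gamma(t))}$. The product of uniform convergence with weak-$*$ convergence yields the desired limit $\oint_\gamma\overline{f_\frw}\,d\overline{\vartheta}$, so that $\oint_\gamma\omega=0$ as required.

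The main obstacle to handle carefully is the subtlety of the pointwise meaning of $F_\frw^{\tw,\delta}$ along $\gamma$: the function is only naturally defined on (white) faces of $\cT^\delta$ and is piece-wise constant there, and we need genuine pointwise uniform convergence along the curve in order to multiply against a weakly-$*$ converging factor. This is resolved exactly as in the proof of Corollary~\ref{cor:pre-compact}: one replaces $F_\frw^{\tw,\delta}$ by its mollification on scale~$\delta^{1/2}$, and Proposition~\ref{prop:Holder} ensures that this mollification differs from $F_\frw^{\tw,\delta}$ by a quantity tending to zero uniformly on $K$. The analogous argument, with $\overline{\cO^\delta}$ replaced by $\cO^\delta$ and the T-graph $\cT+\overline{\cO}$ replaced by $\cT+\cO$, handles the t-black-holomorphic case. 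Finally, in the degenerate case $\vartheta\equiv 0$, the closedness of the form $f_\frw(z)\,dz$ together with the continuity of $f_\frw$ implies, via Morera's theorem, that $f_\frw$ is holomorphic in $U$; the analogous conclusion for $f_\frb$ is identical.
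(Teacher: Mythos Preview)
Your proof is correct and follows the same overall strategy as the paper: use that the discrete forms $\omega^\delta=F_\frw^{\tw,\delta}\,dz+\overline{F_\frw^{\tw,\delta}}\,d\overline{\cO^\delta}$ are closed (Lemma~\ref{lem:FdT-in-C}, extended as in Section~\ref{sec:non_triangulation}) and show $\oint_\gamma\omega^\delta\to\oint_\gamma\omega$ for smooth loops~$\gamma$, then invoke Morera in the case~$\vartheta\equiv 0$. The only difference is in how the convergence of the Stieltjes-type summand $\oint_\gamma\overline{F_\frw^{\tw,\delta}}\,d\overline{\cO^\delta}$ is argued. The paper partitions~$\gamma$ into arcs~$\gamma_\varepsilon^{(j)}$ of diameter~$\varepsilon$, replaces $F_\frw^{\tw,\delta}$ by its value at a single point on each arc using the H\"older bound of Proposition~\ref{prop:Holder} (error $O(\varepsilon^\beta\cdot\mathrm{length}(\gamma))$, uniform in~$\delta$ since $\cO^\delta$ is $1$-Lipschitz), lets $\delta\to 0$ in the resulting Riemann--Stieltjes sum, and then sends~$\varepsilon\to 0$. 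You instead package the same ingredients functional-analytically: the uniformly Lipschitz functions $\cO^\delta\circ\gamma$ converge uniformly to $\vartheta\circ\gamma$, hence their derivatives converge weak-$*$ in~$L^\infty$, and this pairs with the uniform convergence of the first factor. Both routes rest on the same three inputs (closedness of~$\omega^\delta$, the H\"older estimate, the $1$-Lipschitz property of~$\cO^\delta$); the paper's version is marginally more elementary and self-contained, while yours trades the $\varepsilon$-bookkeeping for a standard weak-$*$ lemma.
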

\begin{proof} We consider limits~$f_\frw$ of t-white-holomorphic functions only, the case of~$f_\frb$ is fully similar. Recall that Proposition~\ref{prop:integral_simple} and Lemma~\ref{lem:FdT-in-C} imply that, for each t-white-holomorphic function~$F_\frw=F_\frw^\delta$, the differential form $F_\frw^\tw(z)dz+\overline{F_\frw^\tw(z)}d\overline{\cO(z)}$ is closed. (Let us emphasize that we view such forms as being defined everywhere in~$U\subset\C$ and not only on edges of t-embeddings.) Let~$\gamma$ be a smooth loop in~$U$ and~$F_\frw^{\tw,\delta}\to f_\frw$ as~$\delta\to 0$ uniformly on~$\gamma$. We claim that
\begin{equation}
\label{eq:x-FdO-conv}
\oint_\gamma F_\frw^{\tw,\delta}(z)dz\ \to\ \oint_\gamma f_\frw(z)dz
\quad \text{and}\quad
\oint_\gamma F_\frw^{\tw,\delta}(z)d\cO^\delta(z)\ \to\ \oint_\gamma f_\frw(z)d\vartheta(z)
\end{equation}
as~$\delta\to0$, where the last integral is understood in the Riemann--Stieltjes sense. Indeed, the former convergence is a triviality. To prove the latter note that, for each~$\varepsilon>0$, one can split~$\gamma$ into pieces~$\gamma^{(1)}_\varepsilon,\ldots,\gamma^{(n_\varepsilon)}_\varepsilon$, of diameter at most~$\varepsilon$.
Let~$z^{(j)}_\varepsilon$ be an (arbitrarily chosen) point on~$\gamma^{(j)}_\varepsilon$. Proposition~\ref{prop:Holder} implies that
\[
\osc_{\gamma^{(j)}_\varepsilon}(F_\frw^{\tw,\delta})\ =\ O(\varepsilon^\beta)\quad \text{for each $j=1,\ldots,n_\varepsilon$}
\]
and some exponent~$\beta>0$ independent of~$\delta$. Since all functions~$\cO^\delta$ are $1$-Lipschitz, we see that
\[
\oint_\gamma F_\frw^{\tw,\delta}(z)d\cO^\delta(z)\ =\ \sum_{j=1}^{n_\varepsilon}F_\frw^{\tw,\delta}(z^{(j)}_\varepsilon)\int_{\gamma^{(j)}_\varepsilon}d\cO^\delta(z)\ +\ O(\varepsilon^\beta\cdot\operatorname{length(\gamma)}),
\]
where the O-estimate is uniform in~$\delta$. As~$\delta\to 0$ for a fixed~$\varepsilon>0$, the main term converges to a Riemann-Stieltjes sum approximating the contour integral~$\oint_\gamma f_\frw(z)d\vartheta(z)$. Thus, sending first~$\delta\to 0$ and then~$\varepsilon\to 0$ we arrive at~\eqref{eq:x-FdO-conv}.

Finally, in the `small origami' case $\vartheta\equiv 0$, all subsequential limits~$f_\frw$ and~$f_\frb$ are holomorphic due to Morera's theorem.
\end{proof}

\begin{remark} Proposition~\ref{prop:f-closed-forms} allows one to identify a limit of `discrete complex structures' associated to the notion of {t-holomorphicity} on t-embed\-dings. For a generic limit~$\vartheta(z)$ of origami maps, {the} scaling limits~$f_\frw$ and~$f_\frb$ satisfy different conditions, neither of which is complex-linear. However, if we assume that, say, the differential form~$f^{[\eta]}(z)dz+\overline{f^{[\eta]}(z)}d\overline{\vartheta(z)}$ is closed for \emph{each} of the functions \mbox{$f^{[\eta]}(z):=\tfrac{1}{2}(\overline{\eta}f^{[+]}(z)+\eta f^{[-]}(z))$}, $\eta\in\C$, then this condition, reformulated in terms of the pair~$(f^{[+]},\overline{f}{}^{[-]})$, becomes complex-linear. This setup is explicitly relevant for the dimer model coupling functions due to Proposition~\ref{prop:Fpmpm-def}(ii).
\end{remark}

\subsection{Assumption~\ExpFat{\delta}~and Lipschitzness of harmonic functions on T-graphs} \label{sub:Lipschitz}
This section is devoted to the a priori regularity (Lipschitzness) theory for harmonic functions on T-graphs obtained from t-embeddings satisfying assumption~\LipKd{\kappa}{\delta}. From now onwards we additionally rely upon assumption {\ExpFatPrime{\delta}{\delta'}}\
(see Assumption~\ref{assump:ExpFat-triang} for the case of triangulations and Assumption~\ref{assump:ExpFat-general} for a generalization to arbitrary degrees of faces). Working on, say, a \mbox{T-graph} $\cT+\alpha^2\cO$, we mostly focus on~$\alpha\R$-valued harmonic functions: as discussed in Section~\ref{sub:harmonicity} (in particular, see Remark~\ref{def:D}), their gradients~$\rD[H]$ are t-white-holomorphic functions. {For simplicity, below we assume that~$\alpha=1$.} (Note that we do not really lose generality here: for each~$\delta$, the origami square root function is defined up to a multiple, thus one can always modify them so that $\alpha=1$ for all~$\delta$.

The next theorem is the key result of this section. Loosely speaking, {it} says that {the} gradients of bounded harmonic functions satisfy the standard Harnack-type estimate at least if they do not blow up exponentially fast as~$\delta\to 0$. We later use {the additional assumption~\ExpFatPrime{\delta}{\delta'}}~to forbid the pathological blow-up scenario; {see Corollary~\ref{cor:Lipschitz}}. Note however that we prefer to formulate {these results} in a constructive manner so that no limit passage as~$\delta\to 0$ is involved. In order to ease the notation, below we do not distinguish points on t-embeddings and their images on T-graphs.

\begin{theorem} \label{thm:Lipschitz}
For each~$\kappa<1$ there {exist positive} constants~$\beta_0=\beta_0(\kappa)$ and \mbox{$C_0=C_0(\kappa)$} such that the following {holds}.
Let~$H^\delta$ be a harmonic function in a ball~$B(v,d)$ drawn over a T-graph obtained from a t-embedding~$\cT^\delta$ satisfying assumption~\LipKd{\kappa}{\delta}. Then,
\begin{align*}
\text{either}\ \ &\max\nolimits_{B(v,\frac{1}{2}d)}|\rD[H^\delta]|\ \le\ C_0d^{-1}\cdot \osc\nolimits_{B(v,d)}(H^\delta)\\
\text{or}\quad\,\quad&\max\nolimits_{B(v,{\frac{3}{4}d})}|\rD[H^\delta]|\ >\ {\exp(\beta_0d\delta^{-1})\cdot C_0d^{-1}}\osc\nolimits_{B(v,d)}(H^\delta),
\end{align*}
provided that~$d\ge\cst\cdot \delta$ for a constant~$\cst$ depending on~$\kappa$ only.
\end{theorem}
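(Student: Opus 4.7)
The proof rests on combining two ingredients available under assumption~\LipKd: the Hölder-type regularity of t-holomorphic functions given by Proposition~\ref{prop:Holder}, and the closed-form identity provided by Proposition~\ref{prop:integral_simple} and Lemma~\ref{lem:FdT-in-C}. Write $F := \rD[H^\delta]$ and $O := \osc_{B(v,d)} H^\delta$. The plan is to establish the stated dichotomy as two mutually exclusive regimes for the ratio $M\cdot d/O$, where $M:=\max_{B(v,d/2)}|F|$.

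The central computational input is that for any straight line $\gamma\subset B(v,d)$ of length $L$ with endpoints $z_\pm$, the real part of the integral of the closed form $F^\tw(z)\,dz+\overline{F^\tw(z)}\,d\overline{\cO(z)}$ along $\gamma$ equals $2(H^\delta(z_+)-H^\delta(z_-))$, hence is bounded in absolute value by $2O$. When $F^\tw$ is approximately constant along $\gamma$ (equal to some $F_0$), this integral is close to $\Re\bigl[F_0(z_+-z_-)+\overline{F_0}(\cO(z_+)-\cO(z_-))\bigr]$. Choosing the direction of $\gamma$ optimally and using \LipKd\ to bound $|\cO(z_+)-\cO(z_-)|\le\kappa L$ (valid since $L\ge\delta$), the gain from the first term dominates the worst-case cancellation coming from the second, yielding $(1-\kappa)|F_0|\cdot L\le 2O$. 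Taking $L\simeq d$ recovers the Harnack-type bound $M\le C(\kappa)\,O/d$, i.e.\ the first alternative.

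The obstruction is that $F^\tw$ may fail to be approximately constant along lines of length $\sim d$: the Hölder estimate of Proposition~\ref{prop:Holder} gives quantitative control on the scale $r$ at which this approximate constancy holds, but only in terms of the maximum at a \emph{larger} scale. This suggests an iterative bootstrap across a dyadic sequence of concentric balls $B(v,r_k)$ with $r_k$ covering the range from $\simeq\delta$ to $\simeq d$. Let $M_k:=\max_{B(v,r_k)}|F|$. At each scale one sets up a dichotomy: either $\osc_{B(v,r_k)} F$ is small enough relative to $M_k$ that the line-integration argument above applies on a line of length $\simeq r_k$ inside $B(v,r_k)$, producing $M_k\le C(\kappa)\,O/r_k$ (which places us in the first alternative when $r_k\simeq d$); or, by Proposition~\ref{prop:Holder}, the larger-scale maximum is forced to satisfy $M_{k+1}\ge(1+c(\kappa))M_k$. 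Accumulating the doubling step across $K\simeq d/\delta$ scales produces a factor $(1+c)^K\simeq\exp(\beta_0 d/\delta)$, and combined with the trivial lower bound $\max_{B(v,d)}|F|\ge O/(2d)$ (from the existence of two points in $B(v,d)$ with $H^\delta$-difference $\simeq O$, integrated via $F^\tb\,dz$ along T-graph edges), this yields the second alternative.

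The main technical obstacle is making the doubling step genuinely uniform: the constant $c(\kappa)>0$ in $M_{k+1}\ge(1+c)M_k$ must depend only on the \LipKd\ constant and not on degrees of faces or their relative sizes below scale $\delta$. This is where both Proposition~\ref{prop:Holder} (which itself rests on the uniform ellipticity for backward random walks in Proposition~\ref{prop:backward_uniform_crossing}) and the $\kappa<1$ gap in assumption~\LipKd\ are essential; the latter ensures that the line-integration argument has a definite reserve $(1-\kappa)|F_0|L$ that cannot be wiped out by the origami map, regardless of how intricate the sub-$\delta$ structure of $\cT^\delta$ may be.
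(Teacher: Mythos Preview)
Your proposal correctly identifies the two essential ingredients---the line-integration estimate using the closed form from Lemma~\ref{lem:FdT-in-C} together with the $(1-\kappa)$ gap coming from~\LipKd, and the H\"older regularity of Proposition~\ref{prop:Holder}---and the local dichotomy you set up (either line-integration controls $|F|$ by $O/r$, or the oscillation at scale $r$ is a definite fraction of $|F|$, forcing growth via H\"older) is exactly what the paper exploits. The gap is in the iteration geometry and the resulting step count.

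A dyadic sequence of concentric balls $B(v,r_k)$ from $r_0\simeq\delta$ to $r_K\simeq d$ has only $K\simeq\log_2(d/\delta)$ terms, not $K\simeq d/\delta$ as you claim. Accumulating the factor $(1+c(\kappa))$ over $\log(d/\delta)$ steps yields $(d/\delta)^{O(1)}$, a polynomial rather than an exponential blow-up, so the second alternative cannot be reached this way. Switching to an arithmetic spacing $r_{k+1}=r_k+c\delta$ does give $K\simeq d/\delta$ steps, but then the H\"older gain per step degenerates to $1+O(\delta/r_k)$, and the product is again only polynomial. There is also a mismatch in where the line-integration is applied: it must be centered at the point $z^*$ where $|F|$ attains its maximum, so the relevant oscillation is $\osc_{B(z^*,\cdot)}F$ and the next large value is found in $B(z^*,Ar)$, not in the concentric ball $B(v,r_{k+1})$; following this consistently one is led to moving centers rather than concentric balls.

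The paper's iteration works at a \emph{fixed} scale $r\simeq r_0\delta$ with \emph{moving} centers $z_0,z_1,\ldots$. Assuming the first alternative fails, one starts from $z_0\in B(v,\tfrac12 d)$ with $M_0:=|F(z_0)|>C_0O/d$. The line-integration at $z_n$ over a segment of length $2r$ with $r=\max((1-\kappa)^{-1}M_n^{-1},\,r_0\delta)$ forces $\osc_{B(z_n,r)}F\ge\tfrac14(1-\kappa)M_n$; applying the H\"older estimate between radii $r$ and $Ar$ then produces $z_{n+1}\in B(z_n,Ar)$ with $M_{n+1}:=|F(z_{n+1})|\ge 2M_n$. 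Since $M_n\ge 2^nM_0$, one has $|z_{n+1}-z_n|\le\max(2^{-n-3}d,\,Ar_0\delta)$; after the first $O(\log(d/\delta))$ steps all increments are at most $Ar_0\delta$, and the sequence needs at least $\gtrsim d/(Ar_0\delta)$ further steps to exit a ball of radius $\simeq d/4$ around $z_0$. The doubling over these $\simeq d/\delta$ spatial steps is what produces the factor $\exp(\beta_0 d/\delta)$.
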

\begin{proof} Recall that the gradient~$F_\frw^\delta:=\rD[H^\delta]$ is a t-white-holomorphic function defined in the preimage of~$B(v,d)$ on~$\cT^\delta$, which we identify with this ball to ease the notation. Proposition~\ref{prop:Holder} and distortion estimate~(\ref{eq:T+0-discs}) yield the existence of constants~$A=A(\kappa)>1$ and~$r_0=r_0(\kappa)>0$ such that
\begin{equation}
\label{eq:x-const-A}
\osc_{B(z,r)}(F_\frw^{\tw,\delta})\ \le\ \frac{1-\kappa}{{12}}\osc_{B(z,Ar)}(F_\frw^{\tw,\delta})\quad \text{provided that}\ \ r\ge r_0\delta.
\end{equation}
Below we assume that $d\geq {8} A r_0\delta$ and let~$C_0:={8}A(1-\kappa)^{-1}$ be a big enough constant.
 Without loss of generality, assume {also} that $\osc_{B(v,d)}(H^\delta)=1$ and that the first alternative does not hold, i.e., $M_0:=|F_\frw^\delta(z_0)|> C_0d^{-1}$ {at} a point \mbox{$z_0\in B(v,\frac{1}{2}d)$}. We now claim that one can iteratively construct, {while the condition~$z_n\in B(v,d\!-\!Ar_0\delta)$ holds,} a sequence of points~$z_0,z_1,\ldots$ such that
\[
M_{n+1}:=|F_\frw^{\tw,\delta}(z_{{n+1}})|\ge 2M_n\quad \text{and}\quad
|z_{n+1}-z_n|\le A\cdot\max ((1-\kappa)^{-1}M_n^{-1}\,,\,r_0\delta).
\]
Indeed, integrating the differential form~$F_\frw^{\tw,\delta}(z)dz+\overline{F_\frw^{\tw,\delta}(z)}d\overline{\cO^\delta(z)}$ (see Lemma~\ref{lem:FdT-in-C}) along an appropriately oriented segment of length~${2r}$
{centered at} the point~$z_n$ one gets the estimate
\begin{align*}
1\, \ge\, \osc_{B(z_n,r)}(H^\delta)\, &\ge\, {2r}\cdot\big((M_n-\osc_{B(z_n,r)}(F_\frw^{\tw,\delta}))-
{(\kappa\cdot M_n+\osc_{B(z_n,r)}(F_\frw^{\tw,\delta}))}\big)\\
&=\, {2r}\cdot\big((1-\kappa)M_n-{2}\osc_{B(z_n,r)}(F_\frw^{\tw,\delta})\big)
\end{align*}
{If $r\ge (1-\kappa)^{-1}M_n^{-1}$,}
then we must have $\osc_{B(z_n,r)}(F_\frw^{\tw,\delta})\ge {\frac{1}{4}}(1-\kappa)M_n$
and hence {$\osc_{B(z_n,Ar)}(F_\frw^{\tw,\delta})\ge 3M_n=3|F_\frw^{\tw,\delta}(z_n)|$} due to the choice of the constant~$A$ made above. Therefore, the maximal value of~$|F_\frw^{\tw,\delta}|$ in the disc~$B(z_n,Ar)$ must be at least~$2M_n$, as required.
It is easy to see that
\[
|z_{n+1}-z_n|\,\le\, {\max(2^{-n}|z_1-z_0|,Ar_0\delta)}\, \le\, \max(2^{-n-3}d\,,\,Ar_0\delta).
\]
{Hence,} this sequence {has} to make at least~${\tfrac{1}{8}}d({A}r_0\delta)^{-1}$ steps {in order} to leave the disc~$B(z_0,{\frac34d})$ {if started inside~$B(z_0,\frac12d)$.} Since the value~$M_n$ at least doubles at each step, the last such value should be at least~$\exp[\beta_0 d\delta^{-1}]\cdot M_0$ provided we set~$\beta_0:={\tfrac{1}{8}(A}r_0)^{-1}\log 2$. The proof is complete.
\end{proof}

{\begin{corollary}\label{cor:Lipschitz} In the setup of Theorem~\ref{thm:Lipschitz}, let us additionally assume that the t-embedding~$\cT^\delta$ satisfies the assumption \ExpFatPrime{\delta}{\delta'}. Then,
\[
\max\nolimits_{B(v,\frac{1}{2}d)}|\rD[H^\delta]|\ \le\ C_0d^{-1}\cdot \osc\nolimits_{B(v,d)}(H^\delta)
\]
provided that~$d\ge\mathrm{cst}\cdot\max(\delta,\delta')$, where the constant $\mathrm{cst}$ depends only on~$\kappa$.
\end{corollary}
\begin{proof} Without loss of generality, assume that $\osc\nolimits_{B(v,d)}(H^\delta)=1$ and that we work with the T-graph~$\cT^\delta+\cO^\delta$; recall that in this case the gradient~$\rD[H^\delta]$ is a t-white-holomorphic function which we denote~$F_\frw$ in what follows.

For simplicity, let us first consider the case when~$\cT^\delta$ is a triangulation. Provided that~$\frac14(1-\kappa)d\ge \delta'$, the assumption \ExpFatPrime{\delta}{\delta'}\ guarantees that each point in $B(v,\frac 34 d)$ is surrounded by an edge-connected circuit consisting of $\delta\exp(-\delta'\delta^{-1})$-fat triangles (and running inside~$B(v,d)$); let $\gamma$ denote this circuit. The image of each black face~$b\in\gamma$ in the T-graph $\cT^\delta+\cO^\delta$ has length at least~$2\delta\exp(-\delta'\delta^{-1})$, which trivially gives the estimate
\begin{equation}
\label{eq:x-Fb-exp-bound}
|F^\tb_\frw(b)|\ =\ |\rD[H^\delta]|\ \le\ \tfrac12\delta^{-1}\exp(\delta'\delta^{-1})\ \ \text{for all}\ b\in \gamma\cap B.
\end{equation}
Moreover, for each~$w\in W_\gamma$ one can use this estimate at two neighboring black triangles~$b_1,b_2\in \gamma$ to show that
\[
|F^\tw_\frw(w)|\ \le\ \delta^{-1}\exp(2\delta'\delta^{-1})\ \ \text{for all}\ w\in \gamma\cap W.
\]
due to the explicit formula
\begin{equation}
\label{eq:x-reconstruction-Fw}
F^\tw_\frw(w)\ =\ \frac{F^\tb_\frw(b_1)\eta_{b_2}\overline{\eta}_{b_1}-F^\tb_\frw(b_2)\overline{\eta}_{b_2}\eta_{b_1}} {i\Im(\eta_{b_2}\overline{\eta}_{b_1})}
\end{equation}
and the fact that~$\Im(\eta_{b_2}\overline{\eta}_{b_1})\ge \rho\delta^{-1}$ if~$w$ is~$\rho$-fat and has diameter less than~$\delta$. Due to Proposition~\ref{prop:backward_harmonic}, the function~$|F^\tw_\frw|$ satisfies the maximum principle, which allows us to conclude that
the estimate $|F^\tw_\frw|\le\delta^{-1}\exp(2\delta'\delta^{-1})$ holds \emph{everywhere} in the ball $B(v,\tfrac{3}{4}d)$. This rules out the second (pathological) scenario in Theorem~\ref{thm:Lipschitz} provided that
\[
C_0\delta d^{-1}\exp(\beta_0d\delta^{-1})\,\ge\,\exp(2\delta'\delta^{-1}),
\]
which holds true if~$d\ge\mathrm{cst}(C_0,\beta_0)\cdot\max(\delta,\delta')$.

For general t-embeddings~$\cT^\delta$, the same arguments go through with the only caveat that we should prove an appropriate replacement of the estimate~\eqref{eq:x-Fb-exp-bound} for `$\rho$-fat' bigons~$b\in \gamma\cap B^{\tw}_\spl$; see the definition given before Assumption~\ref{assump:ExpFat-general}. To this end, note that such~$b$ has length (in the t-embedding) at least $2\rho$, where $\rho:=\delta\exp(-\delta'\delta^{-1})$, since it is a side of a neighboring (in the circuit~$\gamma$) white $\rho$-fat triangle. Let~$w\in W$ be the white face containing~$b$ and $b_1,...,b_k\in B$ denote the $\rho$-fat black faces that are adjacent to the corresponding boundary arc of~$w$. It is easy to see that the
the equation $\oint F^\tb_\frw d\cT^{\bullet,\delta}_\spl=0$ (and the fact that the face~$\cT^\delta(w)$ is convex and has diameter less than~$\delta$) imply that
\[
2\rho\cdot |F^\tb_\frw(b)|\ \le\ |\cT^{\bullet,\delta}_\spl(b)|\cdot |F^\tb_\frw(b)|\ \le\ 2\delta\cdot\max(|F^\tb_\frw(b_1)|,\ldots,|F^\tb_\frw(b_k)|).
\]
Therefore, the estimate~\eqref{eq:x-Fb-exp-bound} for black faces $b_1,\ldots,b_k$ yields
\[
|F^\tb_\frw(b)|\ \le \tfrac12\delta^{-1}\exp(2\delta'\delta^{-1})\ \ \text{for all bigons}\ b\in \gamma \cap B^\tw_\spl.
\]
The rest of the proof given above goes through with minor modifications.
\end{proof}}

Assume now that we are given a sequence of t-embeddings~$\cT^\delta$ covering a common open set~$U\subset\C$ and such that~$\cO^\delta(z)\to\vartheta(z)$ uniformly on compact subsets of~$U$. {Let~$V:=(\mathrm{id}+\vartheta)(U)$, recall that we denote by~$W^{1,\infty}(V)$ the Sobolev space of functions on~$V$ whose derivatives are uniformly bounded on compact subsets of~$V$.}

\begin{corollary} \label{cor:H-sub-limits}
In the setup {described above, let} t-embeddings~$\cT^\delta$ satisfy {assumptions \LipKd{\kappa}{\delta}\ and \ExpFat{\delta} on~$U$.} Let~$H^\delta$ be uniformly (in~$\delta$) bounded real-valued harmonic functions on T-graphs~$(\cT^\delta+O^\delta)\cap V$. Then, the family~$\{H^\delta\}$ is pre-compact in the topology of the Sobolev space~$W^{1,\infty}(V)$.

The gradients~$f_\frw:=2\partial h\circ(\mathrm{id}+\vartheta)$ of all subsequential limits~$h:V\to\R$ of functions~$H^\delta$, considered as functions in~$U$, are~$\beta$-H\"older, with the exponent~$\beta$ given in Proposition~\ref{prop:Holder}. Moreover, the form~$f_\frw(z)dz+\overline{f_\frw(z)}d\overline{\vartheta(z)}$ is closed in~$U$.

In particular, {if}~$\vartheta\equiv 0$, {then} all subsequential limits~$h$ are harmonic in $V=U$.
\end{corollary}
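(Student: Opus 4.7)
The plan is to combine Theorem~\ref{thm:Lipschitz} with the implication, recorded in~\eqref{eq:no-exp-blow-up}, that assumption~{\ExpFat} rules out the exponential blow-up alternative. Write $F_\frw^\delta:=\rD[H^\delta]$; by Remark~\ref{rem:def-D} these are t-white-holomorphic functions on $\cT^\delta$. I will fix a compact $K\subset V$ together with $d_0>0$ strictly less than the distance from $K$ to $\partial V$. The uniform boundedness of $\{H^\delta\}$ together with~\eqref{eq:no-exp-blow-up} forbid the second alternative of Theorem~\ref{thm:Lipschitz} at balls $B(v,d_0)$ centered at points $v\in K$, so the first alternative must hold for $\delta$ small enough. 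This yields a uniform-in-$\delta$ estimate $\sup_K|F_\frw^\delta|\le 2C_0d_0^{-1}\sup_\delta\|H^\delta\|_{L^\infty(V)}$, which produces the equi-Lipschitzness needed to deduce pre-compactness of $\{H^\delta\}$ in $W^{1,\infty}(V)$ via Arzel\`a--Ascoli, and simultaneously renders Corollary~\ref{cor:pre-compact}(i) applicable to $\{F_\frw^\delta\}$ on compact subsets of $U$.

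Passing to a common subsequence I will then have $H^\delta\to h$ uniformly on compacts of $V$ and $F_\frw^{\tw,\delta}\to f_\frw$ uniformly on compacts of $U$. The $\beta$-H\"older regularity of $f_\frw$ is inherited directly from Proposition~\ref{prop:Holder}, and Proposition~\ref{prop:f-closed-forms} gives that $\omega:=f_\frw(z)\,dz+\overline{f_\frw(z)}\,d\overline{\vartheta(z)}$ is closed in $U$. To identify $f_\frw$ with $2\partial h\circ(\mathrm{id}+\vartheta)$, my approach is to start from Definition~\ref{def:D}, which reads $dH^\delta=\Re(F_\frw^{\tw,\delta}(w)\,dw')$ inside each white face of the T-graph $\cT^\delta+\cO^\delta$. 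Pulling this identity back under $\mathrm{id}+\cO^\delta\colon U^\delta\to V$ gives the piecewise constant $1$-form identity
\[
d\bigl(H^\delta\circ(\mathrm{id}+\cO^\delta)\bigr)(z)\ =\ \Re\bigl(F_\frw^{\tw,\delta}(z)\,(dz+d\cO^\delta(z))\bigr)
\]
on $U^\delta$. Integrating along smooth test loops in $U$ and invoking the Riemann--Stieltjes convergence scheme already used in the proof of Proposition~\ref{prop:f-closed-forms} (controlled by the H\"older estimate of Proposition~\ref{prop:Holder} for $F_\frw^{\tw,\delta}$ and by the uniform $1$-Lipschitzness of $\cO^\delta$), the right-hand side will converge to $\Re(f_\frw(z)(dz+d\vartheta(z)))=\Re(\omega)$, while the left-hand side converges to $d(h\circ(\mathrm{id}+\vartheta))$; expanding this distributional equality in coordinates and using that $h$ is real then forces $f_\frw(z)=2\partial h(z+\vartheta(z))$.

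The small origami case $\vartheta\equiv 0$ will be immediate from the above: $\omega$ reduces to $f_\frw(z)\,dz$, Morera's theorem makes $f_\frw$ holomorphic on $V=U$, and hence $h$, whose Wirtinger derivative equals the holomorphic function $\tfrac12 f_\frw$, is harmonic. The main obstacle I anticipate is the identification step: because the limit map $z\mapsto z+\vartheta(z)$ is only Lipschitz and a priori possibly non-injective, one cannot appeal to a classical change-of-variables or chain rule, and the whole argument must be carried out on the level of contour integrals rather than pointwise derivatives; the equicontinuity of $F_\frw^\delta$ produced in the first step is precisely what makes this limit passage quantitative.
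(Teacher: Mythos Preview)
Your proposal is correct and follows essentially the same approach as the paper: rule out the blow-up alternative of Theorem~\ref{thm:Lipschitz} via~\eqref{eq:no-exp-blow-up}, conclude that $F_\frw^\delta=\rD[H^\delta]$ are uniformly bounded, extract subsequential limits via Proposition~\ref{prop:Holder}, and identify $f_\frw$ with $2\partial h\circ(\mathrm{id}+\vartheta)$ by passing to the limit in the primitive identity $H^\delta=\rI_\R[F_\frw^\delta]$ (your ``pullback under $\mathrm{id}+\cO^\delta$'' is exactly this identity rewritten). Two minor remarks: the integration should be along paths, not loops, to identify an exact form; and your anticipated obstacle evaporates because under {\LipKd} the limit $\vartheta$ is $\kappa$-Lipschitz with $\kappa<1$ on compacts, so $\mathrm{id}+\vartheta$ is bi-Lipschitz and in particular injective.
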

\begin{proof} {The uniform (on compact sets) boundedness of the gradients~$F_\frw^\delta:=\rD[H^\delta]$ of the functions~$H^\delta$ follows from Corollary~\ref{cor:Lipschitz}. Applying the Arzel\`a--Ascoli theorem, we can assume} that~$H^\delta(v)\to h(v)$ uniformly on compact subsets of~$V$. Proposition~\ref{prop:Holder} guarantees the a priori H\"older regularity of~$F_\frw^\delta$ and the existence of subsequential limits~$f_\frw:U\to \C$ of t-holomorphic functions~$F_\frw^\delta$. Passing to the limit in the identity~$H^\delta=\rI_{\R}[F_\frw^\delta]$ (see Section~\ref{sub:harmonicity}), one sees that
\[
\textstyle  h(v)\,=\,\int^{(\mathrm{id}+\vartheta)^{-1}(v)}\Re(f_\frw(z)dz+\overline{f_\frw(z)}d\overline{\vartheta(z)})
\,=\, \int^{(\mathrm{id}+\vartheta)^{-1}(v)}\Re(f_\frw(z)d(z+\vartheta(z))),
\]
i.e., $2\partial h = f_\frw\circ (\mathrm{id}+\vartheta)$.
Finally, the form $f_\frw(z)dz+\overline{f_\frw(z)}d\overline{\vartheta(z)}$ is closed in~$U$ due to Proposition~\ref{prop:f-closed-forms}.
\end{proof}

\begin{remark}\label{rem:diffusion-theta}
Similarly to the case~$\vartheta\equiv 0$, one can use the above description of gradients~$f_\frw=2\partial h$ of subsequential limits~$h$ of harmonic functions $H^\delta$ to identify the coefficients of a second-order elliptic PDE $(a(v)\partial_{xx}+2b(v)\partial_{xy}+c(v)\partial_{yy})h=0$, $v=x+iy\in V$, {that} all such limits~$h$ satisfy.
Also, it is worth noting that
there exists a very particular case in which this PDE can be viewed {simply} as the harmonicity of~$h$ after a change of variable~$v$. Namely, if~$(z,\vartheta(z))$ is a space-like {maximal} surface in {the Minkowski space~$\R^{2,2}$,} then~$h$ is harmonic in the conformal metric of this surface; {see the companion paper~\cite{CLR2} for more details on this setup.}
\end{remark}

\subsection{Boundedness of functions~$\F\pm\pm$ under assumption~{\ExpFat{\delta}}} \label{sub:Fpmpm=O(1)}
This section contains a technical result needed for the proof of Theorem~\ref{thm:main-GFF} given in Section~\ref{sec:convergence}. In that theorem we assume that the dimer coupling functions~$K_{\cT^\delta}^{-1}(w,b)$ are uniformly bounded as~$\delta\to 0$ provided that~$w$ and~$b$ remain at {a} definite distance from the boundary of~$\Omega$ and from each other. Recall that Proposition~\ref{prop:Fpmpm-def} gives a representation of these functions via four functions~$\F\pm\pm_{\cT^\delta}$, similar to the link between {the} `true' values of a t-white-holomorphic function~$F_\frw^\tw$ and their projections~$F_\frw^\tb$. In Section~\ref{sec:convergence} we rely upon the fact that these functions~$\F\pm\pm_{\cT^\delta}$ are also uniformly bounded. This bound is obtained in Proposition~\ref{prop:Fpmpm-bound}.

\begin{lemma}\label{lem:Fw-via-Fb-bound}
Let~$U\subset\C$ be an open set and t-embeddings~$\cT^\delta$ satisfy both assumption~{\LipKd{\kappa}{\delta}} (with a fixed constant~$\kappa<1$) and assumption \ExpFat{\delta} (as $\delta\to 0$) on~$U$. Assume that~$F_\frw^\delta$ {are} t-white-holomorphic functions on~$\cT^\delta\cap U$. If the values~$F_\frw^{\tb,\delta}$ are uniformly bounded on compact subsets of~$U$ as~$\delta\to 0$, then the same is true for the values~$F_\frw^{\tw,\delta}$.
\end{lemma}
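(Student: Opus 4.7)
The strategy is to combine the sub-exponential bound on $F_\frw^{\tw,\delta}$ already sketched around~\eqref{eq:no-exp-blow-up} with the dichotomy provided by Theorem~\ref{thm:Lipschitz}. Fix compact sets $K \subset K' \subset U$ with $K \subset \operatorname{int} K'$ and assume that $|F_\frw^{\tb,\delta}| \leq M$ throughout $K'$.

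First, I pass from the bound on $F_\frw^{\tb,\delta}$ to a bound on a real harmonic primitive $H^\delta := \rI_\R[F_\frw^\delta]$ on the T-graph $\cT^\delta + \cO^\delta$, normalized at some fixed vertex. By Proposition~\ref{prop:representation_derivative}, the increment of $H^\delta$ along each segment of the T-graph is the real part of $F_\frw^{\tb,\delta}(b)\cdot(d\cT+d\cO)$, hence has absolute value at most $2M|d\cT|$ on the corresponding edge of $\cT^\delta$. Assumption~{\LipKd} together with the distortion estimates~\eqref{eq:T+0-discs} allows connecting any two vertices of the T-graph inside a ball $B(v,d) \subset K'$ by a path along edges of total $|d\cT|$-length $O(d)$, yielding $\osc_{B(v,d)} H^\delta \leq C'Md$ for all such balls (with $d \geq \cst \cdot \delta$).

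Second, the argument already sketched in the discussion around~\eqref{eq:no-exp-blow-up} produces, for every $\beta > 0$ and every sufficiently small $\delta$, the sub-exponential estimate $|F_\frw^{\tw,\delta}| \leq C(\beta)M\exp(\beta\delta^{-1})$ on $K$. The ingredients are: on each $\exp(-\beta\delta^{-1})$-fat triangle of the white splitting $\cT_\spl^{\tw,\delta}$ (cf.\ Section~\ref{sec:non_triangulation}), the condition~\eqref{eq:spl-t-hol} determines $F_\frw^{\tw,\delta}(w)$ from its three projections $F_\frw^{\tb,\delta}(b_k)\in\eta_{b_k}\mathbb{R}$ with controlled error, owing to the angle condition and the sub-exponential lower bound on the angles of $w$ implied by its fatness; assumption~{\ExpFat} supplies chains of such fat triangles surrounding every point of $K$ while remaining inside $K'$, on which $|F_\frw^{\tw,\delta}|$ is therefore sub-exponentially bounded; and the maximum principle for both $\Re F_\frw^{\tw,\delta}$ and $\Im F_\frw^{\tw,\delta}$, which by Proposition~\ref{prop:backward_harmonic} are harmonic for backward random walks on the T-graphs $\cT^\delta + \overline{\cO^\delta}$ and $\cT^\delta - \overline{\cO^\delta}$ respectively, propagates the bound from each chain to its interior.

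Finally, I apply Theorem~\ref{thm:Lipschitz} to $H^\delta$ on a ball $B(v,d) \subset K'$ of a fixed radius $d > 0$. Combined with the bound from Step~1, the first alternative yields $|F_\frw^{\tw,\delta}| \leq C_0 d^{-1} \cdot C'Md = C_0 C'M$ on $B(v,d/2)$, the desired uniform bound. The second alternative forces $|F_\frw^{\tw,\delta}| \geq C_0 \exp(\beta_0 d\delta^{-1}) \cdot \osc_{B(v,d)} H^\delta$; choosing $\beta < \beta_0 d$ in Step~2 makes this incompatible with the sub-exponential bound for all small $\delta$, unless $\osc H^\delta = 0$ (in which case $F_\frw^\delta \equiv 0$ trivially). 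Covering $K$ by finitely many such balls completes the argument. I expect Step~2 to be the main obstacle: the maximum principle for backward walks relies on the crossing and Harnack estimates of Section~\ref{sub:crossing}, and passing through the white splitting, together with handling possibly degenerate vertices via the identification in Remark~\ref{rem:Im-on-deg-T}, requires care.
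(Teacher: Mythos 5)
Your overall strategy --- show that $F_\frw^{\tw,\delta}$ is either bounded by a constant times $M:=\sup|F_\frw^{\tb,\delta}|$ or blows up exponentially, and then use \ExpFat{} (reconstruction on fat white triangles plus the maximum principle for $\Re F_\frw^{\tw,\delta}$ and $\Im F_\frw^{\tw,\delta}$ via Proposition~\ref{prop:backward_harmonic}) to exclude the blow-up --- is exactly the mechanism of the paper's proof, and your Steps~2 and~3 are essentially sound (in Step~3 the factors of $M$ cancel on both sides of the comparison, so the caveat about $\osc H^\delta=0$ is not even needed). The gap is in Step~1. The claim that two vertices of the T-graph inside $B(v,d)$ can be joined by a path along edges of total $|d\cT|$-length $O(d)$ is not justified and fails in general: with no lower bound on angles or face sizes, a straight segment of length $d$ may cross arbitrarily many thin white faces, and the detours around them are not summable (the paper explicitly flags the absence of such length control in the proof of Proposition~\ref{prop:dH-limit}). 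The only alternative route to $\osc_{B(v,d)}H^\delta\le C'Md$ would be to integrate the extended form of Lemma~\ref{lem:FdT-in-C} along a straight chord, but the gradient of $H^\delta$ \emph{inside} a white face of the T-graph is $|F_\frw^{\tw,\delta}(w)|$ --- precisely the quantity you are trying to bound --- so that argument is circular. Without the oscillation bound, Theorem~\ref{thm:Lipschitz} gives you nothing.

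The paper avoids the harmonic primitive altogether and runs the doubling iteration directly on $F_\frw^{\tw,\delta}$. The substitute for your Step~1 is a purely geometric consequence of \LipKd: by the distortion estimate~\eqref{eq:T+0-discs} applied to $\cT+\overline{\alpha^2\cO}$, every disc $D$ of radius $\ge\delta$ must meet two black faces $b,b'$ with $|\eta_b^2-\eta_{b'}^2|\ge 1-\kappa$, whence
$\osc_D(F_\frw^\tw)\ \ge\ \tfrac14(1-\kappa)\max_D|F_\frw^\tw|-\max_D|F_\frw^\tb|$,
since at least one of the two projections of any value $F_\frw^\tw(w)$ onto $\eta_b\R$, $\eta_{b'}\R$ is comparable to $|F_\frw^\tw(w)|$. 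Combined with the H\"older estimate of Proposition~\ref{prop:Holder} this shows that wherever $\max|F_\frw^\tw|\ge 16(1-\kappa)^{-1}M$ the maximum of $|F_\frw^\tw|$ grows by a fixed factor when passing from $B(z,r_0\delta)$ to $B(z,Ar_0\delta)$, and iterating as in the proof of Theorem~\ref{thm:Lipschitz} yields the exponential blow-up that your Step~2 then rules out. If you want to keep your architecture, you should replace Step~1 by this transversality observation; as written, the proof does not go through.
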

\begin{proof} First, let us note that {the assumption~\LipKd{\kappa}{\delta} implies that} each disc~$D$ of radius~$r\ge\delta$ on a {t-embedding $\cT^\delta$} must intersect two black faces~${b_1,b_2}$ such that ${|\Im(\eta_{b_2}\overline{\eta}_{b_1})|\ge \tfrac12(1-\kappa)}$. Indeed, otherwise there would exist~$\alpha\in\mathbb{T}$ such that $|\eta_b^2-\alpha^2|<1-\kappa$ for all {$b$ in~$D$. In its turn,} this would mean that~$|1+\overline{\alpha}^2\eta_b^2|>1+\kappa$ for all such~$b$, so the image of~$D$ {in} the T-graph~$\cT+\overline{\alpha^2\cO}$ would {have} too big {area} to fit the distortion estimate~\eqref{eq:T+0-discs}. {Using the formula~\eqref{eq:x-reconstruction-Fw} it is not hard to see that} this observation implies the estimate
\[
{\max\nolimits_D|F_\frw^\tw|\ \le\ 4(1-\kappa)^{-1}\cdot(\max\nolimits_D|F_\frw^\tb|+\osc_D(F_\frw^\tw)).}
\]
Provided that the constants~$A$ and~$r_0$ are chosen as in~\eqref{eq:x-const-A}, this {gives}
\begin{align*}
\osc_{B(z,Ar_0\delta)}(F_\frw^\tw)\ &\ge\ {12(1-\kappa)^{-1}\cdot\osc_{B(z,r_0\delta)}(F_\frw^\tw)}\\
&\ge\ {3}\cdot(\max\nolimits_{B(z,r_0\delta)}|F_\frw^\tw|\ -\ 4(1-\kappa)^{-1}\max\nolimits_{B(z,r_0\delta)}|F_\frw^\tb|).
\end{align*}
In particular, if~$\max\nolimits_{B(z,r_0\delta)}|F_\frw^\tw|\ge 16(1-\kappa)^{-1}\max\nolimits_{B(z,r_0\delta)}|F_\frw^\tb|$, then
\[
\max\nolimits_{B(z,Ar_0\delta)}|F_\frw^\tw|\ \ge\ \tfrac{1}{2}\osc_{B(z,Ar_0\delta)}(F_\frw^\tw)\ \ge\ \tfrac{9}{8}\max\nolimits_{B(z,r_0\delta)}|F_\frw^\tw|.
\]
Thus, if the function~$F_\frw^\tw$ attains, {at a certain point in the bulk of~$U$,} a much bigger value than {the maximum of~$|F_\frw^\tb|$ in a vicinity of this point,} then one can iterate the above estimate similarly to the proof of Theorem~\ref{thm:Lipschitz} and observe an exponential {(in~$\delta^{-1}$)} blow-up of~$F_\frw^{\tw,\delta}$. However, this is not possible under assumption~{\ExpFat{\delta}}: {a contradiction is obtained similarly to the proof of Corollary~\ref{cor:Lipschitz} as the reconstruction of the values~$F_\frw^\tw$ from~$F_\frw^\tb$ on $\delta\exp(-\delta'\delta^{-1})$-fat white triangles (and first on relevant black bigons if necessary in the general case) can only give a subexponential factor~$\exp(2\delta'\delta^{-1})$ with~$\delta'\to 0$.}
\end{proof}

\begin{proposition}\label{prop:Fpmpm-bound} Let~$U_1,U_2\subset\C$ be disjoint open sets and {assume that the} \mbox{t-embeddings} $\cT^\delta$ satisfy both assumptions~{\LipKd{\kappa}{\delta}} (with a fixed constant~$\kappa<1$) and  assumption~{\ExpFat{\delta}} (as~$\delta\to 0$) on~$U_1\cup U_2$. If the functions~$K^{-1}_{\cT^\delta}(\cdot,\cdot)$ are uniformly bounded on compact subsets of~$U_1\times U_2$ as~$\delta\to 0$, then the same is true for the functions~$\F\pm\pm_{\cT^\delta}$ defined in Proposition~\ref{prop:Fpmpm-def}.
\end{proposition}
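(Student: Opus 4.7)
The plan is to bound each of the four functions~$\F\pm\pm$ by reducing to the boundedness of the standard t-holomorphic coupling observables~$F_w^\tw(\wc)$ and~$F_b^\tb(\bc)$, for which Lemma~\ref{lem:Fw-via-Fb-bound} is directly applicable. Since $U_1\cap U_2=\emptyset$, for each~$w$ in a fixed compact~$K_1\subset U_1$ the function~$b\mapsto F_w^\tb(b)=\bar\eta_w K^{-1}_{\cT^\delta}(w,b)$ is t-white-holomorphic throughout~$U_2$ with no puncture, and it is uniformly bounded by the hypothesis on~$K^{-1}_{\cT^\delta}$. Lemma~\ref{lem:Fw-via-Fb-bound}, whose dependence on the parameter~$w$ is harmless, then upgrades this to a uniform bound on~$F_w^\tw(\wc)$ for~$w\in K_1$ and~$\wc$ in compact subsets of~$U_2$. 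The symmetric statement, obtained by swapping black and white in the proof of the lemma, produces the analogous bound for~$F_b^\tb(\bc)$.

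The heart of the argument is to establish the clean identity
\[
H_\wc^{[\eta],\tw}(w,\wc)\,=\,\eta_w\,\Re\bigl(\bar\eta\,F_w^\tw(\wc)\bigr),\qquad w\in W\cap U_1,\ \eta\in\mathbb{T},
\]
for the t-black-holomorphic function $H_\wc^{[\eta]}(\bc):=\tfrac12(\bar\eta\,\F++(\bc,\wc)+\eta\,\F+-(\bc,\wc))$ from Proposition~\ref{prop:Fpmpm-def}(iii). The derivation is a short manipulation: Proposition~\ref{prop:Fpmpm-def}(ii) gives, for $w\sim\bc$, the relation $\F++(\bc,\wc)+\eta_w^2\F-+(\bc,\wc)=2\eta_w F_w^\tw(\wc)$, whose complex conjugate reads $\F+-(\bc,\wc)+\eta_w^2\F--(\bc,\wc)=2\eta_w\overline{F_w^\tw(\wc)}$. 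Substituting both into the expansion of $\tfrac12(H_\wc^{[\eta]}(\bc)+\eta_w^2\overline{H_\wc^{[\eta]}(\bc)})=\Pr(H_\wc^{[\eta]}(\bc),\eta_w\R)$ collapses the four emerging terms into $\tfrac{\eta_w}{2}(\bar\eta F_w^\tw(\wc)+\eta\overline{F_w^\tw(\wc)})=\eta_w\Re(\bar\eta F_w^\tw(\wc))$, and consistency across the different $\bc\sim w$ is automatic from the t-black-holomorphicity of $H_\wc^{[\eta]}$. Combined with the first paragraph, this yields a uniform bound on $H_\wc^{[\eta],\tw}(w,\wc)$ for $w\in K_1$, $\wc\in K_2$, uniformly over $\eta\in\mathbb{T}$.

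Applying the t-black-holomorphic analogue of Lemma~\ref{lem:Fw-via-Fb-bound} to the family $\{\bc\mapsto H_\wc^{[\eta]}(\bc,\wc)\}_{\wc,\eta}$, I will conclude that $H_\wc^{[\eta]}(\bc,\wc)$ itself is uniformly bounded on compact subsets of $K_1\times K_2$, uniformly in~$\eta\in\mathbb{T}$. Specializing to $\eta=1$ and $\eta=i$ and inverting the resulting linear system yields
\[
\F++(\bc,\wc)\,=\,H_\wc^{[1]}(\bc,\wc)+i\,H_\wc^{[i]}(\bc,\wc),\qquad \F+-(\bc,\wc)\,=\,H_\wc^{[1]}(\bc,\wc)-i\,H_\wc^{[i]}(\bc,\wc),
\]
so both are uniformly bounded, and $\F--=\overline{\F++}$, $\F-+=\overline{\F+-}$ then follow from Proposition~\ref{prop:Fpmpm-def}(i). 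The extension to non-triangulations is obtained by running the same argument after performing an auxiliary white/black splitting as in Section~\ref{sec:non_triangulation}.

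I expect the main subtlety to lie in the algebraic identity of the second paragraph. A naive attempt to estimate $\F\pm\pm$ directly from the defining formula $\F\pm\pm(\bc,\wc)=\sum_{w,b}c^{[\pm]}_{\bc w}c^{[\pm]}_{\wc b}\bar\eta_w\bar\eta_b K^{-1}(w,b)$ is hopeless, because the coefficients $c^{[\pm]}_{\bc w},c^{[\pm]}_{\wc b}$ blow up on thin faces~$\bc,\wc$; assumption~{\LipKd} provides no control on the local geometry at the scale of a single face. The whole point of re-expressing the t-white projection of $H_\wc^{[\eta]}$ purely through the well-controlled observable $F_w^\tw(\wc)$ is to absorb this apparent blow-up using the linear relation from Proposition~\ref{prop:Fpmpm-def}(ii), which encodes exactly the cancellation that a direct estimate cannot detect; from that point on, assumption~{\ExpFat} enters only through its single use inside Lemma~\ref{lem:Fw-via-Fb-bound}.
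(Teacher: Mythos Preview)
Your proof is correct. The overall architecture --- first bound $F_w^\tw(\wc)$ via Lemma~\ref{lem:Fw-via-Fb-bound}, then bootstrap to~$\F\pm\pm$ through their t-black-holomorphic linear combinations in the first variable --- is the same as the paper's. The difference lies in the execution of the second step. The paper introduces the very same functions (its $G_+^\delta$ and $G_-^\delta$ are, up to sign, your $H_\wc^{[1]}$ and $H_\wc^{[i]}$), but instead of isolating the white-projection it derives a coupled oscillation inequality for the pair $(G_+^{\tb},G_-^{\tb})$ in terms of $\max|F_w^\tw(\wc)|$ and then reruns the exponential blow-up argument of Lemma~\ref{lem:Fw-via-Fb-bound} by hand. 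Your identity $H_\wc^{[\eta],\tw}(w)=\eta_w\Re(\bar\eta\,F_w^\tw(\wc))$ is the cleaner observation: it shows that the white-face values of each $H_\wc^{[\eta]}$ are \emph{separately} controlled by $|F_w^\tw(\wc)|$, so the black analogue of Lemma~\ref{lem:Fw-via-Fb-bound} applies as a black box, uniformly over $(\wc,\eta)$, with no need for the coupled estimate. Both routes use identical ingredients; yours is simply more modular. (The remark about $F_b^\tb(\bc)$ in your first paragraph is correct but superfluous --- you never use it.)
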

\begin{proof}
Let $w\in U_1$ and~$F_w^{\tb,\delta}(\cdot)=\overline{\eta}_wK^{-1}_{\cT^\delta}(w,\cdot)$ be the values of a t-white-holomorphic function on black faces of~$\cT^\delta$ lying in~$U_2$. {(If $\cT^\delta$ is not a triangulation, we use its black splitting in~$U_1$ and a white splitting in~$U_2$ in what follows.)} Provided that~$w$ stays on a compact subset of~$U_1$ as~$\delta\to 0$, Lemma~\ref{lem:Fw-via-Fb-bound} ensures that the functions~$F_w^{\tw,\delta}$ are uniformly bounded on compact subsets of~$U_2$. Moreover, this estimate is also uniform in~$w$ provided that it stays on a compact subset of~$U_1$. We now use the identity
\[
F_w^{\tw,\delta}(\wc)\ =\ \tfrac{1}{2}\big(\overline{\eta}_w\F++_{\cT^\delta}(\bc,\wc)+\eta_w\F-+_{\cT^\delta}(\bc,\wc)\big),
\]
{where} $w\sim \bc\in U_1$ {and $\wc \in U_2$.}
Frow now onwards, let us fix the second argument~$\wc$. Arguing as in the proof of Lemma~\ref{lem:Fw-via-Fb-bound}, for each disc~$D$ of radius {greater than}~$\delta$ one obtains the estimate
\begin{align*}
\tfrac{1}{2}\big(\osc\nolimits_{D}(\F++_{\cT^\delta}(\cdot,\wc))\ &+\ \osc\nolimits_{D}(\F-+_{\cT^\delta}(\cdot,\wc))\,\big)\\ &\ge\ \tfrac{1}{4}(1-\kappa)\cdot\max\nolimits_D|\F++_{\cT^\delta}(\cdot,\wc)|\ -\ \max\nolimits_{w\in D}|F_w^{\tw,\delta}(\wc)|
\end{align*}
and a similar estimate with~$\max\nolimits_D|\F-+_{\cT^\delta}(\cdot,\wc)|$ in the right-hand side. Denote
\[
G_+^{{\tb,}\delta}({\bc})\,:=\,\tfrac{1}{2}\big(\F++_{\cT^\delta}+{\F+-_{\cT^\delta}\big)(\bc,\wc)}\,,\quad
G_-^{{\tb,}\delta}({\bc})\,:=\,\tfrac{i}{2}\big(\F++_{\cT^\delta}+{\F+-_{\cT^\delta}\big)(\bc,\wc)}\,;
\]
{recall that~$\F+-$ is the conjugate of~$\F-+$. It follows from Proposition~\ref{prop:Fpmpm-def} that $G_\pm^{\delta}$ are t-black-holomorphic functions
and that their values on white faces are given by
\begin{equation}
\label{eq:x-Gpm-white-values}
G_+^{\tw,\delta}(w)\,=\,\eta_w\Re F_w^{\tw,\delta}(\wc),\qquad G_-^{\tw,\delta}(w)\,=\,-\eta_w\Im F_w^{\tw,\delta}(\wc).
\end{equation}
For each disc~$D$ of radius greater than $\delta$ the following estimate holds:}
\begin{align*}
&\osc\nolimits_{D}(G_+^{\tb,\delta})+\osc\nolimits_{D}(G_-^{\tb,\delta})\\
 &\quad \ge\ \tfrac{1}{2}\big(\osc\nolimits_{D}(\F++_{ \cT^\delta}(\cdot,\wc))+\osc\nolimits_{D}(\F-+_{\cT^\delta}(\cdot,\wc))\,\big)\\
&\quad \ge\ \tfrac{1}{8}(1-\kappa)\cdot\big(\max\nolimits_D|\F++_{\cT^\delta}(\cdot,\wc)|+\max\nolimits_D|\F-+_{\cT^\delta}(\cdot,\wc)|\,\big)\, -\, \max\nolimits_{w\in D}|F_w^{\tw,\delta}(\wc)|\\
&\quad \ge\ {\tfrac{1}{8}}(1-\kappa)\cdot\big(\max\nolimits_D|G_+^{\tb,\delta}|+\max\nolimits_D|G_-^{\tb,\delta}|\,\big)\, -\, \max\nolimits_{w\in D}|F_w^{\tw,\delta}(\wc)|
\end{align*}
The proof can be now completed similarly to that of Lemma~\ref{lem:Fw-via-Fb-bound}. Since the functions~$G_\pm^{\tb,\delta}$ are t-holomorphic, there exists a constant~$A'=A'(\kappa)>1$ such that
\begin{align*}
\osc_{B(z,A'r)}(G_\pm^{\tb,\delta})\ \ge\ {32}(1-\kappa)^{-1}\cdot\osc_{B(z,r)}(G_\pm^{\tb,\delta})\quad \text{provided that}\ \ r\ge r_0\delta.
\end{align*}
{Denote~$D:=B(z,r_0\delta)$ and~$A'D:=B(z,A'r_0\delta)$. We now have the estimate}
\begin{align*}
&\max\nolimits_{{A'D}}|G_+^{\tb,\delta}|+\max\nolimits_{{A'D}}|G_-^{\tb,\delta}|\ \ge\ \tfrac{1}{2}\big(\osc_{{A'D}}(G_+^{\tb,\delta})+\osc_{{A'D}}(G_-^{\tb,\delta})\,\big)\\
&\qquad \ge\ 2\cdot\big(\max\nolimits_{{D}}|G_+^{\tb,\delta}|+\max\nolimits_{{D}}|G_-^{\tb,\delta}|\,\big)\ -\ {16(1-\kappa)^{-1}}\cdot\max\nolimits_{w\in {D}}|F_w^{\tw,\delta}(\wc)|.
\end{align*}
If (at least) one of the functions~$G_\pm^{\tb,\delta}$ attained, in the bulk of~$U$, a value {much} greater than the maximum of $|F_w^{\tw,\delta}(\wc)|$ {in a vicinity of this point}, this would imply the existence of exponentially {(in~$\delta^{-1}$)} big values of (at least one of) these functions. {However, this scenario is not possible due to a subexponential cost of the reconstruction of t-black holomorphic functions~$G_\pm^{\tb,\delta}(\bc)$ from their values~$G_\pm^{\tw,\delta}(w)$, which are given by~\eqref{eq:x-Gpm-white-values} and, as already mentoned at the beginning of the proof, are uniformly (on compacts) bounded due to Lemma~\ref{lem:Fw-via-Fb-bound}.}
\end{proof}

\section{Convergence to the GFF: {a general framework}}
\label{sec:convergence}

\newcommand\f[1]{f^{\scriptscriptstyle [#1]}}
\newcommand\of[1]{\overline{f}{}^{\scriptscriptstyle [#1]}}
\newcommand\h[1]{h^{\scriptscriptstyle [#1]}}
\newcommand\cTT[2]{\cT^{\scriptscriptstyle [#1]}_{\scriptscriptstyle [#2]}}

This section is devoted to the proof of Theorem~\ref{thm:main-GFF}. In particular, instead of the very mild Lipschitzness condition~\LipKd{\kappa}{\delta}, below we rely upon a {much stronger} `small origami' assumption: the origami maps ${\cO_m}$ tend to $0$ as ${m\to\infty}$, uniformly on compact subsets. Though we do not include such a discussion into this paper, let us nevertheless mention that a similar (though more involved) analysis can be performed {assuming} that the origami maps ${\cO_m}(z)$ converge, as ${m\to\infty}$, to a function~$\vartheta(z)$, which is a graph of a {maximal} surface in {the Minkowski space~$\R^{2,2}$.} In this situation one eventually obtains the GFF in the conformal metric of the corresponding surface and not just in the Euclidean metric on~$\Omega$. We refer the interested reader to {the companion} paper~\cite{CLR2} for details and focus on the case $\vartheta(z)\equiv 0$ from now onwards. Also, in Section~\ref{sub:discussion} we briefly discuss how several known setups (Temperleyan~\cite{kenyon-gff-a,kenyon-gff-b,kenyon-honeycomb}, piecewise {Temperleyan}~\cite{russkikh-t}, hedgehog domains~\cite{russkikh-h}) fit the general framework developed in our paper.

\subsection{Subsequential limits of the dimer coupling {functions}}
Recall the expression of the dimer coupling function
\[
K^{-1}(w,b)\ =\ {\tfrac14(\F++ + \eta_b^2\F+- +\eta_w^2\F-+ +\eta_w^2\eta_b^2\F--)(\bc,\wc),}
\]
{where $w\sim\bc$ and $b\sim\wc$,} via the functions \mbox{$\F\pm\pm:(B\smallsetminus\partial B)\times (W\smallsetminus \partial W)\to\C$} given in Proposition~\ref{prop:Fpmpm-def}. {Let~$U:=\mathrm{Int}K$ be the interior of a compact subset $K\subset\Omega$, where~$\Omega$ stands for the limiting domain of t-embeddings $\cT_m$ under consideration. Under the assumptions~\LipKd{\kappa}{\delta}\ and \ExpFatPrime{\delta}{\delta'} on~$K$}, Proposition~\ref{prop:Fpmpm-bound} and Proposition~\ref{prop:Holder} imply that the functions $\F\pm\pm_{{\cT_m}}$ are uniformly bounded and equicontinuous on scales above ${\delta_m=\delta_m(K)}$ provided that their arguments remain at a {definite} distance from ${\partial U}$ and from each other. {Using the Arzel\`a-Ascoli theorem on each such~$U$ and applying the diagonal process for a sequence of compacts~$K$ approximating~$\Omega$ from inside, one obtains} the existence of \emph{subsequential} limits:
\begin{equation}
\label{eq:fpmpm-def}
\F\pm\pm_{{\cT_m}}(\bc,\wc)\,\to\,\f{\pm\pm}(z_1,z_2)\quad \text{if}\ \ \bc\to z_1,\ \ \wc\to z_2\ \ \text{as}\ \ {m=m_k\to \infty;}
\end{equation}
the convergence is uniform provided that~$z_1$ and~$z_2$ remain at a {definite} distance from~$\partial\Omega$ and from each other. We list the key properties of functions~$\f{\pm\pm}$ in the next proposition; {note that these properties do \emph{not} define~$\f{\pm\pm}$ uniquely.}

\begin{proposition} \label{prop:fpmpm}
In the setup described above, for each subsequential {limit} $\f{\pm\pm}$ (which might depend on the sequence ${m=m_k\to\infty}$) the following is fulfilled:

\smallskip
\noindent (i) $\f{--}(z_1,z_2)=\overline{\f{++}(z_1,z_2)}$ and $\f{+-}(z_1,z_2)=\overline{\f{-+}(z_1,z_2)}$;

\smallskip
\noindent (ii) for each $z_1\in\Omega$, both functions $\f{\pm +}(z_1,\cdot)$ are holomorphic in $\Omega\smallsetminus\{z_1\}$; similarly, for each $z_2\in\Omega$, both functions $\f{+\pm}(\cdot,z_2)$ are holomorphic in $\Omega\smallsetminus\{z_2\}$;

\smallskip
\noindent (iii) the following asymptotics hold as $z_2\to z_1\in\Omega$ (similarly, as $z_1\to z_2\in\Omega$):
\begin{equation}
\label{eq:fpmpm-res}
\f{++}(z_1,z_2)=\frac{2}{\pi i}\cdot \frac{1}{z_2-z_1}+O(1)\quad \text{and}\quad \f{-+}(z_1,z_2)=O(1).
\end{equation}
\end{proposition}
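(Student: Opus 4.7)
The plan is to deduce (i) and (ii) by direct limiting arguments and to tackle (iii) via the Kasteleyn identity translated into a residue condition. Part (i) is immediate from the corresponding pre-limit symmetries in Proposition~\ref{prop:Fpmpm-def}(i), since complex conjugation is continuous. For part (ii), I use Proposition~\ref{prop:Fpmpm-def}(iii): for each $\eta\in\mathbb{T}$, the function $\tfrac12(\overline{\eta}\F++(\bc,\cdot)+\eta\F-+(\bc,\cdot))$ is t-white-holomorphic away from $\bc$ and uniformly bounded by Proposition~\ref{prop:Fpmpm-bound}. In the `small origami' case $\vartheta\equiv 0$, Proposition~\ref{prop:f-closed-forms} ensures its subsequential limit is holomorphic (the closed form $f\,dz+\overline{f}\,d\overline{\vartheta}$ reduces to $f\,dz$). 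Applying this with $\eta=1$ and $\eta=i$ yields two holomorphic functions whose $\C$-linear combinations recover $\f{\pm+}(z_1,\cdot)$; both are thus holomorphic on $\Omega\setminus\{z_1\}$. Holomorphicity in the first variable follows by the black/white symmetry.

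For part (iii), the Kasteleyn relation $\sum_b K^{-1}(w,b)K(b,w)=1$ translates via Lemma~\ref{lem:KF=oint} into $\oint_{\partial w}F_w^\tb\,d\cT=\overline{\eta}_w$ where $F_w^\tb(\cdot):=\overline{\eta}_w K^{-1}(w,\cdot)$. Proposition~\ref{prop:integral_simple} and Lemma~\ref{lem:FdT-in-C} upgrade this to
\[
\oint_\gamma\bigl(F_w^\tw(z)\,dz+\overline{F_w^\tw(z)}\,d\overline{\cO(z)}\bigr)\ =\ 2\overline{\eta}_w
\]
for any loop $\gamma$ homotopic to $\partial\cT(w)$ in $\C\setminus\{w\}$. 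Fixing a small circle $\gamma$ around $z_1$ and a sequence $w=w^\delta\to z_1$ along which $\eta_w^\delta\to\eta_\star\in\mathbb{T}$, the small origami assumption $\cO^\delta\to 0$ kills the second term in the limit (by the Riemann--Stieltjes argument of Proposition~\ref{prop:f-closed-forms}), and the identity $F_w^\tw=\tfrac12(\overline{\eta}_w\F++(\bc,\cdot)+\eta_w\F-+(\bc,\cdot))$ from Proposition~\ref{prop:Fpmpm-def}(ii) yields
$\oint_\gamma f_{z_1,\eta_\star}(z)\,dz=2\overline{\eta_\star}$
for $f_{z_1,\eta_\star}:=\tfrac12(\overline{\eta_\star}\f{++}(z_1,\cdot)+\eta_\star\f{-+}(z_1,\cdot))$, which is holomorphic on $\Omega\setminus\{z_1\}$ by part (ii).

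To extract both residues separately I select two such sequences with $\eta_{w^\delta}\to\eta_0$ and $\eta_{w'^\delta}\to\eta_1$, $\eta_0^2\neq\eta_1^2$; such sequences exist because $d\cO^\delta=\eta_w^2\,dz$ on white faces together with $\cO^\delta\to 0$ force nontrivial cancellation of the values $\eta_w^2$ over small regions (compare the opening of the proof of Lemma~\ref{lem:Fw-via-Fb-bound}). The resulting $2\times 2$ linear system on the $(z-z_1)^{-1}$-Laurent coefficients of $\f{++}$ and $\f{-+}$ has determinant $2i\Im(\overline{\eta_0}\eta_1)\neq 0$ and inverts to give $\mathrm{Res}_{z_1}\f{++}(z_1,\cdot)=2/(\pi i)$ and $\mathrm{Res}_{z_1}\f{-+}(z_1,\cdot)=0$. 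The hard part will be verifying that the singularity of $\f{++}$ at $z_1$ is genuinely at most a simple pole (and hence the `$O(1)$' remainder is bounded) rather than having higher-order Laurent tails or an essential singularity: just knowing boundedness of $\f{\pm+}(z_1,\cdot)$ on $\{|z-z_1|\geq\rho\}$ for each fixed $\rho>0$ is insufficient, and I expect to derive the stronger control from pre-limit structure, using the H\"older estimate of Proposition~\ref{prop:Holder} applied on dyadic annuli shrinking to $w^\delta$ together with hypothesis~(II) of Theorem~\ref{thm:main-GFF} on macroscopic scales to rule out blow-up faster than $|z-z_1|^{-1}$ in the subsequential limit.
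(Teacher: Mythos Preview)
Your treatment of (i) and (ii) is correct and matches the paper's argument essentially verbatim. Your setup for (iii) --- the monodromy identity $\oint_\gamma(F_w^\tw\,dz+\overline{F_w^\tw}\,d\overline{\cO})=2\overline{\eta}_w$, the passage to the limit, and the $2\times 2$ linear system obtained from two limiting values of $\eta_w^2$ --- is also correct and is exactly what the paper does to compute the residues once the singularity type is known.

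The gap is in the control of the singularity type, which you rightly flag as the hard part. Your proposed mechanism (H\"older on dyadic annuli plus hypothesis~(II)) does not close. Hypothesis~(II) gives, for each fixed $\rho>0$, a bound $|\F\pm\pm|\le M(\rho)$ on $\{|z_2-z_1|\ge\rho\}$, but says nothing about how $M(\rho)$ grows as $\rho\to 0$. The H\"older estimate of Proposition~\ref{prop:Holder} bounds $\osc_{B(z,r)}$ by $\osc_{B(z,R)}$ for $r<R$; it does not let you propagate a bound inward toward the singularity. So neither input, alone or combined, yields $|f|=O(|z_2-z_1|^{-1})$, and without that the Laurent expansion could in principle have arbitrarily high-order poles.

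The paper's key idea, which your proposal misses, is to pass to the \emph{primitive}. Define $H_w:=\rI_{i\overline{\eta}_w\R}[F_w]=\Pr(\rI_\C[F_w],i\overline{\eta}_w\R)$. Because the monodromy $2\overline{\eta}_w$ of $\rI_\C[F_w]$ is real in the direction $\overline{\eta}_w\R$, the projection $H_w$ is single-valued; by Proposition~\ref{prop:representation_derivative} it is harmonic on the T-graph $\cT-\overline{\eta}_w^2\cO$, and crucially the image of the face $w$ on this particular T-graph is a single degenerate vertex (Proposition~\ref{prop:geomT}). Hence the real-valued function $i\eta_w H_w$ satisfies a \emph{one-sided} maximum principle in a neighbourhood of the singularity. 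Passing to the limit, this forces $\Im\bigl(\h{++}(z_1,z_2)+\eta_\star^2\h{-+}(z_1,z_2)\bigr)$ to be bounded from one side near $z_1$; since a harmonic function near an isolated singularity that is bounded from one side has at most a logarithmic singularity, and since one can realise two distinct limiting values of $\eta_\star^2$ (via Lemma~\ref{lem:area}), both primitives $\h{\pm+}$ have at most logarithmic singularities, i.e.\ both $\f{\pm+}$ have at most simple poles. Finally, the single-valuedness of $H_w$ forces $\h{-+}$ to be single-valued, hence free of any logarithm, so $\f{-+}=O(1)$; the monodromy then pins down the residue of $\f{++}$ exactly as in your computation.
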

\begin{proof}
Item (i) is a triviality since the same relations hold for the functions $\F\pm\pm$ before taking the limit, see Proposition~\ref{prop:Fpmpm-def}(i). To prove (ii), recall that, for each $\eta\in\C$ and $\bc\in {B}$, the functions $\overline{\eta}\F++(\bc,\cdot)+\eta\F-+(\bc,\cdot)$ are t-holomorphic due to Proposition~\ref{prop:Fpmpm-def}(iii). Therefore, Proposition~\ref{prop:f-closed-forms} and the `small origami' assumption $\vartheta(z)\equiv 0$ imply that
\[
(\overline{\eta}\f{+ +}(z_1,z_2)+\eta\f{- +}(z_1,z_2))dz_2\ \ \text{is~a~closed~form~in}~\Omega\smallsetminus\{z_1\}.
\]
Morera's theorem yields that $\overline{\eta}\f{++}(z_1,\cdot)+\eta\f{-+}(z_1,\cdot)$ is a holomorphic function of~$z_2$. Varying~$\eta$, one concludes that both functions $\f{++}(z_1,\cdot)$ and $\f{-+}(z_1,\cdot)$ are holomorphic in~$\Omega\smallsetminus\{z_1\}$. The holomorphicity of the functions~$\f{+\pm}(\cdot,z_2)$ follows by the same reasoning.

It remains to {demonstrate} (iii), i.e., {to identify} the behavior of the functions $\f{\pm\pm}$ at $z_1=z_2$. To this end, given ${w\in W}$, consider the complex-valued primitive
\[
I_w(\cdot)\ :=\ \rI_\C[F_w],\quad \text{where}\ \ F_w^\tb(\cdot)=\overline{\eta}_w K^{-1}(w,\cdot),
\]
see Definition~\ref{def:rI}. As the function~$F_w$ is t-holomorphic in a \emph{punctured} domain, the function $I_w$ is {not} well-defined: in fact, it has an additive \emph{monodromy} $2\overline{\eta}_w$ around the white face~$w$ as the integral of the differential form~\eqref{eq:Fb_dT=} around $w$ is equal to~$2\overline{\eta}_w\sum_{b:\,b\sim w}K^{-1}(w,b)K(b,w)=2\overline{\eta}_w$. Therefore,
the function
\[
H_w(\cdot)\ :=\ \rI_{i\overline{\eta}_w\R}[F_w]\ =\ \Pr(I_w(\cdot),i\overline{\eta}_w\R)
\]
is well-defined and harmonic, except at the image of~$w$, on the corresponding T-graph $\cT-\overline{\eta}_w^2\cO$; see Proposition~\ref{prop:representation_derivative}. Moreover, Proposition~\ref{prop:geomT} implies that this image is a \emph{single} (degenerate) vertex of the T-graph~$\cT-\overline{\eta}_w^2\cO$ and thus the real-valued function $i\eta_wH_w$ satisfies either the maximum or the minimum principle in a vicinity of ${w}$.

We now use the representation of the function $F_w$ via $\F\pm\pm$ provided by Proposition~\ref{prop:Fpmpm-def}(ii) to claim that
\begin{equation}
\label{eq:x-Fw=fpmpm+o(1)}
F_w^\tw(\wc)\ =\ \tfrac{1}{2}\big(\overline{\eta}_{w}\f{++}(z_1,z_2)+\eta_w\f{-+}(z_1,z_2)\big)+o(1)\
\end{equation}
for $w\to z_1$ {and} $\wc\to z_2$, 
uniformly over $z_1,z_2$ at a {definite} distance from $\partial\Omega$ and from each other. Denote by $\h{\pm +}(z_1,\cdot)$ the (complex-valued, having additive monodromy around $z_1$) primitive of the holomorphic function~$\f{\pm +}(z_1,\cdot)$. As the primitive of the second term in~\eqref{eq:Fb_dT=} vanishes in the limit ${m\to\infty}$ due to the `small origami' assumption, we have
\[\
H_w(v)\ =\ i\overline{\eta}_w\cdot\tfrac{1}{2}\Im\big(\h{++}(z_1,z_2)+\eta_w^2\h{-+}(z_1,z_2)\big)+o(1)\ \ \text{for}\ w\to z_1,\ v\to z_2.
\]
Recall that the discrete functions $i\eta_w H_w$ always satisfy a one-sided maximum principle in a vicinity of $z_1$. {It is easy to see from  Lemma~\ref{lem:area} that for all sufficiently {large~$m$} one can find $w$ and $w'$ in a small vicinity of a given point $z_1$ such that~$|\eta_{w}^2-\eta_{w'}^2|=2|\Im(\overline{\eta}_w{\eta}_{w'})|\ge 2(c_1(\kappa))^{1/2}>0$; {see also the proof of Lemma~\ref{lem:Fw-via-Fb-bound}.} Therefore, both functions $\h{\pm+}(z_1,\cdot)$ can have only logarithmic singularities at $z_1$.} Moreover, since $H_w$ never has an additive monodromy around $w$, the function $\h{-+}(z_1,\cdot)$ must be well-defined and thus cannot have a logarithmic singularity. Therefore, the function $\f{++}(z_1,\cdot)$ has (at most) a simple pole at~$z_1$ and $\f{-+}(z_1,\cdot)$ does not have any singularity in~$\Omega$.

Finally, if $\f{++}(z_1,z_2)=c\cdot (z_2-z_1)^{-1}+O(1)$ as $z_2\to z_1$, then the function $\h{++}(z_1,\cdot)$ has an additive monodromy~$2\pi i c$ around $z_1$. Since the monodromy of $I_w=\rI_\C[F_w]$ around $w$ is known to be equal to~$2\overline{\eta}_w$, integrating asymptotics~\eqref{eq:x-Fw=fpmpm+o(1)} over a fixed contour surrounding $z_1$ and passing to the limit~${m\to\infty}$ one gets the identity $2\overline{\eta}_w=\tfrac{1}{2}\overline{\eta}_w\cdot 2\pi i c$. This concludes the proof of~\eqref{eq:fpmpm-res}.
\end{proof}

\subsection{Proof of Theorem~\ref{thm:main-GFF}} Recall from the discussion after  Theorem~\ref{thm:main-GFF} that we want to prove the convergence of height fluctuations to the GFF without proving convergence of $K^{-1}$. We first give an expression of the limits of correlation functions~$H_{{T_m},n}$ via \emph{unknown} subsequential limits~\eqref{eq:fpmpm-def} of the dimer coupling function and then identify these limits up to an unknown holomorphic factor~$\chi:\Omega\to\C$ (see Lemma~\ref{lem:fpmpm-chi} and the forthcoming Section~\ref{sub:discussion} for more details), which turns out to be enough for the proof {of} Theorem~\ref{thm:main-GFF}.

\begin{proposition} \label{prop:dH-limit}
In the setup of Theorem~\ref{thm:main-GFF} and Proposition~\ref{prop:fpmpm}, let {$v_{k,1}^{(m)}\to v_{k,1}$ as $m\to \infty$ for~$k=1,\ldots,n$ and} pairwise distinct points~${v_{1,1},\ldots,v_{n,1}}\in\Omega$, and similarly ${v_{k,2}^{(m)}\to v_{k,2}}$ for pairwise distinct points~${v_{1,2},\ldots,v_{n,2}}\in\Omega$. Then,
\begin{align*}
&\sum_{r_1,\ldots,r_n\in{\{1,2\}}}(-1)^{r_1+\ldots+r_n}  {H_{\cT_m,n}(v^{(m)}_{1,r_1},\ldots,v^{(m)}_{n,r_n})} \notag\\
&\qquad \to\ 4^{-n}\int_{{v_{1,1}}}^{{v_{1,2}}}\!\!\ldots\int_{{v_{n,1}}}^{{v_{n,2}}} \sum_{{s_1,\ldots,s_n\in\{\pm\}}}\det\big[\mathbbm{1}_{j\ne k}f^{[s_j,s_k]}(z_j,z_k)\big]_{j,k=1}^n\cdot {\prod_{k=1}^n} dz^{[s_k]}_k \label{eq:dH-limit}
\end{align*}
as~${m\to\infty}$, where 
$dz^{\scriptscriptstyle [+]}:=dz$, $dz^{\scriptscriptstyle[-]}:=d\overline{z}$. The multiple integral can be evaluated over an arbitrary collection of pairwise non-intersecting paths~$\gamma_k$ linking~${v_{k,1}}$ and~${v_{k,2}}$ ({i.e.}, the integrand is {an exact} differential form in {each of the variables} $z_1,\ldots,z_n$). The convergence is uniform provided that the points~${v_{k,1}}$ remain at a definite distance from each other and from~$\partial\Omega$ and {that} the same is true {for~$v_{k,2}$}.
\end{proposition}
\begin{proof} The proof essentially repeats the classical argument of Kenyon~\cite{kenyon-gff-a,kenyon-gff-b} in our setup. Let $\gamma_k^{{(m)}}$ be a path running over edges of the t-embedding~$\cT_{{m}}$ from~${v_{k,1}^{(m)}}$ to~${v_{k,2}^{(m)}}$ near~$\gamma_k$. (Note that, in general, we do \emph{not} control the total length of these paths as we do not assume that the angles of t-embeddings are uniformly bounded from below). Let~${(b_kw_k)^*}\in\gamma^{{(m)}}_k$ be some edges on these paths. It is well known that the probability that all the dimers ${(b_kw_k)}$, $k=1,\ldots,n$, {are} present in a random dimer cover of~${\G_m}$ can be written as $\det [K^{-1}_{{\cT_m}}(w_j,b_k)]_{j,k=1}^n\cdot\prod_{k=1}^nK_{{\cT_m}}(b_k,w_k)$. {Therefore,}
\begin{align*}
\sum_{r_1,\ldots,r_n\in{\{1,2\}}}&(-1)^{r_1+\ldots+r_n} {H_{\cT_m,n}(v_{1,r_1}^{(m)}\,,\,\ldots\,,\,v_{n,r_n}^{(m)})} \\[-4pt] & =\ \int_{{v_{1,1}^{(m)}}}^{{v_{1,2}^{(m)}}}\!\!\ldots\int_{{v_{n,1}^{(m)}}}^{{v_{n,2}^{(m)}}} \det\big[\mathbbm{1}_{j\ne k}K^{-1}_{{\cT_m}}(w_j,b_k)\big]_{j,k=1}^n\cdot \prod_{k=1}^n {\big(\pm d\cT_m ((b_kw_k)^*)\big)},
\end{align*}
where the {`$\pm$' signs depend} on whether~$b_k$ is to the right or to the left from {the path~$\gamma_k^{(m)}$} so that the increment~${\pm d\cT_m((b_kw_k)^*)}$ is always oriented from~${v_{k,1}^{(m)}}$ to~${v_{k,2}^{(m)}}$. (The diagonal $j=k$ is excluded since we are interested in the correlations of the fluctuations~${\hbar_{\cT_m}=h_{\cT_m}-\mathbb E[h_{\cT_m}]}$ and not {in the functions~$h_{\cT_m}$} themselves.) {In what follows we denote thus oriented edge~$\pm (b_kw_k)^*$ of~$\G_m^*$ by~$e_k$.}

Expanding the determinant one obtains {the expression}
\begin{align*}
\sum_{\sigma\in S_n: \sigma(j)\ne j}(-1)^{\operatorname{sign}(\sigma)} \int_{{v_{1,1}^{(m)}}}^{{v_{1,2}^{(m)}}}\!\!\ldots\int_{{v_{n,1}^{(m)}}}^{{v_{n,2}^{(m)}}} \ \prod_{j=1}^nK^{-1}_{{\cT_m}}(w_j,b_{\sigma(j)})\cdot {\prod_{k=1}^n} {d\cT_m(e_k)},
\end{align*}
where the sum is taken over all permutations~$\sigma$ having no fixed points. Note that, in each of the variables~${e_k=\pm}(b_kw_k)^*$, the integrand is proportional to
\begin{align*}
 K^{-1}_{{\cT_m}}(w_{\sigma^{-1}(k)},b_k)&K^{-1}_{{\cT_m}}(w_k,b_{\sigma(k)}){d\cT_m(e_k)}\\
 &=\ {\eta_{w_{\sigma^{-1}(k)}}\eta_{b_{\sigma(k)}}}F^\tb_{w_{\sigma^{-1}(k)}}(b_k)F^\tw_{b_{\sigma(k)}}(w_k){d\cT_m(e_k)}.
\end{align*}
According to Remark~\ref{rem:FFdT-in-C}, this differential form can be extended from the edges of the t-embedding~${\cT_m}$ into the {complex} plane, as
\begin{align*}
&\tfrac{1}{4}\big(\,F_{w_{\sigma^{-1}(k)}}^\tw(z_k)F_{b_{\sigma(k)}}^\tb(z_k)dz_k+ F_{w_{\sigma^{-1}(k)}}^\tw(z_k)\overline{F_{b_{\sigma(k)}}^\tb(z_k)}\, d{\cO_m}(z_k)\\
 &\hspace{60pt} + \overline{F_{w_{\sigma^{-1}(k)}}^\tw(z_k)}F^\tb_{b_{\sigma(k)}}(z_k)\, d\overline{{\cO_m}(z_k)}+\overline{F_{w_{\sigma^{-1}(k)}}^\tw(z_k)}\overline{F_{b_{\sigma(k)}}^\tb(z_k)}d\overline{z}_k\,\big).
\end{align*}
Moreover, using the functions~$\F\pm\pm$ introduced in Proposition~\ref{prop:Fpmpm-def}, one can write this extension in all the variables~$z_k$, $k=1,\ldots,n$, simultaneously. {Namely,} recall that
\[
K^{-1}(w_j,b_{\sigma(j)})=\tfrac{1}{4}\big(\,\F++ + \eta_{b_{\sigma(j)}}^2\F+- + \eta_{w_j}^2\F-+ + \eta_{b_{\sigma(j)}}^2\eta_{w_j}^2\F--\,\big)(\bc_j,\wc_{\sigma(j)})
\]
if~$w_j\sim\bc_j$ and~$b_{\sigma(j)}\sim\wc_{\sigma(j)}$ (e.g., one can take~$\bc_j:=b_j$ and~$\wc_{\sigma(j)}:=w_{\sigma(j)}$) and denote
\[
\begin{array}{ll}
d\cTT++:=d\cT, & d\cTT+-:=d\cO=\eta_{w}^2d\cT,\\[2pt]
d\cTT-+:=d\overline{\cT}=\eta_b^2\eta_w^2d\cT,\quad & d\cTT--:=d\overline{\cO}=\eta_b^2d\cT.
\end{array}
\]
Then,
\begin{align*}
\prod_{j=1}^nK^{-1}(w_j,b_{\sigma(j)})\,\cdot &{\prod_{k=1}^n} {d\cT_m(e_k)}\\[-4pt]
& = \ 4^{-n}\!\!\sum_{p_k,q_k=\pm}\ \prod_{j=1}^n F^{[p_j,q_{\sigma(j)}]}(b_j,w_{\sigma(j)})\cdot\prod_{k=1}^n d\cT^{[q_k]}_{{m},[p_kq_k]}{(e_k)}\,,
\end{align*}
where the sum is taken over all~$2^{2n}$ possible combinations of signs~$p_k$ and~$q_k$.
Due to this extension from edges of~${\cT_m}$ to~$\C$, the paths~$\gamma_k^{{(m)}}$ can now be assumed to \emph{coincide} with~$\gamma_k$ except near the endpoints and, in particular, to have uniformly {(as~$m\to\infty$)} bounded lengths.

It follows from the `small origami' assumption that the integrals against~$d\cT^{[q_k]}_{{m},[-]}$ vanish in the limit~${m\to\infty}$, thus only terms corresponding to \mbox{$s_k:=p_k=q_k\in\{\pm\}$} survive; {see the proof of Proposition~\ref{prop:f-closed-forms} for a similar statement}. The convergence~\eqref{eq:fpmpm-def} allows us to conclude that, {as $m\to\infty$},
\begin{align*}
 & \sum_{r_1,\ldots,r_n\in{\{1,2\}}}(-1)^{r_1+\ldots+r_n} {H_{\cT_m,n}(v_{1,r_1}^{(m)}\,,\,\ldots\,,\,v_{n,r_n}^{(m)})} \ \to\\
&4^{-n}\!\!\!\!\!\!\sum_{\sigma\in S_n: \sigma(j)\ne j} (-1)^{\operatorname{sign}(\sigma)}\int_{{v_{1,1}}}^{{v_{1,2}}}\!\!\ldots\int_{{v_{n,1}}}^{{v_{n,2}}}\!\!
\sum_{{s_1,\ldots,s_n\in\{\pm\}}}\ \prod_{j=1}^n f^{[s_j,s_{\sigma(j)}]}(z_j,z_{\sigma(j)})\prod_{k=1}^n dz^{[s_k]}_k
\end{align*}
Interchanging the summations over~$\sigma$ and over the signs~$s_k$ gives the claim.
\end{proof}

\begin{proof}[Proof of Theorem~\ref{thm:main-GFF}]  Given {t-embeddings~$\cT_m$ approximating a continuous domain~$\Omega$,} consider \emph{subsequential} limits~\eqref{eq:fpmpm-def} of the functions~$\F\pm\pm_{{\cT_m}}$ and define
\begin{equation}
\label{eq:x-An=fpmpm}
\mathcal{A}_n(z_1,\ldots,z_n)\ :=\ 4^{-n}\sum_{s_1,\ldots,s_n\in\{\pm\}}\det\big[\mathbbm{1}_{j\ne k}f^{[s_j,s_k]}(z_j,z_k)\big]_{j,k=1}^n\cdot {\prod_{k=1}^n} dz^{[s_k]}_k\,.
\end{equation}
Proposition~\ref{prop:dH-limit}, in particular, implies that~$\mathcal{A}_n(z_1,\ldots,z_n)$ is an exact differential form in each of the arguments~$z_1,\ldots,z_n$. Let~$\widetilde{v}_1$,\ldots,$\widetilde{v}_n$ be auxiliary points close to the boundary of~$\Omega$ (but {lying} at a definite distance from each other). {Let us now deduce from the assumption (III) that} the function
\begin{equation}
\label{eq:x-hn-def}
h_n(v_1,\ldots,v_n)\ :=\ \lim\nolimits_{\widetilde{v}_1\to\partial\Omega}\ldots\lim\nolimits_{\widetilde{v}_n\to\partial\Omega} \int_{\widetilde{v_1}}^{v_1}\!\!\ldots\int_{\widetilde{v}_n}^{v_n}\mathcal{A}_n(z_1,\ldots,z_n).
\end{equation}
is well-defined 
(i.e., {that the iterated limit exists and} does not depend on the way in which the auxiliary points $\widetilde{v}_n,\ldots,\widetilde{v}_1$ consecutively approach the boundary of~$\Omega$) and that, {as~$m\to\infty$},
\begin{equation}
\label{eq:x-Hn-to-hn}
H_{{\cT_m,n}}(v_1^{{(m)}},\ldots,v_n^{{(m)}})\ \to\ h_n(v_1,\ldots,v_n)\ \ \text{if}\ \ {v^{(m)}_k\to v_k;}
\end{equation}
uniformly over~$v_1,\ldots,v_n$ at a definite distance from $\partial\Omega$ and from each other. {Denote $v_{k,1}:=\widetilde{v}_k$, $v_{k,2}:=v_k$ and similarly for the vertices of~$\cT_m$ that approximate these points.} It follows from {Proposition~\ref{prop:dH-limit}} and from the assumption (III) applied to all terms of the sum containing~${v_{n,1}^{(m)}=\widetilde{v}_n^{(m)}}$ that
\begin{align}
\sum_{r_1,\ldots,r_{n-1}\in{\{1,2\}}}(-1)^{r_1+\ldots+r_{n-1}} &{H_{\cT_m,n}(v_{1,r_1}^{(m)},\ldots,v_{n-1,r_{n-1}}^{(m)},v_n^{(m)})}\notag \\[-4pt]
=\ \int_{\widetilde{v_1}}^{v_1}\!\!\ldots&\int_{\widetilde{v}_n}^{v_n}\mathcal{A}_n(z_1,\ldots,z_n) +o_{\widetilde{v}_n\to\partial\Omega}(1)+o_{{m\to\infty}}(1)\,,
\label{eq:x-dH-conv}
\end{align}
where the $o_{\widetilde{v}_n\to\partial\Omega}(1)$ error term in the right-hand side is \emph{uniform in~${m}$} provided that all other auxiliary points~$\widetilde{v}_1,\ldots,\widetilde{v}_{n-1}$ stay in the bulk of~$\Omega$ (and that {$m$ is big} enough depending on~$\widetilde{v}_n$). Since the left hand-side does not depend on~$\widetilde{v}_n$ and the main term in the right-hand side does not depend on~${m}$, this implies that
\begin{equation}
\label{eq:x-hn-Dirichlet}
\int_{\widetilde{v_1}}^{v_1}\!\!\ldots\int_{\widetilde{v}_n}^{\widetilde{v}'_n}\mathcal{A}_n(z_1,\ldots,z_n)\ \to\ 0\ \ \text{as both}\ \ \widetilde{v}_n,\widetilde{v}'_n\to\partial\Omega.
\end{equation}

Therefore, the primitive $\int_{\widetilde{v_1}}^{v_1}\!\!\ldots\int_{\widetilde{v}_n}^{v_n}\mathcal{A}_n(z_1,\ldots,z_n)$ has a well-defined limit as \mbox{$\widetilde{v}_n\to\partial\Omega$}. {Moreover, choosing first~$\widetilde{v}_n$ close enough to~$\partial\Omega$ and then~$m$ big enough (depending on~$\widetilde{v}_n$) in~\eqref{eq:x-dH-conv}, we see that}
\begin{align*}
\sum_{r_1,\ldots,r_{n-1}\in{\{1,2\}}}(-1)^{r_1+\ldots+r_{n-1}} &{H_{\cT_m,n}(v_{1,r_1}^{(m)},\ldots,v_{n-1,r_{n-1}}^{(m)},v_n^{(m)})}\\[-4pt]
&=\ \lim_{\widetilde{v}_n\to\partial\Omega}\int_{\widetilde{v_1}}^{v_1}\!\!\ldots\int_{\widetilde{v}_n}^{v_n}\mathcal{A}_n(z_1,\ldots,z_n) +o_{{m\to\infty}}(1).
\end{align*}
Applying the same argument as~$\widetilde{v}_{n-1}\to\partial\Omega,\ldots,\widetilde{v}_1\to\partial\Omega$ one obtains the existence of the {iterated} limit~\eqref{eq:x-hn-def} and the convergence~\eqref{eq:x-Hn-to-hn}. {Moreover, note that both these convergences are uniform if the points~$v_1,\ldots,v_n$ remain at a definite distance from~$\partial\Omega$ and from each other.}

It remains to identify the functions~$h_n$. {Due to the holomorphicity/anti-holo\-morphicity of the functions~$\f{\pm\pm}(\cdot,\cdot)$, for each fixed reference points~$\widetilde{v}_1,\ldots,\widetilde{v}_n$, the function $\int_{\widetilde{v}_1}^{v_1}\ldots\int_{\widetilde{v}_n}^{v_n}\mathcal{A}_n(z_1,\ldots,z_n)$ is continuous and harmonic in each of the variables~$v_1,\ldots,v_n$ away from the reference points and from the diagonals~$v_p=v_q$, $p\ne q$. Since the iterated limit in~\eqref{eq:x-hn-def} is uniform on compacts, the functions~$h_n$ are also harmonic in each of~$v_1,\ldots,v_n$.} It is {also} easy to see that $h_n$ is symmetric in~$v_1,\ldots,v_n$ since the left-hand side in~\eqref{eq:x-Hn-to-hn} is symmetric. {Moreover, Assumption~(III) implies that $h_n$ satisfies Dirichlet boundary conditions as one of the variables (e.g.,~$v_n$) approaches~$\partial\Omega$.} We start with two particular cases $n=2$ and~$n=3$.

Let~$n=2$. It follows from Proposition~\ref{prop:fpmpm}(iii) {that
\[
\f{++}(z_1,z_2)\ =\ \frac{2}{\pi i}\frac{1}{z_2-z_1}+\f{++}_\mathrm{reg}(z_1,z_2),
\]
where the function~$\f{++}_\mathrm{reg}$ is continuous in~$(z_1,z_2)$ away from the diagonal and holomorphic with a removable singularity as a function of each of the variables~$z_1,z_2$. Due to Hartogs's lemma, this implies that~$\f{++}_\mathrm{reg}$ is continuous and holomorphic as a function of~$(z_1,z_2)$; in particular,~$\f{++}_\mathrm{reg}(z_2,z_1)-\f{++}_\mathrm{reg}(z_1,z_2)=O(|z_2-z_1|)$. Hence,
\[
\f{++}(z_1,z_2)\f{++}(z_2,z_1)\ =\ \frac{4}{\pi^2}\frac{1}{(z_2-z_1)^2}+O(1)
\]
and}
\[
\mathcal{A}_2(z_1,z_2)\ =\ -\frac{1}{2\pi^2}\Re\biggl(\,\frac{dz_1dz_2}{(z_2-z_1)^2}\biggr)+{O(1)}
\]
provided that~$z_1,z_2$ stay in the bulk of~$\Omega$ {(but not necessarily far from each other). This means that the function} $h_2(v_1,\,\cdot\,)$ satisfies the asymptotics
\[
h_2(v_1,v_2)\ =\ (-2\pi^2)^{-1}\log|v_2-v_1|+O(1)\ \ \text{as}\ \ v_2\to v_1
\]
and thus can be identified with the Green function~$h_2(v_1,v_2)=\pi^{-1}G_\Omega(v_1,v_2)$ due to the harmonicity and Dirichlet boundary conditions.

Let~$n=3$. {A similar consideration shows that in this case the differential form
\begin{align*}
\textstyle \mathcal{A}_3(z_1,z_2,z_3)\ =\ \frac{1}{64}\sum_{s_1,s_2,s_3\in\{\pm\}}\bigl(\f{s_1,s_2}(z_1,z_2)\f{s_2,s_3}(z_2,z_3)\f{s_3,s_1}(z_3,z_1)&\\
\ +\f{s_1,s_3}(z_1,z_3)\f{s_3,s_2}(z_3,z_2)\f{s_2,s_1}(z_2,z_1)\bigr)dz_1^{[s_1]}dz_2^{[s_2]}&dz_3^{[s_3]}
\end{align*}
does \emph{not} have singularities at all: for instance, for both~$s_1\in\{\pm\}$ one has
\[
\f{s_1,+}(z_1,z_2)\f{++}(z_2,z_3)\f{+,s_1}(z_3,z_1)\ =\ \frac{2}{\pi i}\frac{\f{s_1,+}(z_1,z_2)\f{+,s_1}(z_2,z_1)}{z_3-z_2}+O(1)
\]
as~$z_3\to z_2\ne z_1$ and the same pole appears with the opposite sign in the second term $\f{s_1,+}(z_1,z_3)\f{++}(z_3,z_2)\f{+,s_1}(z_2,z_1)$ contributing to~$\mathcal{A}_3$.

Therefore, the function} $h_3(v_1,v_2,\,\cdot\,)$ does not have a singularity as~$v_3\to v_{{2}}$ or as~$v_3\to v_{{1}}$.
Due to the harmonicity and Dirichlet boundary conditions this yields~$h_3(v_1,v_2,v_3)=0$ for all~$v_1,v_2,v_3\in\Omega$.

\smallskip

The rest of the proof of Theorem~\ref{thm:main-GFF} boils down to the following simple lemma.
\begin{lemma}\label{lem:fpmpm-chi}
Let differential forms~$\mathcal{A}_2,\mathcal{A}_3$ be defined by~\eqref{eq:x-An=fpmpm}, where functions $\f{\pm\pm}(z_1,z_2)$ are holomorphic/anti-{holomorphic} in each of the variables and satisfy the relations $\f{--}=\of{++}$, $\f{+-}=\of{-+}$. If $\mathcal{A}_2=\pi^{-1}d_{v_1}d_{v_2}G_\Omega(v_1,v_2)$ and $\mathcal{A}_3=0$, then there exists a holomorphic function~$\chi:\Omega\to{\C\smallsetminus\{0\}}$ such that
\begin{align*}
\f{++}(z_1,z_2)\ &=\ \frac{\chi(z_1)}{\chi(z_2)}\cdot \f{++}_0(z_1,z_2),\quad \f{++}_0(z_1,z_2):=\frac{2}{\pi i}\cdot \frac{\phi'_\Omega(z_1)^{\frac 12}\phi'_\Omega(z_2)^{\frac 12}}{\phi_\Omega(z_2)-\phi_\Omega(z_1)},\\
\f{-+}(z_1,z_2)\ &=\ \frac{\overline{\chi(z_1)}}{\chi(z_2)}\cdot \f{-+}_0(z_1,z_2),\quad \f{-+}_0(z_1,z_2):=\frac{2}{\pi i}\cdot \frac{\overline{\phi'_\Omega(z_1)}^{\frac12}\phi'_\Omega(z_2)^{\frac 12}}{\phi_\Omega(z_2)-\overline{\phi_\Omega(z_1)}},
\end{align*}
where~$\phi_\Omega:\Omega\to\mathbb{H}$ is a conformal uniformization~$\Omega$ onto the upper half-plane~$\mathbb{H}$.
\end{lemma}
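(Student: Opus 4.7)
The plan is to compare the given $\f{\pm\pm}$ against the reference $\f{\pm\pm}_0$ via the ratios $R(z_1,z_2):=\f{++}(z_1,z_2)/\f{++}_0(z_1,z_2)$ and $Q(z_1,z_2):=\f{-+}(z_1,z_2)/\f{-+}_0(z_1,z_2)$, and to deduce the structure of these ratios from the identities $\mathcal{A}_2=\pi^{-1}d_{v_1}d_{v_2}G_\Omega$ and $\mathcal{A}_3\equiv 0$. Expressing $G_\Omega$ via $\phi_\Omega$ and comparing the four sign-components of $\mathcal{A}_2$, one verifies by a direct substitution that $\f{\pm\pm}_0$ already solves the $\mathcal{A}_2$-equation, so the equations for $\f{\pm\pm}$ reduce to the two-point identities
$$R(z_1,z_2)R(z_2,z_1)=1,\qquad \overline{Q(z_1,z_2)}\,Q(z_2,z_1)=1.$$
Thanks to Proposition~\ref{prop:fpmpm}(iii), the simple pole of $\f{++}$ on the diagonal has the same residue $2/(\pi i)$ as that of $\f{++}_0$, so $R$ extends holomorphically across the diagonal with $R(z,z)=1$; similarly $Q$ extends as a function holomorphic in $z_2$ and anti-holomorphic in $z_1$, with $|Q(z,z)|=1$.

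The next step is to extract a cocycle relation for $R$ from the $(+,+,+)$ component of $\mathcal{A}_3=0$. Expanding the $3\times 3$ determinant with zero diagonal produces a sum of two triple products
$$\f{++}(z_1,z_2)\f{++}(z_2,z_3)\f{++}(z_3,z_1)+\f{++}(z_1,z_3)\f{++}(z_3,z_2)\f{++}(z_2,z_1)=0.$$
A direct calculation shows the same identity holds for $\f{++}_0$, since the two triple products have denominators differing by the sign $(-1)^3$ coming from the three factors $\phi_\Omega(z_j)-\phi_\Omega(z_k)$. Dividing the $f$-identity by the $f_0$-identity and using the two-point identity then gives the cocycle $\bigl(R(z_1,z_2)R(z_2,z_3)R(z_3,z_1)\bigr)^2\equiv 1$, constant by continuity on $\Omega^3$ and equal to $+1$ by the diagonal normalization $R(z,z)=1$. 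Fixing $z_0\in\Omega$ and setting $\chi(z):=R(z,z_0)$ then yields $R(z_1,z_2)=\chi(z_1)/\chi(z_2)$, with $\chi$ holomorphic and nowhere vanishing on $\Omega$ (otherwise $\f{++}$ would lack its required diagonal pole).

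A similar analysis of the mixed $(+,+,-)$ component of $\mathcal{A}_3=0$, using $\f{+-}=\of{-+}$, produces another cubic identity also satisfied by $f_0$. Dividing by the $f_0$-identity, substituting $R=\chi(z_1)/\chi(z_2)$, and using the two-point identity for $Q$ allows to separate variables and gives $\chi(z_1)^2Q(z_3,z_1)^2=G(z_3)$ for some function $G$ of $z_3$ alone, anti-holomorphic in $z_3$ because of the mixed-holomorphicity of $Q$. Writing $G=\overline{H}$ with $H$ holomorphic and setting $\psi:=\sqrt{H}$ (globally well-defined as a holomorphic function since $\Omega$ is simply connected and $H$ is non-vanishing by the bound $|Q(z,z)|=1$), one obtains $Q(z_1,z_2)=\epsilon\,\overline{\psi(z_1)}/\chi(z_2)$ for some $\epsilon\in\{\pm 1\}$. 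Inserting this back into $\overline{Q(z_1,z_2)}Q(z_2,z_1)=1$ forces $|\psi/\chi|\equiv 1$; by the open mapping theorem $\psi=c\chi$ for a constant $c\in\bbT$, and a rescaling $\chi\to\lambda\chi$ with $\bar\lambda/\lambda=c$ absorbs $c$ into $\chi$ while leaving the formula for $R$ unchanged. Finally, the substitution $\chi\to i\chi$ leaves $R$ unchanged but flips the sign in the $Q$-formula, so the two signs can always be aligned.

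The main conceptual content is that $\mathcal{A}_3=0$ encodes a multiplicative cocycle relation on $R$ which must be trivial, giving $R$ as a ratio $\chi(z_1)/\chi(z_2)$, and similarly pins down $Q$ up to a unit-modulus constant. The main technical delicacy will be justifying the separation of variables in the mixed equation and ensuring that $\chi$ and $\sqrt{H}$ are globally defined as single-valued holomorphic functions; here the simple-connectedness of $\Omega$ and the non-vanishing conditions imposed by the diagonal asymptotics are used in an essential way.
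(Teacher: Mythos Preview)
Your approach is essentially the same as the paper's: define ratios to the reference functions $\f{\pm\pm}_0$, read off two-point product identities from $\mathcal{A}_2$, and read off three-point cocycle identities from $\mathcal{A}_3=0$. The paper does this uniformly by introducing all four ratios $g^{[s_1,s_2]}:=\f{s_1s_2}/\f{s_1s_2}_0$ and using the full cocycle
\[
g^{[s_1,s_2]}(z_1,z_2)\,g^{[s_2,s_3]}(z_2,z_3)\,g^{[s_3,s_1]}(z_3,z_1)\;=\;g^{[s_1,s_3]}(z_1,z_3)\,g^{[s_3,s_2]}(z_3,z_2)\,g^{[s_2,s_1]}(z_2,z_1)
\]
for arbitrary signs, which lets one write $g^{[s_1,s_2]}(z_1,z_2)=\pm g^{[s_1,+]}(z_1,z_3)/g^{[s_2,+]}(z_2,z_3)$ in one stroke and then observe that $g^{[+-]}/g^{[++]}$ is a constant of modulus one. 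You instead extract the $(+,+,+)$ and $(+,+,-)$ components separately, leading to the auxiliary function $\psi$ and the extra step $\psi=c\chi$; this is a little more roundabout but lands in the same place.

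One point to flag: you invoke Proposition~\ref{prop:fpmpm}(iii) to normalize $R(z,z)=1$ and $|Q(z,z)|=1$, but these diagonal asymptotics are \emph{not} among the hypotheses of the lemma as stated. They are also unnecessary: the non-vanishing of $R$ and $Q$ (hence of $G=\overline{H}$) already follows from the two-point identities $R(z_1,z_2)R(z_2,z_1)=1$ and $\overline{Q(z_1,z_2)}Q(z_2,z_1)=1$, and the sign of the cocycle $R(z_1,z_2)R(z_2,z_3)R(z_3,z_1)\in\{\pm 1\}$ can simply be carried through to the final formula, which explicitly allows a global $\pm$. The paper's proof never touches the diagonal for exactly this reason.
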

\begin{proof} {Denote} $g^{\scriptscriptstyle [\pm\pm]}(z_1,z_2):=\f{\pm\pm}(z_1,z_2)/\f{\pm\pm}_0(z_1,z_2)$. {As we have} $\f{--}_0=\of{++}_0$ and $\f{+-}_0=\of{-+}$, {the functions~$g^{\scriptscriptstyle [\pm\pm]}$ satisfy the same relations. Note also that these functions do not have singularities as the simple pole of~$\f{++}$ at~$z_1=z_2$ cancels out by the same pole of~$\f{++}_0$, which also implies that~$g^{\scriptscriptstyle [++]}(z,z)=1$.} Since
\[
\pi^{-1}d_{v_1}d_{v_2}G_\Omega(v_1,v_2)\ =\ -\frac{1}{16}\sum_{s_1,s_2\in\{\pm\}}f^{[s_1,s_2]}_0(z_1,z_2)f^{[s_2,s_1]}_0(z_2,z_1)dz_1^{[s_1]}dz_2^{[s_2]}\,,
\]
{one} can rewrite the identity~$\mathcal{A}_2=\pi^{-1}d_{v_1}d_{v_2}G_\Omega(v_1,v_2)$ as
\begin{equation}
\label{eq:x-g12=g21}
g^{[s_1,s_2]}(z_1,z_2)g^{[s_2,s_1]}(z_2,z_1)=1,
\end{equation}
{for all $s_1,s_2\in\{\pm\}$ and $z_1,z_2\in\Omega$. In particular, $g^{\scriptscriptstyle[\pm\pm]}(z_1,z_2)\ne 0$ everywhere in~$\Omega\times\Omega$.}
Similarly, expanding the definition~\eqref{eq:x-An=fpmpm} of the differential form~$\mathcal{A}_3=0$ {as above} one sees that
\begin{align*}
g^{[s_1,s_2]}(z_1,z_2)g^{[s_2,s_3]}(z_2,z_3)&g^{[s_3,s_1]}(z_3,z_1)\\
&=\ g^{[s_1,s_3]}(z_1,z_3)g^{[s_3,s_2]}(z_3,z_2)g^{[s_2,s_1]}(z_2,z_1)
\end{align*}
for all $s_1,s_2,s_3\in\{\pm\}$ and $z_1,z_2,z_3\in\Omega$. {Due to~\eqref{eq:x-g12=g21}, the two sides of this identity are inverse to each other, which means that they do not depend on~$z_1,z_2,z_3$ and are equal to~$\pm 1$. Moreover, substituting~$z_p=z_q$ if~$s_p=s_q$ one sees that only~$+1$ is possible and hence
\[
g^{[s_1,s_3]}(z_1,z_3)\ =\ g^{[s_1,s_2]}(z_1,z_2)g^{[s_2,s_3]}(z_2,z_3)
\]
for all $s_1,s_2,s_3\in\{\pm\}$ and $z_1,z_2,z_3\in\Omega$. Let us now fix a point~$z_0\in\Omega$ and denote
\[
\chi(z)\ :=\ (g^{\scriptscriptstyle[+-]}(z_0,z_0))^{1/2}\cdot g^{\scriptscriptstyle[++]}(z,z_0)\ =\ (g^{\scriptscriptstyle[+-]}(z_0,z_0))^{-1/2}\cdot g^{\scriptscriptstyle[+-]}(z,z_0)\,;
\]
note that~$|g^{\scriptscriptstyle[+-]}(z_0,z_0)|=1$ due to~\eqref{eq:x-g12=g21} and since~$g^{\scriptscriptstyle[-+]}=\overline{g}^{\scriptscriptstyle[+-]}$. Therefore,
\[
g^{\scriptscriptstyle[++]}(z_1,z_2)\,=\, \frac{g^{\scriptscriptstyle[++]}(z_1,z_0)}{g^{\scriptscriptstyle[++]}(z_2,z_0)}\,=\, \frac{\chi(z_1)}{\chi(z_2)}\quad \text{and}\quad
g^{\scriptscriptstyle[-+]}(z_1,z_2)\,=\, \frac{g^{\scriptscriptstyle[-+]}(z_1,z_0)}{g^{\scriptscriptstyle[++]}(z_2,z_0)}\,=\, \frac{\overline{\chi(z_1)}}{\chi(z_2)}
\]
as required.}
\end{proof}

We are now ready to conclude the proof of Theorem~\ref{thm:main-GFF}. It is easy to see that the differential forms~$\mathcal{A}_n$ (see~\eqref{eq:x-An=fpmpm}) {do} \emph{not} depend on the factors~$\chi(\cdot)$ in the representation of functions~$\f{\pm\pm}$ provided by Lemma~\ref{lem:fpmpm-chi}: all these factors simply cancel out when one considers a product~$\prod_{j=1}^nf^{[s_j,s_{\sigma(j)}]}(z_j,z_{\sigma(j)})$. Similarly, the global~$\pm$ sign in the expression of~$\f{\pm\pm}$ can only affect the sign of~$\mathcal{A}_n$ for odd~$n$. In other words, even though we are unable to identify the functions~$\f{\pm\pm}$ themselves, Lemma~\ref{lem:fpmpm-chi} provides enough information to identify the differential forms~$\mathcal{A}_n$. It is well known (e.g., see~\cite[Proposition~3.2]{kenyon-gff-b} and~\cite[Eq.~(12.53)]{yellow-book}) that
\[
\mathcal{A}_{n}\ =\ \pi^{-{n/2}}\cdot d_{v_1}\ldots d_{v_{n}}G_{\Omega,n}(v_1,\ldots,v_{n})\,,
\]
where {$G_{\Omega,2k+1}=0$} and the correlation functions~${G_{\Omega,2k}}$ of the Gaussian Free Field in~$\Omega$ {are given by~\eqref{eq:Green-correlations}.}
Thus, we have the identity
\[
[d_{v_1}\ldots d_{v_n}(h_n-G_{\Omega,n})](z_1,\ldots,z_n)=0,\quad z_1,\ldots,z_n\in\Omega,
\]
and it remains to note that the Dirichlet boundary conditions of the harmonic function \mbox{$h_n-G_{\Omega,n}$} as~$v_n\to\partial\Omega$ yield (via the Harnack principle) the same boundary conditions, e.g., for the gradient~$d_{v_1}\ldots d_{v_{n-1}}(h_n-G_{\Omega,n})$ as~$v_n\to\partial\Omega$. By induction, this allows us to conclude that
\[
[d_{v_1}\ldots d_{v_{n-k}}(h_n-G_{\Omega,n})](z_1,\ldots,z_{n-k},v_{n-k+1},\ldots,v_n)=0
\]
for all~$k=1,2,\ldots,n$. The proof is complete.
\end{proof}

\subsection{Discussion}\label{sub:discussion} We now briefly discuss how several cases from the existing literature fit our setup; with a particular emphasis on the explicit expression for functions~$\f{\pm\pm}$, the scaling limits of~$\F\pm\pm_{{\cT_m}}$ as~${m\to\infty}$, available in these cases. In all these situations one obtains~$\f{\pm\pm}$ explicitly by solving some conformally covariant boundary value problem, so these functions are also conformally covariant. Recall that~$\f{--}(z_1,z_2)=\overline{\f{++}(z_1,z_2)}$ and~$\f{+-}(z_1,z_2)=\overline{\f{-+}(z_1,z_2)}$.

For classical Temperleyan domains on~$\mathbb{Z}^2$~\cite{kenyon-gff-a,kenyon-gff-b} (see also~\cite{zhongyang-li}), as well as for Temperleyan-type domains coming from T-graphs~\cite{kenyon-honeycomb} (see also {\cite{laslier-21} and}~\cite{BLR1,BLRnote}), the functions~$\f{\pm\pm}(z_1,z_2)$ are conformally invariant in the first variable and conformally covariant with exponent~$1$ in the second. This implies the explicit expressions
\begin{equation}
\label{eq:x-fpmpm-Tempreleyan}
\f{++}(z_1,z_2)\,=\,\frac{2}{\pi i}\frac{\phi_{\Omega}'(z_2)}{\phi_{\Omega}(z_2)\!-\!\phi_{\Omega}(z_1)}\,,\quad \f{-+}(z_1,z_2)\,=\,\frac{2}{\pi i}\frac{\phi_{\Omega}'(z_2)}{\phi_{\Omega}(z_2)\!-\!\overline{\phi_{\Omega}(z_1)}}\,,
\end{equation}
where $\phi_{\Omega}:\Omega\to \mathbb{H}$ denotes a conformal uniformization of~$\Omega$ onto the upper half-plane sending the root point~$a\in\partial\Omega$ of the Temperley correspondence to~$\infty$. Equivalently, for each~$\eta_w\in\mathbb{T}$,
\begin{equation}\label{eq:x-bc-cov-1}
\Re \biggl[\int (\overline{\eta}_w\f{++}(z_1,z_2)+\eta_w\f{-+}(z_1,z_2))dz_2\biggr]\,=\,0\ \ \text{if $z_2\in\partial\Omega\smallsetminus\{a\}$.}
\end{equation}
This boundary condition is inherited from the Dirichlet boundary conditions for the discrete primitives~$\rI_\R[F_w]$ of the dimer observables (which are discrete harmonic functions on the corresponding T-graph), the particular feature which allows one to prove the convergence theorem in this case; cf. \cite{kenyon-honeycomb,laslier-21}. 
It is worth mentioning that the Temperleyan case is \emph{non-symmetric} with respect to changing the roles of black and white faces. In terms of the first variable, one observes that, for each~$\eta_b\in\mathbb{T}$,
\begin{equation}\label{eq:x-bc-cov-0}
\Im\big(\overline{\eta}_b\f{++}(z_1,z_2)+\eta_b\f{+-}(z_1,z_2)\big)\,=\,0\ \ \text{if $z_1\in\partial\Omega\smallsetminus\{a\}$.}
\end{equation}
Both boundary conditions~\eqref{eq:x-bc-cov-0} and~\eqref{eq:x-bc-cov-1} are clearly visible in classical Temperleyan domains on~$\mathbb{Z}^2$ and can be used to prove the convergence of observables, the original argument of Kenyon~\cite{kenyon-gff-a,kenyon-gff-b} went through the simpler Dirichlet boundary conditions~\eqref{eq:x-bc-cov-0}. However, in the {Tempereleyan}-like setup coming from T-graphs one of the variables plays a distinguished role:~\eqref{eq:x-bc-cov-1} is tautological while~\eqref{eq:x-bc-cov-0} does not admit a straightforward interpretation in discrete to the best of our knowledge; it would be interesting to find one.

The second case to discuss is the so-called white-piecewise Temperleyan domains on~$\mathbb{Z}^2$ studied in~\cite{russkikh-t} (we swap the colors as compared to~\cite{russkikh-t} to fit the {preceding} discussion). This setup is conceptually similar to the classical Temperleyan one, except that one gets more complicated limits
\begin{align*}
\f{\pm+}(z_1,z_2)\
=\ \frac{2}{\pi i}&\frac{\phi_{\Omega}'(z_2)}{\phi_{\Omega}(z_2)-\phi_{\Omega}^{\scriptscriptstyle [\pm]}(z_1)}\\
&\times\  \frac{\prod_{k=1}^{m-1}(\phi_{\Omega}^{\scriptscriptstyle [+]}(z_2)-\phi_{\Omega}(v_k'))^{\frac12}}{\prod_{k=1}^{m+1}(\phi_{\Omega}^{\scriptscriptstyle [+]}(z_2)-\phi_{\Omega}(v_k))^{\frac12}}\cdot \frac{\prod_{k=1}^{m+1}(\phi_{\Omega}^{\scriptscriptstyle [\pm]}(z_1)-\phi_{\Omega}(v_k))^{\frac12}}{\prod_{k=1}^{m-1}(\phi_{\Omega}^{\scriptscriptstyle [\pm]}(z_1)-\phi_{\Omega}(v_k'))^{\frac12}}\,
\end{align*}
instead of~\eqref{eq:x-fpmpm-Tempreleyan}, where~$\phi_{\Omega}^{\scriptscriptstyle[+]}(z_1):=\phi_{\Omega}(z_1)$,~$\phi_{\Omega}^{\scriptscriptstyle[-]}(z_1):=\overline{\phi_{\Omega}(z_1)}$ and~$v_k,v_k'\in\partial\Omega$ are the boundary points at which boundary conditions change. Again, the covariance exponents of the functions~$\f{\pm\pm}$ are~$(1,0)$ and the two colors play asymmetric roles. On each of the boundary arcs between points~$v_k,v_k'$, either the real or the imaginary part of the function
\[
\overline{\eta}_b\f{++}(z_1,z_2)+\eta_b\f{+-}(z_1,z_2)\,=\,\overline{\eta}_b\f{++}(z_1,z_2)+\eta_b\overline{\f{-+}(z_1,z_2)}
\]
satisfies Dirichlet conditions and one can use this boundary value problem to pass to the limit.

The third example is the so-called hedgehog domains studied in~\cite{russkikh-h}. For these domains one has
\[
\f{++}(z_1,z_2)\,=\,\frac{2}{\pi i} \frac{\phi_{\Omega}'(z_1)^\frac12\phi_{\Omega}'(z_2)^\frac12}{\phi_{\Omega}(z_2)-\phi_{\Omega}(z_1)}\,,\quad \f{-+}(z_1,z_2)\,=\,\frac{2}{\pi i} \frac{\overline{\phi_{\Omega}'(z_1)}{}^\frac12\phi_{\Omega}'(z_1)^\frac12}{\phi_{\Omega}(z_2)-\overline{\phi_{\Omega}(z_1)}}\,,
\]
the colors play symmetric roles, and the boundary conditions are the Ising-type ones:
\[\begin{array}{ll}
\Im[\int (f(z_1,z_2))^2 dz_2]=0\ \text{at}\  \partial\Omega &\text{if}\ \ f(z_1,z_2)=\overline{\eta}_w\f{++}(z_1,z_2)+\eta_w\f{-+}(z_1,z_2);\\[4pt]
\Im[\int (f(z_1,z_2))^2 dz_1]=0\ \text{at}\  \partial\Omega & \text{if}\ \ f(z_1,z_2)=\overline{\eta}_b\f{++}(z_1,z_2)+\eta_b\f{+-}(z_1,z_2).
\end{array}
\]
Already this set of conformally covariant examples clearly illustrates the fragility of the functions~$\f{\pm\pm}$ with respect to the change of the microscopic properties of the boundary. (Note that this effect has nothing to do with limit shapes: on {the square grid,} all three examples listed above lead to {an asymptotically} horizontal profile of the height function.)

In general, one {can} easily invent a setup in which the functions~$\f{\pm\pm}$ are \emph{not} conformally covariant: for instance, piecewise Temperleyan domains on~$\mathbb{Z}^2$ with arcs satisfying the Temperley condition with respect to different colors provide a relevant example. Moreover, in any reasonably general situation -- e.g., domains on the square grid composed of $2\times 2$ blocks, which again leads to the flat horizontal limit profile of the height function -- one should not expect to see \emph{any} particular boundary conditions for dimers observables. This is one of the reasons of why we believe that the framework of Theorem~\ref{thm:main-GFF}, which \emph{bypasses} the identification of the limits of the functions~$\f{\pm\pm}$, is a right way to treat the dimer model in {a} reasonably general situation.


To summarize, the 
limits of dimer observables~$F_w^\tw(\cdot)$ and ~$F_b^\tb(\cdot)$ might depend on a subsequence but their boundary conditions are always dual to each other so that the primitives $\int \Re[F_w^\tw(z)F_b^\tb(z)dz]$ satisfy the Dirichlet ones. In a general setup, it does not make much sense to consider limits of~$F_w^\tw$ and limits of~$F_b^\tb$ separately. However, being combined together these limits form a stable object, whilst each of them alone might have \emph{no} reasonable interpretation at all due to a great freedom in the choice of the unknown holomorphic factor~$\chi$ in Lemma~\ref{lem:fpmpm-chi}. The three particular cases discussed above are clearly very special in this respect.

{\section{T-holomorphicity and other discretizations of complex analysis}\label{sec:appendix2}}

\newcommand{\vl}{w_{\raisebox{-1pt}{\tiny$\lambda$}}}
\newcommand{\vlbr}{w_{\raisebox{-1pt}{\tiny $\overline\lambda$}}}
\newcommand{\uR}{b_{\raisebox{-1pt}{\tiny R}}}
\newcommand{\uI}{b_{\raisebox{-1pt}{\tiny I}}}
\renewcommand{\wc}{u^\tw}
\renewcommand{\bc}{u^\tb}
\def\spl{\mathrm{spl}}

\def\cH{\mathcal{H}}

The goal of this {last section} is to clarify the links between the setup of t-holomorphic functions on t-embeddings and more standard discretizations of complex analysis. Namely, in Section~\ref{sub:appendix-ort} we show that the setup of \emph{orthodiagonal embeddings} is a particular case of t-embeddings. Moreover, in Section~\ref{subsub:appendix-square-tilings} we indicate a link between \emph{square tilings} (or, more generally, rectangular ones) and t-embeddings. 
Further, in Section~\ref{sub:appendix-s-emb} we focus on another particular case: that of \emph{s-embeddings} appearing in the planar Ising model context, cf.~\cite[Section~7]{KLRR} and~\cite[Section~2.3]{chelkak-s-emb}. The standard framework of \emph{isoradial grids}, which can be viewed both as s-embeddings and as orthodiagonal ones, is discussed in Section~\ref{sub:appendix-isoradial}. Finally, in Section~\ref{sub:appendix-regular} we briefly explain how the two known discrete complex analysis frameworks on the \emph{square} and the \emph{honeycomb lattices} fit our setup.

\smallskip

\subsection{Orthodiagonal embeddings and square tilings}\label{sub:appendix-ort}

Let $\Gamma$ be a planar graph, denote its dual by $\Gamma^*$ and {let~$\Lambda$ be the `diamond graph' of~$\Gamma$: the vertices of~$\Lambda$ are those of~$\Gamma$ and of~$\Gamma^*$, the edges of~$\Gamma$ connect a vertex of~$\Gamma$ with an adjacent vertex of~$\Gamma^*$ so that all faces of~$\Lambda$ have degree four and correspond to edges of~$\Gamma$.}
Let $\cH_{\mathrm{ort}}:\Lambda\to\C$ be an orthodiagonal embedding, i.e., a graph  $\Gamma$ is embedded to $\mathbb{C}$ together with its dual~$\Gamma^*$ so that all edges are straight segments and dual edges $\cH_{\mathrm{ort}}(e)$, $\cH_{\mathrm{ort}}(e^*)$ intersect and are orthogonal to each other for each edge~$e$ of~$\Gamma$; see Fig.~\ref{fig:ort}. In many questions and results the intersection condition can be weakened, instead one often assumes that the images of faces of~$\Lambda$ under~$\cH_\mathrm{ort}$ are proper (though not necessarily convex) quads. While keeping the forthcoming exposition essentially self-contained, we refer the reader to~\cite{gurel-jerison-nachmias} and references therein for more background on the discrete complex analysis in this setup.

Given an orthodiagonal embedding $\cH_{\mathrm{ort}}$ one defines conductances $c_{{(vv')}}$ on edges of $\Gamma$ by
\begin{equation}
\label{eq:c-ortho-def}
c_{(vv')}\ :=\ \frac{\left|\cH_{\mathrm{ort}}(b_+^*)-\cH_{\mathrm{ort}}(b_-^*)\right|}
{\left|\cH_{\mathrm{ort}}(b_+)-\cH_{\mathrm{ort}}(b_-)\right|}\,,\qquad c_{(vv')^*}\ :=\ c_{(vv')}^{-1}\,,
\end{equation}
where we use the notation $(vv')=(b_-b_+)$ and $(vv')^*=(b_-^*b_+^*)$ for an edge of~$\Gamma$ and its dual; below we often identify vertices of `abstract' graphs with their positions in~$\C$ under~$\cH_\mathrm{ort}$ and simply write, e.g., $b_+^*$ instead of~$\cH_\mathrm{ort}(b_+^*)$. Further, let {a dimer graph $G_D$ be the `diamond graph' of~$\Lambda$: its} vertex set consists of {the} union of $\Lambda$ (two types of black vertices) and quads~$\diamondsuit:=\Lambda^*$ (white vertices), where each vertex $\wc\in\diamondsuit$ is represented in~$\cH_\mathrm{ort}$ by the intersection of the corresponding edge $(b_-b_+)$ of $\Gamma$ and its dual edge $(b_-^*b_+^*)$.

Note that the medial graph of $\cH_{\mathrm{ort}}(\Lambda)$ {forms} a t-embedding $\cT$ of $G_D^*,$ see Fig.~\ref{fig:ort}. Since all white faces of $\cT(G^*_D)$ are rectangles, an origami square root function~$\eta_b$ takes only two values on black faces depending on whether such a face corresponds to a vertex of $\Gamma$ or to that of $\Gamma^*$. If $\eta_{b}$ equals to $\pm 1$ on $\Gamma$, then it is $\pm i$ on~$\Gamma^*$ (recall that~$\eta_b$ is defined up to the sign).

\begin{figure}
\center{\includegraphics[width=0.64\textwidth]{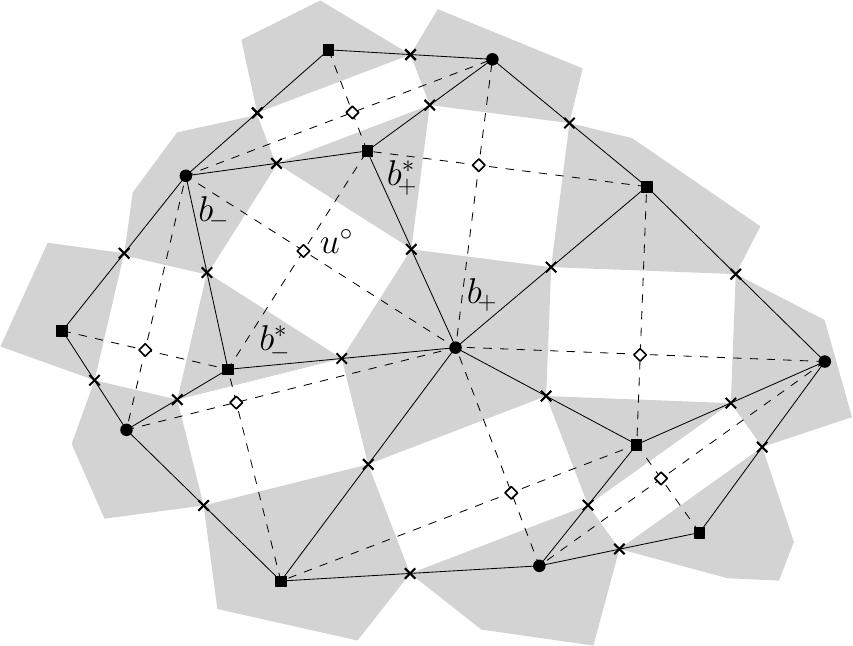}}
\caption{A portion of an orthodiagonal embedding $\cH_{\mathrm{ort}}(\Lambda)$, the corresponding dimer graph $G_D$ (dashed) and {its} t-embedding. {There are two types of black vertices in~$G_D$: black discs are vertices of~$\Gamma$ and black squares are those of~$\Gamma^*$. White vertices of~$G_D$ are}  {vertices of~$\diamondsuit$ drawn as white rhombi in the picture.}\label{fig:ort}}
\end{figure}

\begin{remark}
Given an edge-weighted graph $(\Gamma, c)$ one defines weights $\chi$ on edges of the corresponding dimer graph $G_D$ by
\begin{align*}
\chi(b_{+} \wc)=\chi(b_{-}\wc ):={c_{(b_{-}b_{+})},}\qquad
\chi(b^*_{+} \wc)=\chi(b^*_{-}\wc ):=1,
\end{align*}
where vertices $\wc, b_+, b_-, b_+^*, b_-^*\in G_D$ are as shown in Fig.~\ref{fig:ort}. If~$\cH_\mathrm{ort}$ is an orthodiagonal embedding {and~$c_{vv'}$ are given by~\eqref{eq:c-ortho-def},} then these weights are gauge equivalent to the geometrical weights $|d\cT((b\wc)^*)|$. Namely, for all adjacent $b\in \Lambda$ and $\wc\in\diamondsuit$ (i.e., for~\mbox{$b=b_+,b_-,b_+^*,b_-^*$}) the following identity holds:
\[
|d\cT((b\wc)^*)|\ =\ \chi(b\wc)\cdot{\tfrac{1}{2}}|\cH_\mathrm{ort}(b_+)-\cH_\mathrm{ort}(b_-)|\,.
\]
\end{remark}

Let $F$ be a function defined on (a subset of) $\Lambda$.
The discrete operators
${\partial}_{\mathrm{ort}}, \overline{\partial}_{\mathrm{ort}}:\mathbb{C}^{\Lambda} \to\mathbb{C}^{\diamondsuit}$ are defined by the {formulae:}
\[ [{\partial}_{\mathrm{ort}} F](\wc)\ :=\
\frac12\biggl(\frac{F(b_{+})-F(b_{-})}{{b_{+}-b_{-}}}+
 \frac{F(b_{+}^*)-F(b_{-}^*)}{{b_{+}^*-b_{-}^*}}\biggr),
\]
\[ [\overline{\partial}_{\mathrm{ort}} F](\wc)\ :=\
\frac12\biggl(\frac{F(b_{+})-F(b_{-})}{\overline{b}_{+}-\overline{b}_{-}}+
 \frac{F(b_{+}^*)-F(b_{-}^*)}{\overline{b}{}_{+}^*-\overline{b}{}_{-}^*}\biggr);
\]
see Fig.~\ref{fig:ort}.

A function $F$ is called discrete holomorphic at $\wc\in \diamondsuit$ if $[\overline{\partial}_{\mathrm{ort}} F](\wc)=0$. If~$F$ is discrete holomorphic, then replacing it by~$\Re F$ on~$\Gamma$ and by~$\Im F$ on~$\Gamma^*$ (or vice versa) also gives a discrete holomorphic function. Because of that, it is often convenient to assume that~$F$ is purely real on~$\Gamma$ and is purely imaginary on~$\Gamma^*$.

Given~$\wc\in\diamondsuit$, let
\begin{equation}
\label{eq:x-mu-u}
\mu_\diamondsuit(\wc)\ :=\ {\tfrac12\,|b_+^*-b_-^*|\,|b_+-b_-|}
\end{equation}
be the area of the corresponding orthodiagonal quad in~$\cH_\mathrm{ort}$. Further, for~$b\in\Lambda$ define
\begin{equation}
\label{eq:x-mu-b}
\textstyle \mu_\Lambda(b)\ :=\ {\frac{1}{4}}\sum_{{w_k}\sim b}{|b_{k+1}^*-b_k^*|\,|w_k-b|\,,}
\end{equation}
where the sum is taken over all vertices~${w_k}\in\diamondsuit$ adjacent to~$b$; {see Fig.~\ref{fig:isorad_t_graph} for the notation.} Let~$\partial_\mathrm{ort}^*,\overline{\partial}{}_\mathrm{ort}^*:\C^\diamondsuit\to\C^\Lambda$ be the (formal) adjoint operators to the operators~$\partial_\mathrm{ort},\overline{\partial}_\mathrm{ort}$\,, {where we assume that the scalar products in~$\C^\diamondsuit$ and~$\C^\Lambda$ are defined by the weights~\eqref{eq:x-mu-u} and~\eqref{eq:x-mu-b}, respectively.} In particular, one has
\begin{equation}
\label{eq:pa-star-ort}
[\partial^*_\mathrm{ort}G](b)\ =\ \frac{1}{\mu_\Lambda(b)}\sum_{w_k\sim b}\frac{\mu_\diamondsuit(w_k)}{2(\overline{b}-\overline{b}_k)}G(w_k)\ =\ \frac{i}{4\mu_\Lambda(b)}\sum_{w_k\sim b}(b_{k+1}^*-b_k^*)G(w_k).
\end{equation}

Now let~$H$ be a function defined on (a subset of)~$\Gamma$ or, similarly, on (a subset of) $\Gamma^*$. The so-called \emph{cotangent Laplacian} on orthodiagonal embeddings reads as
\begin{align}\label{eq:delta_ort}
[\Delta_{\mathrm{ort}} H](b)\ :=\ \frac{1}{{2\mu_\Lambda(b)}}\sum_{b_k\sim b}{c_{(bb_k)}}(H(b_k)-H(b))\,.
\end{align}
It is well known that the following factorization holds:
\[
{-\Delta_{\mathrm{ort}}}\ =\ 4{\partial}^*_{\mathrm{ort}} {\partial}_{\mathrm{ort}}\ =\ 4\overline{\partial}{}^*_{\mathrm{ort}}\overline{\partial}_{\mathrm{ort}}\,.
\]
A function~$H$ is called discrete harmonic at~$b$ if~$[\Delta_\mathrm{ort}H](b)=0$. In terms of probabilistic models, this notion is naturally associated with reversible random walks on $\Gamma$ and on $\Gamma^*$ with conductances $c_{{(vv')}}$ given by~\eqref{eq:c-ortho-def}.

The factorization of~$\Delta_\mathrm{ort}$ implies that the restrictions of discrete {holomorphic} functions to~$\Gamma$ and to~$\Gamma^*$ are discrete harmonic. In particular, it is easy to see that~$[\overline\partial_\mathrm{ort}\cH_\mathrm{ort}](\wc)=0$ for all \mbox{$\wc\in\diamondsuit$} and hence the `coordinate function' $\cH_\mathrm{ort}:\Lambda\to\C$ is discrete harmonic both on~$\Gamma$ and on~$\Gamma^*$. Thus, orthodiagonal embeddings form a subclass of \emph{Tutte's harmonic embeddings}. Vice versa, given a harmonic embedding~$\cH_\mathrm{harm}:\Gamma\to\C$, one can always construct a discrete harmonic conjugate function~$\cH^*_\mathrm{harm}:\Gamma^*\to\C$ such that the images of dual edges~$e$,~$e^*$ under~$\cH_\mathrm{harm}$ and~$\cH^*_\mathrm{harm}$, respectively, are orthogonal to each other. However, the images of faces of~${\Lambda}$ under such pairs of embeddings are not necessarily proper and/or small even if all the images of edges of both~$\Gamma$ and~$\Gamma^*$ are small. Still, one can introduce a proper t-embedding (with small faces) $\cT=\frac12(\cH_\mathrm{harm}+\cH^*_\mathrm{harm})$ of the corresponding graph~$G_D^*$ similarly to Fig.~\ref{fig:ort} and apply the techniques developed in this paper to t-holomorphic functions on~$\cT$. Let us emphasize once again that, in general, the dual harmonic embeddings~$\cH_\mathrm{harm}$ and~$\cH^*_\mathrm{harm}$ are \emph{not} close to each other and thus are not close to~$\cT$; this is the reason why discrete harmonic functions on harmonic embeddings do not necessarily converge to usual harmonic functions in the small mesh size limit while those on biorthogonal embeddings do;
see also 
Remark~\ref{rem:diffusion-theta}.

\subsubsection{T-holomorphic functions in the orthodiagonal setup}
\label{subsub:appendix-thol-orthodiag} Given an orthodiagonal embedding, let us consider the corresponding t-embedding and let the origami square root function be chosen so that~$\eta_b=\pm 1$ on~$\Gamma$ and~$\eta_b=\pm i$ on~$\Gamma^*$ as discussed above. This also implies that
\begin{equation}
\label{eq:eta_wc=}
\eta_{\wc}\ =\ {\pm ie^{-i\arg(b_+-b_-)},}\quad \wc\in\diamondsuit\,;
\end{equation}
see Fig.~\ref{fig:ort} for the notation. In the orthodiagonal context, the `true' complex values $F_\frw^\tw$ of t-white-holomorphic functions~$F_\frw$ and~$F_\frb^\tb$ of t-black-holomorphic functions~$F_\frb$ do not have much sense, in particular since the faces of the t-embedding are not triangles and there is no canonical way to split them in order to defined such complex values; see Section~\ref{sec:non_triangulation}. However, if one considers the values~$F_\frw^\tb(b)\in\eta_b\R$ and~$F_\frb^\tw(\wc)\in\eta_{\wc}\R$, then the usual notion of discrete holomorphicity arises. Indeed, it is easy to see that
\[
\oint_{\partial \wc} F^\tb \,d  \cT\ =\ {-\eta_{\wc}^2}\cdot (b_+ - b_-)(b_+^*-b_-^*)\cdot [\overline{\partial}_{\mathrm{ort}} F^\tb](\wc)
\]
for all functions~$F^\tb$ (locally) defined on~$\Lambda=\Gamma\cup\Gamma^*$ and
\begin{equation}
\label{eq:x-oint-dT=pa-star}
\oint_{\partial b} F^\tw \,d  \cT\ =\ {\sum_{w_k\sim b}\tfrac{1}{2}(b_k^*-b_{k+1}^*)F^\tw(w_k)\ =\ 2i\mu_\Lambda(b)\cdot [\partial^*_\mathrm{ort}F^\tw](b)}
\end{equation}
for all functions (locally) defined on~$\diamondsuit$. Thus, in the orthodiagonal setup, t-white-holomorphic functions $F_\frw^\tb$ are usual discrete holomorphic functions on~$\Lambda$ such that $F^\tb|_\Gamma\in\R$ and~$F^\tb|_{\Gamma^*}\in i\R$ while
t-black-holomorphic functions $F_\frb^\tw$ are usual discrete holomorphic functions on~$\diamondsuit$ such that~$F^\tw(\wc)\in\eta_{\wc}\R$ for all~$\wc\in\diamondsuit$, i.e., nothing but the discrete gradients of \emph{real-valued} discrete harmonic functions on~${\Gamma^*}$ (or, equivalently, $i\R$-valued harmonic functions on~${\Gamma}$).

\begin{figure}
\center{\includegraphics[width=0.88\textwidth]{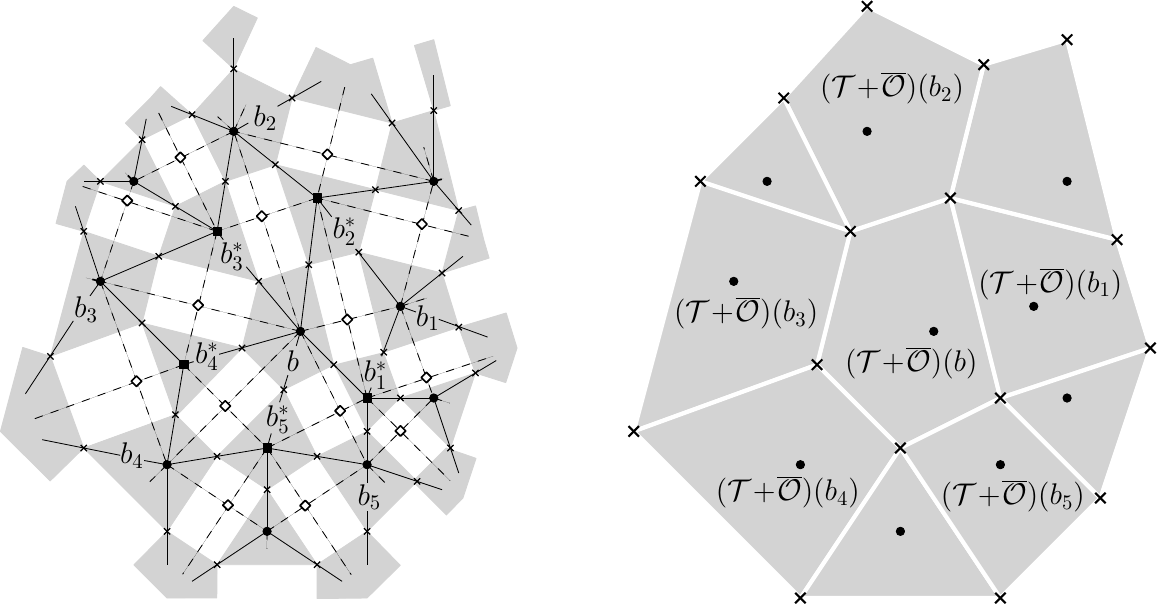}}
\caption{{A t-embedding~$\cT$ obtained from an orthodiagonal embedding (left) and the corresponding T-graph $\cT+\overline{\cO}$ with flattened white faces (right). The black faces~$\cT(b)$, $b\in\Gamma$, scale by the factor~$2$} {while the faces~$\cT(b^*)$, $b^*\in\Gamma^*$, degenerate to vertices of~$\cT+\overline{\cO}$.}\label{fig:isorad_t_graph}}
\end{figure}

\subsubsection{Discrete harmonic functions on~$\Gamma$ as harmonic functions on T-graphs}
\label{subsub:appendix-harm-Temb}
We now discuss how the notion of discrete harmonicity on orthodiagonal embeddings fits the frameworks developed in this paper. A similar discussion {can be applied to general harmonic embeddings by passing to the t-embedding~$\cT=\frac{1}{2}(\cH_\mathrm{harm}+\cH^*_\mathrm{harm})$ obtained from such an embedding~$\cH_\mathrm{harm}$ and its harmonic conjugate~$\cH_\mathrm{harm}^*$.} 

\begin{proposition}\label{prop:Tgraph-ort}
(i) If 
$\eta_{b} = \pm 1$ for $b \in \Gamma$ and $\eta_{b} = \pm i$ for $b \in \Gamma^*$, then the (degenerate) T-graph $\cT + \overline \cO$ coincides with~$\Gamma^*$. {Similarly,}~$\cT - \overline\cO$ coincides with $\Gamma$. 

\smallskip
\noindent (ii) A function is harmonic on the T-graph $\cT+\overline{\cO}$ (resp., on $\cT-\overline{\cO}$) in the sense of Remark~\ref{rq:degenerate_face} if and only if it is harmonic on~$\Gamma^*$ (resp., on~$\Gamma$) in the usual sense.
\end{proposition}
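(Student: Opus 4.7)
The plan is to apply Proposition~\ref{prop:white_Tgraph} with $\alpha=1$ and inspect the images of the individual faces of $\cT$. With the prescribed origami square root function one has $1+\eta_b^2=0$ for $b\in\Gamma^*$, so $(\cT+\overline\cO)(b)$ is a single point; $1+\eta_b^2=2$ for $b\in\Gamma$, so $(\cT+\overline\cO)(b)$ is a homothety of $\cT(b)$ by factor $2$; and for a white face $\wc$, formula~\eqref{eq:eta_wc=} gives $\overline{\eta}_\wc\R$ parallel to the direction of the dual edge $(b_-^*,b_+^*)$, so $(\cT+\overline\cO)(\wc)$ is a segment along this $\Gamma^*$-edge. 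I would fix the additive constant in $\overline\cO$ so that each collapsed image $(\cT+\overline\cO)(b^*)$ is placed at $b^*\in\C$; the angle condition on $\cT$ makes this a globally consistent choice.

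With this normalization, a direct computation on one white face $\wc$ yields $(\cT+\overline\cO)(\wc)=[b_-^*,b_+^*]$, the corresponding dual edge of $\Gamma^*$, with $\wc$ itself appearing as an interior vertex on this segment. Tracking the image of each vertex of $\cT$ simultaneously from its three adjacent faces (a white $\wc$, a black $b\in\Gamma$, and an opposite $b'\in\Gamma^*$), the angle condition forces the three affine expressions to agree and the image collapses onto $b'\in\Gamma^*$. Consequently the doubled image of each $b\in\Gamma$ is a convex 2D region whose boundary consists entirely of white segments (dual edges of $\Gamma^*$), so this region contributes no additional edges to the T-graph. Therefore, as closed subsets of $\C$, $\cT+\overline\cO=\Gamma^*$. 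The case $\cT-\overline\cO$ follows by replacing $\eta$ with $i\eta$, which swaps the roles of $\Gamma$ and $\Gamma^*$.

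\textbf{Part (ii).} Here I would match the transition rates of the random walk on the degenerate T-graph against those of the reversible walk with conductances~\eqref{eq:c-ortho-def}. Using the version of Definition~\ref{def:chain-deg} adapted to degenerate \emph{black} faces of $\cT+\overline\cO$ (obtained by swapping colors and justified by continuity in $\alpha$, exactly as at the end of the proof of Proposition~\ref{prop:backward_harmonic}), a degenerate vertex $v=b^*\in\Gamma^*$ has outgoing rates $q(b^*\to b^*_k)=m_k/|b^*_k-b^*|^2$ with $m_k\propto|d\cT(b_kw_k^*)|\cdot|b^*_k-b^*|$. The explicit orthodiagonal geometry gives $|d\cT(b_kw_k^*)|$ proportional to the primal edge length $|b_+-b_-|$ crossing $[b^*,b^*_k]$, so $q(b^*\to b^*_k)\propto|b_+-b_-|/|b^*_k-b^*|=c_{(b^*b^*_k)}$, while the $b^*$-dependent normalizing prefactor coincides up to a universal constant with $(\mu_\Lambda(b^*))^{-1}$ appearing in~\eqref{eq:x-mu-b} and~\eqref{eq:delta_ort}. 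Hence the two notions of harmonicity agree on $\Gamma^*$; the case of $\cT-\overline\cO$ and $\Gamma$ is symmetric.

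\textbf{Main obstacle.} The delicate step is in part (i): verifying that the boundary of the 2D image $2\cT(b)-b$ for $b\in\Gamma$ is traced out precisely by the surrounding white segments and not by some inscribed polygon whose vertices are the $\wc$'s. This reduces to the local triple matching mentioned above at each vertex of $\cT$ incident simultaneously to three faces $(b,\wc,b')$; the three independent affine descriptions of $z+\overline\cO$ on these faces must all evaluate to $b'\in\C$, and this agreement is exactly what the angle condition together with the specific choice of $\eta$ enforces.
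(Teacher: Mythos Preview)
Your argument is essentially correct, but both parts differ from the paper's proof in ways worth noting.

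\textbf{Part (i).} You and the paper both invoke Proposition~\ref{prop:white_Tgraph}, but the paper is more economical. It simply observes that each white rectangle $\cT(w)$ projects under $2\Pr(\cdot,\overline\eta_w\R)$ onto the full dual edge $\cH_{\mathrm{ort}}(e^*)$, and that the black faces in $\Gamma^*$ degenerate; since the T-graph is by definition the union of the white-face images, this already gives $\cT+\overline\cO=\Gamma^*$. Your ``main obstacle'' about the $2\cT(b)$ images for $b\in\Gamma$ is therefore not an obstacle at all: those are the \emph{faces} of the T-graph, and their boundaries being traced by the white segments is automatic from Proposition~\ref{prop:white_Tgraph} (or simply from $\cT+\overline\cO$ being continuous). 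Two small slips: vertices of $\cT$ in the orthodiagonal setup have degree four, not three (two white rectangles and one black face from each of $\Gamma,\Gamma^*$); and $\wc$ is a face of $\cT$, not a vertex, so it does not appear as an interior vertex of any segment---in fact this T-graph has \emph{no} non-degenerate vertices, since every vertex of $\cT$ is incident to some $b^*\in\Gamma^*$ and hence collapses onto it.

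\textbf{Part (ii).} Here your route genuinely differs. The paper does not compute transition rates; instead it observes that both notions of harmonicity are equivalent to the single condition that the discrete gradient $\rD[H](\wc)=(H(b_+^*)-H(b_-^*))/(b_+^*-b_-^*)=2[\partial_{\mathrm{ort}}H](\wc)$ be t-black-holomorphic, which in turn is the condition $\partial^*_{\mathrm{ort}}(\rD[H])=0$ via~\eqref{eq:x-oint-dT=pa-star}. Your direct computation of the rates from (the colour-swapped, higher-degree analogue of) Definition~\ref{def:chain-deg} is correct and more hands-on: the identity $|d\cT((w_kb^*)^*)|=\tfrac12|b_+-b_-|_k$ indeed gives $q(b^*\to b^*_k)\propto |b_+-b_-|_k/|b^*_k-b^*|=c_{(b^*b^*_k)}$, and only the transition \emph{probabilities} matter for harmonicity, so the normalisation constant is irrelevant. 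The paper's approach has the advantage of tying the result directly into the t-holomorphic framework; yours is self-contained and does not require Definition~\ref{def:D}.
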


\begin{proof}
(i) Due to Proposition~\ref{prop:white_Tgraph} (see also Section~\ref{sec:non_triangulation}) all faces $(\cT+\overline{\cO})(b)$ for $b\in\Gamma^*$ (resp., all faces $(\cT-\overline{\cO})(b)$ for $b\in\Gamma$) are degenerate; see Fig.~\ref{fig:isorad_t_graph}.
Therefore, vertices of $\cT+\overline{\cO}$ (resp., $\cT-\overline{\cO}$) are in bijection with vertices of $\Gamma^*$ (resp.,~$\Gamma$). Moreover, let $w\in\diamondsuit$ correspond to the intersection of an edge~$e$ of~$\Gamma$ and the dual edge~$e^*$ of~$\Gamma^*$. Then,
\[(\cT + \overline \cO)(w) = 2\Pr (\cT(w) , \overline{\eta}_w \R) = \cH_{\mathrm{ort}}(e^*)\]
since $\overline{\eta}_w$ is the direction of $\cH_{\mathrm{ort}}(e^*)$ and $\cT(w)$ is a rectangle. {The same argument applies to the degenerate T-graph} $\cT - \overline \cO$.

\smallskip

\noindent (ii) Let~$H$ be a real-valued harmonic function on~$\Gamma^*=\cT+\overline{\cO}$. Both the harmonicity condition on~$\Gamma^*$ (i.e., the condition~$\Delta_\mathrm{ort}H=0$) and the harmonicity on the degenerate T-graph~$\cT+\overline{\cO}$) boil down to the condition that the discrete gradient
\[
\rD[H](u^\circ)\ =\ H(b_+^*)-H(b_-^*))/(b_+^*-b_-^*)\ =\ 2[\partial_\mathrm{ort}H](u^\circ),\quad u^\circ\in\diamondsuit,
\]
is a t-black-holomorphic function (recall that this is equivalent to saying that \mbox{$\partial^*_\mathrm{ort}(\rD[H])=0$}). A similar argument applies for functions defined on~$\Gamma=\cT-\overline{\cO}$.
\end{proof}

\subsubsection{Square tilings}\label{subsub:appendix-square-tilings}
We now briefly discuss a link of t-embeddings with a classical notion of square (or, more generally, rectangular) tilings introduced in~\cite{BSST}. As above, let~$\cH_\mathrm{ort}:\Lambda\to\C$ be an orthodiagonal embedding and~${\cT:G_D^*\to\C}$ be a t-embedding of the (dual of the) corresponding dimer graph; see Fig.~\ref{fig:ort}. (One can also start with a pair $\cH_\mathrm{harm}:\Gamma\to\C$, $\cH^*_\mathrm{harm}:\Gamma^*\to\C$ of dual harmonic embeddings and consider the corresponding t-embedding~$\cT=\frac12(\cH_\mathrm{harm}+\cH^*_\mathrm{harm})$.)

We now claim the following: if we consider T-graphs $\cT+{\alpha^2}\cO$, what appears is a tiling by rectangles and, in the particular case of all conductances equal to~$1$, a square tiling; see Fig.~\ref{fig:rect_tilings}. More precisely, if all conductances equal $1$, then a corresponding orthodiagonal embedding restricted to $\Gamma$ {forms} a classical Tutte (or barycentric) embedding and the corresponding T-graphs are square tilings defined by projections of $\cH_{\mathrm{ort}}$. As above, we assume that the origami square root function~$\eta$ is chosen so that~$\eta_b=\pm 1$ on~$\Gamma$ and~$\eta_b=\pm i$ on~$\Gamma^*$.

\begin{figure}
\center{\includegraphics[width=0.88\textwidth]{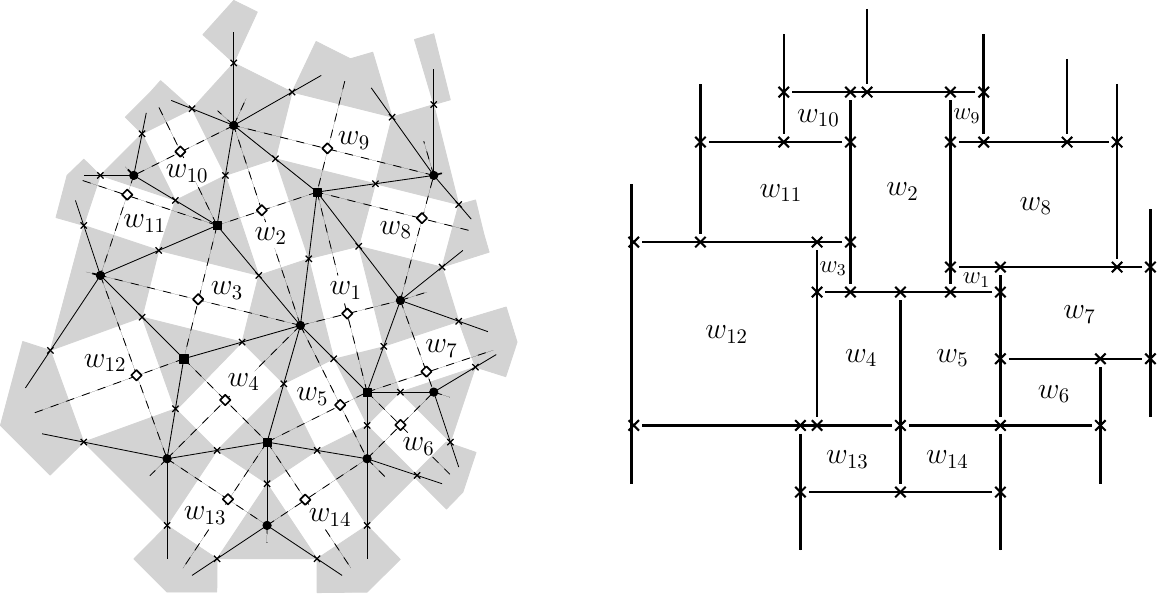}}
\caption{{A t-embedding~$\cT$ obtained from an orthodiagonal embedding (left) and the corresponding T-graph $\cT+\cO$ with flattened black faces (right). The black faces $\cT(b)$ with $b\in\Gamma$ are projected onto the horizontal direction and give rise to horizontal edges of~$\cT+\cO$ while $b^*\in\Gamma^*$ give rise to vertical ones. White} {rectangular faces have aspect ratios~\eqref{eq:c-ortho-def} both in~$\cT$ and in~$\cT+\cO$.}\label{fig:rect_tilings}}
\end{figure}

\begin{proposition}
For {each} $\alpha$ in the unit circle, the T-graph $(\cT+{\alpha^2}\cO)(G^*_D)$ of an orthodiagonal t-embedding $\cT$ forms a tiling by rectangles. Moreover, for all $b\in\Gamma$ and $b^*\in\Gamma^*$ the segments $(\cT+\alpha^2\cO)(b)$ and~$(\cT+\alpha^2\cO)(b^*)$ have directions~$\alpha\R$ and~$i\alpha\R$, respectively, and
\begin{align*}
\Pr ((\cT\!+\!\alpha^2\cO)(b), i\alpha\R)\ &=\ \Pr (\cH_{\mathrm{ort}}(b), i\alpha\R) + {\operatorname{cst}}_{i\alpha},\\[-2pt]
\Pr ( (\cT\!+\!\alpha^2\cO)(b^*), \alpha\R)\ &=\ \Pr (\cH_{\mathrm{ort}}(b^*), \alpha\R) + {\operatorname{cst}}_{\alpha},
\end{align*}
where ${\operatorname{cst}}_{\alpha}\in\alpha\R$ and ${\operatorname{cst}}_{i\alpha}\in i\alpha\R$ are constants depending on $\alpha$ only.
\end{proposition}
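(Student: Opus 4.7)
The plan is to read off everything directly from Proposition~\ref{prop:geomT}, specialized to the orthodiagonal setup where the origami square root function has a very simple explicit form. First I would observe that the convention $\eta_b=\pm 1$ for $b\in\Gamma$ and $\eta_b=\pm i$ for $b\in\Gamma^*$ is consistent with the defining relation~\eqref{eq:def-eta}: dual edges of an orthodiagonal embedding are orthogonal, so the unit direction of $d\cT(bw^*)$ differs by a factor of~$\pm i$ between $b\in\Gamma$ and $b\in\Gamma^*$. Combined with~\eqref{eq:eta_wc=}, this fixes $\eta_w=\pm i\overline\xi$, where $\xi:=(b_+-b_-)/|b_+-b_-|$ is the unit direction of the $\Gamma$-edge associated to $\wc\in\diamondsuit$, and in particular $\eta_w^2=-\overline\xi^{\,2}$.

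Next, I would apply Proposition~\ref{prop:geomT}(ii) directly. For $b\in\Gamma$ the image $(\cT+\alpha^2\cO)(b)$ is a translate of $2\Pr(\cT(b),\alpha\overline\eta_b\R)=2\Pr(\cT(b),\alpha\R)$, a segment along $\alpha\R$; similarly for $b^*\in\Gamma^*$ we get a segment along $i\alpha\R$. For a white face $\wc\in\diamondsuit$, Proposition~\ref{prop:geomT}(i) gives that $(\cT+\alpha^2\cO)(\wc)$ is a translate of the similarity image $(1+\alpha^2\eta_w^2)\cT(\wc)$. A short algebraic rearrangement,
\[
1+\alpha^2\eta_w^2\ =\ 1-\alpha^2\overline\xi^{\,2}\ =\ -\alpha\overline\xi\cdot(\alpha\overline\xi-\overline\alpha\xi),
\]
shows that this complex factor has direction $i\alpha\overline\xi$ (the second factor is purely imaginary). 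Multiplying a rectangle with sides parallel to $\xi$ and $i\xi$ by a complex number with this direction produces a rectangle with sides parallel to $i\alpha$ and $\alpha$, respectively. Matching sides to adjacent black faces: the sides of $\cT(\wc)$ adjacent to $b_\pm\in\Gamma$ (running along $i\xi$) become horizontal (direction $\alpha$), whereas those adjacent to $b_\pm^*\in\Gamma^*$ (running along~$\xi$) become vertical (direction $i\alpha$). This matches the directions of the black-face segments computed above, so all faces of $\cT+\alpha^2\cO$ assemble into a tiling by axis-aligned (in coordinates $\alpha,i\alpha$) rectangles; the tiling property itself is inherited from the general fact that $\cT+\alpha^2\cO$ is a T-graph with possibly degenerate faces (Proposition~\ref{prop:geomT}).

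Finally, for the projection identities, I would compute the vertical side-length of the rectangle $(\cT+\alpha^2\cO)(\wc)$ as
\[
|1+\alpha^2\eta_w^2|\cdot\tfrac{1}{2}|b_+-b_-|\ =\ 2|\Im(\overline\alpha\xi)|\cdot\tfrac{1}{2}|b_+-b_-|\ =\ |\Im(\overline\alpha(b_+-b_-))|,
\]
which is precisely the length of $\Pr(b_+-b_-,i\alpha\R)$. Since the top and bottom sides of this rectangle lie on the horizontal segments $(\cT+\alpha^2\cO)(b_+)$ and $(\cT+\alpha^2\cO)(b_-)$, this gives the claimed equality of differences up to sign along each $\Gamma$-edge; an analogous computation with the width of the rectangle handles each $\Gamma^*$-edge. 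Since $\Gamma$ and $\Gamma^*$ are connected, telescoping over adjacent pairs yields the projection identities with a single additive constant each. The main obstacle is to pin down the signs, which I would settle by invoking the orientation-preservation property used in the proof of Proposition~\ref{prop:geomT}: the Rouché/``dog on a leash'' winding bound forces the cyclic order of the four black neighbours around $(\cT+\alpha^2\cO)(\wc)$ to coincide with their cyclic order around $\cT(\wc)$, so $(\cT+\alpha^2\cO)(b_+)$ lies above $(\cT+\alpha^2\cO)(b_-)$ in the direction $i\alpha\R$ iff $\Im(\overline\alpha b_+)>\Im(\overline\alpha b_-)$, and analogously for $\Gamma^*$.
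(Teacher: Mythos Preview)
Your proof is correct and follows essentially the same approach as the paper: both invoke Proposition~\ref{prop:geomT} to identify the images of black and white faces, then use the explicit form of $1+\alpha^2\eta_w^2$ on the rectangular white faces to read off the increment of the $i\alpha\R$-projection across each $\Gamma$-edge and telescope. You supply considerably more detail than the paper (which simply writes ``it is easy to deduce''), including an explicit factorization of $1+\alpha^2\eta_w^2$ and a separate orientation argument to fix signs---the latter is not strictly necessary, since the signed increment $\Pr((\cT+\alpha^2\cO)(b_+)-(\cT+\alpha^2\cO)(b_-),i\alpha\R)$ can be computed directly as $(1+\alpha^2\eta_w^2)\cdot\tfrac12(b_+-b_-)$ projected onto $i\alpha\R$, which already yields $\Im(\overline\alpha(b_+-b_-))$ with the correct sign.
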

\begin{proof}
Proposition~\ref{prop:geomT} (see also Section~\ref{sec:non_triangulation}) imply that all edges {$(\cT+\alpha^2\cO)(b)$, \mbox{$b\in\Gamma$}, are segments with the direction ${\alpha} \R$}. Moreover, for each~$\wc\in\diamondsuit$ the image $(\cT+\alpha^2\cO)(\wc)$ is a translate of the rectangle $(1+\alpha^2\eta_{\wc}^2)\cT(\wc)$, where~$\eta_{\wc}$ is given by~\eqref{eq:eta_wc=}. It is easy to deduce from this fact that
\[
\Pr({(\cT\!+\!\alpha^2\cO)(b_+)-(\cT\!+\!\alpha^2\cO)(b_-)}, i\alpha\R)\ =\ \Pr(\cH_\mathrm{ort}(b_+)-\cH_\mathrm{ort}(b_-),i\alpha\R)
\]
for all pairs of adjacent vertices~$b_\pm\in\Gamma$. Similar arguments apply to the images of {vertices} $b^*\in\Gamma^*$ {in~$\cT+\alpha^2\cO$.}
\end{proof}

\newcommand{\GI}{G_{\mathrm{Ising}}}
\newcommand{\LI}{\Lambda_{\mathrm{Ising}}}
\newcommand{\dI}{\diamondsuit_{\mathrm{Ising}}}
\def\cX{{\mathcal{X}}}

\subsection{S-embeddings}\label{sub:appendix-s-emb}
In this section we recall a link between s-embeddings of planar graphs carrying the Ising model and t-embeddings of the corresponding bipartite dimer model; see also~\cite[Section~7]{KLRR} and~\cite[Section~2.3]{chelkak-s-emb}.
Let $\GI$ be a planar graph and {$\LI$ be its `diamond graph' (see Section~\ref{sub:appendix-ort}; recall that vertex set of~$\LI$ is the union of those of~$\GI$ and~$\GI^*$).} According to~\cite{chelkak-icm2018,chelkak-s-emb}, an s-embedding~$\cS:\LI\to\C$ 
satisfies the condition that every face of $\cS(\LI)$ is a tangential quadrilateral (i.e., admits an inscribed circle). Note that, given~$\cS$, there is also a natural way to embed the graph~$\dI:=\LI^*$ by placing its vertices at the centers of the inscribed circles of the corresponding faces of~$\LI$.

Let us construct a bipartite graph $\Upsilon^\bullet\cup\Upsilon^\circ$ associated to $\GI$ by putting a black and a white vertices to each edge of the graph~$\LI$ (or, equivalently, to each `corner' of the graph $\GI$) as shown in Fig.~\ref{s_emb}.
Due to~\cite{dubedat-bosonization}, there exists a natural correspondence between the Ising model on $\GI$ and the bipartite dimer model on $\Upsilon^\bullet\cup\Upsilon^\circ$ with weights
\begin{equation}
\label{eq:Ising-dimer-weights}
\chi_{(b_{10}w_{10})}=1, \quad \chi_{(b_{11}w_{10})}=\cos\theta_e, \quad \chi_{(b_{00}w_{10})}=\sin\theta_e,
\end{equation}
where vertices $b_{10},$ $b_{11},$ $b_{00}$ and $w_{10}$ are as shown in Fig.~\ref{s_emb} and $\theta_e$ parameterize the Ising interaction constants~$J_e$ so that~$\tanh[\beta J_e]=x_e=\tan\tfrac12\theta_e$.

Let $\Upsilon$ be the set of `corners' of the graph $\GI$ and denote by $\Upsilon^\times$ the double-cover of~$\Upsilon$ that branches over each vertex of $\LI\cup\dI$. Recall that each corner $c\in\Upsilon$ corresponds to an edge $(v^\bullet(c) v^\circ(c))$ of~$\LI$ and let~$w(c)\in\Upsilon^\circ$, $b(c)\in\Upsilon^\bullet$ denote the corresponding vertices of the bipartite graph~$\Upsilon^\circ\cup\Upsilon^\bullet$ so that $(w(c)b(c))^*=(v^\bullet(c) v^\circ(c))$.

It is easy to see that an s-embedding of a graph $\GI$ can be viewed as a \mbox{t-embedding} $\cT$ of the graph $(\Upsilon^\circ\cup\Upsilon^\bullet)^*=\LI\cup\dI$.
Indeed, the angle condition is satisfied at each vertex: if $v\in\LI$, then for each edge $e$ adjacent to~$v$ the segment $\cS(vz_e)$ is the bisector of the corresponding angle (since $\cS(z_e)$ is the center of the circle inscribed into the corresponding quad) and for each $z_e\in \dI$ the sum of two white adjacent to $\cS(z_e)$ angles is $\pi$; see Fig.~\ref{s_emb}. Moreover, the geometrical weights~$|d\cT(bw^*)|$ are gauge equivalent to the weights~\eqref{eq:Ising-dimer-weights} provided that the \mbox{s-embedding} $\cS=\cS_\cX:\LI\to\C$ is constructed {by the rule
\begin{equation}
\label{eq:SX-def}
\cS_\cX(v_p^\bullet)-\cS_\cX(v_q^\circ)\ =\ (\cX(c_{pq}))^2,
\end{equation}
where a \emph{Dirac spinor}} $\cX:\Upsilon^\times\to\C$ satisfies the \emph{propagation equation}
\begin{equation}
\label{eq:propagation}
\cX(c_{pq})=\cX(c_{p,1-q})\cos\theta_e +\cX(c_{1-p, q})\sin\theta_e
\end{equation}
for any three consecutive vertices~$c_{pq}\in\Upsilon^\times$ surrounding~$z_e$; see~\cite[Definition~1.1]{chelkak-s-emb}. 

\begin{figure}
\center{\includegraphics[width=0.88\textwidth]{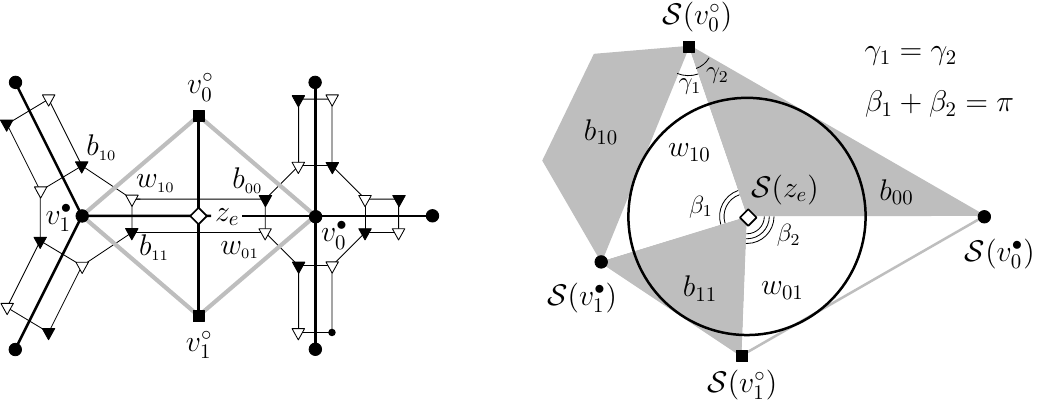}}
\caption{{An s-embedding~$\cS$ of the {`diamond graph' $\LI$ obtained from a planar graph~$\GI$ can be viewed as a t-embedding of (the dual of) the bipartite graph~$\Upsilon^\circ\cup\Upsilon^\bullet$ if} one places vertices of~$\dI=\LI^*$ at the centers of the circles inscribed into faces of~$\LI$. We draw vertices of~$\GI^*$ as black squares but now} {denote them as~$v^\circ$ in order to keep the notation consistent with~\cite{chelkak-s-emb}.}\label{s_emb}}
\end{figure}

Further, note that
\begin{equation}
\label{eq:etab=etaw=}
\eta_{b(c)}=\eta_{w(c)}\ :=\ \exp[-\tfrac{i}{2}\arg(\cS(v^\bullet(c))-\cS(v^\circ(c)))]
\end{equation}
defines an origami square root function on~$\cT$. Moreover, one can make the choice of the signs in the right-hand side canonical {(cf. Remark~\ref{rem:eta-def})} by passing from~$\Upsilon$ to its double cover~$\Upsilon^\times$ and defining a spinor~$\eta_c$, $c\in\Upsilon^\times$, as $\bar{\varsigma}\eta_c:=\eta_{b(c)}=\eta_{w(c)}$, where the global factor $\varsigma$ is such that $|\varsigma|=1$. (The particular choice of~$\varsigma$ is not important; in the planar Ising model literature it is sometimes fixed as $\varsigma=e^{i\frac\pi 4}$ following~\cite{smirnov-annals,chelkak-smirnov-universality}.) Vice versa, each choice of a section of~$\Upsilon^\times$ (i.e., a choice of the square root in the definition of~$\eta_c$ at each `corner'~$c\in\Upsilon$) defines Kasteleyn signs on~$\Upsilon^\bullet\cup\Upsilon^\circ$ according to the following rule: one always assigns the `$-$' sign to edges of the form~$(b(c)w(c))$ while the signs of all other edges~$(bw)$ of~$\Upsilon^\bullet\cup\Upsilon^\circ$ are given by the signs of~$\Re[\eta_b\overline{\eta}_w]$. (The fact that these signs satisfy the Kasteleyn condition directly follows from the branching structure of the double cover~$\Upsilon^\times$ on which the Dirac spinor~$\eta$ is defined.) Once a section of~$\Upsilon^\times$ and the corresponding Kasteleyn signs are fixed, the values~$\cF_\mathrm{gauge}^\bullet(b(c)):=\cX(c)$ and~$\cF_\mathrm{gauge}^\circ(w(c)):=\cX(c)$ are nothing but the Coulomb gauge giving rise to the t-embedding~$\cT$.

\begin{definition}[see~\cite{smirnov-annals,chelkak-smirnov-universality,chelkak-icm2018,chelkak-s-emb}]
\label{def:s-hol}
A function $F: \dI \to \mathbb{C}$ is called s-holomorphic (on an s-embedding~$\cS$) if
\begin{equation}
\label{eq:s-hol}
\Pr(F(z), \eta_c\mathbb{R})\ =\ \Pr(F(z'), \eta_c\mathbb{R})
\end{equation}
for each pair $z,z'\in\dI$ adjacent to the same edge $(v^\bullet(c) v^\circ(c))$ of $\LI$.
\end{definition}

It is easy to see that this definition essentially (up to the factor~$\varsigma$) matches the definition of t-holomorphic functions on the corresponding t-embedding~$\cT$. Indeed, let~$F_\frw$ be, say, a t-white-holomorphic function. Then, its `true' complex values at two vertices~$w_{10},w_{{01}}\in\Upsilon^\circ$ corresponding to the same edge of~$\GI$ match since
\begin{align*}
\Pr( F_\frw^{\tw}( w_{10}) ,{\eta}_{b_{00}} \mathbb{R})\ &=\ F_\frw^{\tb} (b_{00})\ =\ \Pr( F_\frw^{\tw}(w_{01}) ,{\eta}_{b_{00}} \mathbb{R})\,,\\
\Pr( F_\frw^{\tw}( w_{10}) ,{\eta}_{b_{11}} \mathbb{R})\ &=\ F_\frw^{\tb} (b_{11})\ =\ \Pr( F_\frw^{\tw}(w_{01}) ,{\eta}_{b_{11}} \mathbb{R})\,;
\end{align*}
see Fig.~\ref{s_emb} for the notation. This allows to associate these values to the quads $z_e\in\dI$ instead of~$w\in\Upsilon^\circ$ and one immediately sees that the function
\begin{equation}
\label{eq:s-hol<->t-hol}
F(z_e)\ :=\ \varsigma\cdot F_\frw^\tw(w_{10})\ =\ \varsigma\cdot F_\frw^\tw(w_{10})
\end{equation}
satisfies the s-holomorphicity condition~\eqref{eq:s-hol}; vice versa, given an s-holomorphic function on $\dI$ one can view it as a t-white-holomorphic one using the same rule. Since $\Upsilon^\bullet$ and $\Upsilon^\circ$ play fully symmetric roles,
the same discussion applies to the `true' complex values~$F_\frb^\tb$ of t-black-holomorphic functions.

The {next} result gives an interpretation of the values~$F_\frw^\tb$ of t-white-holomorphic functions on~$\Upsilon^\bullet$; a similar consideration applies to values of {functions~$F_\frb^\tw$} on~$\Upsilon^\tw$.

\begin{proposition}[{see also~\cite[Proposition~2.5]{chelkak-s-emb}}]
\label{prop:propagation} Let~$F_\frw$ be a t-white-holomorphic function on the t-embedding~$\cT:\LI\cup\dI\to\C$ associated with the s-embedding $\cS=\cS_\cX:\LI\to\C$. Then, a {real-valued} spinor $c\mapsto {F_\frw^\tb(b(c))\cX(c)}$ satisfies the propagation equation~\eqref{eq:propagation} on $\Upsilon^\times$.

Vice versa, each real-valued spinor~$X$ satisfying the propagation equation~\eqref{eq:propagation} on~$\Upsilon^\times$ gives rise to a t-white-holomorphic function whose values on~$\Upsilon^\bullet$ are defined as~$b(c)\mapsto X(c)/\cX(c)$.
\end{proposition}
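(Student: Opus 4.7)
The plan is to translate the defining equation for $F_\frw$ at each white face $w(c)\in\Upsilon^\circ$ of the t-embedding $\cT$ into the propagation equation~\eqref{eq:propagation} for the spinor $X(c):=\cX(c)\cdot F_\frw^\tb(b(c))$, using the squaring relation $\cS_\cX(v^\bullet)-\cS_\cX(v^\circ)=(\cX(c))^2$ from~\eqref{eq:SX-def}. Recall from Lemma~\ref{lem:KF=oint} that t-white-holomorphicity at $w(c)$ is equivalent to
\[
\sum_{b\sim w(c)} K(b,w(c))\cdot F_\frw^\tb(b)\ =\ 0,
\]
and each white face $w(c_{pq})$ has degree three in $\Upsilon^\bullet\cup\Upsilon^\circ$, with the three black neighbours being $b(c_{pq})$, $b(c_{p,1-q})$ and $b(c_{1-p,q})$ around the quad $z_e\in\dI$. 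So each t-holomorphicity equation is a single three-term linear relation with the same combinatorial structure as~\eqref{eq:propagation}.

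First I would compute the three Kasteleyn entries $K(b(c'),w(c_{pq}))=d\cT((b(c')w(c_{pq}))^*)$ for $c'\in\{c_{pq},c_{p,1-q},c_{1-p,q}\}$ in terms of values of~$\cX$. Each dual edge $(b(c')w(c_{pq}))^*$ of~$\cT$ runs between two vertices of $\LI\cup\dI$ whose positions are given by $\cS_\cX$, so its increment reduces to a difference of squares $(\cX(c_1))^2-(\cX(c_2))^2$ for two nearby corners $c_1,c_2$ around $v^\bullet(c_{pq})$, $v^\circ(c_{pq})$ or $z_e$. Using the factorisation $(\cX(c_1))^2-(\cX(c_2))^2=(\cX(c_1)-\cX(c_2))(\cX(c_1)+\cX(c_2))$ together with the propagation identity for $\cX$ itself around $z_e$, one factor absorbs a $\cos\theta_e$ or $\sin\theta_e$ while the other produces a common multiple of $\cX(c_{pq})$. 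After factoring this common $\cX(c_{pq})$ out, the t-holomorphicity relation at $w(c_{pq})$ reads
\[
\cX(c_{pq})\cdot\bigl[X(c_{pq}) - \cos\theta_e\cdot X(c_{p,1-q}) - \sin\theta_e\cdot X(c_{1-p,q})\bigr]\ =\ 0,
\]
which, since $\cX(c_{pq})\neq 0$ in a nondegenerate s-embedding, is exactly~\eqref{eq:propagation} for $X$. Reality of $X$ up to the global constant $\bar\varsigma$ follows from the constraint $F_\frw^\tb(b)\in\eta_b\R$ together with the definition of $\eta_{b(c)}$ in~\eqref{eq:etab=etaw=}, which gives $\eta_{b(c)}\R=\bar\varsigma\cdot\overline{\cX(c)}\R$ and hence $\cX(c)\cdot F_\frw^\tb(b(c))\in\bar\varsigma\R$.

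For the converse I would reverse this recipe: given a real-valued (times~$\bar\varsigma$) spinor $X$ on $\Upsilon^\times$ satisfying~\eqref{eq:propagation}, define $F_\frw^\tb(b(c)):=X(c)/\cX(c)$. Since $X$ and $\cX$ both branch over the same vertices of $\LI\cup\dI$ (with values flipping sign on the two sheets), the ratio $X/\cX$ descends well-definedly to $\Upsilon$; it lies in $\eta_{b(c)}\R$ by the same argument as above. The t-white-holomorphicity of the resulting $F_\frw$ at each $w(c_{pq})$ is then obtained by reading the displayed identity backwards, and $F_\frw^\tw$ is reconstructed on $\Upsilon^\circ$ via the correspondence~\eqref{eq:s-hol<->t-hol} with the s-holomorphic extension.

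The main obstacle I anticipate is not conceptual but bookkeeping: one must carefully track (i) the Kasteleyn signs on $\Upsilon^\bullet\cup\Upsilon^\circ$ induced by a chosen section of $\Upsilon^\times$, (ii) the orientation conventions for $d\cT((bw)^*)$ at the three edges around $w(c_{pq})$, and (iii) which of the two factors $\cX(c)\pm\cX(c')$ one collects as a common multiple and which produces the trigonometric coefficient. Once a single quad $z_e\in\dI$ with its four surrounding corners is drawn out carefully, these signs are fixed once and for all and the equivalence is a direct algebraic verification.
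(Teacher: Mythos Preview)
Your approach is essentially the paper's: write the t-holomorphicity condition $\oint_{\partial w_{10}}F_\frw^\tb\,d\cT=0$ as a three-term relation, express each $d\cT$ in terms of~$\cX$, and factor out~$\cX(c_{10})$ to recover~\eqref{eq:propagation}. One correction is needed, however: only the edge $(b_{10}w_{10})^*=(v_0^\circ v_1^\bullet)$ lies between two vertices of~$\LI$ and hence gives $(\cX(c_{10}))^2$ via~\eqref{eq:SX-def}; the other two edges run from a vertex of~$\LI$ to the incenter~$z_e\in\dI$, whose position is not of the form~$(\cX(c))^2$, so your difference-of-squares factorisation does not apply there. The paper instead invokes the known product formulas
\[
\cT(v_1^\bullet)-\cT(z_e)=\cX(c_{10})\cX(c_{11})\cos\theta_e,\qquad \cT(z_e)-\cT(v_0^\circ)=\cX(c_{10})\cX(c_{00})\sin\theta_e
\]
(see \cite[Eq.~(2.5)]{chelkak-s-emb} or \cite[Eq.~(6.1)]{chelkak-icm2018}), which already carry~$\cX(c_{10})$ as a common factor with the trigonometric weight built in. Substituting these, the contour integral reads $\cX(c_{10})\cdot\bigl[X(c_{10})-X(c_{11})\cos\theta_e-X(c_{00})\sin\theta_e\bigr]=0$ exactly as you wanted. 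Also, the reality check is simpler than you wrote: from~\eqref{eq:etab=etaw=} one has $\eta_{b(c)}=\overline{\cX(c)}/|\cX(c)|$, hence $\cX(c)F_\frw^\tb(b(c))\in\cX(c)\eta_{b(c)}\R=\R$ with no factor~$\bar\varsigma$ involved.
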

\begin{proof} {Note that~$\cX(c)\cdot F_\frw^\tb(b(c))\in \cX(c)\eta_{b(c)}\R=\R$ due to~\eqref{eq:etab=etaw=} and~\eqref{eq:SX-def}. Further, recall that}
\[
\begin{array}{rcl}
\cT(v_1^\bullet)-\cT(z_e)&\!\!=\!\!&\cX(c_{10})\cX(c_{11})\cdot\cos\theta_e\,,\\[4pt]
\cT(z_e)-\cT(v_0^\circ)&\!\!=\!\!&\cX(c_{10})\cX(c_{00})\cdot\sin\theta_e\,,
\end{array}\qquad
\cT(v_1^\bullet)-\cT(v_0^\circ)\ =\ (\cX(c_{10}))^2
\]
provided that the lifts of the corners $c_{00},c_{10},c_{11}$ are neighbors on the double cover~$\Upsilon^\times$ {(e.g., see \cite[Eq. (2.5)]{chelkak-s-emb} or \cite[Eq.~(6.1)]{chelkak-icm2018}).} Therefore, the condition
\begin{samepage}
\begin{align*}
0\ &=\ \oint_{\partial w_{10}} F_\frw^\tb \,d\cT\\
&=\ \big(F_\frw^\tb(b_{10})\cX(c_{10})-F_\frw^\tb(b_{11})\cX(c_{11})\cos\theta_e-F_\frw^\tb(b_{00})\cX(c_{00})\sin\theta_e\big)\cdot \cX(c_{10})
\end{align*}
is equivalent to the propagation equation~\eqref{eq:propagation} {for the spinor~$F_\frw^\tb(b(c))\cX(c)$.}
\end{samepage}

The computation around~$w_{01}$ is similar. Vice versa, if~$X$ is a real-valued spinor satisfying {the propagation equation}~\eqref{eq:propagation} on~$\Upsilon^\times$, then $X(c)/\cX(c)\in\eta_{b(c)}\R$ and $\oint_{\partial w} (X/\cX)d\cT=0$ for all~$w\in\Upsilon^\circ$ due to the same computation.
\end{proof}

The case of s-embeddings is special in many respects as compared to general t-embeddings. In particular, in this setup the origami map~$\cO$ is essentially \emph{one-dimensional} and coincides with the function $\mathcal{Q}=\mathcal{Q}_\cX:\LI\to\mathbb{R}$ defined (up to a global additive constant) by the rule
\begin{equation}
\label{eq:cQ-def}
\mathcal{Q}_\cX(v^\bullet(c))-\mathcal{Q}_\cX(v^\circ(c))\ :=\ |\cS_\cX(v^\bullet(c))-\cS_\cX(v^\circ(c))|=|\cX(c)|^2
\end{equation}
for all $c\in\Upsilon^\times$; see~\cite[Definition 2.2]{chelkak-s-emb}. (As~$\cX$ satisfies the propagation equation~\eqref{eq:propagation}, this definition is consistent; geometrically it means that all quads of the s-embedding~$\cS=\cS_\cX$ are tangential.) Indeed, it is easy to see that the origami map~$\cO$ sends all vertices of~$\LI$ onto the same line: e.g., if one folds the plane along the edge $\cT(v_1^{\bullet})\cT(z_e)$, then the images of $\cT(v_1^\bullet), \cT(v_1^\circ)$ and $\cT(v_0^\circ)$ lie on a line since $\cT(v_1^{\bullet})\cT(z_e)$ is the bisector of the angle $\cT(v_1^{\circ})\cT(v_1^{\bullet})\cT(v_0^{\circ})$. A similar consideration applies to all other edges,
one also sees that $\cO|_{\LI}=\mathcal{Q}$ up to a rotation and a translation. Moreover, $\cO$ sends all vertices of $\dI$ into the same half-plane with respect to the line containing the image of~$\LI$.

{We conclude this section by mentioning a reformulation of Proposition~\ref{prop:integral_product} in the context of s-embeddings:
for each s-holomorphic function~$F$ defined on (a subset of)~$\dI$, the differential form
\begin{equation}
\label{eq:Im(F^2dS)-semb}
\tfrac12\Re(\overline{\varsigma}^2 F^2d\cS+|F|^2d{\mathcal{Q}})
\end{equation}
is closed (as this is nothing but the differential form~\eqref{eq:FFdT=} under the correspondence~\eqref{eq:s-hol<->t-hol}). The primitive of the differential form~\eqref{eq:Im(F^2dS)-semb} plays a very important role in the analysis of fermionic and spinor observables appearing in the planar Ising model on s-embeddings; see~\cite{chelkak-s-emb}.}

\subsection{Isoradial grids}\label{sub:appendix-isoradial}
Isoradial grids (or, equivalently, rhombic lattices) provide a nice setup for discrete complex analysis that goes beyond a more straightforward discretization on the square grid; this was first pointed out by Duffin~\cite{duffin-rhombic} in late 1960s. The interest to this setup in connection with the planar Ising and the bipartite dimer models reappeared in the work of Mercat~\cite{mercat-CMP} and Kenyon~\cite{kenyon-isorad} in early 2000s, respectively; see also~\cite{chelkak-smirnov-analysis,chelkak-smirnov-universality}. One says that~$\Gamma^\delta=\Gamma^{\bullet,\delta}$ is an isoradial grid of mesh size~$\delta$ if~$\Gamma^\delta$ is a planar graph in which each face is inscribed into a circle of a common radius~$\delta$. Suppose that all circle centers are inside the corresponding faces, then the dual graph $\Gamma^{*,\delta}=\Gamma^{\circ,\delta}$ is also isoradial with the same radius. The associated rhombic lattice~${\Lambda^\delta}$ is the graph on the union of the vertex sets {of~$\Gamma^\delta$ and~$\Gamma^{*,\delta}$} with natural incidence relations. Clearly, all faces of~$\Lambda^\delta$ are rhombi with edge length~$\delta$. Isoradial grids form a subclass of the intersection of orthodiagonal embeddings and s-embeddings and therefore there are two notions of t-holomorphicity associated with them according to Sections~\ref{sub:appendix-ort} and~\ref{sub:appendix-s-emb}.

\begin{figure}
\center{\includegraphics[width=0.88\textwidth]{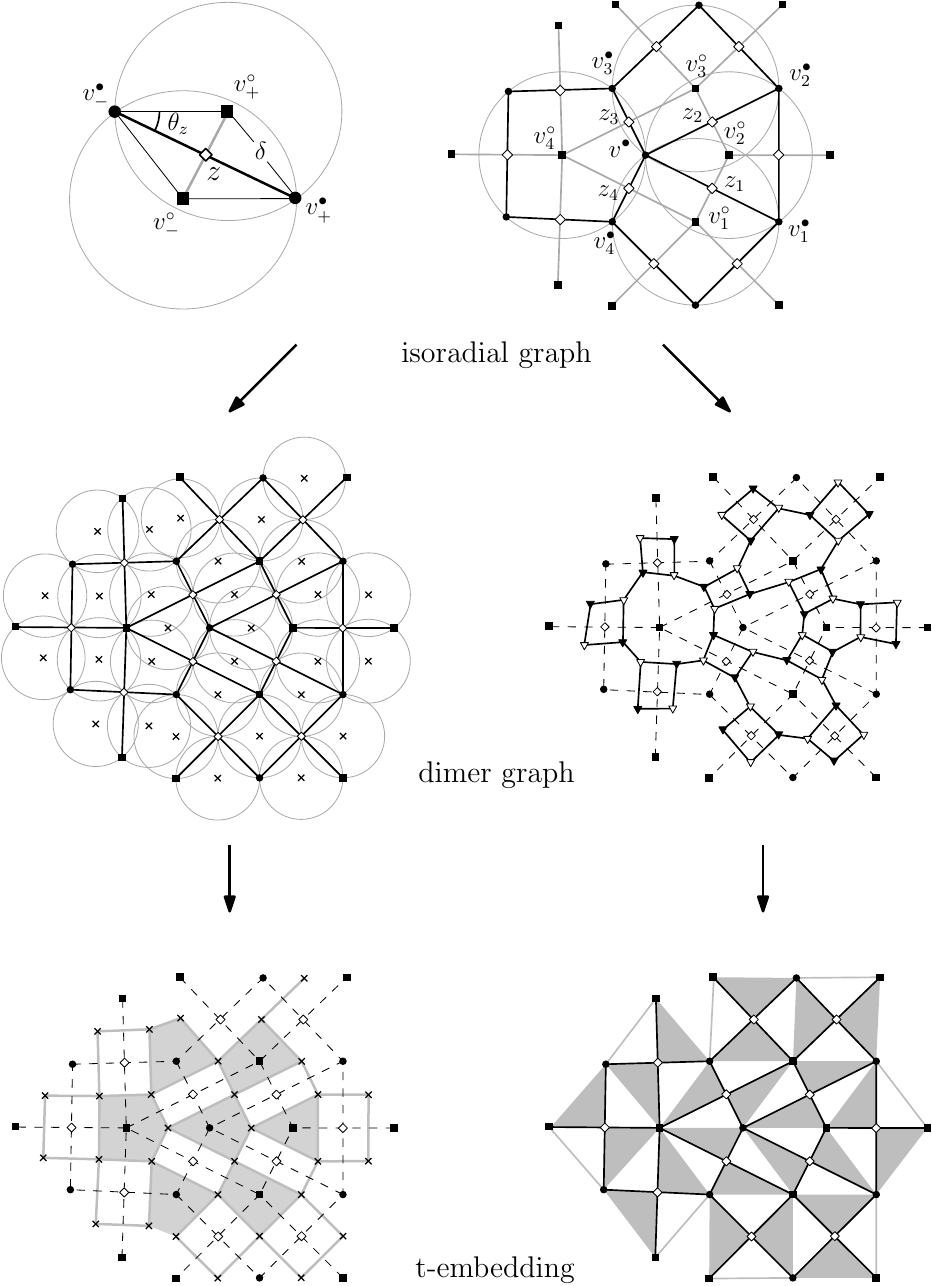}}
\bigskip
\caption{Two t-embeddings associated with an isoradial graph~$\Gamma$, its dual isoradial graph~$\Gamma^*$, and the corresponding rhombic lattice~$\Lambda$ of mesh size~$\delta$. \mbox{\textsc{Left:} a t-embedding} of the (dual of the) bipartite graph~$\Lambda\cup\diamondsuit$, where~$\diamondsuit=\Lambda^*$, is a particular case of orthodiagonal embeddings discussed in Section~\ref{sub:appendix-ort}. \mbox{\textsc{Right:} a t-embedding} of the graph~$(\Upsilon^\bullet\cup\Upsilon^\circ)^*=\Lambda\cup\diamondsuit$ is a particular case of s-embeddings discussed in Section~\ref{sub:appendix-s-emb}.}\label{fig:isorad}
\end{figure}

We begin {by} recalling the definition of discrete holomorphic functions on {the set} $\diamondsuit^\delta=(\Lambda^\delta)^*$ from Section~\ref{sub:appendix-ort} in the isoradial setup; see Fig.~\ref{fig:isorad} for the notation. Given $z\in \diamondsuit^\delta$, let $\theta_z$ denote the half-angle of the corresponding rhombus $(v^\bullet_-v^\circ_-v^\bullet_+v^\circ_+)$ along the edge~$(v^\bullet_-v^\bullet_+)$ of~$\Gamma$.  The face weights~\eqref{eq:x-mu-u},~\eqref{eq:x-mu-b} are given by
\[
\mu_\diamondsuit(z)\ =\ \delta^2\sin (2\theta_z),\ \ z\in\diamondsuit^\delta,\qquad \mu_\Lambda(v)\ =\ \textstyle \frac14\delta^2\sum_{z_k\sim v}\sin(2\theta_{z_k}),\ \ v\in\Lambda^\delta,
\]
the edge conductances~\eqref{eq:c-ortho-def} are equal to~$\tan\theta_z$, and one has
\begin{align*}
[\partial F](z)\ &=\ \frac{1}{2}\biggl(\frac{F(v^\bullet_+)-F(v^\bullet_-)}{v^\bullet_+-v^\bullet_-}+ \frac{F(v^\circ_+)-F(v^\circ_-)}{v^\circ_+-v^\circ_-}\biggr), \quad z\in\diamondsuit^\delta \\
[\partial^*G](v^\bullet)\ &=\ \frac{i}{4\mu_\Lambda(v^\bullet)}\sum_{z_k\sim v^\bullet}(v^\circ_{k+1}-v^\circ_k)\cdot G(z_k),\hskip 38pt v^\bullet\in\Gamma^\delta,\\
[\Delta H](v^\bullet)\ &=\ [-4\partial^*\partial H](v^\bullet)\ =\ \frac{1}{2\mu_\Lambda(v^\bullet)}\sum_{v^\bullet_k\sim v^\bullet}\tan\theta_k\cdot H(v^\bullet_k)
\end{align*}
and similarly for~$v^\circ\in\Gamma^{*,\delta}$; see~\eqref{eq:pa-star-ort} and~\eqref{eq:delta_ort}. Recall that a function~$G$ defined on (a subset of) $\diamondsuit^\delta$ is called discrete holomorphic at~$v^\bullet$ if~$[\partial^* G](v^\bullet)=0$ and that a function~$H$ defined on (a subset of) $\Gamma^\delta$ is called discrete harmonic at~$v^\bullet$ if~$[\Delta H](v^\bullet)=0$. All results mentioned in Section~\ref{sub:appendix-ort} apply on isoradial grids, in particular, the following holds:
\begin{itemize}
\item If~$F_\frb$ is a t-black-holomorphic function (in the sense of Section~\ref{sub:appendix-ort}), then $F_\frb^\tw$ is discrete holomorphic {(i.e., $\partial^*F^\tw_\frb=0$ on~$\Lambda^\delta$)} and satisfies the condition \mbox{$F_\frb^\tw(z)\in (\overline{v}^\circ_+-\overline{v}^\circ_-)\R$} for all~$z\in\diamondsuit^\delta$.
\item Vice versa, each discrete holomorphic function~$G$ defined on (a subset of)~$\diamondsuit^\delta$ and such that~$G(z)\in (\overline{v}^\circ_+-\overline{v}^\circ_-)\R$ is a t-black-holomorphic function in the sense of Section~\ref{sub:appendix-ort}.
\item Locally (or in simply connected domains), such discrete holomorphic functions are nothing but the discrete gradients of \emph{real-valued} harmonic functions on~$\Gamma^*=\Gamma^{\circ,\delta}$ or, equivalently, the discrete gradients of $i\R$-valued harmonic functions on~$\Gamma=\Gamma^{\bullet,\delta}$.
\end{itemize}

\begin{remark}\label{rem:involution-appendix}
It is worth noting that the condition~$G(z)\in \eta_z\R=(\overline{v}^\circ_+-\overline{v}^\circ_-)\R$ that specifies t-black-holomorphic functions~$F_\frb^\tw$ out of all discrete holomorphic functions on $\diamondsuit^\delta$ can be equivalently formulated as the requirement that~$G$ is invariant under the involution~$G(z)\mapsto \eta_z^2\overline{G(z)}$ on the set of discrete holomorphic functions on~$\diamondsuit^\delta$; see equation~\eqref{eq:x-oint-dT=pa-star} and Remark~\ref{rq:involution}.
\end{remark}

For the notion of discrete harmonicity inherited from orthodiagonal embedding, the main simplification in the isoradial setup comes from the fact that the discrete Laplacian approximates the continuous one up to the \emph{second order}: one has~$\Delta H=2(a+c)$ if~$H=ax^2+2bxy+cy^2$. This means that the covariance matrix~$\var X_t$ of the corresponding continuous time random walk~$X_t$ on~$\Gamma$ (see Section~\ref{sec:Tgraph} and Proposition~\ref{prop:Tgraph-ort}) not only satisfies the normalization~$\mathrm{Tr}\,\var X_t=t$ but is \emph{rotationally invariant}, i.e., one has $\var  X_t=\frac12 t\cdot \mathrm{Id}$; cf.~\cite[Eq.~(1.2)]{chelkak-smirnov-analysis}. In particular, we see that this random walk must converge as~$\delta\to 0$ to a standard 2D Brownian motion (as opposed to a centered diffusion with non-trivial covariance depending on the point that arises for general orthodiagonal embeddings; see Remark~\ref{rem:diffusion-theta}). 

We now move on to the second approach to discrete holomorphic functions in the isoradial setup that comes from the notion of s-holomorphicity discussed in Section~\ref{sub:appendix-s-emb}; see Fig.~\ref{fig:isorad}. Clearly, a rhombic lattice~$\Lambda^\delta$ of mesh size~$\delta$ can be viewed as an s-embedding~$\cS^\delta=\cS^\delta_\cX$, where the Dirac spinor~$\cX^\delta$ is given by
\[
\cX^\delta(c)\ =\ (v^\bullet(c)-v^\circ(c))^{1/2}\ =\ \delta^{1/2} \overline{\eta}_{b(c)}=\delta^{1/2}\overline{\eta}_{w(c)}=\delta^{1/2} \varsigma\overline{\eta}_c,\quad c\in\Upsilon^\times;
\]
see~\eqref{eq:etab=etaw=}. Moreover, in the isoradial setup the parameter~$\theta_e$ involved into the propagation equation~\eqref{eq:propagation} is nothing but the half-angle of the corresponding rhombus.

Recall that the notion of t-holomorphicity on s-embeddings boils down to Definition~\ref{def:s-hol} and that the correspondence between t-holomorphic and s-holomorphic functions is given by~\eqref{eq:s-hol<->t-hol}: one can assign the `true' complex values of t-holomorphic functions to~$z\in\diamondsuit$ and the result is an s-holomorphic function (up to a fixed global factor~$\varsigma$ such that~$|\varsigma|=1$). A major simplification in the isoradial setup as compared to general s-embeddings is that the function~$\mathcal{Q}^\delta$ defined by~\eqref{eq:cQ-def} (or, equivalently, the origami map~$\cO^\delta$ associated to the corresponding t-embedding) becomes completely explicit: one can choose an additive constant in its definition so that
\begin{equation}
\label{eq:cQ-isorad}
\mathcal{Q}^\delta(v^\bullet)=\tfrac12\delta\ \ \text{for all $v^\bullet\in\Gamma^\delta$}\quad \text{and}\quad \mathcal{Q}^\delta(v^\circ)=-\tfrac12 \delta\ \ \text{for all $v^\circ\in\Gamma^{*,\delta}$.}
\end{equation}
(Moreover, one easily sees that $|\cO^\delta(z)|=\frac\delta 2$ for all~$z\in\diamondsuit$ and that the image of~$\cO^\delta$ is contained in the intersection of the ball~$B(0,\frac\delta 2)$ and the upper half plane.)

\begin{proposition}[{see also~\cite[Lemma~3.2]{chelkak-smirnov-universality}}] Let~$F$ be an s-holomorphic function defined on (a subset of) the rhombic lattice~$\diamondsuit^\delta=(\Lambda^\delta)^*$. Then, $F$ is discrete holomorphic, i.e., $[\partial^*F](v)=0$ at all points~$v\in\Lambda^\delta=\Gamma^\delta\cup\Gamma^{*,\delta}$ where this discrete derivative is defined.
\end{proposition}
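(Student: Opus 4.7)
My plan is to verify $[\partial^*_{\mathrm{ort}} F](v^\bullet) = 0$ directly at each vertex $v^\bullet \in \Gamma^\delta$ from the s-holomorphicity relations around $v^\bullet$; the case $v^\circ \in \Gamma^{*,\delta}$ follows by a symmetric computation. Label the neighbors of $v^\bullet$ in $\Lambda^\delta$ as $v^\circ_1, \ldots, v^\circ_n$ counterclockwise, and let $z_k \in \diamondsuit^\delta$ denote the rhombus with vertices $v^\bullet, v^\circ_k, v^\bullet_k, v^\circ_{k+1}$ (indices taken cyclically mod $n$). Using the explicit formula for $\partial^*_{\mathrm{ort}}$ recalled just before the proposition, the claim reduces to showing
\[
\sum_{k=1}^n (v^\circ_{k+1} - v^\circ_k)\, F(z_k)\ =\ 0.
\]

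First I would rewrite the left-hand side by summation by parts as $-\sum_k v^\circ_k D_k$, where $D_k := F(z_k) - F(z_{k-1})$. Decomposing $v^\circ_k = v^\bullet - (v^\bullet - v^\circ_k)$ and noting that the cyclic telescoping sum $\sum_k D_k$ vanishes, the identity reduces further to verifying
\[
\sum_{k=1}^n (v^\bullet - v^\circ_k)\, D_k\ =\ 0.
\]

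The key step is the translation of the s-holomorphicity condition into a convenient algebraic identity for $D_k$. The edge $(v^\bullet, v^\circ_k)$ of $\Lambda^\delta$ separates $z_{k-1}$ and $z_k$, and on a rhombic lattice one has $\eta_{c_k}^2 = \overline{(v^\bullet - v^\circ_k)}/\delta$ since $|v^\bullet - v^\circ_k| = \delta$. The condition in Definition~\ref{def:s-hol} then becomes $D_k + \eta_{c_k}^2 \overline{D_k} = 0$, which after complex conjugation gives
\[
(v^\bullet - v^\circ_k)\, D_k\ =\ -\delta\, \overline{D_k}.
\]
Summing over $k$ and invoking $\sum_k D_k = 0$ once more yields $\sum_k (v^\bullet - v^\circ_k) D_k = -\delta\, \overline{\sum_k D_k} = 0$, which concludes the argument. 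I do not anticipate any essential obstacle here: once the rhombic structure is used to rewrite $\eta_{c_k}^2$ without absolute values, the proof reduces to two cyclic telescoping identities combined with a single conjugation.
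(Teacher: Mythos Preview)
Your proof is correct. It differs from the paper's argument in packaging rather than substance: the paper invokes the closed differential form $F\,d\cS + \varsigma^2\overline{F}\,d\mathcal{Q}$ on $\Lambda^\delta$ (coming from Proposition~\ref{prop:integral_simple} via the s-/t-holomorphic correspondence) together with the isoradial identity $\mathcal{Q}|_{\Gamma^{*,\delta}}\equiv -\tfrac12\delta$, so that the $d\mathcal{Q}$-contribution drops out when integrating around $v^\bullet$ through the $v^\circ_k$'s and one is left with $\sum_k(v^\circ_{k+1}-v^\circ_k)F(z_k)=0$. Your route unpacks the same content by hand: the relation $(v^\bullet-v^\circ_k)D_k=-\delta\,\overline{D_k}$ is exactly the well-definedness of that form on the edge $(v^\bullet v^\circ_k)$ (up to the harmless global factor $\varsigma^2$, which you implicitly set to $1$), and the telescoping $\sum_k D_k=0$ plays the role of closedness. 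The paper's version makes the mechanism transparent---the constancy of $\mathcal{Q}$ on each bipartite class is what singles out the isoradial case---while yours is self-contained and avoids any appeal to the earlier machinery.
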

\begin{proof}
This is a trivial combination of the definition~\eqref{eq:pa-star-ort} of~$\partial^*F$, the fact that the differential form~$ Fd\cS+\varsigma^2\overline{F}d\mathcal{Q}$ is closed on~$\Lambda^\delta$ (see~\eqref{eq:s-hol<->t-hol} and Proposition~\ref{prop:integral_simple}) and of the identity~\eqref{eq:cQ-isorad}, which implies that the second term disappears if we restrict this form onto~$\Gamma^\delta$ (or~$\Gamma^{*,\delta}$) only.
\end{proof}
\begin{remark} Let~$F$ be an s-holomorphic function on an isoradial grid. Due to~\eqref{eq:cQ-isorad}, restricting the differential form~\eqref{eq:Im(F^2dS)-semb} onto~$\Gamma$ (or, {similarly}, onto~$\Gamma^*$) one obtains a consistent definition of the discrete primitive~$\frac12\int\Re[\overline{\varsigma}^2(F(z))^2dz]$, {which reads as} $\frac12\int\Im[(F(z))^2dz]$ {if~$\varsigma=e^{i\frac\pi 4}$}; cf.~\cite[Section~3.3]{chelkak-smirnov-universality}.
\end{remark}

Let us now briefly recall a characterization of s-holomorphic functions within the (strictly larger) class of discrete holomorphic functions on~$\diamondsuit^\delta$; see also~\cite[Section~3.2]{chelkak-smirnov-universality} for more details.

If~$F$ is an s-holomorphic function, then one can easily see that there {exists} a \emph{real-valued} spinor $X(c):=\Re[\overline{\eta}_c F(z)]$ satisfying the propagation equation~\eqref{eq:propagation} on~$\Upsilon^\times$ and such that
\begin{equation}
\label{eq:F=etaX+etaX}
F(z)\ =\ \eta_{c_{00}}X(c_{00})+\eta_{c_{11}}X(c_{11})=\eta_{c_{01}}X(c_{01})+\eta_{c_{10}}X(c_{10}),
\end{equation}
where~$c_{pq}\in\Upsilon^\times$ are the four `corners' of~$\Gamma$ (or, equivalently, edges of~$\Lambda$) that surround~$z$; note that the right-hand side is a product of two spinors ($X$ and $\eta$) on~$\Upsilon^\times$ and thus does not depend on the choice of the lifts of~$c_{pq}$ from~$\Upsilon$ onto~$\Upsilon^\times$. In fact, one can show that a similar representation holds for all discrete holomorphic functions~$F$ defined on simply connected subsets of~$\diamondsuit^\delta$: e.g., it follows from~\cite[Lemma~3.3]{chelkak-smirnov-universality} that each such a function~$F$ admits a decomposition~$F=F_1+iF_2$ with s-holomorphic~$F_1$ and $F_2$, which gives a \emph{complex-valued} spinor~$X=X_1+iX_2$ {solving the equation~\eqref{eq:F=etaX+etaX}}. Moreover, one can easily see that~$X$ is defined uniquely up to adding a constant multiple of~$\eta_c$.

Since the propagation equation~\eqref{eq:propagation} has real coefficients, there exists a trivial involution~$X\mapsto\overline{X}$ on the set of spinors satisfying this equation. One can now use the representation~\eqref{eq:F=etaX+etaX} in order to obtain the corresponding involution on the set of discrete holomorphic functions on~$\diamondsuit^\delta$. (Since~$X$ in~\eqref{eq:F=etaX+etaX} is defined only up to adding a constant multiple of~$\eta_c$, one should also remove from the consideration constant multiples of~$\overline{\eta}_c$, which correspond to constant discrete holomorphic functions on~$\diamondsuit^\delta$.) The class of s-holomorphic functions is the invariant subspace of discrete holomorphic functions under this involution. It is worth pointing out a similarity with Remark~\ref{rem:involution-appendix}: in the isoradial setup both real-linear notions of \mbox{t-holo}\-morphicity (inherited from orthodiagonal embeddings and from s-embeddings, respectively) can be used to obtain the \emph{same} complex-linear discrete holomorphicity relations~$[\partial^* G](v)=0$, $v\in\Lambda$. However, the conventions to select a real-linear subspace of such functions $G$ are different.

\subsubsection{Kite embeddings}\label{subsub:appendix-kites} It is easy to see that the intersection of the two setups discussed in Section~\ref{sub:appendix-ort} (orthodiagonal embeddings) and in Section~\ref{sub:appendix-s-emb} (s-embeddings) is strictly bigger than the isoradial grids discussed in this section: the two diagonals of a tangential quad~$(v^\bullet_-v^\circ_-v^\bullet_+v^\circ_+)$ are orthodiagonal if and only if this quad is a kite. It is also natural to assume that all these kites are symmetric with respect to, say, edges~$(v^\bullet_-v^\bullet_+)$ of~$\Gamma$; note that this assumption breaks the symmetry between~$\Gamma$ and~$\Gamma^*$. A convenient way to think about such embeddings is to view them as \emph{circle patterns}: each vertex of~$\Gamma$ becomes a center of a circle and vertices of~$\Gamma^*$ are their intersection points. This setup appeared in the planar Ising model context in the work of Lis~\cite{Lis} as a generalization of the class of isoradial embeddings. Clearly, all results of Sections~\ref{sub:appendix-ort},~\ref{sub:appendix-s-emb} still apply in this case and two notions of t-holomorphic functions arise. However, to the best of our knowledge there is no simple link between these two notions. Let us also emphasize that the circle patterns mentioned above are \emph{not} the ones discussed in~\cite{KLRR}, where the intersections of circles form the dual to a bipartite dimer graph; this is why one should be precise when speaking about the `discrete complex analysis on circle patterns'.

\def\BR{B_{\raisebox{-1pt}{\tiny R}}}
\def\BI{B_{\raisebox{-1pt}{\tiny I}}}

\subsection{Regular lattices}\label{sub:appendix-regular}
\subsubsection{Square lattice} \label{subsub:appendix-Z2} In this section we briefly discuss the most classical discretization of complex analysis: that on the square lattice. This notion dates back at least to 1940s, e.g., it explicitly appeared in the work of Ferrand~\cite{ferrand-44}. Consider a checkerboard tiling $\mathbb{C}^\delta$ of the complex plane with
squares, each square has side $\delta$ and is centered at a lattice point of
$\left\{\left(\delta n,\delta m\right)\, | \,n, m \in \mathbb{Z}\right\}$. Let us call~$(n,m)$ the coordinates of the corresponding square and
define the sets $\BR^\delta$ and $\BI^\delta$ of black squares and the sets $W_\lambda^\delta$ and $W_{i\lambda}^\delta$  of white squares by the following properties:
\begin{itemize}
\item $(B_{\raisebox{-1pt}{\tiny R}}^\delta)$ both coordinates are even;
\item $(B_{\raisebox{-1pt}{\tiny I}}^\delta)$ both coordinates are odd;
\item $(W_\lambda^\delta)$ the first coordinate is even and the second one is odd, where~$\lambda:=e^{i\frac\pi 4}$;
\item $(W_{i\lambda}^\delta)$ the first coordinate is odd and the second one is even;
\end{itemize}
see Fig.~\ref{fig:sqgrid}. Classically, a function $F$ defined on (a subset of) $\BR^\delta\cup \BI^\delta$ is called discrete holomorphic if it has purely real values on~$\BR^\delta$, purely imaginary values on~$\BI^\delta$, and the following identity holds for odd $n+m$:
\begin{equation}
\label{eq:disc-hol-Ferrand}
F(\delta n,\delta(m\!+\!1))-F(\delta n,\delta(m\!-\!1))\ =\ i(F(\delta(n\!+\!1),\delta m)-F(\delta(n\!-\!1),\delta m)).
\end{equation}
Note that this definition can be viewed as a particular case of t-white-holomorphic functions on orthodiagonal embeddings if we set~$\Gamma:=\BR^\delta$ and $\Gamma^*:=\BI^\delta$; see Section~\ref{subsub:appendix-thol-orthodiag}.

Clearly, the checkerboard tiling~$\C^\delta$ is an example of a t-embedding with square faces. According to Section~\ref{sec:non_triangulation}, in order to speak about the `true' complex values of, say, t-white-holomorphic functions on $\C^\delta$ one should fix a splitting of its white faces; a similar discussion applies to t-black-holomorphic functions. This splitting can be done, e.g., by drawing diagonals of white squares as shown in Fig.~\ref{fig:sqgrid}. Let us call thus obtained t-embedding~$\cT^{\circ,\delta}_\mathrm{spl}$ and denote by~$\mathcal{V}_\diamond^\delta$ the set of its vertices that are \emph{not} adjacent to these diagonals. If $\eta_b=\pm 1$ for~$b\in\BR^\delta$ and $\eta_b=\pm i$ for~$b\in \BI^\delta$, then the origami square root function~$\eta_b$ has the values~$\pm \lambda$ and~$\pm i\lambda$ on diagonals splitting white squares~$w\in W_\lambda^\delta$ and~$w\in W_{i\lambda}^\delta$, respectively. The following notion was introduced by Smirnov in his seminal work~\cite{smirnov-annals} on the conformal invariance in the critical Ising model on~$\mathbb{Z}^2$ (the name s-holomorphicity was coined in a later paper~\cite{chelkak-smirnov-universality} devoted to the isoradial setup).

\begin{figure}
\center{
\includegraphics[clip, trim=0cm 0cm 7cm 0cm, width=0.4\textwidth]{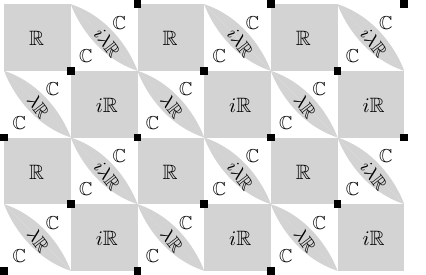}\hskip 0.1\textwidth
\includegraphics[clip, trim=11.5cm 0.96cm 0cm 0cm, width=0.18\textwidth]{holom_s-holom_t-holom_2.pdf}}
\caption{\textsc{Left:} A splitting~$\cT^{\circ,\delta}_\mathrm{spl}$ (see Section~\ref{sec:non_triangulation}) of the white squares of~$\delta\mathbb{Z}^2$ by diagonals and the types  of values ($\eta_b\R$ or $\C$) of t-white-holomorphic functions on this t-embedding. The vertices from the set~$\mathcal{V}^\delta_\diamond$ are shown as black squares.
\textsc{Right:} Local notation near a vertex~$z\in\mathcal{V}^\delta_\diamond$, note that one has~$F_\frw^\tw(z^\circ_\lambda)=F_\frw^\tw(z^\circ_{i\lambda})$.}\label{fig:sqgrid}
\end{figure}

\begin{definition}\label{def:s-hol-sqgrid}
A function $F_\diamond$ defined on (a subset of) the grid $\mathcal{V}_\diamond^\delta$ is called \mbox{s-holo}\-morphic if for each pair of vertices $z_1, z_2\in\mathcal{V}_\diamond^\delta$ of the same square $s\in B^\delta\cup W^\delta$ the following holds:
\begin{equation}
\label{eq:s-hol-sqgrid}
\Pr(F_\diamond(z_1), \eta(s)\mathbb{R})\ =\ \Pr(F_\diamond(z_2), \eta(s)\mathbb{R}),
\end{equation}
where  $\eta(s)$ is $1$, $i$, $\lambda$ or $i\lambda$ if $s$ {has} the type
$B_{\raisebox{-1pt}{\tiny R}}^\delta$,
$B_{\raisebox{-1pt}{\tiny I}}^\delta$,
$W_\lambda^\delta$ or
$W_{i\lambda}^\delta$, respectively.
\end{definition}
Let~$F_\frw$ be a t-white-holomorphic function on~$\cT^{\circ,\delta}_\mathrm{spl}$. By definition, its values at two white triangles~$z^\circ_\lambda$, $z^\circ_{i\lambda}$ adjacent to a vertex~$z\in \mathcal{V}_\diamond^\delta$ have matching real parts as well as matching imaginary parts; see Fig.~\ref{fig:sqgrid}. Thus, one can define a function
\begin{equation}
\label{eq:sqgrid:s-hol<->tw-hol}
F_\diamond(z)\ :=\ F_\frw^\tw(z_{\lambda}^\circ)=F_\frw^\tw(z_{i\lambda}^\circ), \quad z\in \mathcal{V}_\diamond^\delta.
\end{equation}
It is easy to see that this function is s-holomorphic (provided that~$F_\frw$ is t-white-holomorphic) and that, vice versa, starting with an s-holomorphic function~$F_\diamond$ one can define a t-white-holomorphic function~$F_\frw$ by the same rule. It is also easy to see that Definition~\ref{def:s-hol-sqgrid} actually coincides with the definition of s-holomorphic functions on isoradial grids or, more generally, on s-embeddings (see Definition~\ref{def:s-hol} and Section~\ref{sub:appendix-isoradial}) if we set~$\dI:=\mathcal{V}_\diamond^\delta$; note that the mesh size of thus obtained rhombic lattice is~$\sqrt{2}\delta$.

Finally, it is known (e.g., see~\cite[Remark~3.3]{smirnov-annals}) that the notion of s-holomorphic functions on~$\mathcal{V}^\delta_\diamond$ is actually \emph{equivalent} to the classical definition~\eqref{eq:disc-hol-Ferrand} of discrete holomorphic functions on~$\BR^\delta\cup\BI^\delta$. More precisely, the following holds (see also~\cite{russkikh-h}):
\begin{itemize}
\item Let~$F_\frw^\tw$ be a t-white-holomorphic function defined on (a subset of) $\cT^{\circ,\delta}_\mathrm{spl}$. Then, the function~$F_\frw^\bullet$ is discrete holomorphic on~$\BR^\delta\cup\BI^\delta$ since the equation~\eqref{eq:disc-hol-Ferrand} can be equivalently written as the condition~$\oint_{\partial w}F_\frw^\tb d\cT=0$, where the contour integral is computed around the white square~$w$ (split into two triangles) centered at the point~$(\delta n,\delta m)$.
\item Vice versa, let a function~$F$ be defined on (a subset of) $\BR^\delta\cup\BI^\delta$ so that it has purely real values on~$\BR^\delta$ and purely imaginary ones on~$\BI^\delta$. If this function satisfies the discrete holomorphicity condition~\eqref{eq:disc-hol-Ferrand}, then the function~$F_\diamond(z):=F(b_R)+F(b_I)$ is s-holomorphic and hence can be also viewed as a t-white-holomorphic function on~$\cT^{\circ,\delta}_\mathrm{spl}$ via~\eqref{eq:sqgrid:s-hol<->tw-hol}. Indeed, the identities~\eqref{eq:s-hol-sqgrid} with~$\eta(s)=1$ or~$\eta(s)=i$ are tautological while the similar identities with~$\eta(s)=\lambda$ or~$\eta(s)=i\lambda$ turn out to be equivalent to the equation~\eqref{eq:disc-hol-Ferrand}.
\end{itemize}
Let us emphasize that the equivalence of the two notions of discrete holomorphicity discussed in this section heavily relies upon the special structure of the square grid and does \emph{not} hold, e.g., in a  more general setup of isoradial grids.

\subsubsection{Special definitions on the triangular/honeycomb lattices} We conclude this paper by mentioning two special notions of discrete holomorphic functions on triangular/honeycomb lattices: the first was suggested by Dynnikov and Novikov~\cite{dynnikov-novikov} in the discrete Hodge theory context, the second appeared in a recent work~\cite{chelkak-glazman-smirnov} on the discrete stress-energy tensor in 2D lattice models.

\begin{figure}
\center{\includegraphics[width=0.36\textwidth]{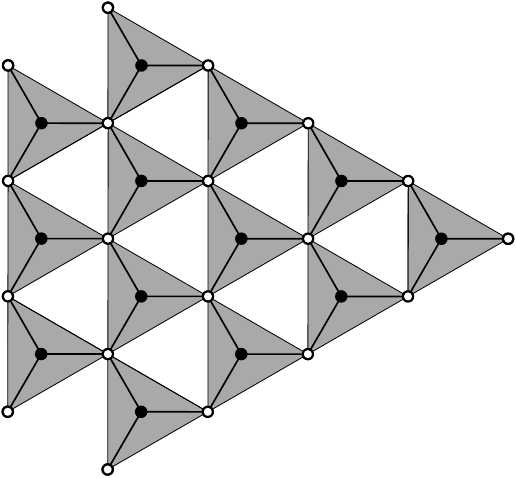}\hskip 0.1\textwidth
\includegraphics[width=0.36\textwidth]{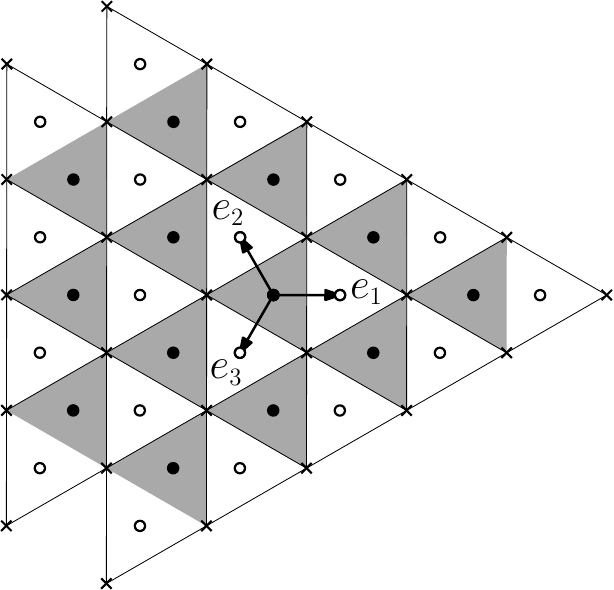}}
\caption{\textsc{Left:} in the notation of~\cite[Section~4]{dynnikov-novikov}, a real-valued function defined on the triangular grid~$W$ (white nodes) is called discrete holomorphic if the sum of its values at the three vertices of each black triangle vanishes. \textsc{Right:} a regular t-embedding $\cT:(B\cup W)^*\to\C$ of the (dual of the) honeycomb lattice~$B\cup W$; vectors~$e_1,e_2,e_3$ have directions~$1,e^{2\pi i/3},e^{4\pi i/3}$, respectively.}\label{fig:hex}
\end{figure}

\begin{definition} A real-valued function~$G$ defined on \emph{vertices} of a regular triangular lattice $W$ is called discrete holomorphic in the sense of~\cite[Section~4]{dynnikov-novikov} if the equation
\begin{equation}
\label{eq:disc-hol-DN}
G(w_1)+G(w_2)+G(w_3)=0,\quad w_{1,2,3}\sim b
\end{equation}
holds for the three vertices~$w_{1,2,3}$ of each black triangle of~$W$; see Fig.~\ref{fig:hex}.
\end{definition}

Denote by~$B$ the set of centers of these black triangles and note that the set $B\cup W$ forms a honeycomb lattice. Let us now consider a regular t-embedding~$\cT$ of {this set}; see Fig.~\ref{fig:hex}. Since all faces of the honeycomb lattice have degree~$6\in 2+4\mathbb{Z}$, one can consistently define the square root origami function~$\eta_w$ (and not only~$\eta_w^2$; cf. Remark~\ref{rem:eta-def}) so that~$\eta_w\in\{1,e^{2\pi i/3},e^{4\pi i/3}\}$ for all~$w\in W$; each of these three values appears exactly once around each black triangle of $\cT$.

Now let~$F_\frb$ be a t-black-holomorphic function on~$\cT$. By definition, one has $F_\frb^\tw(w)\in\eta_w\R$ for all~$w\in W$ and
\[
\oint_{\partial b}F_\frb^\tw d\cT\ =\ \delta\overline{\eta}_b\cdot(F_\frb^\tw(w_1)\overline{\eta}_{w_1}+ F_\frb^\tw(w_2)\overline{\eta}_{w_2}+F_\frb^\tw(w_3)\overline{\eta}_{w_3}) =\ 0,
\]
where~$w_{1,2,3}\in W$ denote the three neighbors of~$b\in B$ and~$\delta$ is the mesh size of~$(B\cup W)^*$ (or, equivalently, the mesh size of~$W$). These conditions can be equivalently formulated as follows: the function~$G(w):=\overline{\eta}_w F_\frb^\tw(w)$, $w\in W$, is real-valued and satisfies the equation~\eqref{eq:disc-hol-DN} around each~$b\in B$. In other words, the definition of discrete holomorphic functions on a regular triangular grid discussed in~\cite[Section~4]{dynnikov-novikov} trivially fits the t-holomorphicity framework.

\begin{definition}[see~\cite{chelkak-glazman-smirnov}] A real-valued function~$G$ defined on \emph{edges} of a regular honeycomb lattice~$\mathrm{H}$ is called discrete holomorphic if it satisfies local identities
\[
G(e_4)-G(e_1)\ =\ G(e_2)-G(e_5)\ =\ G(e_6)-G(e_3),
\]
\[
G(e_1)+G(e_2)+G(e_3)+G(e_4)+G(e_5)+G(e_6)=0
\]
on edges~$e_1,\ldots,e_6$ (listed counterclockwise) of each face of~$\mathrm{H}$; see Fig.~\ref{fig:hex-ii}.
\label{def:CGS-hol}
\end{definition}

\begin{figure}
\center{\includegraphics[width=0.36\textwidth]{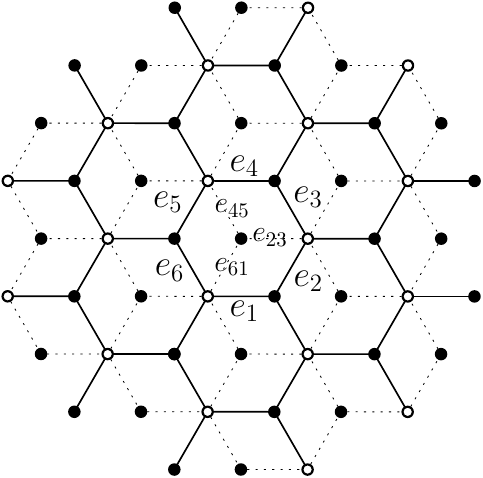}\hskip 0.1\textwidth
\includegraphics[width=0.36\textwidth]{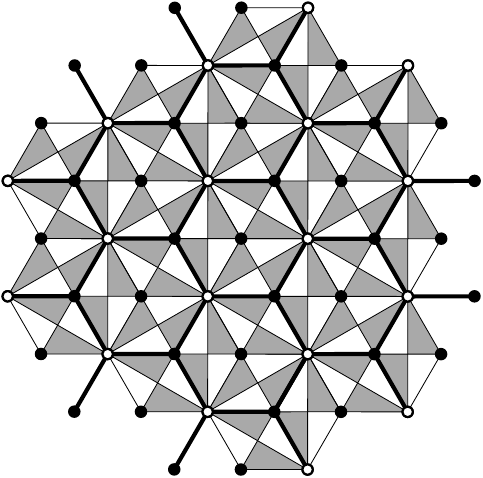}}
\caption{A discrete holomorphic function~$G$ defined on \emph{edges} of a regular honeycomb grid~$\mathrm{H}$ (see Definition~\ref{def:CGS-hol} and~\cite{chelkak-glazman-smirnov}) can be viewed as an \mbox{s-holomorphic} function on the rhombic lattice obtained from~$\mathrm{H}$ (left picture)
or, equivalently, as a t-holomorphic function on the corresponding t-embedding (right picture).}\label{fig:hex-ii}
\end{figure}

{Let $\Gamma$ be} the triangular grid formed by half of the vertices of~$\mathrm{H}$ and~$\Gamma^*$ be its dual honeycomb grid; see Fig.~\ref{fig:hex-ii}. We claim that Definition~\ref{def:CGS-hol} can be equivalently reformulated as the \mbox{s-holo}\-morphicity property for functions defined on the graph~$\diamondsuit=\Lambda^*$, {where, as usual, $\Lambda$ stands for the `diamond graph' of~$\Gamma$.} To this end, recall that s-holomorphic functions on~$\diamondsuit$ can be equivalently viewed as real-valued spinors~$X$ satisfying the propagation equation~\eqref{eq:propagation} on the double cover~$\Upsilon^\times$ of the medial graph of~$\Lambda$. (Note that in our setup one has~$\theta_e=\frac\pi6$ for all rhombi of~$\Lambda$.) Let us fix a section of~$\Upsilon^\times$ by choosing the values
\[
\eta_{b(c)}=\eta_{w(c)}\in\{1,e^{{2\pi i}/{3}},e^{{4\pi i}/{3}},i,ie^{{2\pi i}/{3}},ie^{{4\pi i}/{3}}\}
\]
in the definition~\eqref{eq:etab=etaw=}. If we only know the values of~$X$ at the edges~$e_1,...,e_6$ surrounding a given face~$f$ of~$\mathrm{H}$, then the consistency relations required to define the values~$X(e_{23})$, $X(e_{45})$ and~$X(e_{61})$ so that the propagation equation~\eqref{eq:propagation} holds for the three rhombi inside~$f$ read as
\begin{align*}
\tfrac12 X(e_1)+X(e_2)\ =\ -\tfrac{\sqrt{3}}{2}X(e_{23})\ =\ -(X(e_3)+\tfrac12 X(e_4))\,,\\
\tfrac12 X(e_3)+X(e_4)\ =\ -\tfrac{\sqrt{3}}{2}X(e_{45})\ =\ -(X(e_5)+\tfrac12 X(e_6))\,,\\
\tfrac12 X(e_5)+X(e_6)\ =\ -\tfrac{\sqrt{3}}{2}X(e_{61})\ =\ -(X(e_1)+\tfrac12 X(e_2))\,;
\end{align*}
see Fig.~\ref{fig:hex-ii} for the notation. A simple algebraic manipulation shows that this is equivalent to saying that the values~$X(e_1),\ldots,X(e_6)$ satisfy the equations given in Definition~\ref{def:CGS-hol}. In other words, this definition also fits the general t-holomorphicity framework developed in our paper.


\end{document}